\definecolor{cite}{rgb}{0.30,0.60,1.00}
\definecolor{url}{rgb}{0.00,0.00,0.80}
\definecolor{link}{rgb}{0.40,0.10,0.20}
\numberwithin{equation}{section}
\theoremstyle{plain}
\newtheorem{proposition}{Proposition}[section]
\newtheorem{corollary}[proposition]{Corollary}
\newtheorem{lem}[proposition]{Lemma}
\newtheorem{theorem}[proposition]{Theorem}
\theoremstyle{definition}
\newtheorem{definition}[proposition]{Definition}
\newtheorem{notation}[proposition]{Notation}
\theoremstyle{remark}
\newtheorem{remark}[proposition]{Remark}
\renewcommand{\c}[1]{\mathcal{#1}}
\renewcommand{\d}[1]{\mathbb{#1}}
\newcommand{\f}[1]{\mathfrak{#1}}
\renewcommand{\r}[1]{\mathrm{#1}}
\newcommand{\s}[1]{\mathscr{#1}}
\renewcommand{\sf}[1]{\mathsf{#1}}
\newcommand{\ol}{\overline}
\newcommand{\ang}[1]{\langle{#1}\rangle}
\newcommand{\cA}{\c A}
\newcommand{\cB}{\c B}
\newcommand{\cD}{\c D}
\newcommand{\cE}{\c E}
\newcommand{\cH}{\c H}
\newcommand{\cN}{\c N}
\newcommand{\cO}{\c O}
\newcommand{\cP}{\c P}
\newcommand{\cS}{\c S}
\newcommand{\cT}{\c T}
\newcommand{\cV}{\c V}
\newcommand{\dA}{\d A}
\newcommand{\dC}{\d C}
\newcommand{\dF}{\d F}
\newcommand{\dG}{\d G}
\newcommand{\dN}{\d N}
\newcommand{\dP}{\d P}
\newcommand{\dQ}{\d Q}
\newcommand{\dR}{\d R}
\newcommand{\dS}{\d S}
\newcommand{\dT}{\d T}
\newcommand{\dZ}{\d Z}
\newcommand{\fc}{\f c}
\newcommand{\fg}{\f g}
\newcommand{\fm}{\f m}
\newcommand{\rE}{\r E}
\newcommand{\rH}{\r H}
\newcommand{\sC}{\s C}
\newcommand{\sV}{\s V}
\newcommand{\tD}{\mathtt{D}}
\newcommand{\tF}{\mathtt{F}}
\newcommand{\tV}{\mathtt{V}}
\newcommand{\xra}{\xrightarrow}
\newcommand{\GU}{\mathrm{GU}}
\newcommand{\ac}{\r{ac}}
\newcommand{\et}{\acute{\r{e}}\r{t}}
\newcommand{\Iw}{\mathrm{Iw}}
\newcommand{\St}{\mathrm{St}}
\newcommand{\BS}{\mathrm{BS}}
\newcommand{\BB}{\mathrm{BB}}
\newcommand{\IH}{\mathrm{IH}}
\newcommand{\BC}{\mathrm{BC}}
\newcommand{\Kp}{  K^p }
\newcommand{\bKp}{  (K^p) }
\newcommand{\Fpp}{ { \mathbb{F}_{p^2} }}
\newcommand{\Fpac}{ { \mathbb{F}_{p}^{\mathrm{ac}} }}
\newcommand{\spe}{\mathrm{sp}}
\DeclareMathOperator{\rank}{rank}
\DeclareMathOperator{\diag}{diag}
\DeclareMathOperator{\End}{End}
\DeclareMathOperator{\Fil}{Fil}
\DeclareMathOperator{\Fr}{Fr}
\DeclareMathOperator{\Frob}{Frob}
\DeclareMathOperator{\Gal}{Gal}
\DeclareMathOperator{\GL}{GL}
\DeclareMathOperator{\Gr}{Gr}
\DeclareMathOperator{\Hom}{Hom}
\DeclareMathOperator{\Ind}{Ind}
\DeclareMathOperator{\Lie}{Lie}
\DeclareMathOperator{\Map}{Map}
\DeclareMathOperator{\Res}{Res}
\DeclareMathOperator{\Sh}{Sh}
\DeclareMathOperator{\Spec}{Spec}
\DeclareMathOperator{\Tr}{Tr}
\DeclareMathOperator{\val}{val}
\DeclareMathOperator{\id}{id}
\DeclareMathOperator{\im}{im}
\DeclareMathOperator{\HdR}{H_1^{\r{dR} } }
\begin{document}

	\title[]
	{Level lowering for GU(1,2)}

	\author{Hao Fu}
	\address{Universit\'{e} de Strasbourg, Strasbourg, France}
	\email{hao.fu@math.unistra.fr}

	\date{\today}
	\subjclass[2010]{11G05, 11R34, 14G35}

	\begin{abstract}
		Mazur's principle gives a criterion under which an irreducible 
				mod $\ell$ Galois representation arising from a modular form of level $Np$ (with $p$ prime to $N$) can also arise from a modular form of level $N.$
		We prove an analogous result  showing that   
		a mod $\ell$ Galois representation arising from a stable cuspidal automorphic representation of the unitary similitude group $G=\r{GU}(1,2)$ which is Steinberg at an inert prime $p$ can also arise from an automorphic representation of $G$ that is unramified at $p$.
	\end{abstract}

	\maketitle
	
	\tableofcontents
	
	\section{Introduction} \label{sec:notation}
	
	The level lowering problem was proposed by Serre in \cite{Ser87a,Ser87b} in the name of epsilon conjecture and served as a key step in deducing Fermat last theorem from Shimura-Taniyama-Weil conjecture. 
Ribet proved the following theorem, which he called also Mazur's principle.
\begin{theorem}\cite[Theorem 1.1]{Rib90}
Let $N$ be a positive integer and let $p,\ell$ be  prime numbers such that  $\ell$ is odd and $(p,N)=1.$ 
Let $f$ be a newform of weight 2 and level $Np$ and $\ol\rho_{f,\ell}$ be the mod $\ell$ residual Galois representation attached to $f$. Suppose 
\begin{enumerate}
\item $\ol\rho_{f,\ell}$ is absolutely irreducible;
\item $\ol\rho_{f,\ell}$ is unramified at $p$;
\item  $p \not \equiv 1 \bmod \ell$.
\end{enumerate}
Then there exists a newform $g$ of weight 2 and level $N$ such that $\ol\rho_{f,\ell}\cong \ol\rho_{g,\ell}.$
  \end{theorem}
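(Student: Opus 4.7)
The plan is to use Ribet's geometric approach via modular Jacobians. I would consider the Jacobian $J=J_0(Np)$, its Hecke algebra $\mathbb{T}$, and the maximal ideal $\mathfrak{m}\subset\mathbb{T}$ attached to $f$; the residue field $\mathbb{T}/\mathfrak{m}$ is a finite extension of $\mathbb{F}_\ell$, and $\ol\rho_{f,\ell}$ occurs as a constituent of $J[\ell]_\mathfrak{m}$. The goal is to show $\mathfrak{m}$ is $p$-old, i.e.\ that it arises from the Hecke algebra of level $N$ via the two degeneracy maps, from which Eichler--Shimura theory produces the desired newform $g$. Equivalently, arguing by contradiction, I assume $\mathfrak{m}$ is $p$-new and aim for an inconsistency.

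The geometric input is the Deligne--Rapoport model of $X_0(Np)$ at $p$: the special fiber consists of two copies of $X_0(N)_{\mathbb{F}_p}$ meeting transversally at the supersingular points. Consequently $J$ has semistable reduction at $p$, and the $p$-new quotient $J^{\r{new}}$ (the cokernel of $J_0(N)\times J_0(N)\to J_0(Np)$) has purely toric reduction. Its character group $X$ is a Hecke-stable subquotient of the free abelian group on the supersingular points in characteristic $p$. Under the contrary hypothesis, $\ol\rho_{f,\ell}$ appears as a constituent of $J^{\r{new}}[\ell]_\mathfrak{m}$.

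Next I would invoke Grothendieck's monodromy theorem. The $\ell$-adic Tate module of $J^{\r{new}}$ admits a two-step filtration whose graded pieces involve $X\otimes \mathbb{Z}_\ell(1)$ and $X^\vee\otimes \mathbb{Z}_\ell$; inertia $I_p$ acts trivially on the graded pieces and through a monodromy pairing on the total space, while geometric Frobenius acts with eigenvalues on these two pieces differing by a factor of $p$ (the Tate twist). Reducing mod $\ell$, localizing at $\mathfrak{m}$, and using that $\ol\rho_{f,\ell}$ is unramified at $p$, the monodromy on the $\mathfrak{m}$-part must vanish. Hence $J^{\r{new}}[\ell]_\mathfrak{m}$ becomes an unramified semisimple $G_{\mathbb{Q}_p}$-module whose constituents are copies of $\ol\rho_{f,\ell}$ distributed between the two graded pieces.

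The main obstacle is to combine this with the hypothesis $p\not\equiv 1\pmod\ell$. Since the two graded pieces carry Frobenius actions differing by $p$, a common irreducible constituent $\ol\rho_{f,\ell}$ appearing in both would force each Frobenius eigenvalue $\lambda$ of $\ol\rho_{f,\ell}$ to satisfy $\lambda=p\lambda$ in $\ol{\mathbb{F}}_\ell$. As $\det\ol\rho_{f,\ell}$ is the mod $\ell$ cyclotomic character and is thus a unit, $\lambda\neq 0$, forcing $p\equiv 1\pmod\ell$, a contradiction. Thus $J^{\r{new}}[\ell]_\mathfrak{m}=0$, so $\mathfrak{m}$ is $p$-old. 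The delicate points, where I would expect to spend most effort, are carrying out the monodromy analysis integrally rather than only rationally (so the mod-$\ell$ conclusion is rigorous), identifying $X$ Hecke-equivariantly in terms of the supersingular set, and using absolute irreducibility of $\ol\rho_{f,\ell}$ to ensure that it genuinely occurs in both graded pieces rather than being hidden in an extension that the unramified hypothesis must first dismantle.
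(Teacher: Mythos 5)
Your overall strategy matches Ribet's (and the sketch the paper itself gives right after the statement): embed $\ol\rho_{f,\ell}$ in $J_0(Np)[\ell]_\fm$, exploit the Deligne--Rapoport semistable model at $p$, and argue from the toric part of the reduction. The gap is in the final step. You assert that $\ol\rho_{f,\ell}$ appearing in both graded pieces $\ol X(1)_\fm$ and $\ol X^\vee_\fm$ forces each Frobenius eigenvalue $\lambda$ to satisfy $\lambda=p\lambda$. That is not what follows from the premises you state. Since the two pieces carry Frobenius actions differing by the factor $p$, and $\ol\rho_{f,\ell}\vert_{G_{\dQ_p}}$ is unramified and hence splits as $\chi_1\oplus\chi_2$, a common constituent only tells you that the pair $\{\chi_1,\chi_2\}$ is stable under multiplication by $p$. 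This yields either $p\equiv 1\pmod\ell$, or $\chi_1=p\chi_2$ and $\chi_2=p\chi_1$, i.e.\ $p^2\equiv 1\pmod\ell$. The second alternative $p\equiv -1\pmod\ell$ is perfectly consistent with $\ell$ odd and $p\not\equiv1\pmod\ell$, so your argument does not reach the stated conclusion.

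Ribet closes this by a sharper observation. If the specialization map from $J[\fm]$ to the abelian part of the special fiber is injective, one is done; otherwise a full copy of $\ol\rho_{f,\ell}$ sits inside the toric piece $\ol X\otimes\mu_\ell$. On the $\fm$-localization of that piece, $\Frob_p$ acts by a single scalar, and the Eichler--Shimura/Ihara description of $\Frob_p$ on the character group in terms of a Hecke operator, together with $U_p^2=1$ for weight-$2$ $p$-newforms, pins that scalar down to $\pm p$. Combined with $\det\ol\rho_{f,\ell}(\Frob_p)=p$, a scalar Frobenius of size $\pm p$ on the two-dimensional $\ol\rho_{f,\ell}$ forces $p^2\equiv p$, hence $p\equiv 1\pmod\ell$. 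That is the precise mechanism that upgrades $p^2\equiv1$ to $p\equiv1$, and it is what your argument is missing. You would also need to establish that the component group is Eisenstein (hence vanishes after $\fm$-localization); you flag the integral monodromy analysis as delicate but leave this essential point open, and without it $J[\ell]_\fm$ does not actually admit the two-step filtration on which your contradiction rests.
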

  In his original proof, Ribet embedded the given Galois representation
  into some torsion module of the Jacobian of a modular curve. A key step
  is to analyze the Frobenius action on the toric part of Jacobians. 
 The assumption   $p \not \equiv 1 \bmod \ell$ was removed by Ribet later in \cite{Rib91}, where he took another prime number
 $q$ such that $q\not\equiv 1 \bmod \ell$ and transferred the given modular form to the one attached to
 the indefinite quaternion algebra ramified at $pq$ by Jacquet-Langlands correspondence. Then the so-called $(p,q)$ switch trick allows him to lower the level at $p$
while by Mazur's principle he can further lower the level at $q$. For a more precise explanation of Ribet's method, see \cite{Wan22}.
   
     Later Jarvis (\cite{Jar99}) and Rajaei (\cite{Raj01}) proved  similar results on level lowering of Galois representations attached to Shimura curves over totally real fields 
after a major breakthrough by Carayol in \cite{Car86}. The geometry of bad reduction of Shimura curve combined with an explicit calculation of nearby cycles shows the component group of the Jacobian of the Shimura curve is Eisenstein. 
    Along the same line van Hoften (\cite{vH21}) and Wang (\cite{Wan22}) studied level lowering for Siegel modular threefold  of paramodular level under different technical assumptions. 
	 For unitary similitude group of signature(1,2), Helm proved level lowering at a place split in the quadratic imaginary extension over a totally real field in \cite{Hel06}.   Boyer treated the case for unitary Shimura varieties
	of  Kottwitz-Harris-Taylor type in   \cite{Boy19}.

In this article we deal with level lowering at a rational prime inert in a quadratic imaginary extension for the unitary similitude group of signature (1,2). 
	 
Let $F$ be a quadratic imaginary extension over $\dQ$
and $G:=\GU(1,2)$ be the corresponding quasi-split unitary similitude group of signature (1,2).
 	Fix a prime number $p$ inert in $F$  
		and an open compact subgroup $K^p$ of $G(\dA^{\infty,p})$
		where $\dA^{\infty,p}$ is the finite ad\`ele over $\dQ$ outside $p.$
		Let $K_p\subset G(\dQ_p)$ be a hyperspecial subgroup, and $\Iw_p\subset K_p$ be an Iwahoric subgroup. 
			 Let $S$ (resp. $S_0(p)$) be the integral model of Shimura variety attached to $G$ of 
			 level $K^pK_p$ (resp. $K^p\Iw_p$).
	The main theorem is 
	\begin{theorem}[Theorem \ref{thm:main}]
		Let $\pi$ be a stable automorphic cuspidal representation of $G(\dA)$  cohomological with trivial coefficient. 
							Fix a prime number $\ell\neq p.$
		Let $\fm$ be the mod $\ell$   maximal ideal of the spherical Hecke algebra attached to 
		$\pi.$
		Let $\ol\rho_{\pi,\ell}$ be the mod $\ell$ Galois representation attached to $\pi.$ 
		Suppose 
		\begin{enumerate}
		\item  $(\pi^{\infty, p})^{K^p}\neq 0;$ 
					\item $ \pi_{p}$ is the Steinberg representation of $G_p$ twisted by an unramified character;
			\item 			 if $i\neq 2$ then $\rH^i(S\otimes{F^\ac},\dF_\ell)_\fm = 0;$
     \item $\ol{\rho}_{\pi,\ell }$ is absolutely irreducible;
				\item
				$\ol\rho_{\pi,\ell}$ is unramified at $p$;
				\item $ \ell\nmid(p-1)(p^3+1).$
			\end{enumerate} 
		Then there exists a cuspidal automorphic  representation $\tilde \pi$ of $G(\dA)$  such that $(\tilde\pi^\infty)^{K^p K_p}\neq 0$ and 
		\[ \ol{\rho}_{\pi,\ell} \cong  \ol{\rho}_{\tilde\pi,\ell}.\]
	\end{theorem}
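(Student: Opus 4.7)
The strategy is an adaptation of Ribet's method, applied to the Shimura variety $S_0(p)$ of Iwahori level at $p$. Since $\pi_p$ is the Steinberg representation twisted by an unramified character, it has a one-dimensional space of Iwahori fixed vectors. Together with hypothesis (1), this implies that $\pi$ contributes nontrivially to $\rH^2(S_0(p)\otimes F^\ac, \dF_\ell)_{\fm_0}$, where $\fm_0$ is the maximal ideal of the away-from-$p$ spherical Hecke algebra attached to $\pi$. The goal is to transfer this contribution to $\rH^2(S\otimes F^\ac,\dF_\ell)_\fm$, which by hypothesis (3) is concentrated in the middle degree and will then produce the desired hyperspecial-level automorphic representation $\tilde\pi$.

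The main geometric input is the analysis of the semistable model of $S_0(p)$ at $p$. Its special fiber admits a stratification whose closed strata are either copies of the good-reduction special fiber $\ol S$ (indexed by the vertices of the local affine Dynkin diagram of $\GU_3$ at an inert place) and whose deeper strata involve pieces of the basic locus, which by Vollaard--Wedhorn is a union of Deligne--Lusztig varieties (Fermat-type curves) over $\Fpp$. One feeds this stratification into the weight/monodromy spectral sequence for the nearby cycles $\r{R}\Psi(\dF_\ell)$ on $S_0(p)$; the $E_1$-page is an explicit complex whose terms are (shifted and twisted) cohomologies of $\ol S$ and of the basic strata, and the differentials are restriction/Gysin maps induced by the degeneracy maps $S_0(p)\rightrightarrows S$.

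Now invoke hypothesis (5): since $\ol\rho_{\pi,\ell}$ is unramified at $p$, the monodromy operator $N$ acts as zero on the $\fm_0$-localised nearby-cycle cohomology modulo $\ell$. Combined with the weight spectral sequence, this forces the Steinberg-type contribution of $\pi$ to factor through the kernel of $N$, which by the usual monodromy--weight yoga is controlled by the restriction map to $\rH^2(\ol S\otimes\Fpac,\dF_\ell)^{\oplus 2}_{\fm}$ modulo the image of a Gysin-type map from the basic stratum. Hypotheses (4) and (3) allow one to localise cleanly at $\fm$ and discard higher-degree contributions; hypothesis (6) that $\ell\nmid (p-1)(p^3+1)$ is exactly what forces the relevant ``Eisenstein'' pieces to vanish mod $\ell$: the factor $p-1$ controls a character-group/component-group contribution attached to the ordinary stratum (so that $T_p$ acts non-Eisenstein-ly on the toric quotient), while $p^3+1$ is the Euler factor governing the Frobenius action on the $\dF_\ell$-cohomology of the Vollaard--Wedhorn Fermat curves, whose non-vanishing mod $\ell$ makes the connecting map in the spectral sequence injective after $\fm$-localisation. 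Putting these together one obtains a nonzero Hecke-equivariant map from the $\fm_0$-part of $\rH^2(S_0(p))$ to the $\fm$-part of $\rH^2(S)$, and standard Hecke-module arguments produce a cuspidal $\tilde\pi$ with $\tilde\pi^{K^pK_p}\neq 0$ and $\ol\rho_{\tilde\pi,\ell}\cong\ol\rho_{\pi,\ell}$.

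The main obstacle is the precise control of the two Eisenstein-type contributions appearing in the weight spectral sequence, i.e.\ the verification that the conditions $\ell\nmid (p-1)$ and $\ell\nmid (p^3+1)$ really suffice to kill the cokernel of the specialisation map and the image from the basic locus respectively. This requires an explicit computation of the Frobenius action on the cohomology of the Vollaard--Wedhorn strata and of the Hecke action (for the spherical Hecke generators at $p$) on the graded pieces of the weight filtration, which replaces in the present setting the classical analysis of the character group of the Jacobian of $X_0(Np)$ in Ribet's proof.
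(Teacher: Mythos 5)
Your overall geometric setup is the right one (the semistable integral model of $S_0(p)$, the Rapoport--Zink weight--monodromy spectral sequence, the Vollaard--Wedhorn Fermat curves in the basic locus, the boundary-cohomology vanishing from irreducibility), but your proof strategy is genuinely different from the paper's, and as sketched it has a real gap.

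You propose a \emph{direct} argument: use the monodromy-weight filtration together with $\ell\nmid(p-1)(p^3+1)$ to show that the image of $\ol\rho_{\pi,\ell}$ inside $\ker N$ is forced, modulo a basic-locus contribution, into a piece isomorphic to $\rH^2(\ol S)^{\oplus 2}_\fm$, and then read off a nonzero Hecke map to $\rH^2(S)_\fm$. This does not match either the geometry or the mechanism of the paper. First, the special fiber of $S_0(p)$ has \emph{three} closed strata $Y_0$, $Y_1$, $Y_2$ (not two copies of $\ol S$): $Y_0$ is the blow-up $S^\#$ of $S$ along the superspecial points, $Y_1$ maps purely inseparably to $S^\#$, and $Y_2$ is a $\dP^1$-bundle over the normalization $N$ of the supersingular locus, with triple intersection $Y_{0,1,2}$ parametrized by the discrete Shimura set $T_0(p)$ for the inner form $G'$. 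So the "$\rH^2(\ol S)^{\oplus 2}$ modulo Gysin from the basic stratum" picture, imported from the curve case, is not the shape of the $E_1$-page here. Second, for your direct argument you would need to prove injectivity or non-vanishing of a specific specialization/Gysin map after localization at $\fm$, which you do not do; the claim that $\ell\nmid(p-1)$ handles a "toric/component-group" term via a $T_p$-action, and that $\ell\nmid(p^3+1)$ makes "the connecting map injective," is a heuristic without a proof.

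The paper instead argues \emph{by contradiction}. Assuming no level-lowering, Lemma~\ref{lem:cohvanish} shows that every term of the $E_1$-page coming from $Y^{(0)}$ and $Y^{(1)}$ vanishes after localization at $\fm$ --- this uses, via Jacquet--Langlands transfer from $G'$ and the computation that the Tate--Thompson representation on $\rH^1$ of the Fermat curve has Satake parameter $\{-p,1,-p^{-1}\}$ after base change, exactly the hypothesis $\ell\nmid(p-1)(p^3+1)$ to rule out Frobenius-eigenvalue collisions modulo $\ell$. The spectral sequence then degenerates and $\rH^2(S_0(p)\otimes F^\ac,\dF_\ell)_\fm$ has a filtration with graded pieces $\rH^0(Y^{(2)})_\fm$, $\rH^0(Y^{(2)})_\fm(-1)$, $\rH^0(Y^{(2)})_\fm(-2)$; unramifiedness of $\ol\rho_{\pi,\ell}$ pins it inside $\ker\tilde\mu=\rH^0(Y^{(2)})_\fm$. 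The decisive step, absent from your sketch, is Lemma~\ref{lem:Frobp}: $Y_{0,1,2}$ parametrizes superspecial abelian schemes, Frobenius over $\dF_{p^2}$ is the isogeny $-p$, so $\Frob_{p^2}$ acts on $\rH^0(Y^{(2)})_\fm$ as the Iwahori--Hecke diamond operator $\ang{p^{-1}}$, which Lemma~\ref{lem:diamond} places in the away-from-$\square$ Hecke algebra; hence $\Frob_{p^2}$ acts as a single scalar on $\rH^0(Y^{(2)})[\fm]$. But the Steinberg shape gives $\ol\rho_{\pi,\ell}(\Frob_p)$ the eigenvalue multiset $\{p^2,1,p^{-2}\}$ up to scalar, which cannot all be one scalar unless $p^2\equiv 1\pmod\ell$ --- excluded by $\ell\nmid(p-1)(p^3+1)$. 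This Frobenius--versus--Hecke identity is the analogue of Ribet's toric-part computation and is the missing key idea in your proposal.

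So: right toolkit, wrong route, and the essential mechanism (conditional vanishing of lower strata under no-level-lowering, and the identity of Frobenius with a diamond operator on $\rH^0(Y^{(2)})_\fm$) is not present in your sketch.
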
 
 	  We adapt Ribet's strategy.  
As Jacobian is unavailable for Shimura surfaces, inspired by Helm we use weight-monodromy  spectral sequence to analyze analogues of the component group
 of Jacobians of $S$ and $S_0(p).$
  In order to do so, we need a detailed study on the geometry of special fibers.
	The surface $S\otimes \Fpp$ was studied by Wedhorn in \cite{Wed01} and Vollaard in \cite{Vol10}.
	They showed that the   supersingular locus consists of geometric irreducible components which are Fermat curves of degree $p+1$ intersecting transversally at superspecial points. The complement of supersingular locus is $\mu$-ordinary locus which is dense.
	
	The geometry of $S_0(p)$ is more complicated.
	The study of local models in \cite{Bel02} implies that $S_0(p)$ has semistable reduction at $p$. 
    We define three closed strata $Y_0,Y_1,Y_2$ in ${S_0(p)}\otimes {\Fpp}.$  
We show they are all smooth and their union is  ${S_0(p)}\otimes {\Fpp}.$	 
	We further study  relations between these strata and $S\otimes \Fpp.$
	In particular,
	$Y_0$ is isomorphic to the blowup of $S\otimes\Fpp$ at superspecial points;
	$Y_1$ admits a purely inseparable morphism to the latter;
	and $Y_2$ is a $\dP^1$-bundle over the normalization of the supersingular locus of $S\otimes \Fpp$
	which is geometrically a disjoint union of Fermat curves.	
		The pairwise intersections $Y_i \cap Y_j$ are transversal and
   parameterized by discrete Shimura varieties attached to  $G',$
   where $G'$ is the unique inner form of $G$ which coincides with $G$
  at all finite places and is compact modulo center at infinity.
  This can be viewed as  a geometric incarnation of Jacquet-Langlands transfer.
	 Moreover, we show the geometric points of $Y_0\cap Y_1\cap Y_2$ 
 are in bijection with the discrete Shimura variety attached to $G'$ of level $K^p\Iw_p.$ All the morphisms are equivariant under prime-to-$p$ Hecke correspondence, and defined over $\Fpp$ thus compatible with the Frobenius action
 when taking the geometric fiber. The result bears a resemblance to those of   \cite{dSG18} and \cite{Vol10}, but is tailored for arithmetic applications by preserving Hecke equivariance and schematic structure.
 
By Matsushima's formula, the given automorphic representation $\pi$ contributes  to the intersection cohomology  of   Baily-Borel compactification of  $S_0(p).$ Fortunately, we can ignore the compactification since the cohomology of the boundary of Borel-Serre compactification vanishes when localized at $\fm$ 
by the irreducibility of the residual Galois representation.
We then write down the weight-monodromy spectral sequence for the surface $S_0(p).$ 
  
We are ready to prove the main theorem by contradiction. If there were no level lowering,  the torsion-free assumption would eliminate the possiblity that $\pi$ appears in 
the localized cohomology of $S \otimes \Fpac.$
The weight-monodromy spectral sequence would degenerate at the first page and give rise to a filtration
 of $\rH^2({S_0(p)}\otimes{F^\ac},\dF_\ell)_\fm$ with the graded pieces given by the cohomology groups of $Y_0\cap Y_1\cap Y_2.$
  The unramified condition on the residual Galois representation would force $\ol\rho_{\pi,\ell}$ to live in the localized cohomology of $ (Y_0\cap Y_1\cap Y_2)\otimes \Fpac.$ 
  We then find a contradiction by studying the generalized eigenvalues of the Frobenius action.
  
  The article is organized as follows:
  after introducing the relevant Shimura varieties  in Section \ref{sec:Shimura}, 
we study the geometry of special fiber of Shimura varieties in Section \ref{sec:geometry}.
Finally we carry out the proof of the main theorem 
   in Section \ref{sec:proof} .
   
	\subsection{Notation and conventions}
	\label{ss:notation}
	
	The following list contains basic notation and conventions we fix throughout the article, we will use them without further comments.
	\begin{itemize}
		\item We denote by $\dA$ the ring of ad\`eles over $\dQ$. For a set $\Box$ of places of $\dQ$, we denote by $\dA^\Box$ the ring of ad\`eles away from $\Box$. For a number field $F$, we put $\dA^\Box_F\coloneqq\dA^\Box\otimes_\dQ F$. If $\Box=\{v_1,\dots,v_n\}$ is a finite set, we will also write $\dA^{v_1,\dots,v_n}$ for $\dA^\Box$.
		
		\item For a field $K$, denote by $K^\ac$ the algebraic closure of $K$ and put $ G_K\coloneqq\Gal(K^\ac/K)$. Denote by $\dQ^\ac$ the algebraic closure of $\dQ$ in $\dC$. When $K$ is a subfield of $\dQ^\ac$, we take $G_K$ to be $\Gal(\dQ^\ac/K)$ hence a subgroup of $G_\dQ$.
		\item
		For every rational prime $p$, we fix an algebraic closure $\mathbb{Q}_{p}^\ac$ of $\mathbb{Q}_{p}$with the residue field $\mathbb{F}_{p}^\ac,$ and an isomorphism 
		$\iota_p:\dQ_{p}^\ac \cong \dC.$
			 \item
	 	For an algebraic group $G$ over $\dQ$, set $G_p:=G(\dQ_p)$ 
		for a rational prime $p$ and $G_\infty:=G(\dR).$
		\item Let $X$ be a scheme. The cohomology group $\rH^\bullet(X,-)$ will always be computed on the small \'{e}tale site of $X$. If $X$ is of finite type over a subfield of $\dC$, then $\rH^\bullet(X(\dC),-)$ will be understood as the Betti cohomology of the associated complex analytic space $X(\dC)$.
		
		\item
		Let $R$ be a ring. 
		Given two $R$-modules $M_1 \subset M_2$, and $s \in \dN$ an integer.
		denote by $M_1 \stackrel{s} \subset M_2$ if the length of the $R$-module  $M_2/M_1$ is $s$ (hence finite).
		\item
		Let $R$ be a ring and $M$ be a set. Denote by $R[M]$ the set of functions on $M$ with compact support with values in $R$. 
		\item If a base ring is not specified in the tensor operation $\otimes$, then it is $\mathbb{Z}$.
		\item
		For a scheme $S$ (resp. Noetherian scheme $S$ ), we denote by  
		$\mathsf{Sch}_{/ S}$ (resp.   $\mathsf{Sch}_{/ S}^{\prime}$ ) 
		the category of $S$-schemes (resp. locally Noetherian $S$-schemes). 
		If $S=\operatorname{Spec} R$ is affine, we also write
		$\mathsf{Sch}_{/ R}$ ( resp. $\mathsf{Sch}_{/ R}^{\prime} $ )
		for
		$\mathsf{Sch}_{/ S}$ ( resp. $\mathsf{Sch}_{/ S}^{\prime}$).
		\item
		The structure sheaf of a scheme $X$ is denoted by $\mathcal{O}_{X}$.
		\item
		For a scheme $X$ over an affine scheme Spec $R$ and an $R$-algebra $S$, we write $X \otimes_{R} S$ or even $X_{S}$ for $X \times_{\operatorname{Spec} R} \operatorname{Spec} S$.
		\item
		For a scheme $S$ in characteristic $p$ for some rational prime $p$, we denote by $\sigma: S \to S$ the absolute $p$-power Frobenius morphism. 
		For a perfect field $\kappa$ of characteristic $p$, we denote by $W(\kappa)$ its Witt ring, and by abuse of notation, $\sigma: W(\kappa) \to W(\kappa)$ the canonical lifting of the $p$-power Frobenius map.
		\item
		Denote by $\mathbb{P}^{1}$ the projective line scheme over $\mathbb{Z}$, and 
		$\mathbb{G}_{m,R}=\operatorname{Spec} R [T, T^{-1}]$ the multiplicative group scheme over a ring $R$. Let $\dS=\Res_{\dC/\dR} \dG_{m,\dC}$ be the Weil restriction of $\dG_{m,\dC}$ to $\dR$.
		
	\end{itemize}
\subsection{Acknowledgements}
    The author would like to express his deep gratitude to his Ph.D advisor Prof. Yichao Tian
    for suggesting this topic and the enlightening discussions, not to mention correcting the paper with great patience. 
    He also thanks Prof. Liang Xiao and Prof. Henri Carayol for the advice in finishing this paper.
    Finally, he would like to thank Lambert A'Campo, Ruiqi Bai, 
    Jiahao Niu,
    Matteo Tamiozzo, Zhiyu Zhang and Ruishen Zhao
     for the discussion. The paper is written at Institut de recherche math\'ematique avanc\'ee, Strasbourg and Morningside Center of 
    Mathematics, Beijing.
    
	\section{Shimura varieties, integral models and moduli interpretations}\label{sec:Shimura}
	In this section we introduce some Shimura varieties associated with the group of unitary similitudes.
	
	Let $F=\dQ(\sqrt{\Delta})$ be a quadratic imaginary extension of $\dQ$ with $\Delta \in \dZ$ a negative square-free element.
	Let $\fc$ be the nontrivial element in $\Gal(F/\dQ),$ and write $a^\fc$ or $\fc (a)$ for the action of $\fc$ on $a$ for $a\in F.$
	Fix an embedding $ \tau_0: F \to \mathbb{C} $
	such that 
	$ \tau_0 (\sqrt{\Delta}) \in \mathbb{R}_{>0} \cdot \sqrt{-1}.$
	Then $\Sigma_\infty:=\{\tau_0, \tau_1=\tau_0 \circ \fc \}$ is the set of all complex embeddings of $F.$ 
	Let $O_F$ be the ring of integers of $F$, $F^{ac}$ be an algebraic closure of $F$. 
	Let $(\Lambda=O_F^3,\psi)$ be the free $O_F$-module of rank 3 equipped with the hermitian form 
	$$
	\psi(u, v)={}^tuJ\bar{v}
	$$
	where
	$$
	\Phi=\begin{pmatrix} 
		& & 1 \\
		& -1 & \\
		1 & &
	\end{pmatrix}.
	$$
	Then $\psi$ is of signature (1,2) over $\dR$. Denote by $e_0 ,e_1 ,e_2 \in \Lambda$
	the standard basis vectors. We put also 
	\[
	\langle u, v\rangle_{\psi}:=\Tr_{F/\dQ}(\frac{1}{\sqrt{\Delta}}\psi(u,v))
	\]
	which is a non-degenerate alternating form $V\times V\to \dQ$. 
	Let $G= \operatorname{GU}(\Lambda,\psi)$ be the group of unitary similitudes defined over $\dZ$ by  
	$$
	G(R)=\{(g, \nu(g)) \in \mathrm{GL}_{O_F \otimes_\dZ R} (\Lambda \otimes_\dZ R) \times R^{\times}:  \psi(g x, g y)=\nu (g) \psi(x, y),\forall x, y \in \Lambda \otimes_{\dZ } R\} 
	$$
	for any $\dZ$-algebra $R$. Note that $G$ can be also defined as the similitude group of $\langle\_,\_ \rangle_{\psi}$.
	
 Let $p$ be a prime number inert in $F$.
	\subsection{Bruhat-Tits tree and open compact subgroups of $G_p$}
	\subsubsection{Bruhat-Tits tree}\cite[3.1]{BG06}
	Let $ \mathcal {T} $ be the Bruhat-Tits building of $ G_p.$  According to \cite{Tit79} or \cite[1.4]{Cho94}, 
	it is a tree, 
	and its vertices decompose into two parts $ \mathcal{V} \coprod  \tilde{\mathcal{V}}.$
	Every vertex of $   \mathcal{V}$ (resp. of $ \tilde{\mathcal{V}} ) $ has $ p ^ {3} + 1 $ (resp. $ p + 1 ) $ neighbours which are all in $ \tilde{\mathcal{V}} ) $ (resp. in $ \mathcal{V}$).
	The points of $ \cV $ are \emph{hyperspecial} points in the sense of \cite{Tit79}, those of $ \tilde{\cV} $ are special points which are not hyperspecial. We denote by $ \cE $ the set of (non-oriented) edges of $  {\cT}$.
	
	The tree $ \mathcal {T} $ is endowed with an action of $ G_p.$ The center $ Z_p \subset G_p$ acts on $\cT$ trivially. The action of $ G_p $ on $ \cV $ (resp. $   \tilde{\cV} ) $ is transitive, and the stabilizer of a vertex $ v $  acts transitively on the set of vertices of $ \mathcal {V} $ with distance $ n $ from $ v $ \cite[1.4, 1.5]{Cho94}, and therefore on the set of elements of 
	$ \cE $ of origin $ v $.
	
	\subsubsection{Maximal compact subgroup}\cite[3.2]{BG06}
	According to \cite{BT72}, a maximal compact subgroup of $ G_p $ fixes one and only one vertex of $ \mathcal {T} $, which defines a bijection between the set of maximal compact subgroups of $ G_p $ and $ \cV \coprod \tilde \cV. $ There are therefore two conjugacy classes of maximal compact subgroups of $ G_p$,  those who fix a vertex of $ \cV $, which we call \emph{hyperspecial}, and those who fix a vertex of $ \tilde \cV  $, which we call \emph{special}.
	
	Let $ v \in \cV$ and $v^{\prime} \in \tilde \cV  $. We denote by $ K_v $ and $ K_{v^{\prime}} $ the maximal compact subgroup which fixes $v$ and $v'$.
	Then $K_v$ is conjugate to $K_p:=G(\dZ_p),$ which is the stabilizer of the standard self-dual lattice
	$$
	\Lambda_{0}=\Lambda \otimes \mathbb{Z}_{p}=\langle e_{0}, e_{1}, e_{2}\rangle_{O_{F_p}}.
	$$
	In the meanwhile, $K_{v'}$ is conjugate to $\tilde{K}_p$ which is the stablizer of the lattice
	$$
	\Lambda_{1}=\langle p e_{0}, e_{1}, e_{2}\rangle_{O_{F_p}}.
	$$
	Assume that  $v$ and $v'$ are neighbors. The stabilizer  $K_v \cap K_{v'}$ of the edge $(v,v')$ is an \emph{Iwahoric subgroup} of $G_p.$
	
	\subsection{Picard modular surface over $\dC$}\label{subsec:Picard}
	Define the  bounded symmetric domain associated with $G$ as
\[
\cB=\{(z_{0}: z_{1}: z_{2}) \in \dP^2(\dC) \mid \bar{z}_{0} z_{2}+\bar{z}_{1} z_{1}+\bar{z}_{2} z_{0}<0\}
\] 
	 	which is biholomorphic to the unit ball in $\mathbb{C}^{2}$.
	The group 
	$ G(\dR)$ acts 
	by projective linear transformations on $\mathbb{P}^{2}(\mathbb{C})$, the action of $G(\dR)$ preserves $\cB$ and induces  a transitive action on $\cB$. 
	Denote by $K_{\infty}$ the stabilizer of the "center" $(-1: 0: 1)$. Then we have an homeomorphism
	$$
	G(\dR)/K_\infty \cong \cB.
	$$
	\subsection{Shimura varieties for unitary groups}
	Consider the Deligne homomorphism 
	\[
	\xymatrix@R=0pt{
		h_0\colon  \dS(\dR) = \dC^\times \ar[rr] && G(\dR)
		\\
		z=x+\sqrt{-1}y \ar@{|->}[rr] &&
		(\diag( \ol{z}, z, \ol{z} ), z\ol{z})}
	\]
	where  $\ol{z}$ is the complex conjugate of $z,$ and
	$G(\dR)$ acts on $\Hom_{\dR\text{-group scheme}}(\dS, G)$ by conjugation. The stabilizer of $h_0$ of $G(\dR)$ is $K_\infty,$ and
	there exists a bijection between $\cB$ and the $G(\dR)$-conjugacy class $X$ of $h_0.$  
	
For an compact open subgroup $K \subset  G(\dA^\infty ),$	the Shimura variety $\Sh(G,K)$ of level $K$ is a quasi-projective algebraic variety defined over $F$ whose complex points are identified with 
	$$
	\Sh(G, K)(\mathbb{C}) :=  {G}(\mathbb{Q}) \backslash  {G}(\mathbb{A}) /  KK_{\infty} \simeq  {G}(\mathbb{Q}) \backslash[ X \times  {G}(\mathbb{A}^{\infty}) / K].
	$$
	In this article, we will consider the Shimura varieties $\Sh(G,K)$ with $K$ of the form $K=K^pK_p$, $K^p\tilde {K}_p$ or $K^p\Iw_p$, where $K^p$ is a fixed open compact subgroup of $G(\dA^{\infty})$, as well as their canonical integral models over $O_{F,(p)}$. 
	
	\subsection{Dieudonn\'e theory on abelian schemes} 
	We first introduce some general notations on abelian schemes.
	\begin{definition}
		\cite[Definition 3.4.5]{LTX+22}
		Let $A$ and $B$ be two abelian schemes over a scheme $S \in \mathtt{Sch}_{/\dZ_{(p)} }.$ We say that a morphism of $S$-abelian schemes $\varphi: A \to B$ is a \emph{quasi-isogeny} if there is an  integer $n$ such that $n\varphi$ is an isogeny. We say that a morphism of $S$-abelian schemes $\varphi: A \to B$ is a \emph{quasi-$p$-isogeny} if there exists some $c \in \mathbb{Z}_{(p)}^{\times}$such that $c \varphi$ is a isogeny. A quasi-isogeny $\varphi$ is \emph{prime-to-$p$} if 
		there exist  two integers $n,n'$ both coprime to $p$ such that $n\varphi$ and $n' \varphi^{-1}$ are 
		both isogenies. 
		
		We denote by $A^{\vee}$ the \emph{dual abelian scheme} of $A$ over $S$.
		A \emph{quasi-polarization} of $A$ is a quasi-isogeny $\lambda:A \to A^\vee$ such that $n\lambda$ is a polarization of $A$ for some $n \in \dZ.$
		A quasi-polarization $\lambda : A \to A^\vee$   is called \emph{$p$-principal} if $\lambda$ is a prime-to-$p$ quasi-isogeny.
	\end{definition}

	\begin{notation}
		Let $A$ be an abelian variety over a scheme $S$. We denote by $\mathrm{H}_{1}^{ \r{dR} }(A / S)$ (resp.  $\Lie_{A / S}$, resp. $\omega_{A / S}$ )  the \emph{relative de Rham homology} (resp. \emph{Lie algebra}, resp. \emph{dual Lie algebra}) of $A / S.$
		They are all locally free $\mathcal{O}_{S}$-modules of finite rank. We have \emph{Hodge exact sequence}
		\begin{equation}\label{Hodge}
			0 \to \omega_{A^{\vee} / S} \to \mathrm{H}_{1}^{\mathrm{dR}}(A / S) \to \mathrm{Lie}_{A / S} \to 0.
		\end{equation}
		When the base $S$ is clear from the context, we sometimes suppress it from the notation.
	\end{notation}
	
	\begin{definition}
		Let $S \in \sf{Sch}_{/ \dZ_{(p)}}.$
		\begin{enumerate}
			\item
			An \emph{$O_F$-abelian scheme} over $S$ is a pair $(A, i_A)$ in which $A$ is an abelian scheme over $S$ and $i_A: O_F \to \operatorname{End}_{S}(A) \otimes  \dZ_{(p)}$ is a ring homomorphism of algebras. 
			\item
			An \emph{unitary $O_F$-abelian scheme} over $S$ is a triple $(A, i_A, \lambda_A)$ in which $(A, i_A)$ is an $O_F$-abelian scheme over $S$, and $\lambda_A: A \to A^{\vee}$ is a quasi-polarization such that $i_A(a^{\fc})^{\vee} \circ \lambda_A=\lambda_A \circ i(a)$ for every $a \in O_F.$ 
			\item
			For two $O_F$-abelian schemes $(A, i_A)$ and $(A^{\prime}, i_A^{\prime})$ over $S$, a (quasi-)homomorphism from $(A, i_A)$ to $(A^{\prime}, i_A^{\prime})$ is a (quasi-)homomorphism $\varphi: A \to A^{\prime}$ such that 
			$\varphi \circ i_A(a)=i_A^{\prime}(a) \circ \varphi$ for every $a \in O_F$. We will usually refer to such $\varphi$ as an $O_F$-linear (quasi-)homomorphism.
		\end{enumerate}
		Moreover, we will usually suppress the notion $i_A$ if the argument is insensitive to it.
	\end{definition}
	\begin{definition}[Signature type]
		Let $(A, i_A)$ be an $O_F$-abelian scheme of dimension 3 over a scheme $S \in \sf{Sch}_{/ O_F\otimes \dP}.$ Let $r,s$ be two nonnegative integers with $r+s=3.$
		We say that
		$(A, i_A)$ has signature type $(r, s)$  if for every $a \in O_F,$
		the characteristic polynomial of $i_A(a)$ on $\Lie_{A/S}$ is
		given by 
		$$
		(T-\tau_0(a))^{r}(T-\tau_1(a))^{s}  \in \cO_S[T].
		$$
	\end{definition}
	
	\begin{remark}
		Let $A$ be an $O_F$-abelian scheme of dimension 3 of signature type
		$(r,s)$ over a scheme 
		$S \in \sf{Sch}_{ / k}$. 
		Consider the decomposition 
		\[
		\xymatrix@R=0pt{
			O_F \otimes_{\dZ } k  \ar[r]^\simeq &  k \times k 
			\\
			a \otimes x \ar@{|->}[r] &
			( \ol{\tau_{0}(a) } x,  \ol{\tau_{1}(a)} x )
		}
		\]
		where the bar denotes the mod $p$ quotient map.
		Then for any $O_F\otimes k$-module $N$ we have a canonical  decomposition 
		\begin{equation}
			\label{eq:dec}
			N = N_0 \oplus N_1
		\end{equation}
		where $a\in O_F$ acts on $N_i$ through $\tau_i.$
		Then \eqref{Hodge} induces two short exact sequences
		\[ 
		0 \to \omega_{A^{\vee} / S, i} \to \mathrm{H}_{1}^{\mathrm{dR}}(A / S)_i \to \mathrm{Lie}_{A / S, i} \to 0, \, i=0,1
		\]
		of locally free $\mathcal{O}_{S}$-modules of ranks $s, 3,r$ and $r,3,s.$
	\end{remark}
	\begin{notation}\label{n:pair}
		Let $(A, \lambda_A)$ be a unitary $O_F$-abelian scheme of signature type $(r,s)$ over a scheme $S \in \sf{Sch}_{/ O_{F,(p)}}.$ We denote
		\[
		\langle \, , \, \rangle_{\lambda_A,i}: \mathrm{H}_{1}^{\mathrm{dR}}(A / S)_i \times \mathrm{H}_{1}^{\mathrm{dR}}(A / S)_{i+1}\to \mathcal{O}_{S}, ~ i=0,1
		\]
		the $\mathcal{O}_{S}$-bilinear alternating  pairing induced by the quasi-polarization $\lambda_A$, which is perfect if and only if 
		$\lambda_A$ is $p$-principal.
		Moreover, for an 
		$\mathcal{O}_{S}$-submodule $\mathcal{F} \subseteq \mathrm{H}_{1}^{\mathrm{dR}}(A / S)_{i}$, 
		we denote by $\mathcal{F}^{\perp} \subseteq \mathrm{H}_{1}^{\mathrm{dR}}(A / S)_{i+1}$  (where $i \in \dZ/2\dZ$)
		its (right) orthogonal complement under the above pairing, if $\lambda$ is clear from the context.  		 
			\end{notation}
	\begin{notation}\label{n:Frob}
		In  notation  \ref{n:pair}, put
		$$
		A^{(p)}:=A \times_{S, \sigma} S,
		$$
		where $\sigma$ is the absolute Frobenius morphism of $S$. Then we have
		\begin{enumerate}
			\item a canonical isomorphism $\mathrm{H}_{1}^{\mathrm{dR}}(A^{(p)} / S) \simeq \sigma^{*} \mathrm{H}_{1}^{\mathrm{dR}}(A / S)$ of $\mathcal{O}_{S}$-modules;
			\item  the Frobenius homomorphism $\operatorname{Fr}_{A}: A \to A^{(p)}$ which induces the \emph{Verschiebung map}
			$$
			{\tV}_{A}:=(\operatorname{Fr}_{A})_{*}: \mathrm{H}_{1}^{\mathrm{dR}}(A / S) \to \mathrm{H}_{1}^{\mathrm{dR}} (A^{(p)} / S )
			$$
			of $\mathcal{O}_{S}$-modules;
			\item the Verschiebung homomorphism $\operatorname{Ver}_{A}: A^{(p)} \to A$ which induces the \emph{Frobenius map}
			$$
			\mathtt{F}_{A}:=(\operatorname{Ver}_{A})_{*}: \mathrm{H}_{1}^{\mathrm{dR}}(A^{(p)} / S) \to \mathrm{H}_{1}^{\mathrm{dR}}(A / S)
			$$
			of $\mathcal{O}_{S}$-modules.
		\end{enumerate}
		In what follows, we will suppress $A$ in the notations $\tF_{A}$ and $ \tV_{A}$ if the reference to $A$ is clear.
	\end{notation}
	
	In Notation \ref{n:Frob}, we have $\ker  \tF=\operatorname{im} \tV=\omega_{A^{(p)} / S}$ and $\operatorname{ker} \tV =\operatorname{im} \tF .$  
	\begin{notation}
		Suppose that $S=\operatorname{Spec} \kappa$ for a perfect field $\kappa$ of characteristic $p$ containing $\dF_{p^2}$. Then we have a canonical isomorphism $\mathrm{H}_{1}^{\mathrm{dR}} (A^{(p)} / \kappa ) \simeq \mathrm{H}_{1}^{\mathrm{dR}}(A / \kappa) \otimes_{\kappa, \sigma} \kappa .$
		\begin{enumerate}
			\item
			By abuse of notation, we have
			\begin{itemize}
				\item
				the $(\kappa, \sigma)$-linear Frobenius map $ \tF: \mathrm{H}_{1}^{\mathrm{dR}}(A / \kappa) \to \mathrm{H}_{1}^{\mathrm{dR}}(A / \kappa)$ and
				\item
				the $(\kappa, \sigma^{-1})$-linear Verschiebung map $ \tV: \mathrm{H}_{1}^{\mathrm{dR}}(A / \kappa) \to \mathrm{H}_{1}^{\mathrm{dR}}(A / \kappa)$.
			\end{itemize}
			\item
			We have the \emph{covariant} Dieudonn\'e module $\mathcal{D}(A)$ associated to the $p$-divisible group $A[p^{\infty}]$, which is a free $W(\kappa)$-module, such that $\mathcal{D}(A) / p \mathcal{D}(A)$ is canonically isomorphic to $\mathrm{H}_{1}^{\mathrm{dR}}(A / \kappa)$. Again by abuse of notation, we have
			\begin{itemize}
				\item
				the $(W(\kappa), \sigma)$-linear Frobenius map $ \tF: \mathcal{D}(A) \to \mathcal{D}(A)$ lifting the one above, and
				\item
				the $(W(\kappa), \sigma^{-1})$-linear Verschiebung map 
				$\tV:\mathcal{D}(A) \to \mathcal{D}(A)$ lifting the one above, respectively, satisfying $\tF \circ \tV= \tV \circ \tF=p$.
			\end{itemize}
		\end{enumerate}
	\end{notation}
	\begin{remark}\label{n:DFrob}
		Similar to  \ref{eq:dec} we also have a decomposition
		\[
		\cD(A)=\cD(A)_0 \oplus \cD(A)_1.
		\]
	\end{remark}
	Let $(A, \lambda_A)$ be a unitary $O_{F}$-abelian scheme of signature type $(r,s)$ over Spec $\kappa.$ 
	
	We have a pairing
	$$
	\langle\, ,\, \rangle_{\lambda_A}: \mathcal{D}(A) \times \mathcal{D}(A)  \to W(\kappa)
	$$
	lifting the one in Notation \ref{n:pair}. 
	We denote by $\mathcal{D}(A)^{\perp_A}$ the $W(\kappa)$-dual of $\mathcal{D}(A)$
	\[
	\cD(A)^{\perp_A}:= \{x \in  \mathcal{D}(A) [1/p]  \mid \langle x, y \rangle_{\lambda_A}
	\in W(\kappa), \forall y \in \cD(A).\}
	\]
	as a submodule of $\mathcal{D}(A)[1/p].$  
	We have the following properties:
	\begin{enumerate}
		\item The direct summands in (\ref{eq:dec}) are totally isotropic 
		with respect to $\langle , \rangle_{\lambda_A}.$
		\item  we have 
		\[
		\langle \tF x, y  \rangle_{\lambda_A}=\langle  x, \tV y  \rangle_{\lambda_A}^\sigma, \,   \langle i_A(a) x, y  \rangle_{\lambda_A}=\langle  x, i_A(a^\fc) y  \rangle_{\lambda_A}
		\]
		for $a\in O_F.$
	\end{enumerate}
	
	Next we review some facts from the Serre-Tate theory \cite{Kat81} and the Grothendieck-Messing theory \cite{Mes72}, tailored to our application.   Consider a closed immersion $S \hookrightarrow \hat{S}$ in 
	$\mathtt{Sch}_{/ \mathbb{Z}_{{p}^{2} } }$ 
	on which $p$ is locally nilpotent, with its ideal sheaf equipped with a PD structure, and a unitary
	$O_{F}$-abelian scheme $(A, \lambda)$ of signature type $(r,s)$ over $S$. We let $\mathrm{H}_{1}^{\text {cris }}(A / \hat{S})$ be the evaluation of the first relative crystalline homology of $A / S$ at the PD-thickening $S \hookrightarrow \hat{S}$, which is a locally free $\mathcal{O}_{\hat{S}} \otimes O_{F}$-module. The polarization $\lambda$ induces a pairing
	\begin{equation}\label{paircris}
		\langle,\rangle_{\lambda, i}^{\text {cris }}: \mathrm{H}_{1}^{\text {cris }}(A / \hat{S})_{i} \times \mathrm{H}_{1}^{\text {cris }}(A / \hat{S})_{i^{\fc}} \to \mathcal{O}_{\hat{S}}
		, ~  i=0,1.
	\end{equation}
	We define two groupoids
	\begin{itemize}
		\item
		$\operatorname{Def}(S, \hat{S} ; A, \lambda)$, whose objects are unitary $O_{F}$-abelian schemes $(\hat{A}, \hat{\lambda})$ of signature type $(r,s)$ over $\hat{S}$ that lift $(A, \lambda)$;
		\item
		$\operatorname{Def}^{\prime}(S, \hat{S} ; A, \lambda)$, whose objects are pairs $(\hat{\omega}_{0}, \hat{\omega}_{1})$ where  
		$  \hat{\omega}_{i} \subseteq \mathrm{H}_{1}^{\text {cris }}(A / \hat{S})_{i}$ is a subbundle that lifts $\omega_{A^{\vee} / S, i} \subseteq \mathrm{H}_{1}^{\mathrm{dR}}(A / S)_{i}$ for $i=0,1$, such that $\langle\hat{\omega}_{0}, \hat{\omega}_{ 1}\rangle_{\lambda, 1}^{\text {cris }}=0$.
	\end{itemize}
	\begin{proposition}\label{thm:STGM}
		The functor from $\operatorname{Def}(S, \hat{S} ; A, \lambda)$ to $\operatorname{Def}^{\prime}(S, \hat{S} ; A, \lambda)$ sending $(\hat{A}, \hat{\lambda})$ to $(\omega_{\hat{A}^{\vee} , 0}, \omega_{\hat{A}^{\vee}, 1 })$ is a natural equivalence.
	\end{proposition}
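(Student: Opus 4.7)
The plan is to reduce to the classical Grothendieck--Messing theorem and then track how the additional structures (the $O_F$-action, the polarization, and the signature condition) translate through the resulting equivalence.

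First, I would apply the classical Grothendieck--Messing theorem to the PD thickening $S \hookrightarrow \hat{S}$ (with $p$ locally nilpotent): this yields an equivalence between lifts of $A$ as an abelian scheme to $\hat{S}$ and lifts of the Hodge sub-bundle $\omega_{A^\vee/S} \subset H_{1}^{\mathrm{dR}}(A/S)$ to a sub-bundle $\hat{\omega} \subset H_{1}^{\mathrm{cris}}(A/\hat{S})$. The key general fact I would then invoke is the rigidity of quasi-isogenies along such nilpotent PD thickenings: any quasi-endomorphism of $A$ lifts uniquely to a quasi-endomorphism of the associated $p$-divisible group over $\hat{S}$, and the obstruction for this lift to be an honest endomorphism, respectively a quasi-polarization, is precisely the compatibility of the lifted Hodge filtration with the corresponding crystalline datum.

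Applied to the $O_F$-action, uniqueness of the quasi-isogeny lift shows that $\hat{\omega}$ underlies a lift of $(A, i_A)$ if and only if $\hat{\omega}$ is stable under the canonical $O_F$-action on $H_{1}^{\mathrm{cris}}(A/\hat{S})$, equivalently $\hat{\omega} = \hat{\omega}_0 \oplus \hat{\omega}_1$ with $\hat{\omega}_i \subseteq H_{1}^{\mathrm{cris}}(A/\hat{S})_i$. Applied to $\lambda_A$, the unique quasi-isogeny lift $\hat{\lambda}\colon \hat{A} \to \hat{A}'$ produces some lift $\hat{A}'$ of $A^\vee$, and the identification $\hat{A}' \simeq \hat{A}^\vee$ (so that $\hat{\lambda}$ is genuinely a quasi-polarization of $\hat{A}$) is equivalent to $\hat{\omega}$ being isotropic with respect to the crystalline pairing induced by $\lambda_A$. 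Using the Hermitian property $\langle i_A(a)x, y\rangle = \langle x, i_A(a^{\fc})y\rangle$, each summand $H_{1}^{\mathrm{cris}}(A/\hat{S})_i$ is itself totally isotropic, so the full isotropy of $\hat{\omega}$ reduces to the single cross condition $\langle \hat{\omega}_0, \hat{\omega}_1 \rangle_{\lambda, 1}^{\mathrm{cris}} = 0$, which is exactly the condition defining $\operatorname{Def}'(S, \hat{S}; A, \lambda)$.

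The signature condition $(r,s)$ is then automatic: $\hat{\omega}_i$ has the same $\mathcal{O}_{\hat{S}}$-rank as $\omega_{A^\vee/S, i}$ (being a sub-bundle lifting it), and the characteristic polynomial of $i_A(a)$ on $\mathrm{Lie}_{\hat{A}^\vee/\hat{S}, i} = H_{1}^{\mathrm{cris}}(A/\hat{S})_i / \hat{\omega}_i$ is determined by its reduction modulo the PD ideal, so the signature type is preserved. The quasi-inverse functor sends a pair $(\hat{\omega}_0, \hat{\omega}_1)$ satisfying the isotropy condition to the abelian scheme produced by Grothendieck--Messing from the filtration $\hat{\omega}_0 \oplus \hat{\omega}_1$, equipped with the uniquely lifted $O_F$-action and the quasi-polarization obtained from isotropy. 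The step I expect to be most delicate is the passage from quasi-isogeny lifts to genuine polarization lifts: one must check that the unique quasi-isogeny lift $\hat{\lambda}$ obtained by rigidity is compatible with the crystalline duality between $H_{1}^{\mathrm{cris}}(\hat{A}/\hat{S})$ and $H_{1}^{\mathrm{cris}}(\hat{A}^\vee/\hat{S})$, so that isotropy really does characterize when $\hat{A}' \cong \hat{A}^\vee$ and $\hat{\lambda}$ is a quasi-polarization of $\hat{A}$ compatible with $i_{\hat{A}}$.
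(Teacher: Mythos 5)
The paper states this proposition without proof, deferring to the Serre--Tate and Grothendieck--Messing references; your proposal fills in the standard argument those references are meant to cover, and it is correct in substance. One small imprecision in the polarization step is worth tidying: once you have lifted $A$ to $\hat{A}$ via the chosen Hodge sub-bundle, the dual $\hat{A}^\vee$ is already determined as a lift of $A^\vee$, so there is no auxiliary $\hat{A}'$ to produce and then identify with $\hat{A}^\vee$. Drinfeld rigidity gives directly a unique quasi-isogeny lift $\hat{\lambda}\colon \hat{A}\to\hat{A}^\vee$ of $\lambda_A$; the content is that $n\hat{\lambda}$ is an honest homomorphism for some $n$ prime to $p$ precisely when $\hat{\lambda}_*$ carries $\omega_{\hat{A}^\vee}$ into $\omega_{\hat{A}^{\vee\vee}}=\omega_{\hat{A}}$, which under the crystalline duality pairing is exactly the isotropy condition $\langle\hat{\omega},\hat{\omega}\rangle_{\lambda}^{\mathrm{cris}}=0$, and this in turn reduces to the single cross condition $\langle\hat{\omega}_0,\hat{\omega}_1\rangle_{\lambda,1}^{\mathrm{cris}}=0$ by the Hermitian property, as you say. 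The rest of your argument --- the reduction of $O_F$-equivariance to stability of the filtration (equivalently, the splitting $\hat{\omega}=\hat{\omega}_0\oplus\hat{\omega}_1$), and the observation that the signature type is automatically preserved because ranks are and the $O_F$-action on each graded piece already factors through $\tau_i$ --- is exactly what is needed.
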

	\subsection{Moduli problems}
	Fix an open compact subgroup $K^p \subset G(\dA^{\infty,p}).$
	
	\begin{definition}\label{def:S}
	Let ${S}$ be the moduli problem that  associates with every $O_{F,(p)}$-algebra  $R$ the set 
		${S}(R)$
		of equivalence classes of triples
		$  (A, \lambda_A,  \eta_A ),$ where
		\begin{itemize}
			\item
			$(A,\lambda_A)$ is a unitary $O_F$-abelian scheme of signature type (1,2) over $R$ such that $\lambda_A$ is $p$-principal;
			\item
			$\eta_A$ is a $ K^{p}$-level structure, that is, for a chosen geometric point $s$ on every connected component of $\Spec R$, a $\pi_{1}(\Spec R,s)$-invariant $K^p$-orbit of isomorphisms
			\[\eta_A:   V \otimes_{\mathbb{Q}}  \mathbb{A}^{\infty, p} 
			\stackrel{\sim}{\longrightarrow}
			\mathrm{H}_{1}^{\r{et}}(A, \mathbb{A}^{\infty,p})
			\]
such that the skew hermitian pairing $\langle \_,\_\rangle_{\psi}$ on $V\otimes_{\dQ} \mathbb{A}^{\infty, p}$ corresponds to the $\lambda_A$-Weil pairing on $\mathrm{H}_{1}^{\r{et}}(A, \mathbb{A}^{\infty,p})$ up to scalar. 
		\end{itemize}
		Two triples $(A, \lambda_A, \eta_A)$ and 
		$(A^{\prime}, \lambda_{A^{\prime}},  \eta_{A^{\prime}} )$ are equivalent
		if there is a prime-to-$p$ $O_F$-linear isogeny $\varphi:A \to A'$ such that
		\begin{itemize}
			\item there exists $c \in \dZ_{(p)}^{\times}$such that 
			$\varphi^{\vee} \circ \lambda_{A^{\prime}} \circ \varphi=c \lambda_A;$ and
			\item the $ K^{p}$-orbit of maps 
			$v \mapsto \varphi_{*} \circ \eta_A(v)  $
			for $v \in V \otimes_{\mathbb{Q}} \mathbb{A}^{\infty, p}$ coincides with $\eta_{A'}.$
		\end{itemize}
		Given $g \in  {K}^{p} \backslash  G (\mathbb{A}^{\infty, p} ) /  {K}'^{p  }$ such that $g^{-1}K^pg \subset {K'^p}$, we have a map
		${S}  (  K^p )(R)$ to 
		${S}  (  {K'^p} )(R)$ by changing $\eta_A $ to $\eta_A \circ g.$ 
	\end{definition}

	\begin{definition}\label{def:Stilde}
		Let   $\widetilde{{S}}$ be the moduli problem that  associates with every $O_{F,(p)}$-algebra $R$
		the set 
		$\widetilde{{S}}(R)$
		of equivalence classes of triples
		$(\tilde{A}, \lambda_{\tilde{A}}, \eta_{\tilde{A}}),$ where
		\begin{enumerate}
			\item  
			$(\tilde{A},\lambda_{\tilde{A} })$ is a unitary $O_F$-abelian scheme of signature type (1,2) over $R$ such that
			$\ker\lambda_{\tilde A}[p^\infty] $ is contained in $ \tilde{A}[p]$ of  rank $p^2;$
			\item  $\eta_{\tilde A}$ is a $ K^{p}$-level structure.
		\end{enumerate}
		The equivalence relation and the action of $G(\dA^{\infty,p})$ are defined similarly as in Definition \ref{def:moduliS}.
	\end{definition} 
	\begin{definition}\label{def:S0p}
		The moduli problem ${S_0(p)}$ associates with every $O_{F}\otimes \dZ_{(p)}$-algebra $R$
		the set 
		$ {S_0(p)}(R)$
		of equivalence classes of sextuples
		$(A, \lambda_A , \eta_A,
		\tilde{A}, \lambda_{\tilde{A}}, \eta_{\tilde{A}},
		\alpha)
		$ where
		\begin{enumerate}
			\item $(A, \lambda_A,   \eta_A)$ is an element in ${S} (R).$
			\item $(\tilde{A}, \lambda_{\tilde{A}}, \eta_{\tilde{A}})$
			is an element in ${\tilde{S}} (R).$
			\item $\alpha: A \to \tilde{A}$ is an $O_F$-linear quasi-$p$-isogeny such that 
			\[
			p \lambda_A= \alpha^\vee \circ \lambda_{\tilde{A}} \circ  \alpha.
			\]
			\item
			$\ker{\alpha} \subset A[p]$ is a Raynaud $O_F$-subgroup scheme of rank $p^{2}$, which is isotropic for the $\lambda_A$-Weil pairing
			\[
			e_p: A[p]\times A[p] \to \mu_p.
			\]
			For the definition of Raynaud subgroup, see \cite[1.2.1]{dSG18}.
		\end{enumerate} 
		Two septuplets $(A, \lambda_A , \eta_A,
		\tilde{A}, \lambda_{\tilde{A}}, \eta_{\tilde{A}},
		\alpha)$ and 
		$(A', \lambda_A' ,  \eta_A',
		\tilde{A'}, \lambda_{\tilde{A}'}, \eta_{\tilde{A}'},
		\alpha')$ 
		are equivalent if there are $O_{F}$-linear prime-to-$p$ quasi-isogenies 
		$ \varphi: A \to A^{\prime}$ and 
		$\varphi': \tilde A \to \tilde{A}'$ such that
		\begin{itemize}
			\item there exists $c \in \dZ_{(p)}^{\times}$
			such that 
			$\varphi^{\vee} \circ \lambda_{A^{\prime}} \circ \varphi=c \lambda_A$
			and
			$\varphi^{\vee} \circ \lambda_{\tilde A^{\prime}} \circ \varphi=c \lambda_{\tilde A}.$
			\item the $ K^{p}$-orbit of maps 
			$v \mapsto \varphi_{*} \circ \eta_A(v)  $
			for $v \in V \otimes_{\mathbb{Q}} \mathbb{A}^{\infty, p}$ coincides with $\eta_{A'}.$
			\item the $ K^{p}$-orbit of maps 
			$v \mapsto \varphi'_{*} \circ \eta_{A'}(v)  $
			for $v \in V \otimes_{\mathbb{Q}} \mathbb{ A}^{\infty, p}$ coincides with $\eta_{\tilde{A}'}.$
		\end{itemize}
	\end{definition}

It is well known that, for sufficiently small $K^p$,  the  three moduli problems $S$, $\tilde{S}$ and ${S_0(p)}$  are all  representable by  quasi-projective  schemes over $O_{F,(p)}$,
still denoted by $S,\tilde S,S_0(p)$ by abuse of notation,
 and give integral models of $\Sh(G, K^pK_p)$,  $\Sh(G, K^p\tilde K_p)$ and $\Sh(G, K^p\Iw_p)$ respectively. 
 We have  natural forgetful maps $\pi:{S_0(p)} \to {S}$  sending $(A, \lambda_A,  \eta_A, \tilde{A},  \lambda_{\tilde{A}},   \eta_{\tilde{A}}, \alpha)$ to 
$(A, \lambda_A,  \eta_A) $, and 
$\tilde\pi:{S_0(p)} \to {\tilde{S}}$  sending $(A, \lambda_A, \eta_A, \tilde{A}, \lambda_{\tilde{ A}}, \eta_{\tilde{A}}, \alpha)$ to $(\tilde{A}, \lambda_{\tilde{ A}}, \eta_{\tilde{A}}).$
This gives rise to  the diagram
\[
\xymatrix{
	& {S_0(p)} \ar[dl]_\pi  \ar[dr]^{\tilde{\pi}}& \\
	S  & & {\tilde{S}} }
\]

		\begin{remark}\label{uniform}
			For the convenience of readers, we recall why $S$ is an integral model of $\Sh(G,K^pK_p)$. 
		We shall content ourselves  with describing  a canonical  bijection 
		${S}(\dC)\simeq \Sh(G, K^pK_p)(\dC)$, which determines uniquely an isomorphism $S\otimes_{O_{F,(p)}}F\cong \Sh(G,K^pK_p)$. 
		It suffices to assign to each  point
		\[
		s=( A, \lambda_A,  \eta_A)  \in S(\dC)
		\]
		a point in
		\[
		\Sh(G, K^pK_p) (\mathbb{C})=G(\dQ) 
		\backslash(X \times G(\mathbb{A}^{\infty}) /{K}^{p} {K}_{p})
		\]
		Let $H:=\r{H}_1(A, \dQ)$, which is an $F$-vector space by the action of $O_F$ on $A$. The polarization $\lambda_A$ induces a structure of   skew hermitian space on  $H$. 
		By Hodge theory, the composed map  $$H\otimes_{\dQ}\dR\cong \rH^{\mathrm{dR}}_{1}(A, \dR)\to \rH^{\mathrm{dR}}_{1}(A, \dC)\to \Lie_A$$
		is an isomorphism, which gives
		a complex structure on $H\otimes_{\dQ}\dR$. The signature condition on $A$ ensures an isomorphism of (skew) hermitian spaces $H\otimes_{\dQ}\dR\cong V\otimes_{\dQ}\dR$. Now look at the place $p$. Since $A$ is an abelian variety up to prime-to-$p$ isogeny,  the $\dZ_p$-module $\Lambda_H:= \rH^{\et}_{1}(A, \dZ_p)$  is well-defined  and gives a self-dual lattice in  $H\otimes_{\dQ}\dQ_p\cong \rH^{\et}_1(A, \dQ_p)$. Hence there exists an isomorphism of hermitian spaces $H\otimes_{\dQ}\dQ_p\cong V\otimes_{\dQ}\dQ_p$.
		 In addition to  the prime-to-$p$ level structure $\eta_A$, the Hasse principle implies  that there exists globally an isomorphism of hermitian spaces  $\xi: H\xrightarrow{\sim} V$ over $F$ up to similitude. Fix such a $\xi$.  
		First,  the complex structure on $H\otimes_{\dQ}\dR$ transfers via $\xi$ to  
		a homomorphism of $\dR$-algebras $\dC\to \End_F(V)\otimes_{\dQ}\dR$, which leads to an  element $x\in X=G(\dR)/K_{\infty}$ because of the signature condition. Secondly, post-composing with $\xi$, the level structure $\eta_A$ gives a coset $g^pK^p:=\xi\circ\eta_A\in G(\dA^{\infty,p})/K^p$. At last,   there  exists  a coset $g_pK_p \in G(\mathbb{Q}_{p})/K_p$ such that $\xi(\Lambda_H)= g_p (\Lambda_{0})$ as lattices of $V\otimes_{\dQ}\dQ_p$ for any representative $g_p$ of $g_pK_p$.  
		Note that a different choice of $\xi$ differs by the left action of  an element of $G(\dQ)$. 
		It follows that the class   
		\[[x, g^pK^p, g_pK_p]\in  G(\dQ)\backslash (X\times G(\dA^{\infty,p})/K^p\times G(\dQ_p)/K_p) \]
		does not depend on  the choice of $\xi$, and gives the point of $\Sh(G, K^pK_p)$ corresponding to $s\in {S}(\dC)$.

	\end{remark}
	
		

\subsection{An inner form of $G$}\label{sec:inner}
Let $(W,\psi_W)$ be a hermitian space over $F$ of dimension 3 such that it is isomorphic to $(V\otimes_{\dQ}\dA^{\infty}, \psi)$ as hermitian spaces over $\dA^{\infty}$, and $(W\otimes_{\dQ}\dR, \psi_W)$ has signature $(0,3)$. Such a $(W, \psi_W)$ exists and  is unique up to isomorphism. 
	Let $G'$ be the unitary similitude group over $\dQ$ attached to $(W, \psi_W)$. Then $G'$ is an inner form of $G$  such that  $G'(\dA^{\infty})\cong G(\dA^{\infty})$. In the sequel, we fix such  an isomorphism  so that $K^p$ and $K_p$ are also viewed respectively as subgroups of $G'(\dA^{\infty,p})$ and $G'(\dQ_p)$.  As $G'(\dR)$ is compact modulo center, for an open compact subgroup $K'\subseteq G'(\dA^{\infty})$, we have a finite set
	\[
	\Sh(G',K'):=G'(\dQ)\backslash G'(\dA^{\infty})/K'.
	\] 
	We will give   moduli interpretations for $\Sh(G', K^pK_p)$, $\Sh(G',K^p\tilde K_p)$ and $\Sh(G', K^p\Iw_p)$. 

	\begin{definition}\label{def:moduliS}
		The moduli problem ${T}$ is to associate with every $O_{F,(p)}$-algebra  $R$ the set 
		${T}(R)$
		of equivalence classes of triples
		$  (B, \lambda_B,  \eta_B ),$ where
		\begin{itemize}
			\item
			$(B,\lambda_B)$ is a unitary $O_F$-abelian scheme of signature type (0,3) over $R$ such that $\lambda_B$ is $p$-principal;
			\item
			$\eta_B$ is a $ K^{p}$-level structure, that is, for a chosen geometric point $s$ on every connected component of $\Spec R$,  $\eta_B$ is a $\pi_{1}(\Spec R,s)$-invariant $K^p$-orbit of isomorphisms
			\[\eta_B:   W \otimes_{\mathbb{Q}}  \mathbb{A}^{\infty, p} 
			\stackrel{\sim}{\longrightarrow}
			\mathrm{H}_{1}(B, \mathbb{A}^{\infty,p})
			\]
			of hermitian spaces over $F \otimes_{\mathbb{Q}} \mathbb{A}^{\infty, p}.$
		\end{itemize}
		Two triples $(B, \lambda_B, \eta_B)$ and 
		$(B^{\prime}, \lambda_{B^{\prime}},  \eta_{B^{\prime}} )$ are equivalent
		if there is a prime-to-$p$ $O_F$-linear isogeny $\varphi:B \to B'$ such that
		\begin{itemize}
			\item there exists $c \in \dZ_{(p)}^{\times}$such that 
			$\varphi^{\vee} \circ \lambda_{B^{\prime}} \circ \varphi=c \lambda_B;$ and
			\item the $ K^{p}$-orbit of maps 
			$v \mapsto \varphi_{*} \circ \eta_B(v)  $
			for $v \in W \otimes_{\mathbb{Q}} \mathbb{A}^{\infty, p}$ coincides with $\eta_{B'}.$
		\end{itemize}
		Given $g \in  {K}^{p} \backslash  G' (\mathbb{A}^{\infty, p} ) /  {K}'^{p  }$ such that $g^{-1}K^pg \subset {K'^p},$
		we have a map
		${T}  (  K^p )(U)$ to 
		${T}  (  {K^p}' )(U)$ by changing $\eta_A $ to $\eta_A \circ g.$
	\end{definition} 
	\begin{definition}
		The moduli problem  $\widetilde{{T}}$ is to associate with every $O_{F}\otimes \dZ_{(p)}$-algebra $R$
		the set 
		$\widetilde{{T}}(R)$
		of equivalence classes of triples
		$(\tilde B, \lambda_{\tilde B}, \eta_{\tilde B}),$ where
		\begin{enumerate}
			\item $(\tilde B,  \lambda_{\tilde B})$ is a unitary $O_F$-abelian scheme of signature type (0,3) over $R$ such that 
			$\ker\lambda_{\tilde B}[p^\infty] $ is contained in $ \tilde{B}[p]$ of  rank $p^2;$
			\item  $\eta_{\tilde B}$ is a $ K^{p}$-level structure.
		\end{enumerate}
		The equivalence relation and the action of $G(\dA^{\infty,p})$ are defined similarly as in Definition \ref{def:Stilde}.
	\end{definition}

	\begin{definition}
		The moduli problem ${T_0(p)}$ is to associate with every $O_{F}\otimes \dZ_{(p)}$-algebra $R$
		the set ${T_0(p)}(R)$of equivalence classes of sextuples
		$(B, \lambda_B , \eta_B,
		\tilde B, \lambda_{\tilde B}, \eta_{\tilde B},
		\beta)
		$ where
		\begin{enumerate}
			\item $(B, \lambda_B,   \eta_B) \in {T} (R);$
			\item $(\tilde B, \lambda_{\tilde B}, \eta_{\tilde B}) \in {\tilde{T}} (R);$
			\item $\beta: \tilde B \to   B$ is an isogeny such that 
			\[
			p \lambda_{\tilde B} 
			= \beta^\vee \circ \lambda_B \circ  \beta;
			\]
			\item
			$\ker{\beta}$ is a $O_F$-subgroup scheme of
			$   \tilde B[p]$ of rank $p^{4}$,  which is isotropic for the $\lambda_B$-Weil pairing
			\[
			e_p: B[p]\times B[p] \to \mu_p.
			\]
		\end{enumerate}
		The equivalence relation and the action of $G(\dA^{\infty,p})$ are defined similarly as in Definition \ref{def:S0p}.
	\end{definition}  
	For sufficiently small $K^{p}$, three moduli problems defined above are representable by quasi-projective schemes over $O_{F,(p)}.$ By abuse of notation we still denote them by $T, \tilde{T}, {T_0(p)}.$
   	\begin{proposition}\label{uni1}
		We have the uniformization maps
		\[
		\xymatrix@R=0pt{
			\upsilon: 
			T(\dC) \cong \Sh(G', K^pK_p) 
			\\
			\tilde{\upsilon}: 
			\tilde{T}(\dC) \cong \Sh(G', K^p\tilde{K}_p) 
			\\
			\upsilon_0:
			{T_0(p)}(\dC) \cong \Sh(G', K^p \Iw_p) 
		}
		\]
		which is equivariant under prime-to-$p$ Hecke correspondence. That is, given $g \in K^p \backslash G(\dA^{\infty,p})/ K'^p,$ we have the commutative diagram
		\[
		\xymatrix{
			T(K^p)(\dC) \ar[r]^-\upsilon \ar[d]^g& \Sh(G', K^pK_p)  \ar[d]^g \\
			T(K'^p)(\dC) \ar[r]^-\upsilon& \Sh(G', K'^pK_p) 
		}
		\]
		for $g \in  {K}^{p} \backslash  G (\mathbb{A}^{\infty, p} ) /  {K}'^{p  }$ such that $g^{-1}K^pg \subset {K'^p}.$   
		Here we use $T(K^p)$ to emphasize the dependence of $T$ on $K^p$.
		Similar diagrams hold for $\tilde T$ and $T_0(p).$
		 \begin{proof}
			Similar to Remark \ref{uniform}. It is worthwhile noting the signature type condition forces the image of  $\dC\to \End_F(W)\otimes \dR$ lies in the  center $F\otimes \dR$.
		\end{proof}
	\end{proposition}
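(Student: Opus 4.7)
The plan is to adapt the argument of Remark \ref{uniform} to the compact inner form $G'$. The essential simplification is that for the signature type $(0,3)$, the complex structure on $H_1(B,\dR)$ is determined by the $O_F$-action: the Hodge decomposition $H_1(B,\dC) = \Lie_B \oplus \omega_{B^\vee}$ together with the signature condition force $\Lie_B$ to be the $\tau_1$-isotypic summand and $\omega_{B^\vee}$ the $\tau_0$-isotypic summand under the $O_F\otimes_\dQ\dC$-action. Hence the induced $\dR$-algebra homomorphism $\dC\to \End_F(W)\otimes_\dQ\dR$ factors through the center $F\otimes_\dQ\dR$ and is canonically determined, with no moduli in the archimedean direction; this matches the fact that $\Sh(G',K')$ is a finite discrete set.

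To construct $\upsilon$: given a point $(B,\lambda_B,\eta_B)\in T(\dC)$, put $H:=H_1(B,\dQ)$, equipped with the skew-hermitian structure induced by $\lambda_B$. The level structure $\eta_B$ gives an isomorphism $H\otimes_\dQ\dA^{\infty,p}\cong W\otimes_\dQ\dA^{\infty,p}$ of hermitian spaces at the finite places outside $p$; at $p$ the existence of the self-dual $O_{F_p}$-lattice $H_1^{\et}(B,\dZ_p)$ forces $H\otimes\dQ_p\cong W\otimes\dQ_p$; at infinity both spaces have signature $(0,3)$. By the Hasse principle for hermitian spaces there exists an isomorphism $\xi:H\xrightarrow{\sim} W$ of hermitian spaces over $F$, unique up to the left action of $G'(\dQ)$. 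Composing $\xi$ with $\eta_B$ yields a coset $g^p K^p\in G'(\dA^{\infty,p})/K^p$, and the image of $H_1^{\et}(B,\dZ_p)$ under $\xi$ is of the form $g_p\Lambda_0$ for some $g_p\in G'(\dQ_p)$ well-defined modulo $K_p$. The class $[g^p K^p, g_p K_p]\in G'(\dQ)\backslash G'(\dA^\infty)/K^p K_p$ is then independent of the choice of $\xi$.

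The maps $\tilde\upsilon$ and $\upsilon_0$ are defined identically at the archimedean and prime-to-$p$ places; only the lattice condition at $p$ changes. For $\tilde T$, the hypothesis that $\ker\lambda_{\tilde B}[p^\infty]\subset \tilde B[p]$ has rank $p^2$ translates into $H_1^{\et}(\tilde B,\dZ_p)$ lying in the $G'(\dQ_p)$-orbit of $\Lambda_1$, giving a coset modulo $\tilde K_p$. For $T_0(p)$, the isogeny $\beta$ with the prescribed kernel yields a pair of lattices in $H\otimes\dQ_p$ in the same relative position as $(\Lambda_1,\Lambda_0)$, equivalently a coset modulo $\Iw_p$. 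Hecke equivariance in $g\in G(\dA^{\infty,p})$ is immediate: replacing $\eta_B$ by $\eta_B\circ g$ changes $g^pK^p$ by right multiplication by $g$. The only point requiring care is the quasi-inverse, namely constructing from a class in the Shimura set an $O_F$-abelian variety with the prescribed polarization and lattice data at $p$; this is the standard complex-analytic dictionary $B(\dC)=(W\otimes_\dQ\dR)/\Lambda_H$ with the canonical complex structure discussed above and the Riemann form coming from $\psi_W$.
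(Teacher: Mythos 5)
Your proposal is correct and follows essentially the same route as the paper's one-line proof, which simply refers to Remark \ref{uniform} and notes that the signature $(0,3)$ condition forces the Deligne cocharacter to land in the center $F\otimes\dR$, so that there is no archimedean moduli and the target is a discrete double coset set. The only caution is that, as in Remark \ref{uniform}, the isomorphism $\xi:H\xrightarrow{\sim}W$ produced by the Hasse principle should be taken up to similitude (not strictly an isometry), which is what makes the ambiguity precisely the left action of $G'(\dQ)$; with that wording fixed, the argument is complete.
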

	
	\section{The geometry of geometric special fiber}\label{sec:geometry}
	 Let $k$ be  a prefect field.
	    Denote   by $S_k$ or $S\otimes k$ the base change of $S$ to $k$. 
         If $k=\Fpp$ we denote still by $S$ the special fiber $S \otimes \Fpp.$ 
	Same notation holds for other integral models.
	\subsection{The geometry of $S$} 
We recall the Ekedahl-Oort stratification on $S$, which  has been  studied extensively in \cite{Wed01,BW06, VW11}.
	 Given $(A,i_A,\lambda_A, \eta_A) \in S(k).$
	Define two standard Dieudonn\'e modules as "building blocks" of $\cD(A[p]):$
	\begin{definition}\cite[3.2]{BW06},\cite[2.4, 3.1]{VW11}
		\begin{enumerate}
			\item
			Define a superspecial unitary Dieudonn\'e module $\cS$ over $k$ as follows. It is a free $W(k)$-module of rank 2 with a base $\{ g,h \}.$
			Set 
			\[
			\cS_0= W(k)g , \, \cS_1= W(k) h, \, \cS= \cS_0 \oplus \cS_1.
			\]
			$\cS$ is equipped by the natural $O_F\otimes W(k)$ action.
			
			Define an alternating form on $\cS$   by $\langle g,h\rangle=-1.$
			Define a $(W(k),  \sigma)$-linear map 
			$\tF$ on $\cS$ by $\tF g=p h$ and $\tF h= -g.$ 
			Define a $(W(k),  \sigma^{-1})$-linear map 
			$\tV$ by $\tV h=g$ and $\tV g = -ph.$ 
			This makes  $\cS$ is a unitary Dieudonn\'e module of signature $(0,1)$.  Write by $\ol\cS$ its reduction mod $p.$
			\item \label{DieudonneS3}
			For an integer $r \geqslant 1$ define a unitary Dieudonn\'e module $\cB(r)$ over $k$ as follows. It is a free $W(k)$-module of rank $2r$ with a base
			$(e_{1}, \ldots, e_{r}, f_{1}, \ldots, f_{r}).$
			Set                                                                                 
			$$
			\cB(r)_{0}=W(k) e_{1} \oplus \cdots \oplus W(k) e_{r}   , \,   \cB(r)_{1}=W(k) f_{1} \oplus \cdots \oplus W(k) f_{r} ,\,\cB(r)= \cB(r)_0 \oplus \cB(r)_1.
			$$
			The alternating form is defined by
			$$
			\langle e_{i}, f_{j}\rangle=(-1)^{i} \delta_{i j} .
			$$
			Finally, define a $\sigma$-linear map 
			$\tF$ and a 
			$\sigma^{-1}$-linear map 
			$\tV$ by
			$$
			\begin{aligned}
				\tV e_{i}&= p f_{i+1} , & & \text { for } i=1, \ldots, r-1, \\ 
				\tV e_{r} &=f_{1}, & & \\
				\tV f_{i} &=e_{i+1}, & & \text { for } i=1, \ldots, r-1, \\
				\tV f_{n} &=p e_{1}, & & \\
				\tF e_{1} &=(-1)^{r} f_{r}, & & \\
				\tF e_{i} &=pf_{i-1}, & & \text { for } i=2, \ldots, r, \\ 
				\tF f_{1} &=p e_{r}, & & \\
				\tF f_{i} &=e_{i-1}, & & \text { for } i=2, \ldots, r, \\ 
			\end{aligned}
			$$
			This is a unitary Dieudonn\'e module of signature $(1,r-1).$
			Write by $\ol\cB(r)$ its reduction mod $p.$
		\end{enumerate}
	\end{definition}

	\begin{proposition}\label{prop:EO}
		Let $x = (A, i_A, \lambda_A,\eta_A) \in S(k).$ Then $\cD(A[p]) \cong \cD(A)/p $ is isomorphic to
		\[
		\ol{\cB}(r) \oplus \ol{\cS}^ {\oplus{3-r}}
		\]
		for some integer $r$ with $1 \leqslant r \leqslant 3$.
	\end{proposition}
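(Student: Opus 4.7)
The plan is to classify the polarized unitary Dieudonn\'e module $\cM := \cD(A[p]) \cong \cD(A)/p$ up to isomorphism by direct linear algebra, and then match each isomorphism class with one of $\ol\cB(r) \oplus \ol\cS^{\oplus(3-r)}$ for $r \in \{1,2,3\}$. As a $k$-vector space, $\cM$ has dimension $6$ and is $\ZZ/2$-graded as $\cM = \cM_0 \oplus \cM_1$ by the $O_F/p = \FF_{p^2}$-action, each $\cM_i$ of $k$-dimension $3$; it is equipped with $\sigma$-linear $\tF$ and $\sigma^{-1}$-linear $\tV$ which swap the two components and satisfy $\tF\tV = \tV\tF = 0$, together with the perfect pairing $\cM_0 \times \cM_1 \to k$ induced by $\lambda_A$. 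The signature $(1,2)$ condition pins down $\dim_k \im(\tV\colon \cM_1 \to \cM_0) = 2$ and $\dim_k \im(\tV\colon \cM_0 \to \cM_1) = 1$, with analogous ranks for $\tF$ by the relations above, and a dimension count then forces the equalities $\ker\tF|_{\cM_i} = \im\tV|_{\cM_{i+1}}$ and $\ker\tV|_{\cM_i} = \im\tF|_{\cM_{i+1}}$.

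I would then distinguish the three possible isomorphism classes by two invariants: the integer $d := \dim_k(\ker\tF|_{\cM_0} \cap \ker\tV|_{\cM_0}) \in \{1,2\}$, and, when $d = 1$, whether the $\sigma^2$-linear (hence $\FF_{p^2}$-linear) endomorphism $\tF^2|_{\cM_0}\colon \cM_0 \to \cM_0$ is nilpotent. A direct inspection of the three standard modules shows that $r = 1$ corresponds to $d = 2$; $r = 3$ corresponds to $d = 1$ together with $\tF^2|_{\cM_0}$ nilpotent; and $r = 2$ corresponds to $d = 1$ together with $\tF^2|_{\cM_0}$ non-nilpotent. Conversely, in each case I would construct the desired isomorphism by choosing a basis of $\cM_0$ adapted to the pair of kernels $\ker\tF|_{\cM_0}, \ker\tV|_{\cM_0}$ and (in the $r = 2$ case) to the eigenspace of $\tF^2|_{\cM_0}$, extending to a basis of $\cM_1$ via $\tF$ and $\tV$, and finally rescaling so that the pairing matches the normalization of the standard generators of $\cB(r)$ and $\cS$.

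The main obstacle is that the $\sigma$-semilinearity of $\tF$ and $\tV$ obstructs naive Jordan decomposition: at several steps one must solve $\sigma$-twisted scalar equations. These are solvable because $k$ is perfect and contains $\FF_{p^2}$, so the relevant Lang torsors are trivial; for instance in the $r = 2$ case the eigenvalue of $\tF^2|_{\cM_0}$ lies in $\FF_{p^2} \subseteq k$ and the eigenvector can be produced over $k$. A further subtlety is maintaining compatibility with the polarization pairing throughout the basis construction, in particular when distinguishing the $\ol\cS$-summands, which a priori embed into $\cM$ in multiple ways; I would handle this by adjusting the basis step by step within the isotropic flag induced by the pairing, ensuring at each stage that the chosen supersingular lines are orthogonal to the $\ol\cB(r)$-summand.
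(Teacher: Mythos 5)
The paper itself does not prove Proposition~\ref{prop:EO}; it is quoted from \cite{BW06} and \cite{VW11}, where it is an instance of the Ekedahl--Oort classification of polarized mod $p$ Dieudonn\'e modules with unitary structure in terms of Weyl group cosets. Your proposal is thus an independent, hands-on route. The rank bookkeeping coming from the signature $(1,2)$ and the equalities $\ker\tF=\im\tV$, $\ker\tV=\im\tF$ are correctly derived, and the two invariants you isolate do separate the three models: a check on the standard bases gives $d=2$ for $\ol\cB(1)\oplus\ol\cS^{\oplus 2}$; $d=1$ with $\tF^2 e_1=e_1$ (non-nilpotent) for $\ol\cB(2)\oplus\ol\cS$; and $d=1$ with $\tF^2 e_1\in\ker\tF^2$ (nilpotent) for $\ol\cB(3)$.

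What is missing, however, is the heart of the argument: proving that every polarized unitary mod $p$ Dieudonn\'e module of signature $(1,2)$ carrying a given pair of invariants is isomorphic to the corresponding model compatibly with the perfect alternating pairing. Your sketch gestures at a step-by-step basis construction and a final rescaling of the pairing, but neither is carried out, and both conceal real difficulties. In the $r=2$ case, $\tF^2|_{\cM_0}$ is $\sigma^2$-semilinear over $k$ (not $k$-linear, and not ``$\FF_{p^2}$-linear'' unless $k=\FF_{p^2}$), so its nonzero ``eigenvalue'' is defined only modulo $(k^\times)^{p^2-1}$; normalizing it to $1$, as in $\ol\cB(2)$, requires solving a twisted Kummer-type equation, and so does fixing the pairing normalization on each summand. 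The justification that these are automatically solvable ``because $k$ is perfect and contains $\FF_{p^2}$, so the relevant Lang torsors are trivial'' is incorrect: Lang's theorem applies over finite fields, not over arbitrary perfect fields (already $x^{p-1}=a$ has no solution in $\FF_{p^2}$ for generic $a$). Over $\bar k$ the normalizations do go through, but passing from $\bar k$ to a general perfect $k$ needs a separate input, typically the vanishing of $H^1$ of the (connected solvable) automorphism group scheme of the relevant BT$_1$ with $\GU$-structure. Without either the explicit construction over $\bar k$ plus that descent step, or a direct verification that the polarization constraints force the normalizations to be solvable over $k$, the uniqueness claim remains a genuine gap.
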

	The \emph{Ekedahl-Oort stratification} in our case is given by
	\[
	S= S_1 \bigsqcup S_2 \bigsqcup S_3,
	\] 
where each $S_i$ is a reduced locally closed subscheme, and  a geometric point $(A, i_A, \lambda_A, \eta_A ) \in S(k)$ lies in $S_i(k)$ if and only if $$	\rH_1^{\rm{dR}}(A/k)_0 \cong \ol\cB (i) \oplus \ol\cS ^{\oplus 3-i}.$$
All $S_i$ are equidimensional \cite[Section 6]{Wed01}, and we have $\dim S_{2}=2, \dim S_{1}=0$ and $\dim S_3=1.$

	The open stratum $S_{2}$ is usually called the \emph{$\mu$-ordinary} locus, and  denoted by $S_\mu$. 
	Its complement  $S_{\r{ss} } := S_1 \cup S_3=S\backslash S_2$ is  the   \emph{supersingular} locus, i.e., the associated $F$-isocrystal $(\cD(A)[1/p],\tF)$ of $A$ has Newton slope $1/2.$ 
	Furthermore, the stratum $S_1$ is exactly the locus where $\tF \cD(A)=\tV \cD(A)$ holds. It is called  the  \emph{superspecial} locus  and denoted by $S_{\r{sp}}.$ 
	The stratum $S_3$ is called \emph{general supersingular} locus, denoted by $S_{\r{gss}}.$
	We will study the irreducible components of supersingular locus $S_{\r{ss }} .$ 
	
	\subsection{Unitary Deligne-Lusztig variety}
	Let $\kappa$ be a field containing $\dF_{p^2}$ and  denote by $\ol\kappa$ one of its algebraic closure.
	Recall   $\sigma:S \to S$ denotes the absolute $p$-power Frobenius morphism for schemes $S$ in characteristic $p$.
	\begin{definition}\label{def:vectorspace}
		Consider a pair $(\mathscr{V},\{\,,\,\})$ in which $\mathscr{V}$ is a  $\kappa$-linear space of dimension 3, and $\{\,,\,\}: \mathscr{V} \times \mathscr{V} \to \kappa$, is a  non-degenerate pairing that is $\kappa$-linear in the first variable and $(\kappa,\sigma)$-linear in the second variable. For every $\kappa$-scheme $S$, put $\mathscr{V}_{S}:=\mathscr{V} \otimes_{\kappa} \mathcal{O}_{S}$. Then there is a unique pairing $\{ \, ,\,\}_{S}: \mathscr{V}_{S} \times \mathscr{V}_{S} \to \mathcal{O}_{S}$ extending $\{ \, ,\, \}$ that is 
		$ \mathcal{O}_{S} $-linear in the first variable and $(\mathcal{O}_{S}, \sigma)$-linear in the second variable. For a subbundle $H \subseteq \mathscr{V}_{S}$, we denote by $H^\perp \subseteq \mathscr{V}_{S}$ its orthogonal complement under $\{ \,, \,\}_{S}$ defined by  
		\[
		H^\perp= \{ x \in \sV_S \mid \{x, H\}_S =0 \}.
		\]   
		When the pairing is induced by a (quasi-)polarization $\lambda_A$ of an abelian variety $A,$ we write $\perp_{\bar A}$
		instead of $\perp$ to specify.
	\end{definition}

	\begin{definition}\label{def:admis}
		We say that a pair $(\mathscr{V},\{ \, , \, \} )$ is \emph{admissible} if there exists an $\mathbb{F}_{p^{2}}$-linear subspace $\mathscr{V}_{0} \subseteq \mathscr{V}_{\bar{\kappa}}$ such that the induced map $\mathscr{V}_{0} \otimes_{\mathbb{F}_{p^{2}}} \bar{\kappa} \to \mathscr{V}_{\bar{\kappa}}$ is an isomorphism, and $\{x, y\}=-\{y, x\}^{\sigma}=\{x, y\}^{\sigma^2}$ for every $x, y \in \mathscr{V}_{0}$.
	\end{definition}
	
	\begin{definition}
		Let  $ \mathrm{DL}(\mathscr{V},\{ \, , \, \})$ be the moduli problem   associating with every $\kappa$-algebra $R$
		the set $ \operatorname{DL}(\mathscr{V},\{ \, , \, \})(R)$ of   subbundles $H$ of $\mathscr{V}_{R}$ of rank $2$ such that $H^{\perp} \subseteq H .$ We call $\mathrm{DL}(\mathscr{V},\{ \, , \, \}, h)$ the \emph{(unitary) Deligne-Lusztig variety} attached to $(\mathscr{V},\{ \, , \,\})$ of rank $2$. 
	\end{definition}
	\begin{proposition}\label{prop:admissible}
		Consider an admissible pair $(\mathscr{V},\{ \,  ,  \, \}).$ 
		Then $\mathrm{DL}(\mathscr{V},\{ \, , \, \})$ is represented by a projective smooth scheme over $\kappa$ of dimension $1$
		with a canonical isomorphism for its tangent sheaf
		\[
		\mathcal{T}_{\mathrm{DL}(\mathscr{V},\{,\}) / \kappa} \simeq  {\cH o m}(\mathcal{H} / \mathcal{H}^{\perp}, \mathscr{V}_{\mathrm{DL}(\mathscr{V},\{,\})} / \mathcal{H})
		\]
		where $\mathcal{H} \subseteq \mathscr{V}_{\mathrm{DL}(\mathscr{V},\{,\})}$ is the universal subbundle.
	Moreover, 
		$  \mathrm{DL}(\mathscr{V},\{,\})\otimes_{\kappa}{\bar \kappa} $
		is isomorphic to the Fermat curve $\sC \subset \dP^2_{\bar \kappa}:$
		\[
		\sC: \{(x:y:z  )\in \dP^2_{\bar \kappa } | x^{p+1}+y^{p+1}+z^{p+1}=0 \}.
		\]
		\begin{proof}
		For the first part, see \cite[Proposition A.1.3]{LTX+22}.
				For the second part,
				by admissibility there exists an
				$\Fpp$-linear space $\sV_0$ such that 
				$\sV_0\otimes\ol\kappa\to\sV_{\ol\kappa}$ is an isomorphism.
		Fix an element $\delta\in \dF_{p^2}^\times $ such that 
		$\delta^\sigma=-\delta.$
		Then we can find a basis $\{e_1,e_2,e_3\}$ of $\sV_0$
		which can be regarded as a basis of $\sV_{\ol\kappa}$
		such that $\{e_i,e_j\}=\delta \delta_{ij}.$
 		Take a rank 2 $\ol\kappa$-subspace $H$ of $\sV_{\ol\kappa}.$
		If $e_3\not\in H$, we can assume 
		$H=\{z e_1+xe_3,ze_2+ye_3\}$ 
		where $x,y,z\in \ol\kappa$ and $z\neq0.$
		Then $H^\perp=\{-x^p e_1-y^p e_2+z^p e_3\}.$
		The condition $H^\perp\subset H$ is equivalent to 
		 $H^\perp\cap H\neq\{0\},$ i.e., 
	\[
\left|\begin{array}{ccc}
z & 0 & x \\
0 & z & y \\
-x^p & -y^p & z^p
\end{array}\right|=z(x^{p+1}+y^{p+1}+z^{p+1})=0.
\]	 
Thus $x^{p+1}+y^{p+1}+z^{p+1}=0$. It is easy to see the map
 $\{z e_1+xe_3,ze_2+ye_3\}\mapsto (x:y:z)$
extends to an isomorphism 
$  \mathrm{DL}(\mathscr{V},\{,\})\otimes_{\kappa}{\bar \kappa} \cong \sC.$
  		\end{proof}
	\end{proposition}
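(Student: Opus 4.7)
The plan is to realise $\mathrm{DL}(\sV, \{\,,\,\})$ as a closed subscheme of the Grassmannian $\Gr(2, \sV)$, verify smoothness and the tangent sheaf formula by a first-order deformation argument, and then identify the $\bar\kappa$-base change with the Fermat curve $\sC$ by an explicit coordinate computation.

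First I would observe that since $\{\,,\,\}$ is non-degenerate, for any rank-$2$ subbundle $H \subseteq \sV_R$ the orthogonal complement $H^\perp$ is a line subbundle, and the condition $H^\perp \subseteq H$ is equivalent to the vanishing of the natural composition $H^\perp \hookrightarrow \sV_R \twoheadrightarrow \sV_R/H$. Over $\Gr(2, \sV)$ this is a section of a line bundle, cutting out $\mathrm{DL}(\sV, \{\,,\,\})$ as a closed subscheme. Since $\Gr(2, \sV)$ is projective and smooth of dimension $2$ over $\kappa$, the moduli problem is automatically projective of dimension at most one.

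For smoothness and the tangent sheaf formula, I would apply Grassmannian deformation theory. At a point $H \in \Gr(2, \sV)(R)$, a first-order deformation $\tilde H$ over $R[\epsilon]/\epsilon^2$ is given by an element $\varphi \in \Hom(H, \sV_R/H)$. The key subtlety is that the pairing is $(\cO_S, \sigma)$-linear in the second variable, so the formation of $\tilde H^\perp$ involves a Frobenius twist; however on the square-zero ideal $(\epsilon)$ we have $\sigma(\epsilon) = \epsilon^p = 0$ for $p \geq 2$, so modulo $\epsilon^2$ the orthogonal is computed by ordinary transposition of $\varphi$. The constraint $\tilde H^\perp \subseteq \tilde H$ then translates to the condition that $\varphi|_{H^\perp} = 0$, i.e.\ $\varphi$ factors through $H/H^\perp \to \sV_R/H$. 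This produces the identification $\mathcal{T}_{\mathrm{DL}/\kappa} \simeq \cHom(\cH/\cH^\perp, \sV_{\mathrm{DL}}/\cH)$, a line bundle, which confirms smoothness of pure dimension $1$.

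For the identification with the Fermat curve, I would follow the explicit calculation suggested by the statement. Using admissibility, fix $\delta \in \Fpp^\times$ with $\delta^\sigma = -\delta$ and an $\Fpp$-basis $\{e_1, e_2, e_3\}$ of $\sV_0$ such that $\{e_i, e_j\} = \delta\, \delta_{ij}$. On the affine chart of $\mathrm{DL} \otimes_\kappa \bar\kappa$ where $H$ is spanned by vectors of the form $ze_1 + xe_3$ and $ze_2 + ye_3$ with $z \neq 0$, a direct computation gives $H^\perp = \langle -x^p e_1 - y^p e_2 + z^p e_3\rangle$, and the containment $H^\perp \subseteq H$ becomes the $3 \times 3$ determinantal equation factoring as $z(x^{p+1} + y^{p+1} + z^{p+1}) = 0$. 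Since $z \neq 0$ in this chart, this reduces to the Fermat equation; the assignment $H \mapsto (x:y:z)$ extends to a morphism into $\dP^2_{\bar\kappa}$, and repeating on the two other standard affine charts and gluing gives a morphism $\mathrm{DL}(\sV, \{\,,\,\}) \otimes_\kappa \bar\kappa \to \sC$. As both source and target are smooth projective curves and the map is bijective on closed points by the chart analysis, it is an isomorphism.

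The main obstacle I anticipate is the careful handling of the Frobenius twist in the deformation-theoretic smoothness argument: making precise the claim that the $\sigma$-linearity in the second variable contributes trivially to first-order deformations so that the tangent space has the desired clean description. A secondary technical point is checking that the three affine descriptions glue scheme-theoretically (not merely set-theoretically) to a global isomorphism with $\sC$, which ultimately follows from both curves being smooth projective of the same dimension together with the bijectivity on geometric points.
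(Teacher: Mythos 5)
Your reconstruction of the tangent-space computation (the part the paper delegates to \cite[Proposition~A.1.3]{LTX+22}) is on the right track, and the key observation — that $\sigma(\epsilon)=\epsilon^p=0$ on a square-zero ideal, so $\tilde H^\perp$ is forced to be the trivial lift $H^\perp\otimes_R R[\epsilon]$ and the constraint becomes $\varphi|_{H^\perp}=0$ — is exactly the right mechanism. One small slip: a closed subscheme of the $2$-dimensional Grassmannian cut out by a line-bundle section has every component of dimension \emph{at least} one, not ``at most one''; that lower bound combined with the $1$-dimensional tangent space is precisely what forces regularity, so the inequality should go the other way for your smoothness argument to close. The Fermat-curve coordinate computation coincides with the paper's.

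There is, however, a genuine gap at the very end. You assert that a morphism of smooth projective curves over $\bar\kappa$ that is bijective on closed points is an isomorphism. This is false in characteristic $p$: the relative Frobenius $\dP^1_{\bar\kappa}\to\dP^1_{\bar\kappa}$ is a bijective morphism of smooth projective curves of degree $p$, not an isomorphism. Bijectivity does not rule out a purely inseparable factor, which is exactly the phenomenon one has to worry about in this Frobenius-twisted setting. The correct justification is already implicit in your chart computation: on the chart where $H$ projects isomorphically onto $\langle e_1,e_2\rangle$ one has $H=\langle e_1+u e_3,\,e_2+v e_3\rangle$, and the map $H\mapsto (u:v:1)$ is literally the identity in the affine coordinates $(u,v)$, hence an isomorphism onto its image; the same holds on the other two charts. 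So the morphism is a local isomorphism in the Zariski topology, not merely a bijection, and that is what upgrades it to a global isomorphism. Replace the bijectivity argument with this observation and the proof is complete.
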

	\begin{notation}
		Take a point $t= (B, \lambda_B, \eta_B ) \in T(\kappa).$ Then $B[ p^{\infty}]$ is a supersingular $p$-divisible group by the signature condition and the fact that $p$ is inert in $F$. From Notation \ref{n:Frob}, we have the $(\kappa, \sigma)$-linear Frobenius map
		\[
		{\tF}: \mathrm{H}_{1}^{\mathrm{dR}}(B / \kappa)_{i} \to \mathrm{H}_{1}^{\mathrm{dR}}(B / \kappa )_{i+1} , \quad i\in \dZ/2\dZ.
		\]
		which can be lifted to 
		\[
		{\tF}: \cD(B)_i  \to \cD(B)_{i+1}.
		\]
		We define a pairing
		\[
		\{ \, , \, \}_{t}: \mathrm{H}_{1}^{\mathrm{dR}}(B /  \kappa )_{ i} \times \mathrm{H}_{1}^{\mathrm{dR}}(B /  \kappa )_{ i+1 }  \to \kappa
		\]
		by the formula 
		$\{ x, y \}_{t}:=\langle   x ,  \tF y \rangle_{\lambda_B }.$
		This pairing can also be lifted to 
		\[
		\{ \, , \, \}_{t}: \cD(B)_i \times \cD(B)_{i+1}  \to W(\kappa)
		\]
		To ease notation, we put 
		\[
		\mathscr{V}_{t}:=\mathrm{H}_{1}^{\mathrm{dR}}(B / \kappa)_{1}. 
		\]
	\end{notation}
	\begin{lem}
		The pair $(\mathscr{V}_{t },\{ \, , \, \}_{ t })$ is admissible of rank 3. In particular, the Deligne-Lusztig variety $\mathrm{DL}_{t}:=\mathrm{DL} (\mathscr{V}_{t },\{ \, , \, \}_{ t })$ is a geometrically irreducible projective smooth scheme in $\sf{Sch}_{/ \kappa}$ of dimension 1 with a canonical isomorphism for its tangent sheaf
		\[
		\mathcal{T}_{\mathrm{DL}_{ t } / \kappa} \simeq \cH om(\mathcal{H} / \mathcal{H}^{\dashv},(\mathscr{V}_{t})_{\mathrm{DL}_{t} } / \mathcal{H})
		\]
		where $\mathcal{H} \subseteq(\mathscr{V}_{t})_{D_{t}}$ is the universal subbundle.
		\begin{proof}
			It follows from the construction that $\{ \, , \, \}_t$ is 
			$\kappa$-linear in the first variable and
			$(\kappa, \sigma)$-linear in the second variable. Thus by Proposition \ref{prop:admissible} it suffices to show that $(\mathscr{V}_{t},\{,\}_{t})$ is admissible.
			
			Note that we have a canonical isomorphism
			$(\mathscr{V}_{ t})_{\bar{\kappa}}=\mathrm{H}_{1}^{\mathrm{dR}}(B / \kappa)_{i} \otimes_{\kappa} \bar{\kappa} \simeq \mathrm{H}_{1}^{\mathrm{dR}}(B_{\bar{\kappa}} / \bar{\kappa})_i$, 
			and that the $(\bar{\kappa}, \sigma)$-linear Frobenius map $\tF: \mathrm{H}_{1}^{\mathrm{dR}}(B_{\bar{\kappa}} / \bar{\kappa})_i \to \mathrm{H}_{1}^{\mathrm{dR}}(B_{\bar{\kappa}} / \bar{\kappa})_{i+1}$ 
			and the $(\bar{\kappa}, \sigma^{-1})$-linear Verschiebung map $ \tV: \mathrm{H}_{1}^{\mathrm{dR}}(B_{\bar{\kappa}} / \bar{\kappa})_i \to \mathrm{H}_{1}^{\mathrm{dR}}(B_{\bar{\kappa}} / \bar{\kappa})_{i}$ are both bijective. Thus, we obtain a $(\bar{\kappa}, \sigma^{2})$-linear bijective map $\tV^{-1} \tF: \mathrm{H}_{1}^{\mathrm{dR}}(B_{\bar{\kappa}} / \bar{\kappa})_{i} \to \mathrm{H}_{1}^{\mathrm{dR}}(B_{\bar{\kappa}} / \bar{\kappa})_{i}$. 
			Denote by $\mathscr{V}_{0}$ the invariant subspace of 
			$\mathrm{H}_{1}^{\mathrm{dR}}(B_{\bar{\kappa}} / \bar{\kappa})_{i}$
			under  $\tV^{-1} \tF$.  
			Then the canonical map 
			$\mathscr{V}_{0} \otimes_{\mathbb{F}_{p^{2}}} \bar{\kappa} \to$ $\mathrm{H}_{1}^{\mathrm{dR}}(B / \bar{\kappa})_{i}=(\mathscr{V}_{t})_{\bar{\kappa}}$ 
			is an isomorphism. For $x, y \in \mathscr{V}_{0}$, we have
			$$
			\{x, y\}_{t}=\langle x,\tF y\rangle_{\lambda_B}=\langle \tV x,  y\rangle_{\lambda_B }^{\sigma}=\langle \tF x, y\rangle_{\lambda_B }^{\sigma}=-\langle  y, \tF x\rangle_{\lambda_B }^{\sigma}=-\{y, x\}_{t}^{\sigma}
			$$
			Thus, $(\mathscr{V}_{t},\{ \, , \,  \}_{t})$ is admissible. The lemma follows.
		\end{proof}
	\end{lem}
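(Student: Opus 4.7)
The second assertion reduces to the first: once admissibility is established, Proposition \ref{prop:admissible} delivers the projectivity, smoothness, one-dimensionality, the description of the tangent sheaf, and an isomorphism $\mathrm{DL}_t\otimes_\kappa\bar\kappa\simeq \sC$ with a Fermat curve. Since $\sC$ is smooth projective and connected over $\bar\kappa$, geometric irreducibility is automatic. So the entire lemma reduces to producing an $\dF_{p^2}$-form $\sV_0\subset (\sV_t)_{\bar\kappa}$ compatible with the pairing $\{\,,\,\}_t$ in the sense of Definition \ref{def:admis}.

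The natural candidate is the fixed subspace of a $\sigma^2$-semilinear automorphism of $(\sV_t)_{\bar\kappa}$, and the operator $\tV^{-1}\tF$ is the obvious choice provided $\tV$ is invertible on $\sV_t$. This is the first thing I would check: because $p$ is inert in $F$ and $(B,\lambda_B)$ has signature type $(0,3)$, the two pieces of the Hodge filtration are $\omega_{B^\vee/\kappa, 0}=\rH_1^{\dR}(B/\kappa)_0$ (of rank $3$) and $\omega_{B^\vee/\kappa, 1}=0$. Combined with $\ker\tF=\omega_{B^\vee}=\im\tV$ and $\ker\tV=\im\tF$ component-by-component, this forces both maps between $\rH_1^{\dR}(B)_0$ and $\rH_1^{\dR}(B)_1=\sV_t$ induced by $\tF$ and $\tV$ to be bijective. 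Hence $\tV^{-1}\tF\colon (\sV_t)_{\bar\kappa}\to(\sV_t)_{\bar\kappa}$ is a well-defined $\bar\kappa$-$\sigma^2$-semilinear bijection. Standard Galois descent for $\sigma^2$-linear operators (equivalently, the slope-zero case of Dieudonn\'e--Manin) then identifies $\sV_0:=(\sV_t)_{\bar\kappa}^{\tV^{-1}\tF}$ as an $\dF_{p^2}$-form of $(\sV_t)_{\bar\kappa}$.

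It remains to verify the pairing identities for $x,y\in\sV_0$. For such $x,y$ one has $\tF x=\tV x$ and $\tF y=\tV y$. Starting from $\{x,y\}_t=\langle x,\tF y\rangle_{\lambda_B}$, I would substitute $\tF y=\tV y$, apply the adjunction $\langle\tF a,b\rangle_{\lambda_B}=\langle a,\tV b\rangle_{\lambda_B}^\sigma$ of Notation \ref{n:Frob}, and then use the alternating (or hermitian-on-components) property of $\langle\,,\,\rangle_{\lambda_B}$, together with $\tV x=\tF x$ once more, to rewrite the expression as $-\langle y,\tF x\rangle_{\lambda_B}^\sigma=-\{y,x\}_t^\sigma$. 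Iterating the identity $\{x,y\}_t=-\{y,x\}_t^\sigma$ immediately yields $\{x,y\}_t=\{x,y\}_t^{\sigma^2}$, completing the admissibility check.

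The only step requiring any care is the bookkeeping with the two-component decomposition in the pairing computation; everything else is formal once the supersingularity-driven bijectivity of $\tF,\tV\colon \sV_t\to \rH_1^{\dR}(B)_0$ is in hand. Thus the main conceptual input is the slope structure forced by signature $(0,3)$ with $p$ inert, and the rest of the proof is a straightforward verification.
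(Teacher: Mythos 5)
Your proof is correct and follows essentially the same route as the paper: reduce via Proposition~\ref{prop:admissible} to producing an admissible $\dF_{p^2}$-structure, take $\sV_0$ to be the fixed subspace of the $\sigma^2$-semilinear bijection $\tV^{-1}\tF$, and verify the skew-hermitian identity $\{x,y\}_t=-\{y,x\}_t^\sigma$ on $\sV_0$ using the adjunction $\langle\tF a,b\rangle=\langle a,\tV b\rangle^\sigma$ and $\tF=\tV$ on $\sV_0$. The only difference is that you make explicit two facts the paper leaves implicit — that the signature-$(0,3)$ condition with $p$ inert forces $\omega_{B^\vee,0}=\rH_1^{\dR}(B)_0$ and $\omega_{B^\vee,1}=0$, hence $\tF,\tV$ bijective on the graded pieces, and that the $\dF_{p^2}$-form then follows from slope-zero Dieudonn\'e--Manin (i.e.\ Lang's theorem / Galois descent) — both of which are correct.
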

	\subsection{Basic correspondence}
	We define a new moduli problem which gives the normalization of the supersingular locus $S_{\r{ss}}.$
	\begin{definition}
		The moduli problem $ {N}$  associates with every 
		$R\in \mathtt{Sch'}_{/\Fpp}$ 
		the set $ {N}(R)$ of equivalence classes of sextuples
		$(B, \lambda_B , \eta_B,
		A, \lambda_A, \eta_A,
		\gamma)
		$ where
		\begin{enumerate}
			\item $(B, \lambda_B,   \eta_B) \in  { T } (R);$
			\item $(A, \lambda_A, \eta_A) \in  {S} (R);$
			\item $\gamma: A \to   B$ is an $O_F$-linear isogeny such that 
			\[
			p \lambda_{  A} 
			= \gamma^\vee \circ \lambda_{  B} \circ  \gamma;
			\]
		\end{enumerate}
	Note that condition (3) implies that $\ker(\gamma)$ is a subgroup scheme of $A[p]$ stable under $O_F$.
		Two septuplets 
		$(B, \lambda_B , \eta_B,
		A, \lambda_A, \eta_A,
		\gamma)$ and 
		$(B', \lambda_B' ,  \eta_B',
		A', \lambda_{ A' }, \eta_{ A' },
		\gamma')$ 
		are equivalent if there are $O_{F}$-linear prime-to-$p$ quasi-isogenies 
		$ \varphi : B \to B^{\prime}$ and 
		$\psi: A \to A'$ such that
		\begin{itemize}
			\item there exists $c \in \dZ_{(p)}^{\times}$such that 
			$\varphi^{\vee} \circ \lambda_{B'} \circ \varphi=c \lambda_B$
			and 
			$\psi^{\vee} \circ \lambda_{A'} \circ \psi=c \lambda_A.$
			\item the $ K^{p}$-orbit of maps 
			$v \mapsto \varphi_{*} \circ \eta_B(v)  $
			for $v \in V' \otimes_{\mathbb{Q}} \mathbb{A}^{\infty, p}$ coincides with $\eta_{B'}.$
			\item the $ K^{p}$-orbit of maps 
			$v \mapsto \varphi'_{*} \circ \eta_{A}(v)  $
			for $v \in V \otimes_{\mathbb{Q}} \mathbb{ A}^{\infty, p}$ coincides with $\eta_{A'}.$
		\end{itemize}
	\end{definition}
	
	We obtain in an obvious way a correspondence  
	\begin{equation}\label{diag:normal}
		\xymatrix{
			& N \ar[ld]_\theta \ar[rd]^\nu  &  \\
			T    &&  S_{\r{ss} }   
		}
	\end{equation}
	\begin{theorem}\label{thm:TN}
		In  diagram \eqref{diag:normal}, take a point
		\[
		t=( B, \lambda_B, \eta_B  ) \in T( \kappa)
		\]
		where $\kappa$ is a field containing $\mathbb{F}_{p^2}$.  
		Put $N_t:= \theta^{ -1} (t  )$, 
		and denote by 
		$( B, \lambda_B, \eta_B ,  \mathcal{A}, \lambda_\cA, \eta_\cA, \gamma )$
		the universal object over the fiber $ {N}_{t}$.
		\begin{enumerate}
			\item
			The fiber $ {N}_{t}$ is a smooth scheme over $\kappa$, with a canonical isomorphism for its tangent bundle
			\[
			\mathcal{T}_{ {N}_{t} / \Fpp} \simeq 
			(\omega_{\mathcal{A}^{\vee}, 1}, \operatorname{ker} \alpha_{*, 1} / \omega_{\mathcal{A}^{\vee}, 1})
			\]
			\item\label{Nfiber} 
			 The assignment sending $ ( B, \lambda_B, \eta_B, A, \lambda_A, \eta_A ; \gamma ) \in   N_t(R)$  for every $R\in \mathtt{Sch'}_{/\kappa}$ to the subbundle
			\[
			U:=  \delta_{*,0}^{-1}(\omega_{A^\vee / R,0})   \subseteq  \r{H}_1^{\r{dR}}(B/R)_0 \cong \r{H}_1^{\r{dR}}(B/\kappa)_0 \otimes_\kappa \cO_R=\mathscr{V}_{t}\otimes_{\kappa}R.
			\]
			induces an isomorphism 
			\[
			\zeta_t :N_t \cong   \r{DL} ( \sV_t , \{ \, ,\,  \} )
			\]
			where $\delta: B \to A$  is the unique quasi-p-isogeny such that $\gamma \circ \delta=p\id_B$ and
			$
			\delta \circ\gamma=p\id_A.$
			In particular, $N_t $ is isomorphic to the Fermat curve $\sC.$
		\end{enumerate}
		\begin{proof}
			See \cite[Theorem 4.2.5]{LTX+22}.
		\end{proof}

	\end{theorem}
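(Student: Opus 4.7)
The strategy is to reduce the moduli problem defining $N_t$ to a linear algebra problem on the de Rham homology of $B$ via Grothendieck-Messing and Serre-Tate deformation theory, and then to recognize this linear algebra problem as precisely the one defining the Deligne-Lusztig variety $\mathrm{DL}(\mathscr{V}_t, \{\,,\,\})$. Because $B$ has signature $(0,3)$ and $p$ is inert in $F$, the $p$-divisible group $B[p^\infty]$ is supersingular (isoclinic of slope $1/2$), so any $O_F$-linear isogeny $\gamma : A \to B$ automatically forces $A[p^\infty]$ to be supersingular, and there is a dual quasi-isogeny $\delta : B \to A$ with $\gamma \circ \delta = p\,\mathrm{id}_B$ and $\delta \circ \gamma = p\,\mathrm{id}_A$; this $\delta$ induces an isomorphism on rational Dieudonn\'e modules, so the information of $A$ relative to $B$ is encoded in the Hodge filtration alone.

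The core construction is the one in part (\ref{Nfiber}). Given a point $(B,\lambda_B,\eta_B,A,\lambda_A,\eta_A,\gamma) \in N_t(R)$, I would form the subbundle $U := \delta_{*,0}^{-1}(\omega_{A^\vee/R, 0}) \subseteq \mathrm{H}_1^{\mathrm{dR}}(B/R)_0 \cong \mathscr{V}_t \otimes_\kappa \mathcal{O}_R$ and check: (a) $U$ is a subbundle of rank $2$, which follows from the signature $(1,2)$ of $A$ (so $\omega_{A^\vee/R,0}$ has rank $1$) combined with the fact that $\delta_{*,0}$ has a kernel of rank $1$ determined by the $p$-isogeny structure; (b) $U^{\perp} \subset U$ with respect to $\{\,,\,\}_t$. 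The second assertion is the key compatibility: the identity $p\lambda_A = \gamma^{\vee} \circ \lambda_B \circ \gamma$ translates, after passing to de Rham homology and using $\langle \tF x, y\rangle_{\lambda_B} = \{x,y\}_t$ together with the isotropy of $\omega_{A^\vee,0}$ under $\langle \,,\,\rangle_{\lambda_A, 0}$, into the $\{\,,\,\}_t$-isotropy condition forcing $U^{\perp} \subseteq U$. This defines the morphism $\zeta_t : N_t \to \mathrm{DL}(\mathscr{V}_t, \{\,,\,\})$.

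To show $\zeta_t$ is an isomorphism, I would construct an inverse. Given a rank $2$ subbundle $U \subset \mathscr{V}_t \otimes \mathcal{O}_R$ with $U^{\perp} \subset U$, the pair $(\omega_{0}, \omega_{1}) = (\delta_{*,0}(U),\, (\delta_{*,1}(U^{\perp}))^{\perp_A})$ -- viewed inside $\mathrm{H}_1^{\mathrm{cris}}(A/R)$ for the abelian scheme $A$ obtained from $B$ by the unique isogeny whose reduction matches the rational Dieudonn\'e picture -- defines a lift of the Hodge filtration on $\mathrm{H}_1^{\mathrm{dR}}(A_0/\kappa)$. By Proposition \ref{thm:STGM}, this produces a unitary $O_F$-abelian scheme $(A,\lambda_A)$ over $R$ lifting $A_0$, and the isogeny $\gamma$ is recovered from $\delta$ by $\gamma = p\delta^{-1}$ on the level of $p$-divisible groups; the prime-to-$p$ level structure $\eta_A$ is inherited from $\eta_B$ via $\gamma$. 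Grothendieck-Messing applied to the isotropy constraint gives the tangent bundle formula, and Proposition \ref{prop:admissible} identifies $\mathrm{DL}(\mathscr{V}_t,\{\,,\,\}) \otimes \bar\kappa$ with the Fermat curve $\mathcal{C}$.

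The main obstacle is the careful tracking of indices and pairings, in particular verifying that the polarization compatibility $p\lambda_A = \gamma^\vee \lambda_B \gamma$ really does translate into $U^\perp \subset U$ for the Frobenius-twisted pairing $\{\,,\,\}_t$ (and not, say, a perpendicularity with different rank), and checking that the resulting Hodge filtration satisfies both the signature $(1,2)$ condition for $A$ and the required relation with the universal subbundle of the Deligne-Lusztig variety. All of this amounts to a careful bookkeeping exercise in covariant Dieudonn\'e theory, combined with Grothendieck-Messing applied to the PD-thickening $\mathrm{Spec}(\kappa) \hookrightarrow \mathrm{Spec}(R)$.
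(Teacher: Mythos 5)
Your overall strategy---reduce the moduli problem to linear algebra on de Rham homology, recognize the resulting isotropy condition as the one defining $\mathrm{DL}(\sV_t,\{\,,\,\})$, then invert and compare tangent spaces via Grothendieck--Messing---is the one used in \cite[Theorem~4.2.5]{LTX+22}, which is all the paper cites here. However, your rank bookkeeping is wrong and your inverse construction is circular. On ranks: with signature type $(1,2)$, the paper's definition gives $\Lie_{A/R,0}$ of rank $r=1$, hence $\omega_{A^\vee/R,0}$ of rank $3-r=2$, not rank $1$ as you assert. (Your figure is correct for $\omega_{A^\vee/R,1}$, which has rank $r=1$.) In fact the theorem statement itself is internally inconsistent at exactly this point: it places $U\subseteq\rH_1^{\r{dR}}(B/R)_0$ while $\sV_t$ was defined as $\rH_1^{\r{dR}}(B/\kappa)_1$; the coherent version is $U=\delta_{*,1}^{-1}(\omega_{A^\vee/R,1})\subseteq\rH_1^{\r{dR}}(B/R)_1$. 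Even after correcting the indices, $\rank U=2$ does not follow from your two cited rank facts alone: you also need $\omega_{A^\vee/R,1}\subseteq\im\delta_{*,1}$. That containment does hold---because $B$ has signature $(0,3)$ one has $\omega_{B^\vee/R,1}=0$, so functoriality gives $\gamma_{*,1}(\omega_{A^\vee/R,1})\subseteq\omega_{B^\vee/R,1}=0$, hence $\omega_{A^\vee/R,1}\subseteq\ker\gamma_{*,1}=\im\delta_{*,1}$---but it must be stated, since without it $U$ could have rank $1$.

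More seriously, your inverse does not actually construct anything. You propose to view $\bigl(\delta_{*,0}(U),(\delta_{*,1}(U^{\perp}))^{\perp_A}\bigr)$ inside $\rH_1^{\text{cris}}(A/R)$ ``for the abelian scheme $A$ obtained from $B$ by the unique isogeny whose reduction matches the rational Dieudonn\'e picture,'' but $A$, $\gamma$, and hence $\delta$ are precisely the output the inverse map is supposed to produce from the input $(B,U)$; they cannot appear as data in their own construction. What is actually required (and is the bulk of the proof in LTX+22, in the same spirit as Lemmas~\ref{lem:tTMSp}, \ref{lem:PY2} and Proposition~\ref{prop:PV} of the present paper) is to build $\cD(A)$ explicitly from $\cD(B)$ and a lift of $U$, verify $\tF,\tV$-stability, the signature $(1,2)$ condition, the self-duality of $\lambda_A$, and the Raynaud condition on $\ker\gamma$, invoke covariant Dieudonn\'e theory to obtain $(A,\lambda_A,\eta_A,\gamma)$ over an algebraically closed field, and then upgrade the pointwise bijection to an isomorphism of $\kappa$-schemes using smoothness of both sides plus a tangent-space comparison via Proposition~\ref{thm:STGM}. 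Your sketch skips all of this, which is where the real work lies.
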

	We can define a moduli problem for $S_{\spe}.$ 
	\begin{definition}
	 Let $S_{\spe}(R)$
			be the set of points $(A,\lambda_A,\eta_A) \in S(R)$ for  
			$R\in \mathtt{Sch'}_{/\Fpp},$ where 
			\[
			\tV \omega_{A^\vee/R,0}=0.
			\]
		\end{definition} 
	
		\begin{remark}\label{remark:sspdefequiv}
			The definition is equivalent to $\tV \omega_{A^\vee/R,1}=0$. Indeed, by comparing the rank we have 
			$(\ker\tV)_0= \omega_{A^\vee/R,0},$ which is equivalent to 
			$(\ker\tV)_1= \omega_{A^\vee/R,1}$ by duality. 
		\end{remark}
				\begin{remark}\label{rem:sspsmooth}
			The conditions
			$\omega_{A^\vee/R,0}=(\ker\tV)_0$ and
			$\omega_{A^\vee/R,1}=(\ker\tV)_1$ 
			imply $S_{\spe}$ is smooth of dimension 0.
		\end{remark}
		\begin{definition}
			Let $ M$ be the moduli problem   associating with every 
			$R\in \mathtt{Sch'}_{/\Fpp}$ 
			the set $ M(R)$ of equivalence classes of septuplets
			$(\tilde B, \lambda_{\tilde B} , \eta_{\tilde B},
			A, \lambda_A, \eta_A,\delta'
			)
			$ where
			\begin{enumerate}
				\item $(\tilde B, \lambda_{\tilde B}  ,  \eta_{\tilde B} ) \in  { \tilde T } (R);$
				\item $( A, \lambda_A, \eta_A)\in S(R);$
				\item\label{isotBA} $\delta': \tilde B \to A$ is a $O_F$-linear quasi-$p$-isogeny such that
				\begin{enumerate}
					\item
					$\ker\delta'[p^\infty]\subseteq \tilde B[p];$ 
					\item
					$
					\lambda_{\tilde B }=\delta^{\prime\vee} \circ \lambda_{A} \circ \delta';
					$
					\item the $ {K}^{p }$-orbit of maps $v \mapsto \delta_{*}' \circ  \eta_{\tilde B}(v)$ for $v \in  {V}^{} \otimes_{\mathbb{Q}} \mathbb{A}^{\infty, p}$ coincides with $\eta_A$.
				\end{enumerate}
			\end{enumerate}
			The equivalence relations are defined in a similar way.
		\end{definition}
		There is a natural correspondence
		\[
		\xymatrix
		{
			&M\ar[ld]_{\rho'} \ar[rd]^\rho&\\
			{\tilde T}&& {S}
		}
		\]
		\begin{lem}
			The morphism $\rho$ factors through $S_{\spe}.$ Moreover,
			$M$ is smooth of dimension 0.
			\begin{proof}
				Take a point $(\tilde B, \lambda_{\tilde B} , \eta_{\tilde B},
				A, \lambda_A, \eta_A,\delta'
				)\in M(R)$
				for $R\in \mathtt{Sch'}_{/\Fpp}.$
				By Remark \ref{remark:sspdefequiv} it suffices to show $\tV_A\omega_{A^\vee/R,0}=0.$
				By condition  \eqref{isotBA} and the proof of Lemma 3.4.12(1)(4) of \cite{LTX+22}, we have
				\[
				\rank_{\cO_R}\ker \delta'_{*,0}+\rank_{\cO_R}\ker \delta'_{*,1}=1.
				\]
				We claim 
				$\delta'_{*,1}$ is an isomorphism,
				since otherwise
				$\rank_{\cO_R} \ker\delta'_{*,0}=0$ and
				$\rank_{\cO_R} \im\delta'_{*,0}=3$, 
				which  imply $\rank_{\cO_R} \omega_{A^\vee/R,0}=3$
				by $\delta'_{*,0}\omega_{\tilde B/R,0}\subset \omega_{A^\vee/R,0},$  
				contradicting 
				the signature condition on $A.$ 
				We conclude  that $\im\delta_{*,1}'=\HdR(A/R)_1.$
				Consider the commutative diagram
				\begin{equation}\label{normal1}
					\xymatrix{
						\rH_1^{\r{dR}}(\tilde B/R)_0 \ar[r]^-{ \delta_{*,0}'}  \ar[d]^-{ \tV_{\tilde B}}&
						\rH_1^{\r{dR}}(  A/R)_0   \ar[d]^-{ \tV_{A} } \\
						\rH_1^{\r{dR}}( \tilde B^{(p)}/R)_0 \ar[r]^-{ \delta_{*,1}^{\prime (p)}} &  \rH_1^{\r{dR}}(  A^{(p)}/R)_0
					}.
				\end{equation}
				Thus   we have 
				\[
				\tV_A\omega_{A^\vee/R,0} 
				=
				\tV_A\im \delta'_{*,0}
				= \delta_{*,1} ^{\prime (p)}(\im\tV_{\tilde B})_0= 
				\delta_{*,1} ^{\prime (p)} \omega_{ \tilde B^{(p)\vee} /R,0}=
				( {\delta_{*,1}'} \omega_{ \tilde B  ^\vee /R,1})^{(p)}
				=0
				\]
				where we have used 
				$\omega_{ \tilde B  ^\vee /R,1}=0.$
				We have proved $\rho$ factors through $S_{\spe}.$
				The signature condition and 
				Remark
				\ref{rem:sspsmooth} imply $\tilde B$ and $A$ have
				trivial deformation. 
				Thus $M$ is smooth of dimension 0.
			\end{proof}
			
		\end{lem}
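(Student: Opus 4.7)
The plan is to verify the two assertions in sequence, mirroring the structure of the statement. For the factorization $\rho : M \to S$ through $S_{\spe}$, by Remark~\ref{remark:sspdefequiv} it suffices to prove $\tV_A \omega_{A^{\vee}/R,0} = 0$ for the universal $A$. First I would analyze the induced maps $\delta'_{*,i} : \rH_1^{\dr}(\tilde B/R)_i \to \rH_1^{\dr}(A/R)_i$. The compatibility $\lambda_{\tilde B} = (\delta')^\vee \circ \lambda_A \circ \delta'$ together with the constraint $\ker \delta'[p^\infty] \subseteq \tilde B[p]$ should, as in [LTX+22, Lemma 3.4.12], yield the rank identity $\rank \ker \delta'_{*,0} + \rank \ker \delta'_{*,1} = 1$. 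A dimension count then forces $\delta'_{*,1}$ to be the isomorphism: if instead $\delta'_{*,0}$ were an isomorphism, then the signature $(0,3)$ of $\tilde B$ gives $\omega_{\tilde B^{\vee}/R,0} = \rH_1^{\dr}(\tilde B/R)_0$ of rank $3$, whose image would have to lie in $\omega_{A^{\vee}/R,0}$ of rank $2$, a contradiction. By rank comparison, $\omega_{A^{\vee}/R,0} = \im \delta'_{*,0}$.

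Then I would use the commutative square relating $\tV_{\tilde B}$, $\tV_A$ and the $p$-twisted $\delta'_{*,1}$ together with the standard identity $\im \tV_B = \omega_{B^{(p)\vee}/R}$ and the Frobenius-twist compatibility to reduce
\[
\tV_A \omega_{A^{\vee}/R,0} = (\delta'_{*,1}\, \omega_{\tilde B^{\vee}/R,1})^{(p)}.
\]
Since $\omega_{\tilde B^{\vee}/R,1}$ is of rank $r = 0$ by the signature $(0,3)$ of $\tilde B$, this vanishes, giving the factorization through $S_{\spe}$.

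For the smoothness and dimension of $M$, I would appeal to Grothendieck--Messing and Serre--Tate (Proposition~\ref{thm:STGM}). Since $\rho$ lands in $S_{\spe}$, which is smooth of dimension $0$ by Remark~\ref{rem:sspsmooth}, $A$ admits no nontrivial Hodge deformation; the signature $(0,3)$ condition similarly rigidifies $\tilde B$, because $\omega_{\tilde B^{\vee}/R,1}$ is of rank zero and thus the Hodge filtration has no flexibility. With $(A,\lambda_A)$ and $(\tilde B, \lambda_{\tilde B})$ determined up to equivalence, $\delta'$ inherits rigidity from the constraints in the moduli problem, so $M$ has trivial infinitesimal deformations and $M \to \Spec \Fpp$ is \'etale. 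The main obstacle I anticipate is justifying cleanly the rank-sum identity $\rank \ker \delta'_{*,0} + \rank \ker \delta'_{*,1} = 1$, which encodes precisely how the non-$p$-principality of $\lambda_{\tilde B}$ distributes across the $O_F$-decomposition; once this input is in place, the remaining arguments flow mechanically from the signature conditions and the crystalline deformation machinery.
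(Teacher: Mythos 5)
Your proposal tracks the paper's proof essentially step for step: the same reduction to showing $\tV_A\omega_{A^\vee/R,0}=0$ via Remark~\ref{remark:sspdefequiv}, the same rank-sum identity from \cite[Lemma 3.4.12]{LTX+22}, the same contradiction argument (the image of a rank-3 $\omega_{\tilde B^\vee/R,0}$ cannot fit inside the rank-2 $\omega_{A^\vee/R,0}$) to pin down which $\delta'_{*,i}$ is an isomorphism, the same commutative square with $\tV$ and the Frobenius twist, and the same use of $\omega_{\tilde B^\vee/R,1}=0$ from the $(0,3)$ signature to kill the relevant term; the deformation/rigidity argument for smoothness of dimension $0$ is likewise the one the paper uses via the signature conditions and Proposition~\ref{thm:STGM}. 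This is the paper's argument, correctly reproduced.
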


		\begin{lem}\label{lem:tTMSp} 
				The morphism $\rho$ induces   isomorphisms of $\Fpp$-schemes
				\[
				\rho: M\cong   S_{\spe}
				,\quad
					\rho':M \cong \tilde T
					\]
				which are both equivariant under the prime-to-$p$ Hecke correspondence. That is, given $g \in K^p\backslash G(\dA^{\infty,p}) / K'^p$ such that $g^{-1}K^p g \subset K'^p,$ we have a commutative diagram
				\[
				\xymatrix{
					S_{\spe} (\Kp)  \ar[r]^g \ar[d]_{\varphi \bKp}
					&  S_{\spe} (K'^p)  \ar[d]^{\varphi(K'^p) } \\
					\tilde{T} (\Kp) \ar[r]^g
					& \tilde{T}(K'^p)
				}
				\]
			\begin{proof}
				We show that $ \rho$ is an isomorphism.
					Since $M$ and $S_{\spe}$ are smooth of dimension 0, it suffices to check that for every algebraically closed field $\kappa$ containing $\Fpp$,  
					$\rho$ induces a bijection on $\kappa$-points. 
					We will construct an inverse map $\theta$ of $\rho$.
					Given a point $s'=(A , \lambda_A, \eta_A) \in S_{\spe}(\kappa)$. 
					We list   properties of $\cD(A)$:
					\begin{enumerate}
						\item
						$\tV\cD(A)=\tF\cD(A).$ This follows from lifting the definition
						$\tV\omega_{A^\vee/\kappa}=0$ of $S_{\spe}.$
						\item\label{Aselfdual} $\cD(A)_0^{\perp_A}=\cD(A)_1,\cD(A)_1^{\perp_A}=\cD(A)_0$. This is because $\lambda$ is self-dual, or equivalently $\cD(A)^{\perp_A}=\cD(A)$.
						\item \label{chain12}
						We have a chain of $W( \kappa )$-modules
						$$
						p\cD(A)_0 \stackrel{2} \subset \tF\cD(A)_1 \stackrel{1}\subset \cD(A)_0, \quad
						p\cD(A)_1  \stackrel{1}\subset \tF\cD(A)_0 \stackrel{2}\subset \cD(A)_1.
						$$
						This follows from \cite[Lemma 1.4]{Vol10} and in particulier $A$ is of signature type (1,2).
					\end{enumerate}
					Set
					\[
					\cD_{ \tilde B ,0}=\tV\cD(A)_1,\quad  \cD_{ \tilde B ,1}= \cD(A)_1, \quad
					\cD_{ \tilde B }=\cD_{\tilde B ,0}\oplus \cD_{ \tilde B ,1}.
					\]
					We verify that $\cD_{\tilde B}$ is $\tF,\tV$-stable. Indeed,
					since $\cD(A)$ is $\tV^{-1}\tF$-invariant, it suffices to verify the condition for $\tV$: we have  
					$\tV \cD_{ \tilde B }
					=\tV^2\cD(A)_1+ \tV\cD(A)_1
					=\tV\cD(A)_1+ p \cD(A)_1 \subset  \cD_{ \tilde B }$ 
					since $\tV^2=\tF\tV=p$.
					
					The chain \eqref{chain12} implies $  \cD_{\tilde B} \subset \cD(A)$ as $W(\kappa)$-lattices in $\cD(A)[1/p].$
					
					By Dieudonn\'e theory there exists an abelian 3-fold $\tilde B$
					such that $\cD(\tilde B)=\cD_{\tilde B}$, and the injection $   \cD(\tilde B)  \to \cD(A) $ is induced by a prime-to-$p$ isogeny $\delta': \tilde B \to A.$
					Define the endormorphism structure $i_{\tilde B}$ on $\tilde B$ by  $i_{\tilde B}(a)= \delta'^{-1} \circ i(a) \circ \delta'$ for $a \in O_F.$ Then $(\tilde B, i_{\tilde B})$ is an $O_F$-abelian scheme.
					Let $\lambda_{\tilde B}$ be the unique polarization such that 
					$$\lambda_{\tilde B} = \delta'^\vee  \circ \lambda_A  \circ \delta'.$$
					
					The pairings induced by $\lambda_A$ and $\lambda_{\tilde B}$
					have the relation
					$$
					\langle x, y \rangle_{\lambda_A} = \langle x , y \rangle_{\lambda_{\tilde B}} ,\, x,y \in \cD(A).
					$$
					Define the level structure $\eta_{\tilde B}$ by
					$\eta_{\tilde B}=  {\delta'_*}^{-1} \circ \eta_A.$
					We verify
					\begin{enumerate}[resume]
						\item
						$\cD({\tilde B})$ is of signature type (0,3). Indeed, this follows from
						\[
						\Lie(\tilde B) \cong  \cD_{\tilde B}/ \tV \cD_{\tilde B} \cong \cD(A)_1/p\cD(A)_1.
						\]
						\item
						$\ker \lambda_{\tilde B}$ is a finite group scheme of rank $p^2$. Indeed, from covariant Dieudonne theory 
						it is equivalent to show 
						$ \cD({\tilde B}) \stackrel{2}\subset \cD({\tilde B})^ { \perp_{ {\tilde B} } } $. Thus it suffices to show 
						$ \cD( \tilde B)_0    \stackrel{1} \subset \cD( \tilde B )_1^{\perp_{\tilde B} }    .$
						From \eqref{Aselfdual}
						it is equivalent to show
						$   \tF \cD(A)_1 \stackrel{1}\subset\cD(A)_0 $  which comes from $\eqref{chain12}.$
						\item $\ker\delta'[p^\infty]\subset \tilde B[p].$ 
						It suffices to show $p\cD(A) \subset \cD(\tilde B),$
						which is by definition.   
					\end{enumerate}

					Finally we set 
					$\theta(s')=({\tilde B},  \lambda_{ \tilde B } , \eta_{ \tilde B },
					A , \lambda_A, \eta_A,\delta'
					).$
					To verify   $\theta$ is equivariant under prime-to-$p$ Hecke correspondence, it suffices to consider the associativity of the following  diagram
					\[
					\xymatrix{
						V \otimes_{ \dQ } \dA^{\infty,p}
						\ar[r]^g & V \otimes_{ \dQ }   \mathbb{A}^{\infty, p} 
						\ar[r]^{\eta_A}&
						\mathrm{H}_{1}(A, \mathbb{A}^{\infty,p}) \ar[r]^{\delta_{*}^{\prime -1}} & \r{H}_{1}(\tilde{B},  \dA^{\infty,p})
					}
					\]
					for $g \in K^p \backslash G(\dA^{\infty,p}) / K'^p.$
					It is easy to verify $\theta$ and $\rho$ are the inverse of each other.	
		We show that $  \rho'$ is an isomorphism.		Since $M$ and $\tilde T $ are smooth and have dimension 0, it suffices to check that for every algebraically closed field $\kappa$ containing $\Fpp$,  
					$\rho'$ induces a bijection on $\kappa$-points. 
					We will construct an inverse map $\theta'$ of $\rho'$.
					Given 
					$t=( \tilde B ,  \lambda_{\tilde B}   ,  {\eta}_{\tilde B }) \in \tilde{T}( \kappa ),$
					we list   properties of $\cD({\tilde B})$:
					\begin{enumerate}[resume]
						\item  $\tV\cD({\tilde B})_0=\tF \cD(\tilde B)_0.$ In fact, since $\cD({\tilde B})$ is of signature (0,3), \cite[Lemma 1.4]{Vol10} gives
						\[
						\cD({\tilde B})_0=\tV\cD({\tilde B})_1=\tF\cD({\tilde B})_1.
						\]
						\item \label{duallength}
						$
						\cD({\tilde B})_1 \stackrel{1} \subset \cD({\tilde B})_0^{\perp_{\tilde B}} 
						$ and $  \cD({\tilde B})_0 \stackrel{1}\subset \cD({\tilde B})_1^ {\perp_{\tilde B}}  $.
						Indeed, since $\ker\lambda_{\tilde B}[p^\infty]$ is a $\tilde B[p]$-subgroup scheme of rank $p^2$, by covariant Dieudonn\'e theory we have 
						$
						\cD({\tilde B}) 
						\stackrel{2} \subset
						\cD({\tilde B})^{\perp_{\tilde B } },
						$
						and the claim follows.
						
						\item\label{chain03} We have the chain of $W(\kappa)$-lattice
						\[
						\cD({\tilde B})_1 
						\stackrel{1}\subset 
						\tV^{-1}\cD({\tilde B})_1^{\perp_{ \tilde B} } \stackrel{2}\subset 
						\frac{1}{p}\cD({\tilde B})_1.
						\]
						Indeed, $\ker \lambda_{\tilde B} \subset {\tilde B}[p]$
						gives $\cD({\tilde B})_0^{\perp_{\tilde B}} \subset (1/p)\cD({\tilde B})_1.$
						The claim comes from \eqref{duallength} and the fact that $\cD({\tilde B})_1^{\perp_{\tilde B}}=( \tV^{-1} \cD({\tilde B})_0)^{\perp_{\tilde B}}= 
						\tF (\cD({\tilde B})_0)^{\perp_{\tilde B}}$. 
					\end{enumerate}
					
					We set
					\[
					\cD_{A,0}=\cD(\tilde B)_1^{\perp_{ \tilde B }}, 
					\cD_{A,1}=\cD({\tilde B})_1, 
					\cD_A=\cD_{A,0}\oplus \cD_{A,1}.
					\]
					That $\cD_A$ 
					is $\tF,\tV$-stable follows from \eqref{chain03}.
					By covariant Dieudonn\'e theory there exists an abelian 3-fold $A$ such that $\cD(A)=\cD_A$, and the inclusion $\cD({\tilde B}) \to \cD(A)$ is induced by a prime-to-$p$ isogeny $\delta':{\tilde B} \to A.$
					Define the endormorphism structure $i_A$ on $A$ by 
					$i_A(a)= \delta' \circ i_{\tilde B}(a) \circ \delta'^{-1}$ for $a \in O_F.$
					Then $( A, i_A)$ is an $O_F$-abelian scheme.
					Let $\lambda_A$ be the unique polarization such that 
					$$\lambda_{\tilde B} = \delta'^\vee  \circ \lambda_A \circ \delta'.$$
					The pairings induced by $\lambda_A$ and $\lambda_{\tilde B}$ have the relation
					\[
					\langle x, y \rangle_{\lambda_A} =  \langle x , y \rangle_{\lambda_{\tilde B}} , \ x,y \in \cD(A).
					\]
					Define the level structure $\eta_A$ by
					$\eta_A=  \delta'_* \circ \eta_{\tilde B}.$
					We verify
					\begin{enumerate}[resume]
						\item $\cD(A)$ is of signature (1,2):  
						calculate the Lie algebra  
						\[
						\frac{\cD(A)}{\tV\cD(A)}=
						\frac{\cD(\tilde B)_1^{\perp_{\tilde B} } + \cD({\tilde B})_1 } 
						{ \tV \cD({\tilde B})_1  + \tV\cD(\tilde B)_1^{\perp_{\tilde B} }  }.
						\]
						The claim follows from \eqref{chain03}.
						\item $\cD(A)$ is self-dual with respect to $\langle , \rangle_{\lambda_A}$. Indeed, it suffices to show $\cD(A)_0^{\perp_A}=\cD(A)_1.$ Since $\cD(A)_0^{\perp_A}= \cD(A)_0^{\perp_{\tilde B}}$, it is enough to verify
						$ \cD(A)_0^{\perp_{\tilde B}}=\cD(A)_1$, which is exactly our construction.
					\end{enumerate}  
					Finally we set
					$\theta(t')=({\tilde B},  \lambda_{ \tilde B } , \eta_{ \tilde B },
					A , \lambda_A, \eta_A,\delta'
					).$
					The equivariance under prime-to-$p$ Hecke correspondence is clear.
			\end{proof}
		\end{lem}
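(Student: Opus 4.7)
The plan is to reduce both claims to bijectivity on $\kappa$-points for every algebraically closed field $\kappa$ containing $\Fpp$. This reduction is legitimate because $M$, $S_{\spe}$ and $\tilde T$ are all smooth of dimension zero (established in the preceding remarks and lemma). For each of $\rho$ and $\rho'$ I would construct an explicit inverse using covariant Dieudonn\'e theory, and then verify compatibility with the prime-to-$p$ Hecke action.

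For the inverse of $\rho$: given $(A,\lambda_A,\eta_A)\in S_{\spe}(\kappa)$, the defining relation $\tV\omega_{A^\vee/\kappa,0}=0$ lifts to $\tV\cD(A)=\tF\cD(A)$, and the signature $(1,2)$ gives the length chain
\[
p\cD(A)_i\stackrel{2}\subset \tF\cD(A)_{i+1}\stackrel{1}\subset\cD(A)_i,\qquad i\in\dZ/2\dZ,
\]
by \cite[Lemma 1.4]{Vol10}. I would set $\cD_{\tilde B,0}:=\tV\cD(A)_1$, $\cD_{\tilde B,1}:=\cD(A)_1$, and $\cD_{\tilde B}:=\cD_{\tilde B,0}\oplus\cD_{\tilde B,1}\subset\cD(A)$. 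Stability under $\tF,\tV$ is immediate from $\tF\tV=p$ together with $\tV\cD(A)=\tF\cD(A)$, and Dieudonn\'e theory then produces an $O_F$-abelian variety $\tilde B$ and a prime-to-$p$ quasi-isogeny $\delta':\tilde B\to A$ realizing the inclusion $\cD_{\tilde B}\subset\cD(A)$. The signature $(0,3)$ of $\tilde B$ follows from $\cD(\tilde B)/\tV\cD(\tilde B)\cong\cD(A)_1/p\cD(A)_1$, and the pulled-back polarization $\lambda_{\tilde B}:=\delta'^{\vee}\lambda_A\delta'$ has $\ker\lambda_{\tilde B}[p^\infty]\subset\tilde B[p]$ of rank $p^{2}$ via $\cD(\tilde B)\stackrel{2}\subset\cD(\tilde B)^{\perp_{\tilde B}}$, which reduces to $\tF\cD(A)_1\stackrel{1}\subset\cD(A)_0$. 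The level structure is transferred by $\eta_{\tilde B}:=\delta'^{-1}_{\ast}\eta_A$.

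For the inverse of $\rho'$: given $(\tilde B,\lambda_{\tilde B},\eta_{\tilde B})\in\tilde T(\kappa)$, the signature $(0,3)$ yields $\cD(\tilde B)_0=\tV\cD(\tilde B)_1=\tF\cD(\tilde B)_1$, while the polarization condition yields $\cD(\tilde B)\stackrel{2}\subset\cD(\tilde B)^{\perp_{\tilde B}}$. I would set $\cD_{A,0}:=\cD(\tilde B)_1^{\perp_{\tilde B}}$, $\cD_{A,1}:=\cD(\tilde B)_1$, and $\cD_{A}:=\cD_{A,0}\oplus\cD_{A,1}$. Stability under $\tF,\tV$ follows from the length chain
\[
\cD(\tilde B)_1\stackrel{1}\subset\tV^{-1}\cD(\tilde B)_1^{\perp_{\tilde B}}\stackrel{2}\subset\tfrac{1}{p}\cD(\tilde B)_1,
\]
itself a consequence of $\ker\lambda_{\tilde B}\subset\tilde B[p]$ and the identification of $\cD(\tilde B)_1^{\perp_{\tilde B}}$ with $\tF\cD(\tilde B)_0^{\perp_{\tilde B}}$. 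Dieudonn\'e theory then produces $A$ together with $\delta':\tilde B\to A$; computing $\cD(A)/\tV\cD(A)$ gives signature $(1,2)$, and self-duality $\cD(A)_0^{\perp_A}=\cD(A)_1$ is built into the construction. That the resulting point of $S$ lies in $S_{\spe}$ is automatic: the diagram \eqref{normal1} and $\omega_{\tilde B^\vee/\kappa,1}=0$ force $\tV_A\omega_{A^\vee/\kappa,0}=0$ as in the earlier lemma.

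The main bookkeeping obstacle is tracking how lengths and orthogonal complements transport through the quasi-isogeny $\delta'$ on $\cD(A)[1/p]$; in particular one must check that signature types, polarization degrees, and the chain conditions translate cleanly between $\cD(A)$ and $\cD(\tilde B)$. Once this is in order, the two constructions are visibly inverse to one another at the level of Dieudonn\'e modules, and equivariance under the prime-to-$p$ Hecke correspondence is formal: it reduces to the associativity of
\[
V\otimes_{\dQ}\dA^{\infty,p}\xrightarrow{g}V\otimes_{\dQ}\dA^{\infty,p}\xrightarrow{\eta_A}\rH_{1}(A,\dA^{\infty,p})\xrightarrow{\delta'^{-1}_{\ast}}\rH_{1}(\tilde B,\dA^{\infty,p})
\]
for $g\in K^p\backslash G(\dA^{\infty,p})/K'^p$, because $\delta'_{\ast}$ only affects the $p$-component.
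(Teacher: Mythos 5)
Your proposal follows the paper's proof essentially line for line: reduce to $\kappa$-points via the common smoothness of dimension zero, then for each direction construct the inverse explicitly on Dieudonné modules with the same choices $\cD_{\tilde B,0}=\tV\cD(A)_1$, $\cD_{\tilde B,1}=\cD(A)_1$ (for $\rho$) and $\cD_{A,0}=\cD(\tilde B)_1^{\perp_{\tilde B}}$, $\cD_{A,1}=\cD(\tilde B)_1$ (for $\rho'$), verify stability, signature, polarization rank, and dispatch Hecke equivariance by associativity. One small slip: your chain
\[
p\cD(A)_i\stackrel{2}\subset \tF\cD(A)_{i+1}\stackrel{1}\subset\cD(A)_i,\qquad i\in\dZ/2\dZ,
\]
claims the same colength pattern for both $i$, but because $A$ has signature $(1,2)$ the two chains are asymmetric: for $i=0$ the colengths are $(2,1)$ as you write, while for $i=1$ they are $(1,2)$, i.e.\ $p\cD(A)_1\stackrel{1}\subset\tF\cD(A)_0\stackrel{2}\subset\cD(A)_1$. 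Since the subsequent verifications ($\ker\lambda_{\tilde B}$ of rank $p^2$, signature $(0,3)$ of $\tilde B$) only invoke the $i=0$ chain, this typo is inconsequential and the argument goes through as in the paper.
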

		
	\subsection{The geometry of $S_0(p)$}\label{subsec:geoS0p}
	We define three closed subschemes  $Y_i, i=0,1,2$ of $S_0(p)$ over $\Fpp$ as follows: for $R\in \mathtt{Sch'}_{/\Fpp}$, 
	a point  $s=(A ,  \lambda_A, \eta_A,   \tilde A 
	, \lambda_{\tilde A}, \eta_{\tilde A}, 
	\alpha) 
	\in S_0(p)(R)$  belongs to 
	\begin{itemize}
		\item\label{defY0}
		 $ Y_0(R)$ if and only if 
		$
		\omega_{\tilde A^\vee/R,0}=\r{im} \alpha_{*,0};  $
		\item\label{defY1}
		$ Y_1(R)$ if and only if 
		$
		\omega_{A^\vee/R,1}=  \ker \alpha_{*,1};  $
		\item\label{defY2}
		  $Y_2(R)$ if and only if   
		$  \omega_{\tilde A^\vee/R,1}= 
		\rH_1^{\r{dR}}(\tilde A/R)_0^{\perp_{\tilde A } }.
		$
	\end{itemize} 
	
	\begin{remark}
		In \cite{dSG18},  the authors define two strata $\ol{Y}_m, \ol{Y}_{et}$. We will see that  $Y_0$ coincides with  their $\ol{Y}_m$ and
		$Y_1$ coincides with their $\ol{Y}_{et}.$
		
	\end{remark}

	We are going to show these three strata are all smooth of dimension 2.
	
	\begin{lem}\label{lem:Y0Y1Y2induce}
		Take $s=(A, \lambda_{A}, \eta_{A}, \tilde A, \lambda_{\tilde{A}}, \eta_{\tilde{A}}, \alpha) \in S_{0}(p)(R)$ for a scheme $R \in  \mathtt{Sch}^{\prime}_{ / \mathbb{F}_{p^{2}} }.$
		\begin{enumerate}
			\item
			If $s\in Y_0(R)$ then
			\begin{enumerate}
				\item\label{Y0definduce1}
				$\omega_{\tilde A^\vee/R,1}  \subseteq \mathrm{im}\alpha_{*,1};$
				\item \label{Y0definduce2}
				$ (\ker\tV_A)_1=\ker\alpha_{*,1}.$
				\end {enumerate}
				\item
				If $s\in Y_1(R)$ then 
				\begin{enumerate}
					\item\label{Y1definduce1}
					$\ker\alpha_{*,0} \subseteq \omega_{  A^\vee/R,0} ;$ 
					\item \label{Y1definduce2}
					$
					\alpha_{*,0} ({\omega}_{  A^{\vee}/R, 0})= 
					\HdR(\tilde A/R)_1^{\perp_{\tilde A}}.
					$
				\end{enumerate}
				\item
				If $s\in Y_2(R)$ then 
				\begin{enumerate}
					\item\label{Y2definduce1}
					$\ker\alpha_{*,0} \subseteq \omega_{  A^\vee/R,0}. $ 
				\end{enumerate}
			\end{enumerate}
			\begin{proof}
				Denote by $\breve\alpha:\tilde A \to A$ the unique isogeny 
				such that $\breve\alpha\circ\alpha=p\id_A$ and 
				$\alpha\circ \breve\alpha=p\id_{\tilde A}.$
				
				\begin{enumerate}
					\item
					\begin{enumerate}
						\item
						The condition $\omega_{\tilde A^\vee/R,0}  = \mathrm{im}\alpha_{*,0}$ implies 
						$\omega_{\tilde A^\vee/R,0} ^{\perp_{\tilde A}} = (\mathrm{im}\alpha_{*,0} )^{\perp_{\tilde A}}.$
						On the other hand, we have  
						$\ang{\im \alpha_{*,0} , \im \alpha_{*,1}
						}_{\lambda_{\tilde A}}=
						\ang{\HdR(\tilde A/R)_0 ,\breve \alpha_{*,1} \im \alpha_{*,1}
						}_{\lambda_{\tilde A}}=0,
						$
						which implies 
						$ \mathrm{im}\alpha_{*,1}  =
						(\mathrm{im}\alpha_{*,0} )^{\perp_{\tilde A}}$ by comparing the rank. We also have 
						$
						\ang{\omega_{\tilde A^\vee/R,0} ,\omega_{\tilde A^\vee/R,1}  }_{\lambda_{\tilde A}}=0
						$, thus \eqref{Y0definduce1} follows.
						\item
						It suffices to show $\ker \alpha_{*,1} \subseteq (\ker\tV_A)_1.$
						The condition \eqref{Y0definduce1} implies
						$\omega_{\tilde A^\vee/R,1}  \subseteq \mathrm{im}\alpha_{*,1}=\ker\breve\alpha_{*,1}.$ 
						We also have $(\ker\tV_{  A})_1 = (\im \tF_A)_1.$
						Consider the following commutative diagram
						\begin{equation}\label{diag:commuteF}
							\xymatrix{
								\rH_1^{\r{dR}}( \tilde A/R)_1 \ar[r]^-{\breve \alpha_{*,1} } \ar[d]^-{ \tV_{\tilde A}}&
								\rH_1^{\r{dR}}(    A /R)_1   \ar[d]^-{ \tV_{  A}} \\
								\rH_1^{\r{dR}}(  \tilde A^{(p)} /R)_0  \ar[r]^-{ \breve\alpha_{*,0}^{(p)}  } & 
								\rH_1^{\r{dR}}(    A^{(p)}/R)_0
							}. 
						\end{equation}
						Thus we have 
						\[
						\tV_A\ker \alpha_{*,1} =
						\tV_A\im\breve \alpha_{*,1} =
						\breve \alpha_{*,0}^{(p)}(\im\tV_{\tilde A})_0
						=\breve \alpha_{*,0}^{(p)}  \omega_{ (\tilde A^{(p)})^\vee/R,0} 
						=(\breve \alpha_{*,1}  \omega_{ \tilde A ^\vee/R,1})^{(p)}
						=0,
						\]
						thus \eqref{Y0definduce2} follows.
					\end{enumerate}
					\item
					\begin{enumerate}
						\item
						The condition $  \omega_{A^\vee/R,1}=  \ker\alpha_{*,1}$ implies 
						$
						\omega_{  A^\vee/R,1} ^{\perp_{  A}} = 
						(\ker \alpha_{*,1} )^{\perp_{  A}}.$
						On the other hand, we have
						$\omega_{  A^\vee/R,0}=\omega_{  A^\vee/R,1}^{\perp_A}$
						and 
						$\ang{ \ker \alpha_{*,0} ,\ker \alpha_{*,1}
						}_{\lambda_{  A}}= 
						\ang{ \im\breve\alpha_{*,0} ,\ker \alpha_{*,1}
						}_{\lambda_{  A}}=
						\ang{\HdR(\tilde A/R)_0, \alpha_{*,1} \ker \alpha_{*,1}
						}_{\lambda_{  A}}=
						0.$ 
						Thus \eqref{Y1definduce1} follows.
						\item $\eqref{Y1definduce1} $ implies $\rank_{\cO_R} \alpha_{*,0} {\omega}_{  A^{\vee}/R, 0}=1.$
						On the other hand, we have
						\[
						\ang{
							\alpha_{*,0}  {\omega}_{  A^{\vee}/R, 0},
							\HdR(\tilde A/R)_1
						}_{\lambda_{  \tilde A}}=
						\ang{
							{\omega}_{  A^{\vee}/R, 0},
							\breve \alpha_{*,1} \HdR(\tilde A/R)_1
						}_{\lambda_{  \tilde A}}=
						\ang{
							{\omega}_{  A^{\vee}/R, 0},
							\ker \alpha_{*,1}  
						}_{\lambda_{    A}}=
						\ang{
							{\omega}_{  A^{\vee}/R, 0},
							{\omega}_{  A^{\vee}/R, 1}
						}_{\lambda_{    A}}=0.
						\]
						Thus 
						$
						\alpha_{*,0} {\omega}_{  A^{\vee}/R, 0}\subseteq
						\HdR(\tilde A/R)_1^{\perp_{\tilde A}}
						$. By comparing the rank  \eqref{Y1definduce2} follows.
					\end{enumerate}
					\item
					\begin{enumerate}
						\item 
						Since 
						$\omega_{A^\vee/R,0}^{\perp_A}=
						\omega_{A^\vee/R,1} ,$
						by taking the dual it suffices to show 
						$\omega_{A^\vee/R,1} \subset (\ker \alpha_{*,0} )^{\perp_A}.$
						Since $ \ker \alpha_{*,0} =\im \breve \alpha_{*,0},$
						it suffices to show
						$\ang{ \omega_{A^\vee/R,1} , \im \breve \alpha_{*,0}}_{\lambda_A} =0.$ 
						By the equality 
						$\ang{\alpha_* x, y}_{\lambda_{\tilde A} }= \ang{  x, \breve\alpha_* y}_{\lambda_A}$
						it suffices to show 
						$\ang{ \alpha_{*,1} \omega_{A^\vee/R,1} , 
							\HdR(\tilde A/R)_0}_{\lambda_{\tilde A}}=0,$ 
						which follows from the conditions
						$ \omega_{\tilde A^\vee/R,1}= 
						\rH_1^{\r{dR}}(\tilde A^\vee/R)_0^{\perp_{\tilde A } }$   and 
						$ \alpha_{*,1} \omega_{A^\vee/R,1} \subseteq \omega_{\tilde A^\vee/R,1} .$
					\end{enumerate}
				\end{enumerate}
			\end{proof}
		\end{lem}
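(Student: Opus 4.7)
The proof of all three items rests on a single toolkit that I would set up at the outset. First, from $p\lambda_A = \alpha^\vee \circ \lambda_{\tilde A} \circ \alpha$ I introduce the unique complementary $O_F$-linear quasi-$p$-isogeny $\breve\alpha:\tilde A\to A$ with $\breve\alpha\circ\alpha=p\,\mathrm{id}_A$ and $\alpha\circ\breve\alpha=p\,\mathrm{id}_{\tilde A}$, and record the adjointness
\[
\langle \alpha_* x, y\rangle_{\lambda_{\tilde A}} = \langle x, \breve\alpha_* y\rangle_{\lambda_A},
\]
together with the orthogonality of Hodge pieces ($\omega_{A^\vee,0}$ and $\omega_{A^\vee,1}$ are exact annihilators of each other under $\langle\,,\,\rangle_{\lambda_A}$, and similarly for $\tilde A$). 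Because everything happens in characteristic $p$, one has $\alpha_*\breve\alpha_* = \breve\alpha_*\alpha_* = 0$, so $\mathrm{im}\,\alpha_{*,i} = \ker\breve\alpha_{*,i}$ and symmetrically. Finally, the signature $(1,2)$ pins down the ranks of the Hodge pieces: $\omega_{A^\vee,0}$ has rank $2$, $\omega_{A^\vee,1}$ has rank $1$, and the same holds for $\tilde A$. Inclusions between rank-matched subbundles become equalities.

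For item (1), the hypothesis $\omega_{\tilde A^\vee,0}=\mathrm{im}\,\alpha_{*,0}$ forces $\mathrm{rk}\,\alpha_{*,0}=2$, hence $\ker\alpha_{*,0}$ and $\ker\alpha_{*,1}$ both have rank $1$ and $\mathrm{im}\,\alpha_{*,1}$ has rank $2$. Taking $\perp_{\tilde A}$ of the hypothesis, the left side contains $\omega_{\tilde A^\vee,1}$ by Hodge orthogonality; the right side equals $\mathrm{im}\,\alpha_{*,1}$ because $\langle\alpha_{*,0}x,\alpha_{*,1}y\rangle_{\lambda_{\tilde A}}=p\langle x,y\rangle_{\lambda_A}\equiv 0$, giving (1)(a). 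For (1)(b), both sides have rank $1$, so I only need to show $\tilde V_A$ kills $\ker\alpha_{*,1}=\mathrm{im}\,\breve\alpha_{*,1}$. Using the natural commutative square $\tilde V_A\circ\breve\alpha_{*,1}=\breve\alpha_{*,0}^{(p)}\circ \tilde V_{\tilde A}$ and the identification $\tilde V_{\tilde A}\,H_1^{\mathrm{dR}}(\tilde A)_1=\omega_{\tilde A^{(p)\vee},0}$, this reduces via $(1)(a)$ to $\breve\alpha_{*,1}\omega_{\tilde A^\vee,1}\subseteq\breve\alpha_{*,1}\mathrm{im}\,\alpha_{*,1}=0$, done.

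Items (2) and (3) run along the same lines. For (2)(a), the hypothesis $\omega_{A^\vee,1}=\ker\alpha_{*,1}$ gives $\omega_{A^\vee,0}=\omega_{A^\vee,1}^{\perp_A}=(\ker\alpha_{*,1})^{\perp_A}$; applying adjointness to $\ker\alpha_{*,1}=\mathrm{im}\,\breve\alpha_{*,1}$, this orthogonal contains $\ker\alpha_{*,0}$. Then (2)(b) follows because $\alpha_{*,0}\omega_{A^\vee,0}$ has rank $1$ (by (2)(a)) and pairs trivially with $H_1^{\mathrm{dR}}(\tilde A)_1$: via adjointness and $\ker\alpha_{*,1}=\omega_{A^\vee,1}$, the pairing collapses to $\langle\omega_{A^\vee,0},\omega_{A^\vee,1}\rangle_{\lambda_A}=0$. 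So $\alpha_{*,0}\omega_{A^\vee,0}\subseteq H_1^{\mathrm{dR}}(\tilde A)_1^{\perp_{\tilde A}}$ and ranks match. For (3)(a), I dualize the desired inclusion to $\omega_{A^\vee,1}\subseteq(\ker\alpha_{*,0})^{\perp_A}=(\mathrm{im}\,\breve\alpha_{*,0})^{\perp_A}$, then apply adjointness to reduce to $\langle\alpha_{*,1}\omega_{A^\vee,1},H_1^{\mathrm{dR}}(\tilde A)_0\rangle_{\lambda_{\tilde A}}=0$, which is immediate from $\alpha_{*,1}\omega_{A^\vee,1}\subseteq\omega_{\tilde A^\vee,1}=H_1^{\mathrm{dR}}(\tilde A)_0^{\perp_{\tilde A}}$.

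The main conceptual obstacle, which I would isolate once as a lemma, is the precise interaction between the Hodge filtrations on $A$ and $\tilde A$ under $\alpha_*$, specifically the identification $\mathrm{im}\,\alpha_{*,i}=\ker\breve\alpha_{*,i}$ and the induced behaviour of ranks — this is what lets inclusions be upgraded to equalities. Once this and the adjointness formula are in hand, every assertion is a short diagram chase; the Frobenius/Verschiebung commutative square is the only nontrivial auxiliary input, and it is used only for (1)(b).
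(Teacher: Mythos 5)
Your proof is correct and follows essentially the same route as the paper's: set up $\breve\alpha$, use the adjunction $\langle\alpha_*x,y\rangle_{\lambda_{\tilde A}}=\langle x,\breve\alpha_*y\rangle_{\lambda_A}$ together with $\alpha_*\breve\alpha_*=\breve\alpha_*\alpha_*=0$ in characteristic $p$, Hodge orthogonality, and the Frobenius/Verschiebung commutative square, and then upgrade inclusions to equalities via rank counting. The only stylistic difference is that you isolate the rank bookkeeping (and the identification $\operatorname{im}\alpha_{*,i}=\ker\breve\alpha_{*,i}$) explicitly at the start, whereas the paper invokes it inline "by comparing the rank."
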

		
		\begin{proposition}\label{prop:Y0Y1Y2smooth}
			\begin{enumerate}
				\item  \label{prop:Y0smooth}
				$Y_0$   is smooth of dimension 2 over $\Fpp.$ 
				Moreover, 
				let $(\cA, \tilde \cA, \alpha)$ denote the universal object on
				$Y_0$.
				Then the tangent bundle $\cT_{Y_0 /\Fpp}$ of $Y_0$ fits into an exact sequence
				\begin{multline}\label{eq:Y0tangent}
					0 \to \cH om (  {\omega}_{  \cA^{\vee}, 1},
					\alpha_{*,1}^{-1}  {\omega}_{\tilde \cA^{\vee},1}/ {\omega}_{  \cA^{\vee}, 1})
					\to
					\cT_{Y_0/\Fpp} 
					\to 
					\cH om  
					(\alpha_{*,1}^{-1}  {\omega}_{\tilde \cA^{\vee},1}/\ker\alpha_{*,1}, 
					\HdR(     \cA )_{ 1} /
					\alpha_{*,1}^{-1}  {\omega}_{\tilde \cA^{\vee},1})
					\to
					0
				\end{multline}
				
				\item   \label{prop:Y1smooth}
				$Y_1$  is smooth of dimension 2 over $\Fpp.$ 
				Moreover, 
				let $(\cA, \tilde \cA, \alpha)$ denote the universal object on
				$Y_1$.
				Then the tangent bundle $\cT_{Y_1 /\Fpp}$ of $Y_1$ fits into an exact sequence
				\begin{multline}
					0 \to \cH om (  {\omega}_{  \tilde \cA^{\vee}, 1},
					{\omega}_{  \tilde \cA^{\vee}, 0}^{\perp_{\tilde \cA}}
					/ {\omega}_{\tilde  \cA^{\vee}, 1})
					\to
					\cT_{Y_1/\Fpp} 
					\to 
					\cH om  
					(  {\omega}_{\tilde \cA^{\vee},0}/
					\HdR(\tilde \cA)_1^{\perp_{\tilde \cA}}, 
					\HdR(   \tilde  \cA )_{ 1} /
					{\omega}_{\tilde \cA^{\vee},0})
					\to
					0
				\end{multline}

				\item \label{prop:Y2smooth}
				$Y_2$ is smooth of dimension 2 over $\Fpp.$
				Moreover, 
				let $(\cA, \tilde \cA, \alpha)$ denote the universal object on
				$Y_2$.
				Then the tangent bundle $\cT_{Y_2 /\Fpp}$ of $Y_2$ fits into an exact sequence
				\begin{multline}\label{eq:Y2tangent}
					0 \to 
					\cH om (  {\omega}_{  \tilde  \cA^{\vee}, 0}/
					\alpha_{*,0} {\omega}_{      \cA^{\vee}, 0},
					\HdR(   \tilde   \cA )_{ 0}
					/ {\omega}_{   \tilde \cA^{\vee}, 0})
					\to
					\cT_{Y_2/\Fpp} 
					\to 
					\cH om  
					(  {\omega}_{  \cA^{\vee},0}/\ker\alpha_{*,0},
					\HdR(      \cA )_{ 0} /
					{\omega}_{  \cA^{\vee},0})
					\to
					0.
				\end{multline}
			\end{enumerate}
			\begin{proof}
				\begin{enumerate}
					\item
					We show $Y_0$ is formally smooth using deformation theory. Consider a closed immersion $R \hookrightarrow \hat{R}$ in $\mathtt{Sch}_{ / \dF_{p^2}}^{\prime}$ defined by an ideal sheaf $\mathcal{I}$ with $\mathcal{I}^{2}=0$. Take a point 
					$
					y=( A, \lambda_A, \eta_A,\tilde{A} ,  \lambda_{\tilde A},\eta_{\tilde A},  \alpha ) \in Y_0(R).$
					By Proposition \ref{thm:STGM}
					lifting $y$ to an $\hat{R}$-point is equivalent to lifting
					\begin{itemize}
						\item
						$\omega_{A^{\vee} / R, 0}$ (resp. $ \omega_{\tilde A^{\vee} / R,0} $) to a rank 2 subbundle 
						$\hat{\omega}_{A^{\vee},0}$ (resp. $\hat \omega_{\tilde A^{\vee} ,0} $)
						of $\mathrm{H}_{1}^{\text {cris }}(A / \hat{R})_{0}$ 
						( resp. $\mathrm{H}_{1}^{\text {cris }}(\tilde A / \hat{R})_{0}$), 
						\item
						$\omega_{A^{\vee} / R, 1}$ (resp. $\omega_{\tilde A^{\vee} / R,1} $)
						to a rank 1 subbundle $\hat{\omega}_{A^{\vee},1}$  (resp. $\hat \omega_{\tilde A^{\vee} ,1} $) 
						of $\mathrm{H}_{1}^{\text {cris }}(A / \hat{R})_{1}$ 
						( resp. $\mathrm{H}_{1}^{\text {cris }}(\tilde A / \hat{R})_{1}$), 
					\end{itemize}
					subject to the following requirements
					\begin{enumerate}
						\item\label{Aorth}
						$\hat{\omega}_{A^{\vee},0}$ and $\hat{\omega}_{A^{\vee},1}$ are orthogonal complement of each other under $\langle ~,~\rangle_{\lambda_A}^{\text {cris }} $ (\ref{paircris});
						\item \label{condY2cris}
						$\hat{\omega}_{\tilde A^{\vee},0}$ and $\hat{\omega}_{\tilde A^{\vee},1}$ are orthogonal under $\langle ~,~\rangle_{\lambda_{\tilde A}}^{\text {cris }} $ (\ref{paircris});
						\item \label{liftomegaAtA}
						$  \hat \omega_{  A^\vee,1} \subseteq  \alpha_{*,1} ^{-1} \hat \omega_{\tilde A^\vee,1}  ;$
						\item  $\hat \omega_{\tilde A^\vee,0}  =  \alpha_{*,0} \mathrm{H}_{1}^{\text {cris }}(   \tilde  A / \hat{R})_0;$
					\end{enumerate}
					Since $\langle~,~\rangle_{\lambda_A,0}^{\text {cris }}$ is a perfect pairing, $\hat{\omega}_{A^{\vee}, 0}$ is uniquely determined by $\hat{\omega}_{A^{\vee}, 1}$ by (\ref{Aorth}).
					Moreover,
					$\hat{\omega}_{\tilde A^{\vee},0}$  is    uniquely determined by $\mathrm{H}_{1}^{\text {cris }}(\tilde A / \hat{R})_0.$ Therefore, it suffices to give the lifts $\hat{\omega}_{A^\vee,1}$ and $\hat{\omega}_{\tilde{A}^\vee, 1}$ subject to condition \eqref{liftomegaAtA} above.
					But
					lifting $ {\omega}_{\tilde A^{\vee}/R,1}$
					is the same as lifting its preimage
					$\alpha_{*,1}^{-1}  {\omega}_{\tilde A^{\vee}/R,1}$
					to a rank 2 subbundle $\hat{\omega}'_{  A^{\vee}, 1}$ of 
					$\mathrm{H}_{1}^{\text {cris }}(\tilde A / \hat{R})_{0}$
					containing $\ker\alpha_{*,1}.$    
					Thus the tangent space  $T_{Y_0/\Fpp,y}$ at $y$ fits canonically into an exact sequence
					\begin{multline}
						0 \to \cH om (  {\omega}_{  A^{\vee}/R, 1},
						\alpha_{*,1}^{-1}  {\omega}_{\tilde A^{\vee}/R,1}/ {\omega}_{  A^{\vee}/R, 1})
						\to
						T_{Y_0/\Fpp,y} \\
						\to 
						\cH om  
						(\alpha_{*,1}^{-1}  {\omega}_{\tilde A^{\vee}/R,1}/\ker\alpha_{*,1}, 
						\HdR(     A/R )_{ 1} /
						\alpha_{*,1}^{-1}  {\omega}_{\tilde A^{\vee}/R,1})
						\to
						0
					\end{multline}
					Thus, $Y_0$ is formally smooth over $\Fpp$ of dimension 2.
					\item  
					Now we show $Y_1$ is formally smooth. Consider a closed immersion $R \hookrightarrow \hat{R}$ in $\mathtt{Sch}_{ / \dF_{p^2}}^{\prime}$ defined by an ideal sheaf $\mathcal{I}$ with $\mathcal{I}^{2}=0$. Take a point 
					$
					y=( A, \lambda_A, \eta_A,\tilde{A} ,  \lambda_{\tilde A},\eta_{\tilde A},  \alpha ) \in Y_1(R).$
					By proposition \ref{thm:STGM}
					to lift $y$ to an $\hat{R}$-point is equivalent to lift
					\begin{itemize}
						\item
						$\omega_{A^{\vee} / R, 0}$ (resp. $ \omega_{\tilde A^{\vee} / R,0} $) to a rank 2 subbundle 
						$\hat{\omega}_{A^{\vee},0}$ (resp. $\hat \omega_{\tilde A^{\vee} ,0} $)
						of $\mathrm{H}_{1}^{\text {cris }}(A / \hat{R})_{0}$ 
						( resp. $\mathrm{H}_{1}^{\text {cris }}(\tilde A / \hat{R})_{0}$), 
						\item
						$\omega_{A^{\vee} / R, 1}$ (resp. $\omega_{\tilde A^{\vee} / R,1} $)
						to a rank 1 subbundle $\hat{\omega}_{A^{\vee},1}$  (resp. $\hat \omega_{\tilde A^{\vee} ,1} $) 
						of $\mathrm{H}_{1}^{\text {cris }}(A / \hat{R})_{1}$ 
						( resp. $\mathrm{H}_{1}^{\text {cris }}(\tilde A / \hat{R})_{1}$), 
					\end{itemize}
					subject to the following requirements
					\begin{enumerate}\label{condlift}
						\item\label{Aorth}
						$\hat{\omega}_{A^{\vee},0}$ and $\hat{\omega}_{A^{\vee},1}$ are orthogonal complement of each other under $\langle ~,~\rangle_{\lambda_A,0}^{\text {cris }} $ (\ref{paircris});
						\item \label{tAorth}
						$\hat{\omega}_{\tilde A^{\vee},0}$ and $\hat{\omega}_{\tilde A^{\vee},1}$ are orthogonal under $\langle ~,~\rangle_{\lambda_{\tilde A,0}}^{\text {cris }} ;$
						\item\label{omegafonc}
						$  \alpha_{*,0}  \hat \omega_{  A^\vee,0} \subseteq  \hat \omega_{\tilde A^\vee,0}  ;$
						\item \label{condY1lift}
						$\hat \omega_{  A^\vee,1} =\ker\alpha_{*,1}.$ 
					\end{enumerate}
					Since $\langle~,~\rangle_{\lambda_A,0}^{\text {cris }}$ is a perfect pairing, $\hat{\omega}_{A^{\vee}, 0}$ is uniquely determined by $\hat{\omega}_{A^{\vee}, 1}=\ker\alpha_{*,1}
					$ by \eqref{Aorth} and \eqref{condY1lift}.
					On the other hand, we have 
					$
					\alpha_{*,0} {\omega}_{  A^{\vee}/R, 0}= 
					\HdR(\tilde A/R)_1^{\perp_{\tilde A}}
					$ by Lemma \ref{lem:Y0Y1Y2induce}\eqref{Y1definduce2}.
					To summarize, lifting $y$ to an $\hat{R}$-point is equivalent to lifting $\omega_{\tilde A^{\vee} / R,0}$ to a subbundle $\hat{\omega}_{\tilde A^{\vee}, 0}$ containing 
					$\mathrm{H}_{1}^{\text {cris }}(\tilde A / \hat{R})_{1}^{\perp_{\tilde A}},$
					and
					lifting $\omega_{  \tilde A^{\vee} / R,1}$ to a subbundle $\hat{\omega}_{  \tilde A^{\vee}, 1} $ of 
					$\hat{\omega}_{\tilde A^{\vee}, 0}^{\perp_{\tilde A}}$
					where the latter has $\cO_{\hat R}$-rank 2. 
					Thus the tangent space  $\cT_{Y_1/\Fpp,y}$ at $y$ fits canonically into an exact sequence
					\begin{multline}
						0 \to \cH om (  {\omega}_{  \tilde A^{\vee}/R, 1},
						{\omega}_{  \tilde A^{\vee}/R, 0}^{\perp_{\tilde A}}
						/ {\omega}_{\tilde  A^{\vee}/R, 1})
						\to
						\cT_{Y_1/\Fpp,y} \\
						\to 
						\cH om  
						(  {\omega}_{\tilde A^{\vee}/R,0}/
						\HdR(\tilde A/R)_1^{\perp_{\tilde A}}, 
						\HdR(   \tilde  A/R )_{ 1} /
						{\omega}_{\tilde A^{\vee}/R,0})
						\to
						0
					\end{multline}

					Thus, $Y_1$ is formally smooth over $\Fpp$ of dimension 2.
					\item We show $Y_2$ is formally smooth using deformation theory. Consider a closed immersion $R \hookrightarrow \hat{R}$ in $\mathtt{Sch}_{ / \dF_{p^2}}^{\prime}$ defined by an ideal sheaf $\mathcal{I}$ with $\mathcal{I}^{2}=0$. Take a point 
					$
					y=( A, \lambda_A, \eta_A,\tilde{A} ,  \lambda_{\tilde A},\eta_{\tilde A},  \alpha ) \in Y_2(R).$
					We return to the proof of Proposition \ref{prop:Y2smooth}.
					By proposition \ref{thm:STGM}
					to lift $y$ to an $\hat{R}$-point is equivalent to lifting
					\begin{itemize}
						\item
						$\omega_{A^{\vee} / R, 0}$ (resp. $\omega_{\tilde A^{\vee} / R,0} $) to a rank 2 subbundle 
						$\hat{\omega}_{A^{\vee},0}$ (resp. $\omega_{\tilde A^{\vee} ,0} $)
						of $\mathrm{H}_{1}^{\text {cris }}(A / \hat{R})_{0}$ 
						( resp. $\mathrm{H}_{1}^{\text {cris }}(\tilde A / \hat{R})_{0}$), 
						\item
						$\omega_{A^{\vee} / R, 1}$ (resp. $\omega_{\tilde A^{\vee} / R,1} $)
						to a rank 1 subbundle $\hat{\omega}_{A^{\vee},1}$  (resp. $\omega_{\tilde A^{\vee} ,1} $) 
						of $\mathrm{H}_{1}^{\text {cris }}(A / \hat{R})_{1}$ 
						( resp. $\mathrm{H}_{1}^{\text {cris }}(\tilde A / \hat{R})_{1}$), 
					\end{itemize}
					subject to the following requirements
					\begin{enumerate}
						\item\label{Aorth}
						$\hat{\omega}_{A^{\vee},0}$ and $\hat{\omega}_{A^{\vee},1}$ are orthogonal complement of each other under $\langle ~,~\rangle_{\lambda_A,0}^{\text {cris }} $ (\ref{paircris});
						\item \label{Y2tA0tA1}
						$\hat{\omega}_{\tilde A^{\vee},1}$ is the  orthogonal complement of  $\mathrm{H}_{1}^{\text {cris }}(\tilde A / \hat{R})_0$ under $\langle ~,~\rangle_{\lambda_{\tilde A},0}^{\text {cris }} $;
						\item  \label{Y2A0tA0}
						$  {\alpha}_{*,i}  \hat{\omega}_{A^{\vee},i } \subseteq \hat{\omega}_{\tilde A^{\vee},i}$ for $i=0,1.$
						\item \label{Y2kerA0}
						$\ker \alpha_{*,0} \subseteq \hat\omega_{A^\vee,0}
						( \text{Lemma } \ref{lem:Y0Y1Y2induce}\eqref{Y2definduce1})
						.$
					\end{enumerate}
					Since $\langle~,~\rangle_{\lambda_A,0}^{\text {cris }}$ is a perfect pairing, $\hat{\omega}_{A^{\vee}, 1}$ is uniquely determined by $\hat{\omega}_{A^{\vee}, 0}$ by (\ref{Aorth}).
					Moreover,
					$\hat{\omega}_{\tilde A^{\vee},1}$ is  uniquely determined by $\mathrm{H}_{1}^{\text {cris }}(\tilde A / \hat{R})_0$ by (\ref{condY2cris}).
					Given a lift $ {\hat \omega}_{A^{\vee}, 0} $ with condition \eqref{Y2kerA0}
					and define $ {\hat \omega}_{A^{\vee}, 1} := {\hat \omega}_{A^{\vee}, 0}^{\perp_A}. $
					We claim $\alpha_{*,1} {\hat \omega}_{A^{\vee}, 1} 
					\subset 
					\hat \omega_{\tilde A^\vee,1}.$
					Indeed, aince 
					$\omega_{\tilde A^\vee,1}=
					\mathrm{H}_{1}^{\text {cris }}(\tilde A / \hat{R})_0
					^{\perp_{\tilde A } },$
					it suffices to check 
					$
					\ang{
						\alpha_{*,1} {\hat \omega}_{  A^{\vee}, 1},
						\mathrm{H}_{1}^{\text {cris }}(\tilde A / \hat{R})_0
					}_{\lambda_{  \tilde A}}=0.
					$
					However, we have
					\begin{multline}
						\ang{
							\alpha_{*,1} {\hat \omega}_{  A^{\vee}, 1},
							\mathrm{H}_{1}^{\text {cris }}(\tilde A / \hat{R})_0
						}_{\lambda_{  \tilde A}}=
						\ang{
							{\hat \omega}_{  A^{\vee}, 1},
							\breve \alpha_{*,0} \mathrm{H}_{1}^{\text {cris }}(\tilde A / \hat{R})_0
						}_{\lambda_{  \tilde A}}=
						\ang{
							{\hat \omega}_{  A^{\vee}, 1},
							\ker\alpha_{*,0}
						}_{\lambda_{  \tilde A}}\subset
						\ang{
							{\hat \omega}_{  A^{\vee}, 1},
							{\hat \omega}_{  A^{\vee}, 0}
						}_{\lambda_{  \tilde A}}=0.
					\end{multline}
					The claim follows.
					To summarize, lifting $y$ to an $\hat{R}$-point is equivalent to
					lifting $\omega_{     A^{\vee} / R,0}$ to a subbundle $\hat{\omega}_{      A^{\vee}, 0}$  of 
					$\mathrm{H}_{1}^{\text {cris }}(    A / \hat{R})_0$
					containing $\ker\alpha_{*,0}$
					and 
					lifting $\omega_{   \tilde A^{\vee} / R,0}$ to a subbundle
					$\hat{\omega}_{ \tilde  A^{\vee}, 0}$ of
					$\mathrm{H}_{1}^{\text {cris }}(\tilde A / \hat{R})_0$
					containing $\alpha_{*,0}\hat{\omega}_{    A^{\vee}, 0}.$
					Thus the tangent space  $T_{Y_2/\Fpp,y}$ at $y$ fits canonically into an exact sequence
					\begin{multline}
						0 \to 
						\cH om (  {\omega}_{  \tilde  A^{\vee}/R, 0}/
						\alpha_{*,0} {\omega}_{      A^{\vee}/R, 0},
						\HdR(   \tilde   A/R )_{ 0}
						/ {\omega}_{   \tilde A^{\vee}/R, 0})
						\to
						T_{Y_2/\Fpp,y} \\
						\to 
						\cH om  
						(  {\omega}_{  A^{\vee}/R,0}/\ker\alpha_{*,0},
						\HdR(      A/R )_{ 0} /
						{\omega}_{  A^{\vee}/R,0})
						\to
						0.
					\end{multline}
					Thus $Y_2$ is smooth over $\Fpp$ of  dimension 2.
				\end{enumerate}
			\end{proof}
		\end{proposition}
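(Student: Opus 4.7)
The plan is to treat each of $Y_0,Y_1,Y_2$ by Grothendieck--Messing/Serre--Tate deformation theory in the form of Proposition~\ref{thm:STGM}. Fix a square-zero thickening $R\hookrightarrow \hat R$ in $\mathsf{Sch}'_{/\mathbb{F}_{p^2}}$ and an $R$-point $y=(A,\lambda_A,\eta_A,\tilde A,\lambda_{\tilde A},\eta_{\tilde A},\alpha)$ of $Y_i$. Lifting $y$ to $\hat R$ amounts to lifting the four Hodge filtrations $\omega_{A^\vee/R,j}$ and $\omega_{\tilde A^\vee/R,j}$ ($j=0,1$) to subbundles inside the crystalline homologies, subject to three families of constraints: (a) orthogonality under the crystalline pairings induced by $\lambda_A$ and $\lambda_{\tilde A}$, (b) the compatibility $\alpha_{*,j}\hat\omega_{A^\vee,j}\subseteq \hat\omega_{\tilde A^\vee,j}$ coming from the existence of the lifted isogeny, and (c) the defining condition of $Y_i$ itself. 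The idea is to show that, on each stratum, after eliminating the pieces fixed by duality and by the defining relation, the remaining free data form a two-step extension whose graded pieces are the two $\cH om$-sheaves appearing in the claimed sequence.

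For $Y_0$, I would argue that $\langle\,,\,\rangle_{\lambda_A}^{\mathrm{cris}}$ is perfect, so $\hat\omega_{A^\vee,0}$ is determined by $\hat\omega_{A^\vee,1}$, and the defining identity $\omega_{\tilde A^\vee,0}=\operatorname{im}\alpha_{*,0}$ lifts uniquely to $\alpha_{*,0}\rH_1^{\mathrm{cris}}(\tilde A/\hat R)_0$. By Lemma~\ref{lem:Y0Y1Y2induce}\eqref{Y0definduce1}, lifting $\omega_{\tilde A^\vee,1}$ is equivalent to lifting its $\alpha_{*,1}$-preimage to a rank $2$ subbundle containing $\ker\alpha_{*,1}$; and one then has to lift $\hat\omega_{A^\vee,1}$ inside that rank $2$ subbundle. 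The two-stage choice yields the exact sequence~\eqref{eq:Y0tangent}. For $Y_1$, I would reverse the role: the defining identity forces $\hat\omega_{A^\vee,1}=\ker\alpha_{*,1}$, hence also $\hat\omega_{A^\vee,0}$ by duality, and Lemma~\ref{lem:Y0Y1Y2induce}\eqref{Y1definduce2} shows that $\hat\omega_{\tilde A^\vee,0}$ must contain $\rH_1^{\mathrm{cris}}(\tilde A/\hat R)_1^{\perp_{\tilde A}}$; then $\hat\omega_{\tilde A^\vee,1}$ lives in the (rank $2$) orthogonal complement $\hat\omega_{\tilde A^\vee,0}^{\perp_{\tilde A}}$. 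The resulting freedom gives the stated exact sequence.

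For $Y_2$, the pairing $\langle\,,\,\rangle_{\lambda_{\tilde A}}^{\mathrm{cris}}$ is \emph{not} perfect (since $\ker\lambda_{\tilde A}$ has order $p^2$), so the defining condition $\omega_{\tilde A^\vee,1}=\rH_1^{\mathrm{dR}}(\tilde A)_0^{\perp_{\tilde A}}$ still picks out a proper subbundle of $\rH_1^{\mathrm{cris}}(\tilde A/\hat R)_1$, and this lifts uniquely as $\rH_1^{\mathrm{cris}}(\tilde A/\hat R)_0^{\perp_{\tilde A}}$; duality for $\lambda_A$ fixes $\hat\omega_{A^\vee,1}$ in terms of $\hat\omega_{A^\vee,0}$. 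By Lemma~\ref{lem:Y0Y1Y2induce}\eqref{Y2definduce1}, the containment $\ker\alpha_{*,0}\subseteq\omega_{A^\vee,0}$ must be preserved, so $\hat\omega_{A^\vee,0}$ is chosen among rank $2$ subbundles containing $\ker\alpha_{*,0}$, and $\hat\omega_{\tilde A^\vee,0}$ among rank $2$ subbundles containing $\alpha_{*,0}\hat\omega_{A^\vee,0}$. The verification then gives the exact sequence~\eqref{eq:Y2tangent}. In all three cases the two graded pieces have rank $1$, so the stratum has dimension $2$ and is formally smooth; combined with the local finite-type hypothesis this proves smoothness of $Y_i/\mathbb{F}_{p^2}$.

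The main bookkeeping obstacle is the $Y_2$ case, where one must check that the $\alpha$-compatibility in direction $1$, namely $\alpha_{*,1}\hat\omega_{A^\vee,1}\subseteq\hat\omega_{\tilde A^\vee,1}$, is automatic from the other constraints; concretely, one pairs $\alpha_{*,1}\hat\omega_{A^\vee,1}$ against all of $\rH_1^{\mathrm{cris}}(\tilde A/\hat R)_0$ via the adjunction $\langle\alpha_{*}x,y\rangle_{\lambda_{\tilde A}}=\langle x,\breve\alpha_{*}y\rangle_{\lambda_A}$, uses $\breve\alpha_{*,0}\rH_1^{\mathrm{cris}}(\tilde A/\hat R)_0=\ker\alpha_{*,0}\subseteq\hat\omega_{A^\vee,0}$, and invokes the orthogonality $\langle\hat\omega_{A^\vee,1},\hat\omega_{A^\vee,0}\rangle_{\lambda_A}=0$. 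Apart from this, the rest is careful tracking of which sub-steps of the deformation data are forced versus free, and checking that the ranks of the resulting $\cH om$-sheaves globalize the pointwise tangent space calculation into the sheaf-level exact sequences asserted in~\eqref{eq:Y0tangent}--\eqref{eq:Y2tangent}.
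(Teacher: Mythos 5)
Your proposal is correct and follows essentially the same deformation-theoretic route as the paper: in each case you apply Proposition~\ref{thm:STGM}, eliminate the forced lifts (by duality and by the stratum's defining condition), identify the two-step filtration of free data, and note each graded piece is a rank-one $\cH om$-sheaf. You also correctly isolate the one nontrivial verification in the $Y_2$ case, namely that $\alpha_{*,1}\hat\omega_{A^\vee,1}\subseteq\hat\omega_{\tilde A^\vee,1}$ is automatic, and give the same adjunction argument the paper uses.
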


		\begin{lem}\label{lem:unionIw}
			$S_0(p)$ is the union of three strata defined over $\Fpp$
			\[
			S_0(p)=Y_0 \cup Y_1 \cup Y_2.
			\]
			\begin{proof}
				
				By Hilbert's Nullstellensatz,  it suffices to show that 
				\[S_0(p)(\kappa)=Y_0(\kappa)\cup Y_1(\kappa)\cup Y_2(
				\kappa)\] for an algebraically closed field $\kappa$ of characteristic $p$.
				Take $s=(A ,  \lambda_A, \eta_A,   \tilde A 
				, \lambda_{\tilde A}, \eta_{\tilde A}, 
				\alpha) 
				\in S_0(p)(\kappa).$    
				Suppose $s \notin Y_0(\kappa) \cup    Y_1(\kappa),$
				that is, $\omega_{\tilde A^\vee/R,0}  \neq \mathrm{im}\alpha_{*,0}$ and  $ \omega_{A^\vee/R,1} \neq\ker\alpha_{*,1}$. It follows that 
				$ \omega_{A^\vee/R,1} \cap \ker\alpha_{*,1}=\{0\}$
				by the rank condition and therefore 
				$\alpha_{*,1} $ induces an isomorphism
				$ \omega_{\tilde A^\vee/R,1}=\alpha_{*,1} \omega_{A^\vee,1}.$ 
				Thus 
				$ \ang{ \im\alpha_{*,0},  \omega_{\tilde A^\vee/R,1}} _{\lambda_{\tilde A}}=
				\ang{ \im\alpha_{*,0},  \alpha_{*,1} \omega_{A^\vee,1} }_{\lambda_{\tilde A}}=0.$ On the other hand, 
				we have 
				$
				\ang{\omega_{\tilde A^\vee/R,0},    \omega_{\tilde A^\vee/R,1} }_{\lambda_{\tilde A}}=0.
				$
				Since  $\omega_{\tilde A^\vee/R,0}  \neq \mathrm{im}\alpha_{*,0}$ , we conclude 
				$  
				\ang{\rH_1^{\r{dR}}(\tilde A/R)_0,    \omega_{\tilde A^\vee/R,1} }_{\lambda_{\tilde A}}=0
				. $
				Thus $s \in Y_2(\kappa)$ and the lemma follows.
			\end{proof}
		\end{lem}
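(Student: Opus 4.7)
By Hilbert's Nullstellensatz, since $Y_0,Y_1,Y_2$ are closed in $S_0(p)$, it suffices to establish the set-theoretic covering $S_0(p)(\kappa)=Y_0(\kappa)\cup Y_1(\kappa)\cup Y_2(\kappa)$ for every algebraically closed field $\kappa$ of characteristic $p$. Fix $s=(A,\lambda_A,\eta_A,\tilde A,\lambda_{\tilde A},\eta_{\tilde A},\alpha)\in S_0(p)(\kappa)$ with $s\notin Y_0(\kappa)\cup Y_1(\kappa)$; the plan is to show $\omega_{\tilde A^\vee,1}=\HdR(\tilde A/\kappa)_0^{\perp_{\tilde A}}$, i.e.\ $s\in Y_2(\kappa)$.

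I would first collect the relevant ranks. The signature $(1,2)$ condition yields $\rank\omega_{A^\vee,0}=\rank\omega_{\tilde A^\vee,0}=2$ and $\rank\omega_{A^\vee,1}=\rank\omega_{\tilde A^\vee,1}=1$. Since $\ker\alpha$ is an $O_F$-stable Raynaud subgroup of $A[p]$ of rank $p^2$, its Dieudonn\'e module has $\kappa$-rank $2$ and, by $O_F\otimes\kappa$-equivariance, splits as $1+1$; hence each $\ker\alpha_{*,i}$ has rank $1$ and each $\im\alpha_{*,i}$ has rank $2$. The degree calculation arising from $p\lambda_A=\alpha^\vee\lambda_{\tilde A}\alpha$ shows $\ker\lambda_{\tilde A}$ has rank $p^2$, so the radical of $\langle\,,\,\rangle_{\lambda_{\tilde A}}$ on $\HdR(\tilde A/\kappa)$ has total rank $2$ and, by $O_F$-symmetry, splits as $1+1$; in particular $\HdR(\tilde A/\kappa)_0^{\perp_{\tilde A}}$ is a rank $1$ line in $\HdR(\tilde A/\kappa)_1$, so establishing the inclusion $\omega_{\tilde A^\vee,1}\subseteq\HdR(\tilde A/\kappa)_0^{\perp_{\tilde A}}$ will suffice by rank comparison.

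The assumption $s\notin Y_1(\kappa)$ says $\omega_{A^\vee,1}\neq\ker\alpha_{*,1}$; as two distinct rank $1$ subbundles of the rank $3$ space $\HdR(A/\kappa)_1$, they intersect trivially, so $\alpha_{*,1}$ sends $\omega_{A^\vee,1}$ injectively into $\omega_{\tilde A^\vee,1}$, and comparing with the rank $1$ target forces $\omega_{\tilde A^\vee,1}=\alpha_{*,1}\omega_{A^\vee,1}$. Dually, $s\notin Y_0(\kappa)$ says $\omega_{\tilde A^\vee,0}\neq\im\alpha_{*,0}$; being two distinct rank $2$ subbundles of rank $3$, they span $\omega_{\tilde A^\vee,0}+\im\alpha_{*,0}=\HdR(\tilde A/\kappa)_0$. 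I would then verify orthogonality on each summand. On $\omega_{\tilde A^\vee,0}$ the pairing with $\omega_{\tilde A^\vee,1}$ vanishes by isotropy of the Hodge filtration. On $\im\alpha_{*,0}$ the identity $\langle\alpha_{*,0}x,\alpha_{*,1}y\rangle_{\lambda_{\tilde A}}=p\langle x,y\rangle_{\lambda_A}$, a direct consequence of $p\lambda_A=\alpha^\vee\lambda_{\tilde A}\alpha$, makes the pairing vanish in characteristic $p$. Combining these, $\omega_{\tilde A^\vee,1}\perp\HdR(\tilde A/\kappa)_0$, giving the desired inclusion and hence equality.

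The argument is purely linear algebraic on de Rham homology, and I do not anticipate a deep obstacle. The subtlest book-keeping item is the dimension of the radical $\HdR(\tilde A/\kappa)_0^{\perp_{\tilde A}}$, which depends on the failure of $\lambda_{\tilde A}$ to be $p$-principal; this is why the $O_F$-equivariant splitting of $\ker\lambda_{\tilde A}$ is essential for upgrading the inclusion to the equality required by the definition of $Y_2$.
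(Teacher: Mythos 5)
Your proof is correct and follows essentially the same route as the paper's: reduce to $\kappa$-points, deduce from $s\notin Y_0\cup Y_1$ that $\omega_{\tilde A^\vee,1}=\alpha_{*,1}\omega_{A^\vee,1}$ and $\omega_{\tilde A^\vee,0}+\im\alpha_{*,0}=\HdR(\tilde A)_0$, then kill the pairing summand by summand using isotropy of the Hodge filtration and $\langle\alpha_*x,\alpha_*y\rangle_{\lambda_{\tilde A}}=p\langle x,y\rangle_{\lambda_A}\equiv 0$. Your additional bookkeeping on the rank of $\HdR(\tilde A)_0^{\perp_{\tilde A}}$ (needed to upgrade the inclusion to the equality defining $Y_2$) is a point the paper leaves implicit, but it matches the intent.
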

		\subsection{Relation between strata of $S_0(p)$ and $S$}
		
		\begin{definition}
			Let   $S^{\#}$ be the moduli scheme  that associates with every scheme $R\in\mathtt{Sch'}_{/\Fpp},$ the
			isomorphism classes of pairs 
			$(A, \lambda_A , \eta_A, \mathcal{P}_{0})$ 
			where
			\begin{enumerate}
				\item
				$ (A, \lambda_A , \eta_A) \in S(R);$
				\item
				$\mathcal{P}_{0}$  is a line subbundle of 
				$  \operatorname{ker} (\tV: \omega_{A^\vee / R,0}   \to  \omega_{{A^\vee}^{(p)} / R,0} ).$ 
			\end{enumerate}
		\end{definition}	
		Given a point $(A,\lambda_A,\eta_A) \in S(R)$ for a scheme $R\in\mathtt{Sch'}_{/\Fpp},$ 
		recall (Notation \eqref{n:Frob}) that
		we have the locally free 
		$\cO_R$-module $\HdR(A/R)$, 
		the Frobenius map 
		$\tV_A:\HdR(A/R)_i \to \HdR(A^{(p)}/R)_{i+1}$ 
		and the Verschiebung map 
		$\tF_A: \HdR(A^{(p)}/R)_{i+1} \to \HdR(A/R)_{i}$   for $i=0,1$
		satisfying $\ker\tF_A=\im\tV_A=\omega_{A^{(p)} /R}, \ker\tV_A=\im\tF_A.$   If no confusion arises we denote them by $\tF$ and $\tV.$ The $p$-principal polarization $\lambda_A$ induces a perfect pairing $\ang{~,~}$ on $\HdR(A/R).$ Denote by $H^\perp$ the orthogonal complement of a subbundle $H$ of $ \HdR(A/R)$ under the pairing $\ang{~,~}.$

		\begin{proposition}\label{prop:Sblowsmooth}
			$S^\#$   is smooth of dimension 2 over $\Fpp.$ 
			Moreover, 
			let $(\cA, \cP_0)$ denote the universal object on
			$S^\#$.
			Then the tangent bundle $\cT_{S^\# /\Fpp}$ of $S^\#$ fits into an exact sequence
			\begin{multline}\label{eq:Sblowtangent}
				0 \to \cH om (  {\omega}_{  \cA^{\vee}/S^\#, 1},
				\cP_0^\perp/ {\omega}_{  \cA^{\vee}/S^\#, 1})
				\to
				\cT_{S^\#/\Fpp}  
				\to 
				\cH om  
				(\cP_0^\perp /(\ker\tV)_1, 
				\HdR(     \cA/S^\# )_{ 1} /
				\cP_0^\perp )
				\to
				0
			\end{multline}
			\begin{proof}
				We show $S^\#$ is formally smooth using deformation theory. Consider a closed immersion $R \hookrightarrow \hat{R}$ in $\mathtt{Sch}_{ / \dF_{p^2}}^{\prime}$ defined by an ideal sheaf $\mathcal{I}$ with $\mathcal{I}^{2}=0$. Take a point 
				$s=(A, \lambda_A , \eta_A, \mathcal{P}_{0}) \in S^\#(R).$
				By proposition \ref{thm:STGM}
				lifting $s$ to an $\hat{R}$-point is equivalent to lifting
				\begin{itemize}
					\item
					$\omega_{A^{\vee} / R, 0}$ (resp. $ \omega_{  A^{\vee} / R,1} $) to a rank 2 (resp. rank 1) subbundle 
					$\hat{\omega}_{A^{\vee},0}$ (resp. $\hat \omega_{  A^{\vee} ,1} $)
					of $\mathrm{H}_{1}^{\text {cris }}(A / \hat{R})_{0}$ 
					( resp. $\mathrm{H}_{1}^{\text {cris }}(  A / \hat{R})_{1}$), 
					\item
					$\cP_0$ to a rank 1 subbundle $\hat \cP_0$ of $(\ker \tV)_0.$
				\end{itemize}
				subject to the following requirements
				\begin{enumerate}
					\item\label{SblowAorth}
					$\hat{\omega}_{A^{\vee},0}$ and $\hat{\omega}_{A^{\vee},1}$ are orthogonal complement of each other under $\langle ~,~\rangle_{\lambda_A}^{\text {cris }} $ (\ref{paircris});
					\item  $\hat \cP_0 \subseteq \hat\omega_{A^\vee,0};$
					
				\end{enumerate}
				Since $\langle~,~\rangle_{\lambda_A,0}^{\text {cris }}$ is a perfect pairing, $\hat{\omega}_{A^{\vee}, 0}$ is uniquely determined by $\hat{\omega}_{A^{\vee}, 1}$ by \eqref{SblowAorth}. In the meanwhile, 
				lifting $\cP_0$ is equivalent to lifting $\cP_0^\perp$
				to a rank 2 subbundle $\hat\cP_1$ of $\mathrm{H}_{1}^{\text {cris }}(  A / \hat{R})_{1}$ subject to the conditions
				\begin{enumerate}[resume]
					\item\label{liftconditionAP0perp}
					$  (\ker\tV)_0^\perp=(\ker\tV)_1 \subseteq \hat \cP_1$;
					\item  
					$\hat{\omega}_{A^{\vee}, 0}^\perp=\hat{\omega}_{A^{\vee}, 1} \subseteq  
					\hat \cP_1.
					$
				\end{enumerate}
				Therefore, it suffices to give the lifts $\hat{\omega}_{A^\vee,1}$ and $\hat\cP_1$ subject to the conditions
				\eqref{liftconditionAP0perp}.
				Thus the tangent space  $T_{S^\#/\Fpp,s}$ at $s$ fits canonically into an exact sequence
				\begin{multline}
					0 \to \cH om (  {\omega}_{  \cA^{\vee}/S^\#, 1},
					\cP_0^\perp/ {\omega}_{  \cA^{\vee}/S^\#, 1})
					\to
					\cT_{S^\#/\Fpp,s}  
					\to 
					\cH om  
					(\cP_0^\perp /(\ker\tV)_1, 
					\HdR(     \cA/S^\# )_{ 1} /
					\cP_0^\perp )
					\to
					0
				\end{multline}
				Thus, $S^\#$ is formally smooth over $\Fpp$ of dimension 2.
				
			\end{proof}
		\end{proposition}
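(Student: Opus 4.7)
The plan is to mimic the deformation-theoretic argument already used for $Y_0, Y_1, Y_2$ in Proposition \ref{prop:Y0Y1Y2smooth}, now adapted to the extra datum $\cP_0$. Concretely, I would show formal smoothness by a standard infinitesimal lifting check: fix a closed immersion $R \hookrightarrow \hat R$ in $\mathtt{Sch}'_{/\Fpp}$ defined by a square-zero ideal, take an $R$-point $s=(A,\lambda_A,\eta_A,\cP_0) \in S^\#(R)$, and parametrize the lifts of $s$ to $S^\#(\hat R)$.

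By Proposition \ref{thm:STGM}, lifting the unitary $O_F$-abelian scheme $(A,\lambda_A)$ to $\hat R$ is equivalent to lifting the Hodge filtration $\omega_{A^\vee/R,i} \subset \rH_1^{\mathrm{cris}}(A/\hat R)_i$ for $i=0,1$, subject to the orthogonality condition under $\langle\,,\,\rangle^{\mathrm{cris}}_{\lambda_A}$. Since the crystalline pairing is perfect, $\hat\omega_{A^\vee,0}$ is determined by $\hat\omega_{A^\vee,1}$, so the only free datum on the abelian scheme side is a rank-one lift $\hat\omega_{A^\vee,1}$ of $\omega_{A^\vee/R,1}$ in $\rH_1^{\mathrm{cris}}(A/\hat R)_1$. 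The level structure $\eta_A$ lifts uniquely by Serre--Tate. It remains to lift $\cP_0$ to a line subbundle $\hat\cP_0$ of $(\ker\tV)_0$ that is contained in $\hat\omega_{A^\vee,0}$. Dualizing through the perfect pairing, this is equivalent to lifting $\cP_0^\perp$ to a rank-two subbundle $\hat\cP_1$ of $\rH_1^{\mathrm{cris}}(A/\hat R)_1$ containing both $(\ker\tV)_1$ and $\hat\omega_{A^\vee,1}$; the flag condition becomes $\hat\omega_{A^\vee,1}\subset\hat\cP_1$ with $\hat\cP_1\supset(\ker\tV)_1$.

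I would then organize the two lifts hierarchically: first lift $\cP_0^\perp$ to $\hat\cP_1$ (the set of such lifts is a torsor under $\cH om(\cP_0^\perp/(\ker\tV)_1,\, \rH_1^{\mathrm{dR}}(A/R)_1/\cP_0^\perp)$, which is a line bundle), and then, given $\hat\cP_1$, lift $\omega_{A^\vee,1}$ to a line subbundle $\hat\omega_{A^\vee,1}\subset\hat\cP_1$ (a torsor under $\cH om(\omega_{A^\vee/R,1},\, \cP_0^\perp/\omega_{A^\vee/R,1})$, also a line bundle). This yields formal smoothness and dimension $2$, and assembling the two torsor descriptions gives the advertised short exact sequence
\begin{equation*}
0 \to \cH om(\omega_{\cA^\vee/S^\#,1},\, \cP_0^\perp/\omega_{\cA^\vee/S^\#,1}) \to \cT_{S^\#/\Fpp} \to \cH om(\cP_0^\perp/(\ker\tV)_1,\, \rH_1^{\mathrm{dR}}(\cA/S^\#)_1/\cP_0^\perp) \to 0.
\end{equation*}

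The only subtle point — and the main thing I would check carefully — is that the condition $\cP_0\subset(\ker\tV)_0$ really does translate under the pairing to the condition $(\ker\tV)_1\subset\cP_0^\perp$, so that the ``containing $(\ker\tV)_1$'' requirement on $\hat\cP_1$ is both necessary and sufficient to encode the Verschiebung constraint. This follows because $(\ker\tV)_0$ and $(\ker\tV)_1=\im\tF_1=\omega_{A^{(p)}/R,1}$ are exact orthogonal complements under the Hodge pairing (equivalently, because $\ker\tV$ is Lagrangian), so the two formulations of the problem are equivalent; once this is in hand the rest of the argument is a routine dimension count and the tangent sheaf identification follows directly from the parametrizations above.
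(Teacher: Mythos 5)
Your proposal is correct and matches the paper's own proof almost step for step: Grothendieck--Messing to reduce to lifting the Hodge filtration and $\cP_0$, the perfect pairing to eliminate $\hat\omega_{A^\vee,0}$ and to replace $\hat\cP_0$ by its annihilator $\hat\cP_1$, and the two-step flag $\hat\omega_{A^\vee,1}\subset\hat\cP_1\supset(\ker\tV)_1$ giving the torsor filtration and hence the exact sequence of tangent sheaves. One small slip in your final paragraph: you write $(\ker\tV)_1=\im\tF_1=\omega_{A^{(p)}/R,1}$, but it is $\ker\tF=\im\tV$ (a subbundle of $\rH_1^{\mathrm{dR}}(A^{(p)}/R)$) that equals $\omega_{(A^{(p)})^\vee/R}$, whereas $\ker\tV=\im\tF$ is a different bundle and lives in $\rH_1^{\mathrm{dR}}(A/R)$; the orthogonality $(\ker\tV)_0^\perp=(\ker\tV)_1$ you actually need follows instead from the adjunction $\langle\tF x,y\rangle_{\lambda_A}=\langle x,\tV y\rangle^\sigma_{\lambda_A}$ together with the rank count $\operatorname{rk}(\ker\tV)_0+\operatorname{rk}(\ker\tV)_1=3$, and this is exactly what the paper invokes.
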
 
		\begin{remark}
			By \cite[2.3]{dSG18}, $S^\#$ is the moduli space 
			represented by the
			blow up of $S$ at the superspecial points.
			Indeed, for $R\in\mathtt{Sch'}_{/\Fpp}$ and
			$(A, \lambda_A , \eta_A, \mathcal{P}_{0}) \in S^\#(R),$ 
			if $A$ is not superspecial then $\cP_0=\ker (\tV\mid_{\omega_{A^\vee / R,0}} )$ is unique. At superspecial points, since
			$\tV\mid_{\omega_{A^\vee / R,0}}  $ vanishes,  the additional datum $\cP_0$ amounts to a choice of a subline bundle
			$\omega_{A^\vee / R,0}.$
		\end{remark}
		\begin{proposition}\label{prop:Y0Y1blowup}
			\cite[4.3.2]{dSG18}
			\begin{enumerate}
				\item \label{Y0Sblow}
				There is an isomorphism of $\Fpp$-schemes
				\[
				\pi_0^\# : Y_0 \xra{\sim} S^\#
				\]
				defined as follows:
				given a point 
				$y=(A, \lambda_A , \eta_A, \tilde{A}, \lambda_{\tilde{A}},  \eta_{\tilde{A} },\alpha)  \in Y_0(R)$
				for a scheme $R\in\mathtt{Sch'}_{/\Fpp},$ 
				define
				\[
				\pi_0^\#(y)=
				(A, \lambda_A , \eta_A, 
				( \alpha_{*,1}^{-1} \omega_{\tilde A^\vee/R,1} )^{\perp}) 
				\in S^\#(R).
				\] 
				\item
				There is a purely inseparable morphism of $\Fpp$-schemes
				\[
				\pi_1^\# : Y_1 \to S^\#
				\] defined as follows:
				given a point $y=(A, \lambda_A , \eta_A, \tilde{A}, \lambda_{\tilde{A}},  \eta_{\tilde{A} },\alpha)  \in Y_1(R)$ for a scheme $R\in\mathtt{Sch'}_{/\Fpp},$ define
				\[
				\pi_1^\#(y)=
				(A, \lambda_A , \eta_A, (\alpha_{*,1}^{-1} (\ker\tV_{\tilde A})_1  )^{\perp} )\in S^\#(R).
				\]   
			\end{enumerate}
			\begin{proof}
				\begin{enumerate}
					\item
					We check $\pi_0^\# $ is well-defined. Given a point $y=(A, \lambda_A , \eta_A, \tilde{A}, \lambda_{\tilde{A}},  \eta_{\tilde{A} },\alpha)  \in Y_0(R)$ for a scheme $R\in\mathtt{Sch'}_{/\Fpp},$ 
					we need to show
					$ (\alpha_{*,1}^{-1} \omega_{\tilde A^\vee/R,1} )^{\perp} \subseteq (\ker\tV_A)_0 \cap  \omega_{  A^\vee/R, 0}.$
					Firstly we show 
					$(\alpha_{*,1}^{-1} \omega_{\tilde A^\vee/R,1} )^{\perp}
					\subseteq  \omega_{  A^\vee/R, 0}.$ By duality it suffices to show
					$
					\omega_{  A^\vee/R, 1} \subset 
					\alpha_{*,1}^{-1} \omega_{\tilde A^\vee/R,1} ,
					$
					which follows from functoriality.
					Secondly we show 
					$(\alpha_{*,1}^{-1} \omega_{\tilde A^\vee/R,1} )^{\perp}
					\subseteq (\ker\tV_A)_0 .$
					By duality it suffices to show
					$
					(\ker\tV_A)_1 \subseteq
					\alpha_{*,1}^{-1} \omega_{\tilde A^\vee/R,1} .
					$   The condition 
					$ \im \alpha_{*,0}  = \omega_{\tilde A^\vee/R,0  }$
					implies
					$ \im \alpha_{*,0}^{(p)} = \omega_{(\tilde A^{(p)})^\vee/R,0  } .$
					The   commutative diagram \eqref{diag:commuteF}
					then implies 
					$
					\alpha_{*,1}  (\ker\tV_A)_1 =  \alpha_{*,1}   (\im\tF_A)_1 
					=\tF_{\tilde A}   \im \alpha_{*,0}^{(p)} 
					=\tF_{\tilde A}  \omega_{(\tilde A^{(p)} )^\vee/R,1}
					=0. 
					$
					Thus  $\pi_0^\# $ is well-defined.
					
					Since $S^\#$ is smooth over $\Fpp$, to show that $\pi_0^\#$ is an isomorphism, it suffices to check that for every algebraically closed field $\kappa$ containing $\Fpp$, we have
					\begin{enumerate}
						\item \label{isoY0Sblowpt}
						$\pi_0^\#$ induces a bijection on $\kappa$-points;  
						\item \label{isoY0Sblowtan}
						$\pi_0^\#$ induces an isomorphism on the tangent spaces at every $\kappa$-point.
					\end{enumerate}
					For \eqref{isoY0Sblowpt}, it suffices to construct a map $ \theta :S^{\#}(\kappa)\to Y_0(\kappa) $ inverse to $\pi_0^\#.$ Take a point
					$s = (A, \lambda_A , \eta_A, \mathcal{P}_{0}) \in S^\# (\kappa).
					$ 
					We will construct a point $y=
					( 
					A, \lambda_A, \eta_A,
					\tilde{A},\lambda_{\tilde A},\eta_{\tilde A},
					\alpha
					) \in 
					Y_0(\kappa).$
					Recall that  there is a perfect pairing $\ang{~,~}$ on $\cD(A)$ lifting that on $\HdR(A/\kappa).$ Given a $W(\kappa)$-submodule $M$ of $\cD(A)$ denote by $M^\vee$ the dual lattice
					\[
					M^\vee:=\{x\in\cD(A)\mid \ang{x,M} \in W(\kappa)\}.
					\]
					We list miscellaneous properties of $\cD(  A )$ and $\cP_0:$
					\begin{enumerate}[resume]
						\item\label{cond:chainA}
						We have two chains of $W(\kappa)$-modules
						\[
						p   \cD( A )_0 \stackrel{2} \subset
						\tF \cD(A )_1
						\stackrel{1}\subset
						\cD( A)_0, \,
						p   \cD( A)_1 \stackrel{1} \subset
						\tF  \cD(A )_0
						\stackrel{2}\subset 
						\cD( A )_1.
						\]
						Here, for an inclusion of $W(\kappa)$-modules $N\stackrel{i}\subset M$, the number $i$ above $\subset$ means  $\dim_{\kappa}(M/N)=i$. 
						\item\label{cond:Aself}
						$\cD(A)$ is self dual: $\cD(A)_0^\perp=\cD(A)_1, ~\cD(A)_1^\perp=\cD(A)_0.$
						\item\label{cond:Domega}
						The preimage of 
						$  (\ker\tV_A)_0\cap \omega_{A^\vee/S,0}$
						under the reduction map $\cD(A)_0 \to \cD(A)_0/p\cD(A)_0\cong \HdR(A/R)_0$  is
						$\tF\tD(A)_1\cap \tV\cD(A)_1.$
						\item\label{cond:P0sub}
						$\cP_0$ is a $\kappa$-vector subspace of 
						$\ker\tV\cap \omega_{A^\vee/R,0}$ of dimension 1.
						\item\label{chainpreP0}
						Denote by $\tilde \cP_0$ the preimage of $\cP_0$ under the reduction map $\cD(A)_0 \to   \HdR(A/\kappa)_0.$ Then we have   chains of $W(\kappa)$-modules
						\[
						p\tV\cD(A)_1 \stackrel{1}  \subset 
						p \cD(A)_0 \stackrel{1}  \subset 
						\tilde \cP_0   \subset 
						\tF \cD(A)_1 \cap \tV \cD(A)_1  , ~
						\tilde \cP_0 \stackrel{2}  \subset 
						\tV \cD(A)_0.
						\]
						\[
						p\tF\cD(A)_1 \stackrel{1}  \subset 
						p \cD(A)_0 \stackrel{1}  \subset 
						\tilde \cP_0   \subset 
						\tF \cD(A)_1 \cap \tV \cD(A)_1,    ~ 
						\tilde \cP_0 \stackrel{2}  \subset 
						\tF \cD(A)_0.
						\] 
					\end{enumerate}
					
					We set
					\[
					\cD_{\tilde A,0}= \tF   (\tilde \cP_0)^\vee, ~
					\cD_{\tilde A,1}= \tV^{-1} \cD(A )_0 ,\,
					\cD_{\tilde A}=\cD_{\tilde A,0}+\cD_{\tilde A,1}.
					\]
					We verify that $\cD_{\tilde A}$ is $\tF,\tV$-stable and satisfies the following chain conditions: 
					\begin{enumerate}[resume]\label{tAFVstable}
						\item
						$\tV   \cD_{\tilde A,0}  \stackrel{2}  \subset  \cD_{\tilde A,1}.$
						It suffices to check 
						$  (\tilde \cP_0)^\vee \stackrel{2}  \subset   p^{-1}\tV^{-1} \cD(A )_0$. 
						By taking duals, this  is equivalent to 
						$  p\tF \cD(A )_1 \stackrel{2}  \subset  \tilde \cP_0   ,$
						which follows from \eqref{chainpreP0}.
						\item 
						$\tF   \cD_{\tilde A,0}  \stackrel{2}  \subset \cD_{\tilde A,1}.$
						It suffices to check 
						$  (\tilde \cP_0)^\vee \stackrel{2}  \subset   p^{-1}\tF^{-1} \cD(A )_0$. 
						By taking duals,  this is equivalent to 
						$  p\tV \cD(A )_1 \stackrel{2}  \subset  \tilde \cP_0   ,$
						which follows from \eqref{chainpreP0}.
						\item\label{tAV10}
						$\tV \cD_{\tilde A,1} \stackrel{1} \subset \cD_{\tilde A,0}.$ 
						It suffices to check 
						$   \tF^{-1} \cD(A )_0 \stackrel{1}  \subset (\tilde \cP_0)^\vee $.
						By taking  duals,  this is equivalent to 
						$  \tilde \cP_0 \stackrel{1}  \subset  \tV\cD(A )_1   ,$
						which follows from \eqref{chainpreP0}.
						
						\item \label{tAV01}
						$\tF \cD_{\tilde A,1} \stackrel{1} \subset \cD_{\tilde A,0}.$ 
						It suffices to check 
						$   \tV^{-1} \cD(A )_0 \stackrel{1}  \subset (\tilde \cP_0)^\vee .$
						By taking  duals,  this is equivalent to 
						$  \tilde \cP_0 \stackrel{1}  \subset  \tF \cD(A )_0   ,$
						which follows from \eqref{chainpreP0}.
						\item \label{BlowRaynaud}
						$ \cD(A)_0 \stackrel{1}  \subset  \cD_{\tilde A,0},~ \cD(A)_1 \stackrel{1} \subseteq  \cD_{\tilde A,1}.$ Same as \eqref{tAV10} and \eqref{cond:chainA}.
					\end{enumerate}  
					Thus we have an inclusion $ \cD(A)\subseteq  \cD_{\tilde A}  $.
					By covariant Dieudonn\'e theory there exists an abelian 3-fold $\tilde A$ such that $\cD(\tilde A)=\cD_{\tilde A}$, and the inclusion  $ \cD(A)\subseteq  \cD_{\tilde A}  $  is induced by a prime-to-$p$ isogeny $ {\alpha}: A \to  \tilde{A} .$
					Define the endormorphism structure $i_{\tilde A}$ on $\tilde A$ by 
					$i_{\tilde A}(a)= {\alpha} \circ i_{  A}(a) \circ {\alpha}^{-1}$ for $a \in O_F.$
					Then $( \tilde A, i_{\tilde A} )$ is an $O_F$-abelian scheme.
					Let $\lambda_{\tilde A}$ be the unique polarization such that 
					\[
					p\lambda_{  A} = {\alpha}^\vee  \circ \lambda_{\tilde A} \circ {\alpha}.
					\]
					The pairings induced by $\lambda_{\tilde A}$ and $\lambda_B$ have the relations
					\[
					\langle x, y \rangle_{\lambda_{  A} } = p^{-1} \langle x , y \rangle_{\lambda_{\tilde A} } , \ x,y \in \cD(A).
					\]
					For a $W(\kappa)$-submodule $M$ of $\cD(A),$ we have
					\[
					M^{\vee_A } = p M^{\vee_{\tilde A}}.
					\]
					Define the level structure $\eta_{\tilde A}$ on $\tilde A$ by
					$\eta_{\tilde A}=  {\alpha}_* \circ \eta_A.$
					We verify
					\begin{enumerate}[resume]\label{tAwell}
						\item $\cD( \tilde A)$ is of signature (1,2). This is by definition.
						\item $\ker \alpha $ is a Raynaud subgroup of $A[p]$. It suffices to show 
						$ \cD(A)_0 \stackrel{1}  \subseteq \cD(\tilde A)_0$ and
						$ \cD(A)_1 \stackrel{1} \subseteq \cD(\tilde A)_1,$ which follows from
						\eqref {BlowRaynaud}.
						\item\label{BlowpoltA}
						$ \cD( \tilde A )_1 \stackrel{1}\subset \cD( \tilde A )_0^ { \perp_{ \tilde A } } $. 
						It suffices to show 
						$\tV^{-1}\cD(A)_0  \stackrel{1}\subset 
						p^{-1} \tF\tilde \cP_0 ,$ or equivalently
						$p\cD(A)_0  \stackrel{1}\subset \tilde \cP_0 ,$ which follows from  \eqref{chainpreP0}.
						\item\label{BlowpoltAd}
						$ \cD( \tilde A )_0 \stackrel{1}\subset \cD( \tilde A )_1^ { \perp_{ \tilde A } } $. This is the dual version of \eqref{BlowpoltA}.
						\item $\ker\lambda_{\tilde A}[p^\infty]$ is a $\tilde A[p]$-subgroup scheme of rank $p^2$.
						Indeed, from covariant Dieudonn\'e theory
						it is equivalent to show 
						$ \cD( \tilde A ) \stackrel{2}\subset \cD( \tilde A )^ { \perp_{ \tilde A } } .$ Thus it suffices to show 
						$ \cD( \tilde A)_0    \stackrel{1} \subset \cD( \tilde A )_1^{\perp_{\tilde A} }$  and
						$ \cD( \tilde A)_1    \stackrel{1} \subset \cD( \tilde A )_0^{\perp_{\tilde A} }
						$
						which follows from \eqref{BlowpoltA} and \eqref{BlowpoltAd}.
						\item\label{condY0}
						$ \omega_{\tilde A^\vee/R,0}  = \mathrm{im}\alpha_{*,0} , ~
						\omega_{\tilde A^\vee/R,1}  \subset  \mathrm{im}\alpha_{*,1}.$
						It suffices to check  
						$\tV\cD(\tilde A)_0\subseteq   \cD(  A)_1,~\tV \cD(\tilde A)_1\subseteq \cD(  A)_0,$ which follows from \eqref{chainpreP0}.
					\end{enumerate}  
					Finally we set 
					$\theta(s)=(A, \lambda_A  , \eta_A ,\tilde A, \lambda_{\tilde A}  , \eta_{\tilde A},\alpha).$ By \eqref{tAwell} we see $\theta(s)\in Y_0(\kappa).$
					It is easy to verify $\theta$ is the inverse of $\pi_0^\#.$
					
					For \eqref{isoY0Sblowtan},   the morphism $\pi_0^\#$
					induces the identification
					$
					\alpha_{*,1}^{-1} \omega_{\tilde A^\vee/\kappa,1} =\cP_0^\perp.
					$
					Combined with 
					Lemma \ref{lem:Y0Y1Y2induce}
					\eqref{Y0definduce2},
					we see   two exact sequences of tangent bundle 
					\eqref{eq:Y0tangent} and \eqref{eq:Sblowtangent}
					coincide. The proposition follows.
					\item
					We check $\pi_1^\# $ is well-defined. Given a point $y=(A, \lambda_A , \eta_A, \tilde{A}, \lambda_{\tilde{A}},  \eta_{\tilde{A} },\alpha)  \in Y_1(R)$ for a scheme $R\in\mathtt{Sch'}_{/\Fpp}.$
					We need to show
					$ (\alpha_{*,1}^{-1}(\ker\tV_{\tilde A})_1 )^{\perp} \subseteq (\ker\tV_A)_0 \cap  \omega_{  A^\vee/R, 0}.$
					
					Firstly we show 
					$(\alpha_{*,1}^{-1} ( \ker\tV_{\tilde A})_1 )^{\perp}
					\subseteq  \omega_{  A^\vee/R, 0}.$ By duality it suffices to show
					$
					\omega_{  A^\vee/R, 1} \subset 
					\alpha_{*,1}^{-1}( \ker\tV_{\tilde A})_1,
					$
					which follows from the condition
					$\omega_{  A^\vee/R, 1}=\ker\alpha_{*,1}.$
					Secondly we show 
					$(\alpha_{*,1}^{-1}  ( \ker\tV_{\tilde A})_1 )^{\perp}
					\subseteq (\ker\tV_A)_0.$
					By duality it suffices to show
					$
					(\ker\tV_A)_1 \subseteq
					\alpha_{*,1}^{-1}  ( \ker\tV_{\tilde A})_1 ,
					$ 
					which is again from
					the   commutative diagram \eqref{diag:commuteF}.
					Thus  $\pi_1^\# $ is well-defined.
					
					To show that $\pi_1^\#$ is a purely inseparable morphism, it suffices to check that for every algebraically closed field $\kappa$ containing $\Fpp$,  
					$\pi$ induces a bijection on $\kappa$-points.
					We construct an inverse map $  \theta $ of $\pi_1^\# .$ Take a point
					$s = (A, \lambda_A , \eta_A, \mathcal{P}_{0}) \in S^\# (\kappa).
					$ 
					
					We set
					\[
					\cD_{\tilde A,0}= \tV   (\tilde \cP_0)^\vee, ~
					\cD_{\tilde A,1}= \tF^{-1} \cD(A )_0 ,\,
					\cD_{\tilde A}=\cD_{\tilde A,0}+\cD_{\tilde A,1}.
					\]
					In an entirely similar manner we can construct a point 
					$\theta(s)=(A, \lambda_A  , \eta_A ,\tilde A, \lambda_{\tilde A}  , \eta_{\tilde A},\alpha) \in Y_1(\kappa)$.
					It is easy to verify that $\theta$ is the inverse of $\pi_1^\#.$
				\end{enumerate}
			\end{proof}
		\end{proposition}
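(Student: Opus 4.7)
The plan is to treat both assertions together. First I would verify that the proposed assignments land in $S^\#$, then establish part (1) by combining a set-theoretic inverse on $\kappa$-points with a tangent space comparison, using that both $Y_0$ and $S^\#$ are smooth of dimension $2$ (by Propositions \ref{prop:Y0Y1Y2smooth} and \ref{prop:Sblowsmooth}). For part (2) the same Dieudonn\'e-theoretic construction would give set-theoretic bijectivity, and purely inseparability would then follow from a separate tangent space analysis.

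\emph{Well-definedness.} For a point of $Y_0(R)$, I would check that $(\alpha_{*,1}^{-1}\omega_{\tilde A^\vee,1})^{\perp}$ is a line subbundle of $(\ker \tV_A)_0 \cap \omega_{A^\vee,0}$. The inclusion in $\omega_{A^\vee,0}$ follows by duality from the functorial inclusion $\omega_{A^\vee,1} \subseteq \alpha_{*,1}^{-1}\omega_{\tilde A^\vee,1}$; the inclusion in $(\ker \tV_A)_0$ is equivalent by duality to $(\ker\tV_A)_1 \subseteq \alpha_{*,1}^{-1}\omega_{\tilde A^\vee,1}$, which follows from the commutative diagram \eqref{diag:commuteF} for Frobenius together with the defining condition $\omega_{\tilde A^\vee,0} = \im \alpha_{*,0}$ of $Y_0$. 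The analogous verification for $\pi_1^\#$ replaces this by the defining condition $\omega_{A^\vee,1} = \ker \alpha_{*,1}$ of $Y_1$.

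\emph{Inverse on geometric points of $Y_0$.} Given $(A, \lambda_A, \eta_A, \cP_0) \in S^\#(\kappa)$, let $\tilde\cP_0 \subseteq \cD(A)_0$ be the preimage of $\cP_0$ under reduction mod $p$. I would set
\[
\cD_{\tilde A, 0} := \tF\bigl(\tilde\cP_0\bigr)^{\vee_A}, \qquad \cD_{\tilde A, 1} := \tV^{-1}\cD(A)_0,
\]
where duals are taken inside $\cD(A)[1/p]$ via the polarization pairing of $A$, and then recover $\tilde A$ via covariant Dieudonn\'e theory, equipped with the polarization forced by $p\lambda_A = \alpha^\vee \circ \lambda_{\tilde A} \circ \alpha$ and with level structure $\eta_{\tilde A} := \alpha_* \circ \eta_A$. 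The bookkeeping task is to verify $\tF, \tV$-stability, the signature $(1,2)$ condition, the inclusion $\cD(A) \subseteq \cD_{\tilde A}$ of co-index $(1,1)$, and the rank-$p^2$ condition on $\ker \lambda_{\tilde A}$. All of this reduces to the standard length chain
\[
p\cD(A)_i \stackrel{2}\subset \tF\cD(A)_{i+1} \stackrel{1}\subset \cD(A)_i
\]
coming from $A$ having signature $(1,2)$ (see \cite[Lemma 1.4]{Vol10}), combined with $\tilde\cP_0 \subseteq \tF\cD(A)_1 \cap \tV\cD(A)_1$, which captures that $\cP_0 \subseteq (\ker \tV)_0 \cap \omega_{A^\vee,0}$. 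The $Y_0$-condition $\omega_{\tilde A^\vee,0} = \im \alpha_{*,0}$ then drops out from $\tV \cD_{\tilde A, 0} \subseteq \cD(A)_1$, and the assignment is manifestly inverse to $\pi_0^\#$.

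\emph{Tangent spaces and the map $\pi_1^\#$.} Under $\pi_0^\#$ the identification $\cP_0^{\perp} = \alpha_{*,1}^{-1}\omega_{\tilde A^\vee,1}$, together with Lemma \ref{lem:Y0Y1Y2induce}\eqref{Y0definduce2} (which gives $\ker \alpha_{*,1} = (\ker \tV_A)_1$), matches the tangent sequences \eqref{eq:Y0tangent} and \eqref{eq:Sblowtangent} term by term, forcing $\pi_0^\#$ to be an isomorphism on tangent spaces and hence globally. For $\pi_1^\#$, the parallel construction
\[
\cD_{\tilde A, 0} := \tV\bigl(\tilde\cP_0\bigr)^{\vee_A}, \qquad \cD_{\tilde A, 1} := \tF^{-1}\cD(A)_0
\]
(with the roles of $\tF$ and $\tV$ exchanged) would give a set-theoretic inverse on $\kappa$-points. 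The main obstacle I anticipate is establishing pure inseparability: since $\pi_1^\#$ differs from $\pi_0^\#$ by replacing $\tF$ with $\tV$, the induced tangent map kills the $(\mathcal{O}_{S^\#}, \sigma)$-linear summand of the tangent exact sequence, giving a bijective non-\'etale morphism of degree $p$ between smooth surfaces, hence purely inseparable. Making this collapse precise without breaking the set-theoretic bijection is where the argument requires the most careful diagram chase.
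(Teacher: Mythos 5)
Your treatment of part (1) follows the paper's route essentially verbatim: the same well-definedness check via duality and the Frobenius diagram \eqref{diag:commuteF}, the same Dieudonn\'e-theoretic inverse with $\cD_{\tilde A,0} = \tF(\tilde\cP_0)^{\vee}$, $\cD_{\tilde A,1} = \tV^{-1}\cD(A)_0$, and the same matching of tangent sequences \eqref{eq:Y0tangent} and \eqref{eq:Sblowtangent} using the identification $\cP_0^\perp = \alpha_{*,1}^{-1}\omega_{\tilde A^\vee,1}$ together with Lemma~\ref{lem:Y0Y1Y2induce}\eqref{Y0definduce2}. That is all correct and matches the paper.

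For part (2) there is a conceptual misstep. You anticipate that after establishing bijectivity of $\pi_1^\#$ on $\kappa$-points, ``purely inseparability would then follow from a separate tangent space analysis,'' and you further frame this as the hard step, speculating that the tangent map ``kills the $(\mathcal{O}_{S^\#},\sigma)$-linear summand of the tangent exact sequence.'' This extra analysis is neither needed nor well-formed: a morphism locally of finite type is radicial (= universally injective, i.e.\ purely inseparable) if and only if it is injective on points valued in algebraically closed fields, so once you have bijectivity on $\kappa$-points for every algebraically closed $\kappa \supset \Fpp$, you are done --- that \emph{is} pure inseparability plus surjectivity. The paper proves exactly this and stops. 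Moreover, the phrase ``$(\mathcal{O}_{S^\#},\sigma)$-linear summand'' of the tangent exact sequence is not meaningful; both terms of \eqref{eq:Sblowtangent} are honest $\cO_{S^\#}$-modules, and there is no semilinear piece to discard. Similarly, the assertion that $\pi_1^\#$ has degree $p$ is not established by your sketch, and the proposition makes no such claim. If you drop the speculative tangent-space step for part (2) and simply conclude from geometric-point bijectivity, the argument becomes both correct and aligned with the paper.
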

		
		We now introduce a new moduli problem   to show $Y_2$ is a $\dP^1$-bundle over $N$.
		\begin{definition}
			Let $ {P}$ be the moduli problem   associating with every 
			$R\in \mathtt{Sch'}_{/\Fpp}$
			the set $ {P}(R)$ of equivalence classes of undecuples
			$(
			A, \lambda_A, \eta_A,
			\tilde{A},\lambda_{\tilde A},\eta_{\tilde A},
			B, \lambda_B , \eta_B,
			\alpha,\delta
			)
			$ where
			\begin{enumerate}
				\item $( A, \lambda_A, \eta_A, \tilde{A},\lambda_{\tilde A},\eta_{\tilde A}, \alpha ) \in  {  S_0(p)} (R);$
				\item $( A, \lambda_A, \eta_A,B, \lambda_B,   \eta_B,  \delta\circ \alpha ) \in  { N } (R);$
				\item $\delta: \tilde A \to B$ is a $O_F$-linear quasi-$p$-isogeny such that
				\begin{enumerate}
					\item $\ker\delta[p^\infty]\subseteq \tilde A[p];$
					\item
					$\lambda_{  \tilde A}=\delta^\vee\circ \lambda_{  B} \circ \delta;$
					\item the $ {K}^{p }$-orbit of maps $v \mapsto \delta_{*}\circ  \eta_{\tilde A}(v)$ for $v \in  {V}^{} \otimes_{\mathbb{Q}} \mathbb{A}^{\infty, p}$ coincides with $\eta_B$.
				\end{enumerate}
			\end{enumerate}
			Two undecuples 
			$(B, \lambda_B , \eta_B,
			A, \lambda_A, \eta_A,
			\tilde{A},\lambda_{\tilde A},\eta_{\tilde A},
			\alpha,\delta
			)
			$ and 
			$(B', \lambda_B' ,  \eta_B',
			A', \lambda_{ A' }, \eta_{ A' },
			\tilde{A}',\lambda_{\tilde A'},\eta_{\tilde A'},
			\alpha',\delta')$ 
			are equivalent if there are $O_{F}$-linear prime-to-$p$ quasi-isogenies 
			$ \varphi : B \to B^{\prime},$ 
			$\psi: A \to A'$ and 
			$\phi:\tilde A \to \tilde A'$ such that
			\begin{itemize}
				\item there exists $c \in \dZ_{(p)}^{\times}$such that 
				$\varphi^{\vee} \circ \lambda_{B'} \circ \varphi=c \lambda_B, 
				\psi^{\vee} \circ \lambda_{A'} \circ \psi=c \lambda_A$
				and 
				$
				\phi^{\vee} \circ \lambda_{\tilde A'} \circ \phi=c \lambda_{\tilde A};
				$
				\item the $ K^{p}$-orbit of maps 
				$v \mapsto \varphi_{*} \circ \eta_B(v)  $
				for $v \in W \otimes_{\mathbb{Q}} \mathbb{A}^{\infty, p}$ coincides with $\eta_{B'};$
				\item the $ K^{p}$-orbit of maps 
				$v \mapsto \psi_{*} \circ \eta_{A}(v)  $
				for $v \in V \otimes_{\mathbb{Q}} \mathbb{ A}^{\infty, p}$ coincides with $\eta_{A'};$
				\item the $ K^{p}$-orbit of maps 
				$v \mapsto \phi_{*} \circ \eta_{\tilde A}(v)  $
				for $v \in V \otimes_{\mathbb{Q}} \mathbb{ A}^{\infty, p}$ coincides with $\eta_{\tilde A'}.$
			\end{itemize}
		\end{definition}

		\begin{lem}\label{lem:PimplyY2}
			Take a point $s=  
			(B, \lambda_B , \eta_B,
			A, \lambda_A, \eta_A,
			\tilde{A},\lambda_{\tilde A},\eta_{\tilde A},
			\alpha,\delta
			) \in P(R)$ for a scheme 
			$R\in \mathtt{Sch'}_{/\Fpp}.$ 
			Then
			\begin{enumerate}
				\item \label{deltaiso}
				$\delta_{*,0} : \HdR(\tilde A/R)_0 \to  \HdR(B/R)_0$ is an isomorphism and  
				$\rank_{\cO_R} \ker \delta_{*,1}=1.$ 
				\item\label{PdefY2}
				$ \omega_{\tilde A^\vee/R,1}= 
				\rH_1^{\r{dR}}(\tilde A/R)_0^{\perp_{\tilde A } }.$
			\end{enumerate}
			\begin{proof}
				\begin{enumerate}
					\item
					Denote by $\gamma$ the quasi-$p$-isogeny
					$\gamma:=\delta\circ \alpha:A \to B.$ The relation 
					$
					p\lambda_{  A}=\alpha^\vee\circ \lambda_{\tilde A} \circ \alpha
					$
					and
					$
					\lambda_{\tilde A}=\delta^\vee\circ \lambda_B \circ \delta
					$
					implies 
					\[
					p \lambda_{  A}=\gamma^\vee\circ \lambda_B \circ \gamma.
					\]
					By \cite[Lemma 3.4.12(2),(3a),(3b),(4)]{LTX+22},
					we have 
					\[
					\operatorname{rank}_{\mathcal{O}_{R}}(\operatorname{ker}\alpha_{*,0})-\operatorname{rank}_{\mathcal{O}_{R}}(\operatorname{ker}\alpha_{*,1})=0,
					\]
					\[
					\operatorname{rank}_{\mathcal{O}_{R}}(\operatorname{ker}\alpha_{*,0})+\operatorname{rank}_{\mathcal{O}_{R}}(\operatorname{ker}\alpha_{*,1})=2,
					\]
					\[
					\operatorname{rank}_{\mathcal{O}_{R}}(\operatorname{ker} \gamma_{*,0})-\operatorname{rank}_{\mathcal{O}_{R}}(\operatorname{ker} \gamma_{*,1})=-1,
					\]
					\[
					\operatorname{rank}_{\mathcal{O}_{R}}(\operatorname{ker} \gamma_{*,0})+\operatorname{rank}_{\mathcal{O}_{R}}(\operatorname{ker} \gamma_{*,1})=3,
					\]
					\[
					\operatorname{rank}_{\mathcal{O}_{R}}(\operatorname{ker} \delta_{*,0})+\operatorname{rank}_{\mathcal{O}_{R}}(\operatorname{ker} \delta_{*,1})=1.
					\]
					The solution is
					\[
					\rank_{\cO_R} \ker \alpha_{*,0}=1,~
					\rank_{\cO_R} \ker \gamma_{*,0}=1,~
					\rank_{\cO_R} \ker \alpha_{*,1}=1,~
					\rank_{\cO_R} \ker \gamma_{*,1}=2.
					\]
					We claim $\rank_{\cO_R} \ker \delta_{*,0}=0$
					since otherwise $\delta_{*,1}$ is an isomorphism and 
					therefore  $\rank_{\cO_R} \ker \alpha_{*,1}=
					\rank_{\cO_R} \ker \gamma_{*,1}$ which is absurd.
					Then by comparing the ranks we conclude $\delta_{*,0}$ is an isomorphism. \eqref{deltaiso} follows.
					\item \label{lem:Pdef1Y2}
					By comparing the rank it suffices to show 
					$\ang{\omega_{\tilde A^\vee/R,1},  \rH_1^{\r{dR}}(\tilde A/R)_0  }_{\lambda_{\tilde A}}=0.$
					We claim $\omega_{\tilde A^\vee/R,1}= \ker\delta_{*,1}.$
					Indeed, by \eqref{deltaiso} it suffices to show 
					$\omega_{\tilde A^\vee/R,1}\subseteq \ker\delta_{*,1}.$
					The signature condition of $B$ implies 
					$ \omega_{B^\vee/R,1} =0$.  
					Thus $\omega_{\tilde A^\vee/R,1}\subseteq \ker\delta_{*,1}$
					and the claim follows. 
					On the other hand,  
					from $\lambda_{\tilde A}=\delta^\vee\circ \lambda_B \circ \delta$ we have 
					$ \ang{x,y}_{\lambda_{\tilde A}}=\ang{\delta_* x,\delta_* y}_{\lambda_{ B }}
					$ for $x,y\in \rH_1^{\r{dR}}(\tilde A/R).$
					Therefore
					$\ang{\ker\delta_{*,1}, 
						\rH_1^{\r{dR}}(\tilde A/R)_0  }_{\lambda_{\tilde A}} =0.
					$
					We can then conclude
					$\ang{\omega_{\tilde A^\vee/R,1},
						\rH_1^{\r{dR}}(\tilde A/R)_0  }_{\lambda_{\tilde A}} =0$
					and \eqref{PdefY2} follows.
				\end{enumerate}
			\end{proof}
		\end{lem}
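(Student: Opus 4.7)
For part (1), the plan is to reduce the statement to a system of rank equations on the induced maps on de Rham homology. Introducing the composition $\gamma := \delta \circ \alpha \colon A \to B$ and combining $p\lambda_A = \alpha^\vee\lambda_{\tilde A}\alpha$ with $\lambda_{\tilde A} = \delta^\vee\lambda_B\delta$ gives $p\lambda_A = \gamma^\vee\lambda_B\gamma$, so $\gamma$ is a polarization-compatible quasi-$p$-isogeny of the type handled by \cite[Lemma~3.4.12]{LTX+22}. I would then apply that lemma to each of $\alpha$, $\delta$, and $\gamma$, feeding in the signatures $(1,2)$, $(1,2)$, $(0,3)$ of $A$, $\tilde A$, $B$, to obtain $\rank\ker\alpha_{*,0} = \rank\ker\alpha_{*,1} = 1$, $\rank\ker\gamma_{*,0} = 1$, $\rank\ker\gamma_{*,1} = 2$, and $\rank\ker\delta_{*,0} + \rank\ker\delta_{*,1} = 1$. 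The subadditivity $\rank\ker\gamma_{*,1} \leq \rank\ker\alpha_{*,1} + \rank\ker\delta_{*,1}$ coming from the factorization $\gamma_{*,1} = \delta_{*,1}\circ\alpha_{*,1}$ then forces $\rank\ker\delta_{*,1} \geq 1$, and together with $\rank\ker\delta_{*,0} + \rank\ker\delta_{*,1} = 1$ this pins down $\rank\ker\delta_{*,1} = 1$ and $\rank\ker\delta_{*,0} = 0$. Since $\delta_{*,0}$ is then an $\cO_R$-linear map between two locally free modules of rank $3$ whose kernel has rank $0$, it is an isomorphism.

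For part (2), the signature $(0,3)$ of $B$ forces $\omega_{B^\vee/R,1} = 0$; functoriality of $\delta_*$ then yields $\delta_{*,1}(\omega_{\tilde A^\vee/R,1}) \subseteq \omega_{B^\vee/R,1} = 0$, so $\omega_{\tilde A^\vee/R,1} \subseteq \ker\delta_{*,1}$. Since both are rank-$1$ subbundles of $\HdR(\tilde A/R)_1$ by part (1), the inclusion is an equality. The polarization compatibility $\lambda_{\tilde A} = \delta^\vee\lambda_B\delta$ gives $\langle x, y\rangle_{\lambda_{\tilde A}} = \langle \delta_* x, \delta_* y\rangle_{\lambda_B}$ on de Rham homology, hence for $x \in \ker\delta_{*,1}$ and every $y \in \HdR(\tilde A/R)_0$ we have $\langle x, y\rangle_{\lambda_{\tilde A}} = 0$. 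This yields $\omega_{\tilde A^\vee/R,1} = \ker\delta_{*,1} \subseteq \HdR(\tilde A/R)_0^{\perp_{\tilde A}}$, and a final rank comparison---the right-hand side being a rank-$1$ subbundle of $\HdR(\tilde A/R)_1$ because $\ker\lambda_{\tilde A}$ has order $p^2$ inside $\tilde A[p]$---gives the desired equality.

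The main obstacle I anticipate is the bookkeeping in part (1): one must apply \cite[Lemma~3.4.12]{LTX+22} correctly to all three maps $\alpha$, $\delta$, $\gamma$ simultaneously (with the correct signature data and compatibility constants), and then convert the rank-zero information about $\ker\delta_{*,0}$ into the genuine isomorphism statement. By contrast, part (2) is short and essentially formal once part (1) is in hand, modulo identifying the rank of $\HdR(\tilde A/R)_0^{\perp_{\tilde A}}$ from the non-principality of $\lambda_{\tilde A}$.
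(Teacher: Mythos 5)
Your proposal is correct and follows essentially the same route as the paper: introduce $\gamma = \delta\circ\alpha$, apply \cite[Lemma~3.4.12]{LTX+22} to $\alpha$, $\gamma$, and $\delta$ to get the five rank equations, solve for the ranks of the kernels of the $\alpha$- and $\gamma$-components, deduce $\rank\ker\delta_{*,0}=0$, and then for part (2) identify $\omega_{\tilde A^\vee/R,1}$ with $\ker\delta_{*,1}$ via the signature of $B$ and use the polarization compatibility to place it inside $\HdR(\tilde A/R)_0^{\perp_{\tilde A}}$. The only (cosmetic) difference is in the final step of part (1): you argue forward via the subadditivity $\rank\ker\gamma_{*,1}\leq\rank\ker\alpha_{*,1}+\rank\ker\delta_{*,1}$ (checked fiberwise, since all kernels are subbundles by the cited lemma), whereas the paper argues by contradiction: if $\rank\ker\delta_{*,0}\neq 0$ then $\delta_{*,1}$ would be an isomorphism, forcing $\ker\alpha_{*,1}=\ker\gamma_{*,1}$, which conflicts with the computed ranks $1\neq 2$. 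These are logically equivalent.
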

		
		\begin{proposition}\label{prop:Psmooth}
			$P$ is smooth of dimension 2 over $\Fpp.$ Moreover, 
			let $(\cA, \tilde \cA, \cB, \alpha,\delta)$ denote the universal object over 
			$P$.
			Then the tangent bundle $\cT_{\cP /\Fpp}$ of $\cP$ fits into an exact sequence
			\begin{multline}\label{eq:Ptangent}
				0 \to 
				\cH om (  {\omega}_{  \tilde  \cA^{\vee}/P, 0}/
				\alpha_{*,0} {\omega}_{      \cA^{\vee}/P, 0},
				\HdR(   \tilde   \cA/P )_{ 0}
				/ {\omega}_{   \tilde \cA^{\vee}/P, 0})
				\to
				\cT_{P/\Fpp} \\
				\to 
				\cH om  
				(  {\omega}_{  \cA^{\vee}/P,0}/\ker\alpha_{*,0},
				\HdR(      \cA/P )_{ 0} /
				{\omega}_{  \cA^{\vee}/P,0})
				\to
				0.
			\end{multline}
			\begin{proof}
				The proof resembles that of Proposition 
				\ref{prop:Y0Y1Y2smooth}\eqref{prop:Y2smooth}.
				We show $P$ is formally smooth using deformation theory. Consider a closed immersion $R \hookrightarrow \hat{R}$ in $\mathtt{Sch}_{ / \dF_{p^2}}^{\prime}$ defined by an ideal sheaf $\mathcal{I}$ with $\mathcal{I}^{2}=0$. Take a point 
				$
				s=( A, \lambda_A, \eta_A,\tilde{A} ,  \lambda_{\tilde A},\eta_{\tilde A}, 
				B, \lambda_B, \eta_B ,
				\alpha,\delta ) \in P(R).$
				Denote by $\breve \delta:B \to \tilde A$  the unique quasi-$p$-isogeny such that
				$\breve \delta \circ\delta=\id_{\tilde A}$ and
				$ \delta\circ \breve \delta =\id_B.$
				By proposition \ref{thm:STGM}
				lifting $s$ to an $\hat{R}$-point is equivalent to lifting
				\begin{itemize}
					\item
					$\omega_{A^{\vee} / R, 0}$ (resp. $\omega_{\tilde A^{\vee} / R,0} $) to a rank 2 subbundle 
					$\hat{\omega}_{A^{\vee},0}$ (resp. $\omega_{\tilde A^{\vee} ,0} $)
					of $\mathrm{H}_{1}^{\text {cris }}(A / \hat{R})_{0}$ 
					( resp. $\mathrm{H}_{1}^{\text {cris }}(\tilde A / \hat{R})_{0}$), 
					\item
					$\omega_{A^{\vee} / R, 1}$ (resp. $\omega_{\tilde A^{\vee} / R,1} $)
					to a rank 1 subbundle $\hat{\omega}_{A^{\vee},1}$  (resp. $\omega_{\tilde A^{\vee} ,1} $) 
					of $\mathrm{H}_{1}^{\text {cris }}(A / \hat{R})_{1}$ 
					( resp. $\mathrm{H}_{1}^{\text {cris }}(\tilde A / \hat{R})_{1}$), 
					\item
					$\omega_{B^{\vee} / R, 0}$ (resp. $\omega_{  B^{\vee} / R,1} $)
					to a rank 3 (resp. rank 0) subbundle $\hat{\omega}_{B^{\vee},0}$  (resp. $\hat \omega_{  B^{\vee} ,1} $) 
					of $\mathrm{H}_{1}^{\text {cris }}(B / \hat{R})_{0}$ 
					( resp. $\mathrm{H}_{1}^{\text {cris }}(  B / \hat{R})_{1}$), 
				\end{itemize}
				subject to the   requirements in the proof of 
				Proposition
				\ref{prop:Y0Y1Y2smooth}\eqref{prop:Y2smooth} and
				\begin{enumerate}\label{Pliftcondtion} 
					\item\label{PtA1B1}
					$ 
					{\delta}_{*,1}  \hat{\omega}_{\tilde A^{\vee},1} \subseteq \hat{\omega}_{B^{\vee},1}.
					$  
				\end{enumerate}
				We verify \eqref{PtA1B1} holds.
				Indeed, since $\lambda_B$ is $p$-principal, it suffices to show
				$\ang{\delta_{*,1} \hat{\omega}_{\tilde A^{\vee},1} , \mathrm{H}_{1}^{\text {cris }}(B / \hat{R})_0}_{\lambda_B}=0.$
				However,
				the same argument as Lemma \ref{lem:PimplyY2}\eqref{deltaiso} shows 
				$\delta_{*,0}:
				\mathrm{H}_{1}^{\text {cris }}(\tilde A / \hat{R})_0 \to \mathrm{H}_{1}^{\text {cris }}(B/ \hat{R})_0 $ is an isomorphism. Thus 
				we have 
				$\ang{\delta_{*,1}  \hat{\omega}_{\tilde A^{\vee},1} ,   \mathrm{H}_{1}^{\text {cris }}(B / \hat{R})_0} _{\lambda_B}=
				\ang{\hat{\omega}_{\tilde A^{\vee},1} , \mathrm{H}_{1}^{\text {cris }}(\tilde A / \hat{R})_0}_{\lambda_{\tilde A} }=0$
				by  Proposition \ref{prop:Y0Y1Y2smooth}\eqref{Y2tA0tA1}
				and therefore \eqref{PtA1B1} holds.
				We conclude the requirements  are the same as those in Proposition \ref{prop:Y2smooth}. Thus  
				the tangent space  $\cT_{P /\Fpp,s}$ at $s$ fits into an exact sequence
				\begin{multline}
					0 \to 
					\cH om (  {\omega}_{  \tilde  A^{\vee}/R, 0}/
					\alpha_{*,0} {\omega}_{      A^{\vee}/R, 0},
					\HdR(   \tilde   A/R )_{ 0}
					/ {\omega}_{   \tilde A^{\vee}/R, 0})
					\to
					\cT_{P/\Fpp,s} \\
					\to 
					\cH om  
					(  {\omega}_{  A^{\vee}/R,0}/\ker\alpha_{*,0},
					\HdR(      A/R )_{ 0} /
					{\omega}_{  A^{\vee}/R,0})
					\to
					0.
				\end{multline}
				We have shown $P$ is smooth over $\Fpp$ of   dimension 2. 
			\end{proof}
		\end{proposition}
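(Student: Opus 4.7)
The plan is to imitate the Grothendieck--Messing deformation-theoretic argument used for $Y_2$ in Proposition \ref{prop:Y0Y1Y2smooth}\eqref{prop:Y2smooth}, and show that the additional datum $\delta:\tilde A\to B$ adds no further obstruction to lifting. Concretely, I fix a square-zero closed immersion $R\hookrightarrow\hat R$ in $\mathtt{Sch}'_{/\Fpp}$ together with a point $s=(A,\lambda_A,\eta_A,\tilde A,\lambda_{\tilde A},\eta_{\tilde A},B,\lambda_B,\eta_B,\alpha,\delta)\in P(R)$, and by Proposition \ref{thm:STGM} I translate the problem of lifting $s$ to $\hat R$ into the problem of lifting the Hodge filtrations $\omega_{A^\vee,i},\omega_{\tilde A^\vee,i},\omega_{B^\vee,i}$ inside the crystalline cohomologies, subject to the isotropy conditions coming from $\lambda_A,\lambda_{\tilde A},\lambda_B$, the compatibility conditions with $\alpha$ (exactly as in $Y_2$), and the new compatibility condition $\delta_{*,i}\hat\omega_{\tilde A^\vee,i}\subseteq\hat\omega_{B^\vee,i}$ for $i=0,1$ coming from functoriality of $\delta$.

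Next I show that this new condition on $\delta$ is automatic given the others, so no new constraint is imposed. For $i=0$ it is trivial since $\omega_{B^\vee,0}=\mathrm{H}_1^{\mathrm{cris}}(B/\hat R)_0$ by the signature condition. For $i=1$, since $\omega_{B^\vee,1}=0$, I must verify that $\delta_{*,1}\hat\omega_{\tilde A^\vee,1}=0$; equivalently, since $\lambda_B$ is $p$-principal, that $\langle\delta_{*,1}\hat\omega_{\tilde A^\vee,1},\mathrm{H}_1^{\mathrm{cris}}(B/\hat R)_0\rangle_{\lambda_B}=0$. By the crystalline analogue of Lemma \ref{lem:PimplyY2}\eqref{deltaiso} (which only uses rank counts for quasi-$p$-isogenies and can be lifted from $R$ to $\hat R$ by Nakayama), the map $\delta_{*,0}\colon \mathrm{H}_1^{\mathrm{cris}}(\tilde A/\hat R)_0\to\mathrm{H}_1^{\mathrm{cris}}(B/\hat R)_0$ is an isomorphism, so this pairing equals $\langle\hat\omega_{\tilde A^\vee,1},\mathrm{H}_1^{\mathrm{cris}}(\tilde A/\hat R)_0\rangle_{\lambda_{\tilde A}}$, which vanishes because the $Y_2$-condition forces $\hat\omega_{\tilde A^\vee,1}=\mathrm{H}_1^{\mathrm{cris}}(\tilde A/\hat R)_0^{\perp_{\tilde A}}$ (this is part of the lifting conditions carried over from $Y_2$, cf.\ Proposition \ref{prop:Y0Y1Y2smooth}\eqref{Y2tA0tA1}).

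Once the $\delta$-compatibility is eliminated, the deformation problem for $P$ reduces verbatim to the one for $Y_2$: lifts of $\hat\omega_{A^\vee,0}$ containing $\ker\alpha_{*,0}$, together with lifts of $\hat\omega_{\tilde A^\vee,0}$ containing $\alpha_{*,0}\hat\omega_{A^\vee,0}$, with all other pieces determined by duality (since $\lambda_A$ is $p$-principal) or by functoriality through $\delta_{*,0}$. Consequently the tangent space at $s$ fits into the exact sequence \eqref{eq:Ptangent}, and the outer terms are locally free of ranks $1$ and $1$, yielding smoothness of relative dimension $2$.

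The only step that requires any care is the verification that $\delta_{*,0}$ remains an isomorphism after lifting to $\hat R$; this is not hard (Nakayama plus the rank computation of Lemma \ref{lem:PimplyY2}\eqref{deltaiso}), but it is the single place where the presence of $(B,\delta)$ could a priori enlarge the deformation functor, so one has to check it explicitly. Everything else is a direct transcription of the $Y_2$ argument, which is why the final tangent-bundle sequence coincides with \eqref{eq:Y2tangent}.
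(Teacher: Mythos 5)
Your proposal is correct and follows essentially the same route as the paper: translate lifting into a Grothendieck--Messing problem, observe that the only new constraint coming from $\delta$ is $\delta_{*,1}\hat\omega_{\tilde A^\vee,1}\subseteq\hat\omega_{B^\vee,1}=0$, and verify this is automatic by passing through the isomorphism $\delta_{*,0}$ and the $Y_2$-condition $\hat\omega_{\tilde A^\vee,1}=\mathrm{H}_1^{\mathrm{cris}}(\tilde A/\hat R)_0^{\perp_{\tilde A}}$. The only minor addition is your explicit remark that $\delta_{*,0}$ remains an isomorphism over $\hat R$ (via Nakayama), which the paper leaves implicit by citing "the same argument as Lemma \ref{lem:PimplyY2}."
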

		
		\begin{lem}\label{lem:PY2}
			The natural forgetful map $\tilde\nu$
			induces an isomorphism of $\Fpp$-schemes
			\[
			\tilde\nu: P \cong Y_2.
			\]
			\begin{proof}
				Since $Y_2$ is smooth over $\Fpp$ by Proposition \ref{prop:Y0Y1Y2smooth}\eqref{prop:Y2smooth}, to show that $\tilde\nu$ is an isomorphism, it suffices to check that for every algebraically closed field $\kappa$ containing $\Fpp$, we have
				\begin{enumerate}
					\item \label{PY2iso1}
					$\tilde\nu$ induces a bijection on $\kappa$-points; and
					\item \label{PY2iso2}
					$\tilde \nu$ induces an isomorphism on the tangent spaces at every $\kappa$-point.
				\end{enumerate}
				For \eqref{PY2iso1}, we construct an inverse map $  \theta(\kappa)$ of $\tilde\nu.$ Take a point
				$y = ( 
				A, \lambda_A, \eta_A,
				\tilde{A},\lambda_{\tilde A},\eta_{\tilde A},
				\alpha
				) \in Y_2(\kappa).
				$ 
				We have the  following facts: 
				\begin{enumerate}[resume]
					\item\label{exactseq}
					We have two chains
					\[
					\cD(A)_0 \stackrel{1} \subset   \cD(\tilde A)_0, \quad \cD(A)_1  \stackrel{1} \subset  \cD(\tilde A)_1
					\]
					since $\ker \alpha$ is a Raynaud subgroup of $A[p].$
					\item \label{Y2tA0selfdual}
					$\cD(\tilde A)_0^{\perp_{\tilde A}}=p^{-1}\tV \cD(\tilde A)_0.$
					Indeed,  this is by taking the preimage of the condition   $  \omega_{\tilde A^\vee/R,1}= 
					\rH_1^{\r{dR}}(\tilde A/\kappa)_0^{\perp_{\tilde A } }
					$ under the reduction map
					$\cD(\tilde A)_0\to \cD(\tilde A)_0/p\cD(\tilde A)_0\cong \HdR(\tilde A/\kappa)_0.$
					\item\label{Y2tA0FequalV}
					$\tV\cD(\tilde A)_0=\tF\cD(\tilde A)_0.$ 
					Rewrite \eqref{Y2tA0selfdual} as 
					$\cD(\tilde A)_0^{\perp_{  A}}= \tV \cD(\tilde A)_0$
					by identifying $ \cD(\tilde A)$ as a lattice in $\cD(A)[1/p]$
					and taking account of the relation 
					$\cD(\tilde A)_0^{\perp_{  A } }=p \cD(\tilde A)_0^{\perp_{  \tilde A } }.$
					 By taking the $\lambda_A$-dual we get 
					$\cD(\tilde A)_0 =
					(\tV \cD(\tilde A)_0)^{\perp_{  A } }=
					\tF^{-1}  \cD(\tilde A)_0^{\perp_{  A } }=
					\tF^{-1}\tV\cD(\tilde A)_0.
					$
					Thus \eqref{Y2tA0FequalV} follows.
					\item\label{chainb}
					There is a chain of $W(\kappa)$-lattice in $\cD(A)_0[1/p]:$
					\label{Vo2l1}
					\[
					p \cD(A)_1  \stackrel{1} \subseteq
					\tV \cD(A)_0  \stackrel{1} \subseteq
					\tV \cD(\tilde A)_0    =  
					\cD(\tilde A)_0^{\perp_A}  \stackrel{1} \subseteq
					\cD(A)_1.
					\]

					Indeed, the first inclusion follows from \eqref{chain12} and  the second follows from \eqref{exactseq}.
				\end{enumerate}
				Now we define
				\[
				\cD_{B,0}=\cD(\tilde A)_0,\,
				\cD_{B,1}=p^{-1}\tV\cD_{B,0},\, 
				\cD_B=\cD_{B,0}+ \cD_{B,1}.
				\]
				We can easily verify $\cD_B$ is $\tF,\tV$-stable from the fact that $\cD_{B,0}$ is $\tV^{-1}\tF$-invariant.
				Moreover, we have an injection $\cD(\tilde A) \to \cD_B.$
				By covariant Dieudonn\'e theory there exists an abelian 3-fold $B$ such that $\cD(B)=\cD_B$, and the inclusion $\cD(\tilde A) \to \cD(B)$ is induced by an isogeny $\delta: \tilde A \to B.$
				Let $\lambda_B$ be the unique polarization such that 
				\[
				\lambda_{\tilde A} = \delta^\vee  \circ \lambda_B \circ \delta.
				\]
				
				We have the relation
				$$
				\langle x, y \rangle_{\lambda_{\tilde A} } = \langle x , y \rangle_{\lambda_B } , \, x,y \in \cD(\tilde A).
				$$
				Define the level structure $\eta_B$ by
				$\eta_B=  \delta_* \circ \eta_{\tilde A}.$
				We verify  
				\begin{enumerate}[resume]
					\item $\cD(B)$ is of signature type (0,3): this follows from the definition.
					\item  $\cD(B)$ is self-dual with respect to $
					\ang{~ ,~ }_{\lambda_B}$. Indeed, as above it suffices to show $\cD(B)_1=\cD(B)_0^{\perp_B} $, which is equivalent to 
					$ \tV \cD_{B,0} = \cD_{B,0}^{\perp_A},$ which follows from (\ref{Vo2l1}).
					
				\end{enumerate} 
				Finally we set $ \theta(y)=
				(B, \lambda_B , \eta_B,
				A, \lambda_A, \eta_A,
				\tilde{A},\lambda_{\tilde A},\eta_{\tilde A},
				\alpha,\delta
				).$
				
				For \eqref{PY2iso2}, take  $s\in P(\kappa)$ and thus $y=\tilde \nu(s)\in Y_2(\kappa).$
				Under the morphism $\tilde \nu$
				the exact sequences 
				\eqref{eq:Ptangent}
				and \eqref{eq:Y2tangent} coincide. Thus 
				\eqref{PY2iso2} follows and $\tilde \nu$ is an isomorphism.
			\end{proof}
		\end{lem}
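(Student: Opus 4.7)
The plan is to use the standard smoothness-plus-pointwise-bijection-plus-tangent-isomorphism strategy, in the spirit of the proofs of Lemma~\ref{lem:tTMSp} and Proposition~\ref{prop:Y0Y1blowup}. Both $P$ and $Y_2$ are smooth $\Fpp$-schemes of dimension $2$ by Proposition~\ref{prop:Psmooth} and Proposition~\ref{prop:Y0Y1Y2smooth}\eqref{prop:Y2smooth}. Hence it suffices to check, for every algebraically closed field $\kappa \supseteq \Fpp$, that $\tilde\nu$ induces a bijection on $\kappa$-points and that it induces an isomorphism on tangent spaces at every such point. The forgetful map $\tilde\nu$ is clear; the content is in constructing an inverse on points and matching the two tangent sequences.

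For the pointwise bijection, I will construct an inverse $\theta$ by Dieudonn\'e theory. Given $y = (A,\lambda_A,\eta_A,\tilde A,\lambda_{\tilde A},\eta_{\tilde A},\alpha) \in Y_2(\kappa)$, the defining condition $\omega_{\tilde A^\vee,1} = \HdR(\tilde A/\kappa)_0^{\perp_{\tilde A}}$ lifts to the integral statement $\cD(\tilde A)_0^{\perp_{\tilde A}} = p^{-1}\tV \cD(\tilde A)_0$; combined with the duality relation $\cD(\tilde A)_0^{\perp_A} = p\,\cD(\tilde A)_0^{\perp_{\tilde A}}$ this forces $\tV\cD(\tilde A)_0 = \tF\cD(\tilde A)_0$, so $\cD(\tilde A)_0$ is $\tV^{-1}\tF$-invariant. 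I will then set
\[
\cD_{B,0} := \cD(\tilde A)_0, \qquad \cD_{B,1} := p^{-1}\tV\cD_{B,0}, \qquad \cD_B := \cD_{B,0}\oplus\cD_{B,1},
\]
and check that $\cD_B$ is stable under $\tF,\tV$ (immediate from the $\tV^{-1}\tF$-invariance) and that $\cD(\tilde A)\hookrightarrow\cD_B$. By covariant Dieudonn\'e theory this produces an abelian threefold $B$ together with an isogeny $\delta:\tilde A\to B$, and I equip $B$ with the unique polarization $\lambda_B$ satisfying $\lambda_{\tilde A} = \delta^\vee\circ\lambda_B\circ\delta$, and the level structure $\eta_B := \delta_*\circ\eta_{\tilde A}$. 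Two properties then need verification: that $B$ has signature $(0,3)$, which follows from $\tV\cD_B = \cD_{B,0}$ so $\Lie(B)$ is concentrated in the $\tau_1$-part; and that $\lambda_B$ is $p$-principal, i.e.\ $\cD(B)_0^{\perp_B} = \cD(B)_1$, which is precisely $\tV\cD_{B,0} = \cD_{B,0}^{\perp_A}$ and hence reduces to the chain of $W(\kappa)$-lattices $p\cD(A)_1 \subset \tV\cD(A)_0 \subset \tV\cD(\tilde A)_0 \subset \cD(A)_1$ obtained by combining the signature $(1,2)$ chain from \cite[Lemma~1.4]{Vol10} with the Raynaud condition $\cD(A)_i \stackrel{1}\subset \cD(\tilde A)_i$. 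Setting $\theta(y) := (A,\lambda_A,\eta_A,\tilde A,\lambda_{\tilde A},\eta_{\tilde A},B,\lambda_B,\eta_B,\alpha,\delta)$ then gives a two-sided inverse to $\tilde\nu$ by inspection, since $\cD_{B,0}$ is determined by $\cD(\tilde A)_0$.

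For the tangent-space comparison, I will compare the exact sequence \eqref{eq:Ptangent} defining $\cT_{P/\Fpp}$ with the exact sequence \eqref{eq:Y2tangent} defining $\cT_{Y_2/\Fpp}$. The two sequences have literally identical outer terms involving only the data $(\cA,\tilde\cA,\alpha)$, and the forgetful map $\tilde\nu$ is by construction compatible with the identification of deformation functors in the proof of Proposition~\ref{prop:Psmooth}, because the additional datum $\hat\omega_{B^\vee,1}$ is forced to equal $\delta_{*,1}\hat\omega_{\tilde A^\vee,1}$ by the signature condition $(0,3)$ on $B$, as was verified there using that $\delta_{*,0}$ is an isomorphism. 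Hence $\tilde\nu$ induces an isomorphism on tangent spaces, completing the argument.

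The main obstacle is the verification, in the construction of $B$, of the chain of $W(\kappa)$-lattices that guarantees $p$-principality of $\lambda_B$ and the correct signature of $B$; this amounts to tracking several $\perp_A$-versus-$\perp_{\tilde A}$ duality shifts together with the integral $\tV,\tF$-behavior, and it is where the $Y_2$ condition $\omega_{\tilde A^\vee,1} = \HdR(\tilde A)_0^{\perp_{\tilde A}}$ is essentially used.
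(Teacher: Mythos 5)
Your proposal is correct and follows essentially the same route as the paper's proof: smoothness of $Y_2$ (and $P$) reduces the claim to a bijection on geometric points plus an isomorphism on tangent spaces; the inverse on points is constructed by the same Dieudonn\'e-theoretic recipe $\cD_{B,0}=\cD(\tilde A)_0$, $\cD_{B,1}=p^{-1}\tV\cD_{B,0}$, using the lift $\cD(\tilde A)_0^{\perp_{\tilde A}}=p^{-1}\tV\cD(\tilde A)_0$ of the $Y_2$-condition together with the Raynaud chain; and the tangent-space step is the observed coincidence of the two exact sequences \eqref{eq:Ptangent} and \eqref{eq:Y2tangent}. The only cosmetic imprecision is the line ``$\tV\cD_B=\cD_{B,0}$'' (one has $\tV\cD_B=\cD_{B,0}\oplus p\cD_{B,1}$, but the $\tau_0$-graded piece of the quotient indeed vanishes, which is what you use), and this does not affect the argument.
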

		
		\begin{proposition}\label{prop:PV}
			\begin{enumerate}
				\item\label{defsV}
				Define $\sV$ by
				\[
				\sV(R):= 
				\HdR( B/ R) _{ 0}/ 
				\gamma_{*,0}\omega_{  A^\vee/ R,0} 
				\]
				where $ (
				A, \lambda_A, \eta_A, 
				B, \lambda_B , \eta_B,
				\gamma 
				) \in N(R)$ 
				for every 
				$R\in \mathtt{Sch'}_{/\Fpp}.$
				Then $\sV$ is a locally free sheaf of rank 2 over $N.$
				\item\label{isoPPV}
				The assignment 
				sending a point $(
				A, \lambda_A, \eta_A,
				\tilde{A},\lambda_{\tilde A},\eta_{\tilde A},
				B, \lambda_B , \eta_B,
				\alpha,\delta 
				) \in P(R)$  for every  $R\in \mathtt{Sch'}_{/\Fpp}$ to the subbundle
				\[
				I:= \delta_{*,0} \omega_{  \tilde A^\vee/R,0} / \delta_{*,0}   \alpha_{*,0}\omega_{  A^\vee/ R,0}   \subseteq \sV(R)
				\]
				induces an isomorphism of $\Fpp$-schemes
				\[
				\mu: P \simeq   \mathbb{P}( \sV ).
				\] 
			\end{enumerate}
			The relations of morphisms are summarized in the following diagram
			\begin{equation}\label{diag:normal1}
				\xymatrix{
				  	\dP(\sV ) \ar[d]  & Y_2  \ar[l]_-{\cong }     \ar@{^{(}->}[r]  \ar[d]^{\pi_2 } & S_0(p) 
					\ar[d]^{\pi }
					\\
					 N   \ar[r]^{\nu  } & S_{\r{ss} }  \ar@{^{(}->}[r]    & S  
				}.
			\end{equation}
			\begin{proof}
				\begin{enumerate}
					\item
					It suffices to show 
					$ \delta_{*,0}\alpha_{*,0} \omega_{  A^\vee/ R,0} $
					is locally free $\cO_R$-module of rank 1.
					Since $\delta_{*,0} $ is an isomorphism by 
					Lemma \ref{lem:PimplyY2}\eqref{deltaiso}, it suffices to show $\alpha_{*,0}\omega_{  A^\vee/ R,0}$ is locally free of rank 1, which follows from 
					Lemma \ref{lem:Y0Y1Y2induce}\eqref{Y2definduce1}.
					\item
					To show $I$ is locally free of rank 1, the argument is the same as \eqref{defsV}.
					Now we show $\mu$ is an isomorphism.
					Since $Y_2$ is smooth,  it suffices to check that for every algebraically closed field $\kappa$ containing $\Fpp$, we have
					\begin{enumerate}
						\item \label{isoPPVpt}
						$\mu$ induces a bijection on $\kappa$-points;  
						\item \label{isoPPVtan}
						$\mu$ induces an isomorphism on the tangent spaces at every $\kappa$-point.
					\end{enumerate}
					To show \eqref{isoPPVpt}, it suffices to construct an inverse   map $\theta$. 
					Take $ 
					p'=(
					A, \lambda_A, \eta_A, 
					B, \lambda_B , \eta_B,
					\gamma,
					I
					) \in \dP(\sV)(\kappa)$
					where $I$ is a locally free rank 1 $\cO_\kappa$-submodule of $\sV(\kappa).$   
					We list miscellaneous properties of $\cD(  A )$ and $\cD(  B)$:
					\begin{enumerate}[resume]
						\item\label{cond:inv}
						$\tV\cD(B )=\tF\cD(B).$ In fact, since $\cD(B)$ is of signature (0,3), \cite[Lemma 1.4]{Vol10} gives
						\[
						\cD(B)_0=\tV\cD(B)_1=\tF\cD(B)_1,
						\]
						which implies $\cD(B)_0$ and $\cD(B)_1$ are both $\tV^{-1}\tF$-invariant. 
						\item \label{duallength}
						$
						\cD(B)_0^{\perp_B } = \cD(B)_1
						$ 
						and 
						$  \cD(B)_1^{\perp_B} = \cD(B)_0.$
						This follows from the self-dual condition of $\lambda_B$.

						\item\label{cond:AB}
						We have chains of $W(\kappa)$-module
						\[
						p   \cD( B )_0 \stackrel{1} \subseteq
						\cD(   A )_0
						\stackrel{2}\subseteq
						\cD( B )_0, \,
						p   \cD( B )_1 \stackrel{1} \subseteq
						\cD( A )_1
						\stackrel{2}\subseteq
						\cD( B )_1.
						\]
						\item\label{chaintI}
						Denote by $\tilde I$ the preimage of $I$ under the composition of the reduction map $\cD(B)_0 \to \cD(B)_0/p\cD(B)_0\cong \HdR(B/R)_0$
						and the quotient map $\HdR(B/\kappa)_0 \to \sV(\kappa)$.
						Then we have a chain of $W(\kappa)$-module
						\[
						p\cD(B)_0 \stackrel{2}  \subset 
						\tilde I \stackrel{1}  \subset 
						\cD(B)_0.
						\]
					\end{enumerate}
					Now define 
					\[
					\cD_{\tilde A,0}= D(B)_0,
					\cD_{\tilde A,1 }=  \tV^{-1} \tilde I,
					\cD_{\tilde A} =   \cD_{\tilde A,0}+  \cD_{\tilde A,1}
					\] 
					
					We verify that $\cD_{\tilde A}$ is $\tF,\tV$-stable and has the following chain conditions: 
					\begin{enumerate}[resume]\label{FVtA}
						\item\label{FVtA1}
						$\tV   \cD_{\tilde A,0}  \cong 
						\tF   \cD_{\tilde A,0}.$
						This follows from  \eqref{cond:inv}.
						\item\label{FVtA2}
						$\tV   \cD_{\tilde A,0}  \stackrel{2}  \subset \cD_{\tilde A,1}.$
						The rank condition of $I$ gives $p\cD(B)_0\stackrel{2} \subset   \tilde I, $ thus we have 
						$\tF\cD(B)_0\stackrel{2} \subset  \tV^{-1} \tilde I. $
						\item\label{FVtA3}
						$\tV \cD_{\tilde A,1} \stackrel{1} \subset \cD_{\tilde A,0}.$ 
						This is by definition.
						\item $\tF \cD_{\tilde A,1} \stackrel{1} \subset \cD_{\tilde A,0}.$ This is equivalent to 
						$  \cD_{\tilde A,1} \stackrel{1} \subset \tF^{-1}\cD_{\tilde A,0}.$ The claim follows from the fact that  
						$
						\tF^{-1}\cD_{\tilde A,0}=\tV^{-1}\cD_{\tilde A,0}.
						$
					\end{enumerate}
					We also have an inclusion ${\delta}:D_{\tilde A} \subset \cD(B)$ by definition.
					By covariant Dieudonn\'e theory there exists an abelian 3-fold $A$ such that $\cD(\tilde A)=\cD_{\tilde A}$, and the inclusion $ \cD(\tilde A) \to \cD(B)$  is induced by a prime-to-$p$ isogeny $ {\delta}: \tilde{A} \to B.$
					Define the endormorphism structure $i_{\tilde A}$ on $\tilde A$ by 
					$i_{\tilde A}(a)= {\delta}^{-1} \circ i_{  B}(a) \circ {\delta}$ for $a \in O_F.$
					Then $( \tilde A, i_{\tilde A} )$ is an $O_F$-abelian scheme.
					Let $\lambda_{\tilde A}$ be the unique polarization such that 
					\[
					\lambda_{\tilde A} = {\delta}^\vee  \circ \lambda_B \circ {\delta}.
					\]
					The pairings induced by $\lambda_{\tilde A}$ and $\lambda_B$ have the relation
					\[
					\langle x, y \rangle_{\lambda_{\tilde A} } =  \langle x , y \rangle_{\lambda_B} , \ x,y \in \cD(A).
					\]
					Define the level structure $\eta_{\tilde A}$ on $\tilde A$ by
					$\eta_{\tilde A}=  {\delta}_*^{-1} \circ \eta_B.$
					We verify
					\begin{enumerate}[resume]
						\item $\cD( \tilde A)$ is of signature (1,2). This follows from \eqref{FVtA2} and \eqref{FVtA3}. 
						\item\label{poltA}
						$ \cD( \tilde A )_1 \stackrel{1}\subset \cD( \tilde A )_0^ { \perp_{ \tilde A } } $. 
						Consider  
						$\cD( \tilde A )_0^ { \perp_{ \tilde A } } =
						\cD( B)_0^ { \perp_{ B } }=p^{-1}\tV\cD(B)_0.
						$
						The claim follows from the definition and \eqref{chaintI}.
						\item\label{poltAd}
						$ \cD( \tilde A )_0 \stackrel{1}\subset \cD( \tilde A )_1^ { \perp_{ \tilde A } } $. This is the dual version of \eqref{poltA}.
						\item $\ker\lambda_{\tilde A}[p^\infty]$ is a $\tilde A[p]$-subgroup scheme of rank $p^2$.
						Indeed, from covariant Dieudonn\'e theory
						it is equivalent to show 
						$ \cD( \tilde A ) \stackrel{2}\subset \cD( \tilde A )^ { \perp_{ \tilde A } } .$ Thus it suffices to show 
						$ \cD( \tilde A)_0    \stackrel{1} \subset \cD( \tilde A )_1^{\perp_{\tilde A} }$  and
						$ \cD( \tilde A)_1    \stackrel{1} \subset \cD( \tilde A )_0^{\perp_{\tilde A} }
						$
						which follows from \eqref{poltA} and \eqref{poltAd}.
					\end{enumerate}  
					Now we prove \eqref{isoPPVtan}. 
					Indeed, a deformation argument shows  that
					the tangent space  $\cT_{\dP(\sV) /\Fpp,p'}$ at $p'$ fits into an exact sequence
					\begin{multline}
						0 \to 
						\cH om ( I,
						\sV(R)/I)
						\to
						\cT_{\mathbb{P}(\mathscr{V})  /\Fpp,p} 
						\to 
						\cH om  
						(  {\omega}_{  A^{\vee}/R,0}/\ker\gamma_{*,0},
						\HdR(      A/R )_{ 0} /
						{\omega}_{  A^{\vee}/R,0})
						\to
						0
					\end{multline}
					which coincides with \eqref{eq:Ptangent} under $\mu.$
					Thus  \eqref{isoPPVtan} follows.
				\end{enumerate}
			\end{proof}
			
		\end{proposition}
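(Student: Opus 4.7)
For part \eqref{defsV}, the denominator $\gamma_{*,0}\omega_{A^\vee/R,0}$ sits inside the locally free rank $3$ sheaf $\HdR(B/R)_0$, so it suffices to show it is locally free of rank $1$. The relation $p\lambda_A=\gamma^\vee\circ\lambda_B\circ\gamma$ together with \cite[Lemma~3.4.12]{LTX+22} applied to the quasi-$p$-isogeny $\gamma$ (with source of signature $(1,2)$ and target of signature $(0,3)$) forces $\rank_{\cO_R}\ker\gamma_{*,0}=1$ and $\rank_{\cO_R}\ker\gamma_{*,1}=2$; moreover $\ker\gamma_{*,0}\subseteq \omega_{A^\vee/R,0}$ by an orthogonality argument identical to Lemma \ref{lem:Y0Y1Y2induce}\eqref{Y2definduce1}. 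Hence $\gamma_{*,0}$ restricted to the rank $2$ sheaf $\omega_{A^\vee/R,0}$ has rank $1$ image. The same argument applied with $\gamma=\delta\circ\alpha$ factoring through $\tilde{A}$, combined with Lemma \ref{lem:PimplyY2}\eqref{deltaiso} saying $\delta_{*,0}$ is an isomorphism, shows likewise that $I$ in \eqref{isoPPV} is locally free of rank $1$ and lies in $\sV$.

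For part \eqref{isoPPV}, $\mathbb{P}(\sV)$ is smooth over $\Fpp$ of dimension $2$ and $P$ is smooth of the same dimension by Proposition \ref{prop:Psmooth}. Thus $\mu$ will be an isomorphism once we check it is bijective on $\ol\kappa$-points and an isomorphism on tangent spaces at each such point. For the bijection on points, I would construct an inverse $\theta$ via covariant Dieudonn\'e theory: given $(A,\lambda_A,\eta_A,B,\lambda_B,\eta_B,\gamma,I)\in\mathbb{P}(\sV)(\kappa)$, let $\tilde I\subset \cD(B)_0$ be the preimage of $I$ under $\cD(B)_0\twoheadrightarrow\HdR(B/\kappa)_0\twoheadrightarrow \sV(\kappa)$. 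Set
\[
\cD_{\tilde A,0}:=\cD(B)_0,\qquad \cD_{\tilde A,1}:=\tV^{-1}\tilde I,\qquad \cD_{\tilde A}:=\cD_{\tilde A,0}\oplus\cD_{\tilde A,1}.
\]
Then I would verify (i) $\cD_{\tilde A}$ is $\tF,\tV$-stable using that $\cD(B)_0$ is $\tV^{-1}\tF$-invariant (since $B$ has signature $(0,3)$, so $\tV\cD(B)_1=\tF\cD(B)_1$) together with the chain $p\cD(B)_0\stackrel{2}\subset\tilde I\stackrel{1}\subset\cD(B)_0$ coming from the rank condition on $I$; (ii) the resulting abelian threefold $\tilde A$, constructed via Dieudonn\'e theory, has signature $(1,2)$; (iii) the polarization $\lambda_{\tilde A}$ pulled back through the natural inclusion $\delta\colon\tilde A\to B$ has $\ker[p^\infty]\subset \tilde A[p]$ of rank $p^2$; and (iv) the induced map $\alpha\colon A\to\tilde A$ (produced by factoring $\gamma$ through $\delta$) satisfies $p\lambda_A=\alpha^\vee\lambda_{\tilde A}\alpha$ and has Raynaud kernel. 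The level structures $\eta_{\tilde A}$ and the compatibilities under prime-to-$p$ Hecke are then forced. This is entirely parallel to the constructions in Proposition \ref{prop:Y0Y1blowup}\eqref{Y0Sblow} and Lemma \ref{lem:PY2}, the main book-keeping being the chain inclusions to be checked in the Dieudonn\'e modules.

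For the tangent space comparison, a standard Grothendieck-Messing deformation argument on $\dP(\sV)\to N$ gives, for $p'=(A,B,\gamma,I)$, an exact sequence
\[
0\to \cH om(I,\sV/I)\to \cT_{\mathbb{P}(\sV)/\Fpp,p'}\to \cT_{N/\Fpp,\nu(p')}\to 0,
\]
where the quotient is the tangent space of $N$ identified via Theorem \ref{thm:TN} with $\cH om(\omega_{A^\vee,0}/\ker\gamma_{*,0},\HdR(A)_0/\omega_{A^\vee,0})$. Under $\mu$, using that $\delta_{*,0}$ is an isomorphism and $I=\delta_{*,0}\omega_{\tilde A^\vee,0}/\delta_{*,0}\alpha_{*,0}\omega_{A^\vee,0}$, one identifies this sequence termwise with the sequence \eqref{eq:Ptangent} of Proposition \ref{prop:Psmooth}, matching the sub-bundle $\cH om(\omega_{\tilde A^\vee,0}/\alpha_{*,0}\omega_{A^\vee,0},\HdR(\tilde A)_0/\omega_{\tilde A^\vee,0})$ with $\cH om(I,\sV/I)$. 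Hence $\mu$ is an isomorphism on tangent spaces and we conclude.

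The main obstacle is the verification at step (i)--(iv) above: producing the correct Dieudonn\'e module $\cD_{\tilde A}$ and checking the precise chain-length inclusions needed to ensure both the signature $(1,2)$ condition on $\tilde A$ and the isotropy/rank conditions on $\ker\lambda_{\tilde A}$ and on the Raynaud kernels of $\alpha$ and $\delta$. Once the correct lattice is guessed (forced by the requirement $\cD_{\tilde A,0}=\cD(B)_0$ so that $\delta_{*,0}$ is the identity, consistent with Lemma \ref{lem:PimplyY2}\eqref{deltaiso}), the verifications are mechanical but require careful use of the duality $\cD(\tilde A)_i^{\perp_{\tilde A}}$ vs. $\cD(\tilde A)_i^{\perp_B}$ and of the signature-$(0,3)$ identities for $\cD(B)$.
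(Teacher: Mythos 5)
Your proposal takes essentially the same approach as the paper: part (1) via the rank computation on $\ker\gamma_{*,0}$, and part (2) via the Dieudonn\'e-theoretic inverse with $\cD_{\tilde A,0}=\cD(B)_0$ and $\cD_{\tilde A,1}=\tV^{-1}\tilde I$, followed by the matching of tangent-space exact sequences against \eqref{eq:Ptangent}. The only minor divergence is in part (1): you argue directly on $N$ via \cite[Lemma 3.4.12]{LTX+22} applied to $\gamma$, whereas the paper factors $\gamma=\delta\circ\alpha$ and quotes Lemma \ref{lem:PimplyY2}\eqref{deltaiso} and Lemma \ref{lem:Y0Y1Y2induce}\eqref{Y2definduce1} from the $P$-side; your version is arguably cleaner since $\sV$ lives over $N$ where $\alpha,\delta$ are not a priori available.
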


		\subsection{Intersection of irreducible components of $S_0(p)$}
		Define $Y_{i,j}:=Y_i \times_{S_0(p)}  Y_j$ and $Y_{i,j,k}:=Y_i \times_{S_0(p)} Y_j \times_{S_0(p)} Y_k.$ The intersection of irreducible components are parametrized by some discrete Shimura varieties:

		\begin{proposition}\label{prop:Y01Y02Y12}
			\begin{enumerate}
				\item\label{Y01}
				Denote by $\pi_{0,1}$ the restriction of the morphism $\pi$
				on $Y_{0,1}.$ Then  $\pi_{0,1}$ factors through $S_{\spe}.$
				Moreover,
				denote by $(\cA, \lambda_\cA, \eta_\cA)$ the universal object on $S_{\spe}.$ Let 
				$\dP:=\dP (\omega_{\cA^\vee,0} )$ be  the projective bundle associated with $\omega_{\cA^\vee,0}.$
				Then the assignment 
				sending a point $(
				A, \lambda_A, \eta_A,
				\tilde{A},\lambda_{\tilde A},\eta_{\tilde A},
				\alpha 
				) \in Y_{0,1}(R)$  for every  $R\in \mathtt{Sch'}_{/\Fpp}$ to the subbundle
				\[
				I:= (\alpha_{*,1}^{-1} \omega_{  \tilde A^\vee/R,1}  )^\perp  \subseteq \omega_{A^\vee/R,0}
				\]
				induces an isomorphism of $\Fpp$-schemes
				\[
				\varphi_{0,1}:  Y_{0,1} \cong \dP.
				\]
				The morphism $\varphi_{0,1}$
				is equivariant under the prime-to-$p$ Hecke correspondence. That is, given $g \in K^p\backslash G(\dA^{\infty,p}) / K'^p$ such that $g^{-1}K^p g \subset K'^p,$ we have a commutative diagram
				\[
				\xymatrix{
				 Y_{0,1} \bKp \ar[rr]^-{\varphi_{0,1}\bKp }\ar[d]_g
					&&  Y_{0,1}(K'^p)   \ar[d]^g \\
					\dP  \bKp   \ar[rr]^-{\varphi_{0,1}(K'^p) }
					&&  \dP  (K'^p) 
				}.
				\]
				To summarize, we have the commutative diagram
				\begin{equation}\label{diag:normal1}
					\xymatrix{
				    \dP \ar[dr]    & Y_{0,1} \ar[l]_{ \cong }  
				       \ar@{^{(}->}[r]        \ar[d]^{\pi_{0,1} } & S_0(p)  \ar[d]^{\pi }
						\\
					  &  S_{\spe} \ar@{^{(}->}[r]    & S   }.
				\end{equation}
				\item \label{Y02}
				The restriction of  the morphism $\tilde \pi:=\pi\circ\tilde\nu^{-1}$ on $Y_{0,2}$ 
				in the diagram \eqref{diag:normal1}
				is
				an isomorphism of $\Fpp$-schemes which is equivariant under the prime-to-$p$ Hecke correspondence. 
				\[
				\tilde \pi_{0,2}:=\tilde\pi\mid_{Y_{0,2}} : Y_{0,2} \cong N.
				\]
				\[\label{normal1}
				\xymatrix{
					Y_{0,2} \ar@{^{(}->}[r]  &  Y_2  \ar[r]^{\tilde \nu^{-1} } 
					& P \ar[r]^\pi  & N
				}.
				\]
				
				\item\label{Y12}
				The morphism $\tilde \pi$ induces
				a finite flat purely inseparable map
				\[
				\tilde \pi_{1,2}: Y_{1,2} \to N.
				\]
				which is equivariant under the prime-to-$p$ Hecke correspondence.
			\end{enumerate}
		\end{proposition}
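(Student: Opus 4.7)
All three parts follow the pattern established in Lemma \ref{lem:tTMSp} and Proposition \ref{prop:Y0Y1blowup}: construct the stated morphism using the natural data, verify well-definedness via Lemma \ref{lem:Y0Y1Y2induce}, reduce to a bijection on $\kappa$-points and an isomorphism of tangent spaces using the smoothness results of Propositions \ref{prop:Y0Y1Y2smooth}, \ref{prop:Sblowsmooth} and \ref{prop:Psmooth}, and produce the inverse on points via covariant Dieudonn\'e theory. Hecke-equivariance is automatic throughout because every construction is purely functorial.

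For (1), the factorisation of $\pi_{0,1}$ through $S_{\spe}$ is immediate: Lemma \ref{lem:Y0Y1Y2induce}\eqref{Y0definduce2} gives $(\ker\tV_A)_1=\ker\alpha_{*,1}$ on $Y_0$, while the defining condition of $Y_1$ gives $\omega_{A^\vee/R,1}=\ker\alpha_{*,1}$, so $\omega_{A^\vee/R,1}=(\ker\tV_A)_1$, equivalent to superspeciality by Remark \ref{remark:sspdefequiv}. Well-definedness of $\varphi_{0,1}$ follows by duality from the functorial inclusion $\omega_{A^\vee/R,1}\subseteq\alpha_{*,1}^{-1}\omega_{\tilde A^\vee/R,1}$. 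Since both $Y_{0,1}$ and $\dP$ are smooth of dimension $1$ over $\Fpp$, it suffices to exhibit an inverse on $\kappa$-points together with a tangent-space isomorphism. Given superspecial $(A,\lambda_A,\eta_A)\in S_{\spe}(\kappa)$ and a line $I\subseteq\omega_{A^\vee/\kappa,0}$, take the preimage $\tilde I\subseteq\cD(A)_0$ and set $\cD_{\tilde A,0}:=\tF\tilde I^{\vee}$ and $\cD_{\tilde A,1}:=\tV^{-1}\cD(A)_0$; stability under $\tF,\tV$, signature $(1,2)$, the polarisation relation $p\lambda_A=\alpha^{\vee}\lambda_{\tilde A}\alpha$, and both the $Y_0$- and $Y_1$-conditions all follow from the chain-of-lattice computations of Proposition \ref{prop:Y0Y1blowup}\eqref{Y0Sblow}, while the tangent comparison with \eqref{eq:Y0tangent} is direct.

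For (2), one checks that $Y_{0,2}$ is smooth of dimension $1$ by transversality of the tangent sequences for $\cT_{Y_0}$ and $\cT_{Y_2}$, while $N$ is smooth of dimension $1$ by Theorem \ref{thm:TN}, so again it suffices to produce a $\kappa$-point bijection and a tangent isomorphism. Given $(A,B,\gamma)\in N(\kappa)$, define $\cD(\tilde A)_0:=\cD(B)_0$ (permissible because $\delta_{*,0}$ will be an isomorphism by Lemma \ref{lem:PimplyY2}) and $\cD(\tilde A)_1$ as the unique intermediate lattice between $\cD(A)_1$ and $\cD(B)_1$ satisfying $\tV\cD(\tilde A)_1\subseteq\cD(\tilde A)_0$ and self-duality under $\lambda_{\tilde A}:=\delta^{\vee}\lambda_B\delta$. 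This produces an $O_F$-abelian variety of signature $(1,2)$ with compatible quasi-$p$-isogenies $\alpha:A\to\tilde A$ and $\delta:\tilde A\to B$ such that $\delta\circ\alpha=\gamma$; direct verification shows $\omega_{\tilde A^\vee,0}=\im\alpha_{*,0}$ and $\omega_{\tilde A^\vee,1}=\HdR(\tilde A)_0^{\perp_{\tilde A}}$, placing the output in $Y_{0,2}(\kappa)$. The tangent comparison matches the tangent sequences of $\cT_{Y_0}$ and $\cT_{Y_2}$ against the formula for $\cT_{N_t}$ in Theorem \ref{thm:TN}\eqref{Nfiber}.

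For (3), $\tilde\pi_{1,2}$ is the composite $Y_{1,2}\hookrightarrow Y_2\xrightarrow{\tilde\nu^{-1}}P\to N$. On $\kappa$-points an inverse is produced by the same Dieudonn\'e construction as in (2), giving bijectivity on geometric points and hence quasi-finiteness; since source and target are smooth curves, this already forces finiteness and flatness. The main obstacle, and the reason the map is only purely inseparable rather than an isomorphism, lies in the tangent comparison: analogously to the purely inseparable morphism $Y_1\to S^\#$ of Proposition \ref{prop:Y0Y1blowup}, the $Y_1$-condition $\omega_{A^\vee/R,1}=\ker\alpha_{*,1}$ combined with the commutative diagram \eqref{diag:commuteF} forces the induced tangent map to annihilate the first summand of the exact sequence \eqref{eq:Y2tangent}, while the point-set map remains bijective, confirming that $\tilde\pi_{1,2}$ is finite, flat, and purely inseparable.
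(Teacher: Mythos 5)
Your overall strategy matches the paper's: show the factorisation, construct the inverse on geometric points by covariant Dieudonn\'e theory, and close with a tangent-space comparison.  Part (1) is essentially correct: your formula $\cD_{\tilde A,0}=\tF\tilde I^{\vee}$ agrees with the paper's $\tV^{-1}\widetilde{I^{\perp}}$ because $\tilde I^{\vee}=p^{-1}\widetilde{I^{\perp}}$ and $p\tV^{-1}=\tF$ on $\cD(A)$, and both the $Y_0$- and $Y_1$-conditions must indeed be checked for the output.

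There are, however, two concrete errors in (2) and (3).  In (2) you characterise $\cD(\tilde A)_1$ as the intermediate lattice between $\cD(A)_1$ and $\cD(B)_1$ satisfying ``self-duality under $\lambda_{\tilde A}:=\delta^{\vee}\lambda_B\delta$''.  But $\tilde A$ lies in $\widetilde S$, so $\ker\lambda_{\tilde A}$ has rank $p^2$ and $\cD(\tilde A)$ is \emph{not} self-dual under $\lambda_{\tilde A}$; that condition cannot hold.  Moreover, since $B$ is superspecial we have $\tV\cD(B)_1=\cD(B)_0$, so the requirement $\tV\cD(\tilde A)_1\subseteq\cD(\tilde A)_0=\cD(B)_0$ is automatic for \emph{any} intermediate lattice and imposes nothing.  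Your two listed conditions therefore do not pin down $\cD(\tilde A)_1$ at all.  The condition that actually determines it is the $Y_0$-condition $\omega_{\tilde A^{\vee},0}=\im\alpha_{*,0}$, which upon lifting reads $\tV\cD(\tilde A)_1=\cD(A)_0$, i.e.\ $\cD(\tilde A)_1=\tV^{-1}\cD(A)_0$.

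In (3) you claim the inverse is produced by ``the same Dieudonn\'e construction as in (2).''  It is not: on $N$ the abelian variety $A$ is supersingular but not superspecial, so $\tF\cD(A)_0\neq\tV\cD(A)_0$ in general.  The $Y_1$-condition $\omega_{A^{\vee},1}=\ker\alpha_{*,1}$ lifts to $p\cD(\tilde A)_1\cap\cD(A)_1=\tV\cD(A)_0$, which forces $\cD(\tilde A)_1=p^{-1}\tV\cD(A)_0=\tF^{-1}\cD(A)_0$, whereas the construction of (2) gives $\tV^{-1}\cD(A)_0$.  Using the (2) lattice here would land you in $Y_{0,2}$, not $Y_{1,2}$.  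This discrepancy ($\tF^{-1}$ in place of $\tV^{-1}$) is precisely the source of the pure inseparability, which is consistent with your tangent-space intuition, but the inverse construction you propose would not produce the correct point.
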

		\begin{proof}
			\begin{enumerate}
				\item
				We show $\pi_{0,1}$ factors through $S_{\spe}.$
				Take    $y=(A, \lambda_A , \eta_A, \tilde{A}, \lambda_{\tilde{A}},  \eta_{\tilde{A} },\alpha)  \in Y_{0,1}(R)$
				for a scheme $R\in\mathtt{Sch'}_{/\Fpp},$ 
				we need to show $\tV\omega_{A^\vee/R,1}=0.$
				By definition we have 
				$   \omega_{A^\vee/R,1}=  \ker\alpha_{*,1};$
				By Lemma \ref{lem:Y0Y1Y2induce}\eqref{Y0definduce2}
				we have  $\tV \ker\alpha_{*,1}=0.$
				Thus $(A, \lambda_A , \eta_A)\in S_{\spe}(R).$
				
				It is easy to see $\varphi_{0,1}$ is well-defined. 
				Now we show it is an isomorphism.
				A deformation argument shows $Y_{0,1}$ is smooth with tangent bundle
				\begin{equation}\label{eq:Y01tan}
					\cT_{Y_{0,1} }\cong \cH om  
					(\alpha_{*,1}^{-1}  {\omega}_{\tilde \cA^{\vee},1}/\ker\alpha_{*,1}, 
					\HdR(     \cA )_{ 1} /
					\alpha_{*,1}^{-1}  {\omega}_{\tilde \cA^{\vee},1}).
				\end{equation}
				Thus it suffices to check
				that for every algebraically closed field $\kappa $ containing $\Fpp,$
				\begin{enumerate}
					\item\label{isoPY01} $  \varphi_{0,1}$ induces a bijection
					on $\kappa$-points;
					\item\label{isoPY01t}
					$  \varphi_{0,1}$ induces an isomorphism on the tangent spaces at every $\kappa$-point.
				\end{enumerate}
				To show \eqref{isoPY01t}, it suffices to construct an inverse map $\theta.$ 
				Take $p=
				( 
				A, \lambda_A, \eta_A, 
				I
				) \in \dP(\kappa)$
				where $I$ is a locally free rank 1 sub $\kappa$-module of  
				$\omega_{A^\vee/R,0}.$
				We list miscellaneous properties of $\cD(  A ):$
				\begin{enumerate}[resume]
					\item \label{Y01invtau}
					$\tF\cD(A)_0=\tV\cD(A)_0.$ This is by
					$\tV\omega_{A^\vee/R,1}=0.$
					\item\label{Y01chaintI}
					Denote by $\widetilde {I^\perp}$ the preimage of $I^\perp$ under the reduction map $\cD(A)_1 \to \cD(A)_1/p\cD(A)_1\cong \HdR(A/R)_1.$ Then 
					the condition $\omega_{A^\vee/\kappa,1} \subset I^\perp$ lifts as a chain of $W(\kappa)$-module
					\[
					\tV\cD(A)_0 \stackrel{1}  \subset 
					\widetilde {I^\perp} \stackrel{1}  \subset 
					\cD(A)_1 \stackrel{1}  \subset \tF^{-1}\cD(A)_0 .
					\]
				\end{enumerate}
				Now define 
				\[
				\cD_{\tilde A,0}= \tV^{-1} \widetilde {I^\perp},
				\cD_{\tilde A,1 }= \tV^{-1}\cD(A)_0 ,
				\cD_{\tilde A} =   \cD_{\tilde A,0}+  \cD_{\tilde A,1}
				\] 
				
				We verify that $\cD_{\tilde A}$ is $\tF,\tV$-stable and has the following chain conditions: 
				\begin{enumerate}[resume]\label{Y01chain}
					\item 
					$\tV   \cD_{\tilde A,0}  \stackrel{2}  \subset \cD_{\tilde A,1}$
					and
					$\tF  \cD_{\tilde A,0}  \stackrel{2}  \subset \cD_{\tilde A,1}.$
					By \eqref{Y01invtau}
					it suffices to show that 
					$  \widetilde {I^\perp}\stackrel{2}  \subset \tF^{-1}  \cD(A)_0,$
					which is by   \eqref{Y01chaintI}.
					\item $\tV \cD_{\tilde A,1} \stackrel{1} \subset \cD_{\tilde A,0}$ and
					$\tF \cD_{\tilde A,1} \stackrel{1} \subset \cD_{\tilde A,0}.$ 
					By \eqref{Y01invtau}
					it suffices to show 
					$\tV\cD(A)_0  \stackrel{1}  \subset  \widetilde {I^\perp}   ,$
					which is by   \eqref{Y01chaintI}.
				\end{enumerate}
				We also have an inclusion ${\alpha_*}:\cD({  A}) \subset \cD_{\tilde A}$ by definition.
				By covariant Dieudonn\'e theory there exists an abelian 3-fold $\tilde A$ such that $\cD(\tilde A)=\cD_{\tilde A}$, and $\alpha_*$  is induced by a prime-to-$p$ isogeny $ {\alpha}: A \to \tilde A.$
				Define the endormorphism structure $i_{\tilde A}$ on $\tilde A$ by 
				$i_{  A}(a)= {\alpha}^{-1} \circ i_{  \tilde A}(a) \circ {\alpha}$ for $a \in O_F.$
				Then $( \tilde A, i_{\tilde A} )$ is an $O_F$-abelian scheme.
				Let $\lambda_{\tilde A}$ be the unique polarization such that 
				\[
				p\lambda_{  A} = {\alpha}^\vee  \circ \lambda_{\tilde A} \circ {\alpha}.
				\]
				The pairings induced by $\lambda_{  A}$ and $\lambda_{\tilde A}$ are related by 
				\[
				\langle x, y \rangle_{\lambda_{  A} } =p^{-1}  \langle \alpha_* x , \alpha_* y \rangle_{\lambda_{\tilde A}} , \ x,y \in \cD(A).
				\]
				Define the level structure $\eta_{\tilde A}$ on $\tilde A$ by
				$\eta_{\tilde A}=  {\alpha}_*  \circ \eta_A.$
				We verify
				\begin{enumerate}[resume]
					\item $\cD( \tilde A)$ is of signature (1,2). This follows from \eqref{Y01chain}.
					\item\label{Y01poltA}
					$ \cD( \tilde A )_1 \stackrel{1}\subset \cD( \tilde A )_0^ { \vee_{ \tilde A } } $. 
					Consider  
					$\cD( \tilde A )_0^ { \vee_{ \tilde A } } =
					( \tV^{-1} \widetilde{I^ { \perp  } })^{\vee_{\tilde A}} =
					p^{-1}\tF (\widetilde{ I^ { \perp  }})^{\vee_{  A}}.
					$
					The claim follows from the definition and \eqref{Y01chaintI}.
					\item\label{Y01poltAd}
					$ \cD( \tilde A )_0 \stackrel{1}\subset \cD( \tilde A )_1^ { \vee_{ \tilde A } } $. This is the dual version of \eqref{Y01poltA}.
					\item $\ker\lambda_{\tilde A}[p^\infty]$ is a $\tilde A[p]$-subgroup scheme of rank $p^2$.
					Indeed, from covariant Dieudonn\'e theory
					it is equivalent to show 
					$ \cD( \tilde A ) \stackrel{2}\subset \cD( \tilde A )^ { \perp_{ \tilde A } } .$ Thus it suffices to show 
					$ \cD( \tilde A)_0    \stackrel{1} \subset \cD( \tilde A )_1^{\perp_{\tilde A} }$  and
					$ \cD( \tilde A)_1    \stackrel{1} \subset \cD( \tilde A )_0^{\perp_{\tilde A} }
					$
					which follows from \eqref{Y01poltA} and \eqref{Y01poltAd}.
					\item $\omega_{\tilde A^\vee/\kappa,0}=\im\alpha_{*,0}$
					and $\omega_{  A^\vee/\kappa,1}=\ker\alpha_{*,1}.$
					These are from the definition of $\cD(\tilde A)_1$ and
					\eqref{Y01invtau}.
				\end{enumerate}  
				Finally we set
				$\theta(p)=(
				A , \lambda_A, \eta_A, {\tilde A},  \lambda_{ \tilde A } , \eta_{ \tilde A } , \alpha
				).$
				The equivariance under prime-to-$p$ Hecke correspondence is clear.
				
				To show \eqref{isoPY01t}, denote by $ \mathcal{I} \subseteq \omega_{\cA^\vee,0}
				$ the universal subbundle (of rank 1). Then we have an isomorphism
				\begin{equation}\label{eq:dPtan}
					\mathcal{T}_{\mathbb{P} / S_{\spe} } 
					\simeq {\cH om}_{\mathcal{O}_{\mathbb{P} } }
					(\mathcal{I}^\perp/\omega_{\cA^\vee,1}, 
					\HdR(\cA)_1
					/ \mathcal{I}^\perp) .
				\end{equation}
				Under the morphism $\varphi_{0,1} $
				we have 
				\[
				I^\perp=\alpha_{*,1}^{-1}\omega_{A/\kappa,1},~\ker\alpha_{*,1}=\omega_{A/\kappa,1}.
				\]
				Thus the expression of tangent space 
				\eqref{eq:Y01tan} and \eqref{eq:dPtan} coincide.
				Thus $\varphi_{0,1}$ is an isomorphism.
				
				\item\label{mintss}
				Since $N$ is smooth over $\Fpp$ by Proposition \ref{prop:Y0Y1Y2smooth}\eqref{prop:Y2smooth}, to show that $ \varphi_{0,2}$ is an isomorphism, it suffices to check that for every algebraically closed field $\kappa$ containing $\Fpp$, we have
				\begin{enumerate}
					\item \label{iso1}
					$ \varphi_{0,2}$ induces a bijection on $\kappa$-points; and
					\item \label{iso2}
					$ \varphi_{0,2}$ induces an isomorphism on the tangent spaces at every $\kappa$-point.
				\end{enumerate}
				For \eqref{iso1}, we construct an inverse map $  \theta$ of $\varphi_{0,2}.$ Take a point
				$n = ( 
				A, \lambda_A, \eta_A, B, \lambda_B , \eta_B,
				\gamma
				) \in N(\kappa).
				$ 
				We define
				\[
				\cD_{\tilde A,0}=\cD(B)_0, \,
				\cD_{\tilde A,1}= \tV^{-1} \cD(A )_0 ,\,
				\cD_{\tilde A}=\cD_{\tilde A,0} \oplus \cD_{\tilde A,1}.
				\]
				We can easily verify $\cD_B$ is $\tF,\tV$-stable from the fact that $\cD_{B,0}$ is $\tV^{-1}\tF$-invariant. 
				
				We also have an inclusion ${\alpha_*}:\cD({  A}) \subset \cD_{\tilde A}$ by definition.
				By covariant Dieudonn\'e theory there exists an abelian 3-fold $\tilde A$ such that $\cD(\tilde A)=\cD_{\tilde A}$, and $\alpha_*$  is induced by a prime-to-$p$ isogeny $ {\alpha}: A \to \tilde A.$
				Define the endormorphism structure $i_{\tilde A},$ 
				polarization $\lambda_{\tilde A}$
				and prime-to-$p$ level structure $\eta_{\tilde A}$ in a similar way.
				We verify
				\begin{enumerate}
					\item $\cD( \tilde A)$ is of signature (1,2). This is by definition.
					\item\label{Y01poltA}
					$ \cD( \tilde A )_1 \stackrel{1}\subset \cD( \tilde A )_0^ { \perp_{ \tilde A } } $. 
					Consider  
					$\cD( \tilde A )_0^ { \perp_{ \tilde A } } =
					p \tilde I^ { \perp_{ \tilde A } }= \tilde I^ { \perp_{   A } }.
					$
					The claim follows from the definition and \eqref{Y01chaintI}.
					\item\label{Y01poltAd}
					$ \cD( \tilde A )_0 \stackrel{1}\subset \cD( \tilde A )_1^ { \perp_{ \tilde A } } $. This is the dual version of \eqref{Y01poltA}.
					\item $\ker\lambda_{\tilde A}[p^\infty]$ is a $\tilde A[p]$-subgroup scheme of rank $p^2$.
					Indeed, from covariant Dieudonn\'e theory
					it is equivalent to show 
					$ \cD( \tilde A ) \stackrel{2}\subset \cD( \tilde A )^ { \perp_{ \tilde A } } .$ Thus it suffices to show 
					$ \cD( \tilde A)_0    \stackrel{1} \subset \cD( \tilde A )_1^{\perp_{\tilde A} }$  and
					$ \cD( \tilde A)_1    \stackrel{1} \subset \cD( \tilde A )_0^{\perp_{\tilde A} }
					$
					which follows from \eqref{Y01poltA} and \eqref{Y01poltAd}.
					\item $\omega_{\tilde A^\vee/\kappa,0}=\im\alpha_{*,0}$
					and $\omega_{  A^\vee/\kappa,1}=\ker\alpha_{*,1}.$
					These are from the definition of $\cD(\tilde A)_1$ and
					\eqref{Y01invtau}.
				\end{enumerate}  
				Finally we set
				$\theta(n)=(
				A , \lambda_A, \eta_A, {\tilde A},  \lambda_{ \tilde A } , \eta_{ \tilde A } , \alpha
				).$
				The equivariance under prime-to-$p$ Hecke correspondence is clear.
				
				For \eqref{iso2}, take  $p\in P(\kappa)$ and thus $y=\tilde \nu(p)\in Y_2(\kappa).$
				By the proof of Proposition \ref{prop:Y2smooth} and 
				Proposition \ref{prop:Psmooth}, 
				the canonical morphism of tangent space  
				\[
				\cT_{Y_2,y} \to \tilde\nu_*  \cT_{P,p}
				\]
				is an isomorphism.
				\item 
				To show that $ \varphi_{1,2}$ is a purely inseparable morphism, we need check that for every algebraically closed field $\kappa$ containing $\Fpp$, 
				$ \varphi_{1,2}$ induces a bijection on $\kappa$-points.
				We construct an inverse map $  \theta$ of $\varphi_{0,2}.$ Take a point
				$n = ( 
				A, \lambda_A, \eta_A, B, \lambda_B , \eta_B,
				\gamma
				) \in N(\kappa).
				$ 
				We define
				\[
				\cD_{\tilde A,0}=\cD(B)_0, \,
				\cD_{\tilde A,1}= p^{-1}\tV \cD(A )_0 ,\,
				\cD_{\tilde A}=\cD_{\tilde A,0}+\cD_{\tilde A,1}.
				\]
				We can easily verify $\cD_B$ is $\tF,\tV$-stable from the fact that $\cD_{B,0}$ is $\tV^{-1}\tF$-invariant. 
				In a entirely similar way we can construct a point 
				$\theta(n)=(
				A , \lambda_A, \eta_A, {\tilde A},  \lambda_{ \tilde A } , \eta_{ \tilde A } , \alpha
				).$
			\end{enumerate}
		\end{proof}
		
		\begin{definition}
			Let $\tilde M$ be the moduli problem   associating with every $R\in \mathtt{Sch'}_{/\Fpp}$
			the set $ \tilde M(R)$ of equivalence classes of tuples
			$(\tilde B, \lambda_{\tilde B}, \eta_{ \tilde  B},
			A, \lambda_A, \eta_A, 
			\tilde A, \lambda_{\tilde A}, \eta_{\tilde A} ,
			B, \lambda_{  B} , \eta_{  B},
			\delta',\alpha,\delta
			)
			$ where
			\begin{enumerate}
				\item $(\tilde B, \lambda_{\tilde B}  ,  \eta_{\tilde B},A, \lambda_A, \eta_A, \delta' ) \in  { M } (R);$
				\item $( A, \lambda_A, \eta_A, \tilde A, \lambda_{\tilde A}, \eta_{\tilde A}, \alpha)\in Y_{0,2}(R);$
				\item
				$( A, \lambda_A, \eta_A, \tilde A, \lambda_{\tilde A}, \eta_{\tilde A}, 
				B, \lambda_{  B} , \eta_{  B},
				\alpha,\delta) \in P(R);$
				\item
				$( B, \lambda_B, \eta_B,\tilde  B, \lambda_{ \tilde B} , \eta_{ \tilde B},
				\delta'\circ \alpha\circ \delta) \in T_0(p)(R).$
			\end{enumerate}
			The equivalence relations are defined in a similar way.
		\end{definition}
		There is a natural correspondence
		\[
		\xymatrix
		{
			&\tilde M\ar[ld]_{\tilde \rho'} \ar[rd]^{\tilde \rho}&\\
			{ T_0(p) }&& Y_{0,2}
		}
		\]
		\begin{lem}
			The morphism $\tilde \rho$ factors through $Y_{0,1,2}.$ Moreover,
			$\tilde M$ is smooth of dimension 0.
			\begin{proof}
				Take a point $(\tilde B, \lambda_{\tilde B}, \eta_{ \tilde  B},
				A, \lambda_A, \eta_A, 
				\tilde A, \lambda_{\tilde A}, \eta_{\tilde A} ,
				B, \lambda_{  B} , \eta_{  B},
				\delta',\alpha,\delta
				) \in \tilde M(R)$
				for $R\in \mathtt{Sch'}_{/\Fpp}.$
				By Lemma \ref{lem:Y0Y1Y2induce}\eqref{Y0definduce2}
				we have  $(\ker\tV)_1= \ker\alpha_{*,1}.$
				By Remark \ref{rem:sspsmooth}
				we have $(\ker\tV)_1= \omega_{A^\vee/R,1}.$
				Thus $\omega_{A^\vee/R,1}= \ker\alpha_{*,1}$
				and  $\tilde \rho$ factors through $Y_{0,1,2}.$
				It is easy to see  $\tilde B,A,\tilde A, B$ have
				trivial deformation. 
				Thus $\tilde M$ is smooth of dimension 0.
			\end{proof}
			
		\end{lem}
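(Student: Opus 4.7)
The plan is to deduce both assertions from facts already established for $M$, $Y_{0,2}$, and $S_{\spe}$, without entering any genuinely new geometry.

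\textbf{Factorization through $Y_{0,1,2}$.} I would take a test point of $\tilde M(R)$ over some $R\in\mathtt{Sch}'_{/\Fpp}$ and simply combine two previously proved identifications on $\HdR(A/R)_1$. On one hand the $M$-datum $(\tilde B,\lambda_{\tilde B},\eta_{\tilde B},A,\lambda_A,\eta_A,\delta')$ forces $(A,\lambda_A,\eta_A)\in S_{\spe}(R)$ by Lemma \ref{lem:tTMSp}; then Remarks \ref{rem:sspsmooth} and \ref{remark:sspdefequiv} give $\omega_{A^\vee/R,1}=(\ker\tV_A)_1$. On the other hand the $Y_{0,2}$-datum supplies $(A,\tilde A,\alpha)\in Y_0(R)$, and Lemma \ref{lem:Y0Y1Y2induce}\eqref{Y0definduce2} converts this into $(\ker\tV_A)_1=\ker\alpha_{*,1}$. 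Chaining the two equalities yields the defining $Y_1$-condition $\omega_{A^\vee/R,1}=\ker\alpha_{*,1}$, so $\tilde\rho$ takes values in $Y_0\cap Y_1\cap Y_2=Y_{0,1,2}$.

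\textbf{Smoothness of dimension zero.} I would run a Grothendieck-Messing deformation argument in the style of Propositions \ref{prop:Y0Y1Y2smooth} and \ref{prop:Psmooth}. Fix a square-zero thickening $R\hookrightarrow \hat R$ in $\mathtt{Sch}'_{/\Fpp}$ and a point of $\tilde M(R)$. By Proposition \ref{thm:STGM}, lifts to $\hat R$ correspond to simultaneous lifts of the Hodge filtrations on $\rH_1^{\cris}(A/\hat R)$, $\rH_1^{\cris}(\tilde A/\hat R)$, $\rH_1^{\cris}(\tilde B/\hat R)$, $\rH_1^{\cris}(B/\hat R)$, subject to the orthogonality and containment conditions inherited from $M$, $Y_{0,2}$, $P$ and $T_0(p)$. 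The superspecial condition pins both $\hat\omega_{A^\vee,0}$ and $\hat\omega_{A^\vee,1}$ to the canonical crystalline lifts of $(\ker\tV)_{0,1}$, so $A$ has no non-trivial deformation. Once $A$ is rigid, the Raynaud $O_F$-stable subgroup schemes $\ker\alpha$, $\ker\delta$ and $\ker\delta'$ (of fixed ranks inside $A[p]$, $\tilde A[p]$ and $\tilde B[p]$) each admit a unique flat lift, which in turn rigidifies $\tilde A$, $B$ and $\tilde B$ together with their $O_F$-actions, polarizations and level structures. Hence the tangent space of $\tilde M$ vanishes at every geometric point, and since $\tilde M$ is locally of finite type over $\Fpp$ it is \'etale, i.e. smooth of relative dimension zero.

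\textbf{Main obstacle.} I do not anticipate a substantial obstacle: the factorization is a two-line chase and the rigidity is exactly what one expects because the superspecial locus is zero-dimensional and everything else in the tuple is a prime-to-deformation choice of subgroup schemes. The only step requiring care is verifying, in the deformation analysis, that the unique lifts of $\hat\omega_{A^\vee,\bullet}$, $\hat\omega_{\tilde A^\vee,\bullet}$, $\hat\omega_{B^\vee,\bullet}$ and $\hat\omega_{\tilde B^\vee,\bullet}$ produced from $A$ are automatically compatible with all the pairing and isogeny constraints; but this is bookkeeping of the same nature as the compatibility checks carried out in the smoothness proofs for $Y_0$, $Y_1$, $Y_2$ and $P$.
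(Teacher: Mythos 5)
Your factorization argument is exactly the paper's: the $M$-datum forces $(A,\lambda_A,\eta_A)\in S_{\spe}(R)$, Remarks~\ref{remark:sspdefequiv} and~\ref{rem:sspsmooth} give $\omega_{A^\vee/R,1}=(\ker\tV_A)_1$, and Lemma~\ref{lem:Y0Y1Y2induce}\eqref{Y0definduce2} applied to the $Y_0$-datum gives $(\ker\tV_A)_1=\ker\alpha_{*,1}$; chaining yields the defining condition for $Y_1$, hence $\tilde\rho$ lands in $Y_{0,1,2}$.

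For the rigidity statement your Grothendieck--Messing framework is the right one and the conclusion is correct, but the chain of reasoning in the middle is off. You assert that "once $A$ is rigid, the Raynaud $O_F$-stable subgroup schemes $\ker\alpha$, $\ker\delta$ and $\ker\delta'$ each admit a unique flat lift, which in turn rigidifies $\tilde A$, $B$ and $\tilde B$." This is not how the rigidity propagates. First, $\ker\delta'\subset\tilde B[p]$ and $\ker\delta\subset\tilde A[p]$ sit inside abelian schemes whose rigidity has not yet been established at that point of your argument, so "$A$ rigid" says nothing about them a priori. Second, even $\ker\alpha\subset A[p]$ need not lift uniquely over a square-zero thickening merely because $\hat A$ is determined: a finite flat subgroup of a fixed $p$-divisible group over $\hat R$ can have positive-dimensional moduli of flat lifts. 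What actually makes $\tilde M$ $0$-dimensional is the same kind of pinning you invoke for $A$: the Hodge filtrations of $\tilde B$ and $B$ are trivially determined because they are of signature $(0,3)$ (so $\omega_{(\cdot)^\vee,1}=0$ and $\omega_{(\cdot)^\vee,0}$ is everything); $A$ is pinned because it is superspecial; and the $Y_0$- and $Y_2$-conditions (pulled through $\alpha_{*,\bullet}$ and the pairing) determine $\hat\omega_{\tilde A^\vee,0}$ and $\hat\omega_{\tilde A^\vee,1}$ once $\hat A$ is known, exactly as in the deformation analysis of Proposition~\ref{prop:Y0Y1Y2smooth}. The subgroup schemes are then determined as a \emph{consequence} of all four abelian schemes being rigid, not the mechanism by which rigidity propagates. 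The paper's terse "it is easy to see $\tilde B, A, \tilde A, B$ have trivial deformation" is invoking precisely this filtration-pinning argument; your write-up inverts the direction of that implication.
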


		\begin{lem}\label{lem:Y012}
			\begin{enumerate}
				\item\label{tMY012}
				The morphism $\tilde \rho$ induces an isomorphism of $\Fpp$-schemes
				\[
				\tilde \rho: \tilde M\cong   Y_{0,1,2}
				\]
				which is equivariant under the prime-to-$p$ Hecke correspondence.  
				\item\label{tMT0p}
				The morphism $\tilde \rho'$ is an isomorphism of $\Fpp$-schemes
				\[
				\tilde \rho':\tilde M\cong T_0(p)
				\]
				which is equivariant under the prime-to-$p$ Hecke correspondence.
			\end{enumerate}
			\begin{proof}
				\begin{enumerate}
					\item 
					Since $\tilde M$ and $Y_{0,1,2}$ are smooth of dimension 0,  to show that $  \tilde \rho$ is an isomorphism, it suffices to check that for every algebraically closed field $\kappa$ containing $\Fpp$,  
					$\tilde \rho$ induces a bijection on $\kappa$-points. 
					Take a point
					$y = ( 
					A, \lambda_A, \eta_A,
					\tilde{A},\lambda_{\tilde A},\eta_{\tilde A},
					\alpha
					) \in Y_{0,1,2}(\kappa).
					$ 
					We set 
					$\tilde \theta(y)= (\tilde B, \lambda_{\tilde B}, \eta_{ \tilde  B},
					A, \lambda_A, \eta_A, 
					\tilde A, \lambda_{\tilde A}, \eta_{\tilde A} ,
					B, \lambda_{  B} , \eta_{  B},
					\delta',\alpha,\delta
					),$
					where $\tilde B$ with $\delta'$ are constructed in
					Lemma \ref{lem:tTMSp} 
					and 
					$  B$ with $\delta$ are constructed in
					Lemma \ref{lem:PY2}.
					It is easy to verify $\tilde \theta(y) \in \tilde M(R)$
					and $\tilde \theta$  is the inverse of   $\tilde \rho $. The equivariance under prime-to-$p$ Hecke correspondence is clear.
					\item
					Since $\tilde M$ and $  T_0(p) $ are smooth of dimension 0,  to show that $ \tilde  \rho'$ is an isomorphism, it suffices to check that for every algebraically closed field $\kappa$ containing $\Fpp$,  
					$\tilde \rho'$ induces a bijection on $\kappa$-points. 
					Take a point  
					$t=( \tilde B ,  \lambda_{\tilde B}   ,  {\eta}_{\tilde B },
					B ,  \lambda_{  B}   ,  {\eta}_{  B }, \beta
					) \in \tilde{T}( \kappa ).$
					
					We list   properties of $\cD(  B )$ and $\cD(\tilde B)$:
					\begin{enumerate}
						\item\label{BtBF=V}  $\cD(B)$ and $\cD(\tilde B)$ is $\tV^{-1}\tF$-invariant. In fact, since $\cD(B)$ is of signature (0,3), \cite[Lemma 1.4]{Vol10} gives
						\[
						\cD(B)_0=\tV\cD(B)_1=\tF\cD(B)_1,
						\]
						which implies $\cD(B)_0$ and $\cD(B)_1$ are both $\tV^{-1}\tF$-invariant. The argument is identical for $\cD(\tilde B).$
						\item \label{duallength}
						$
						\cD(B)_0^{\perp_B } = \cD(B)_1
						$ 
						and 
						$  \cD(B)_1^{\perp_B} = \cD(B)_0.$
						This follows from the self-dual condition of $\lambda_B$.
						
						\item\label{chain03} We have a chain of $W(\kappa)$-lattice
						$$
						\cD({\tilde B})_1 \stackrel{1}\subset \tV^{-1}\cD({\tilde B})_1^{\perp_{ \tilde B} } \stackrel{2}\subset \frac{1}{p}\cD({\tilde B})_1.
						$$
						Indeed, $\ker \lambda_{\tilde B} \subset {\tilde B}[p]$
						gives $\cD({\tilde B})_0^{\perp_{\tilde B}} \subset (1/p)\cD({\tilde B})_1.$
						The claim comes from \eqref{duallength} and the fact that $\cD({\tilde B})_1^{\perp_{\tilde B}}=( \tV^{-1} \cD({\tilde B})_0)^{\perp_{\tilde B}}= 
						\tF (\cD({\tilde B})_0)^{\perp_{\tilde B}}$.
						\item\label{condition:isogeny}
						We have a relation
						\[
						p   \cD( B )_0 \stackrel{1} \subset
						\cD( \tilde B )_0
						\stackrel{2}\subset
						\cD( B )_0, \,
						p   \cD( B )_1 \stackrel{1} \subset
						\cD( \tilde B )_1
						\stackrel{2}\subset 
						\cD( B )_1.
						\]
						Indeed, we have $ p \cD( B )  \subset \cD( \tilde B )
						$ since $\ker \beta\in \tilde B[p]$ and there is an exact sequence
						\[
						0 \to \cD(\tilde B) \to \cD(B) \to \cD(\ker \beta) \to 0
						\]
						by covariant Dieudonn\'e theory.
					\end{enumerate}
					
					We set
					\[
					\cD_{\tilde A,0}=\tV\cD( B)_1, \,
					\cD_{\tilde A,1}= \tV^{-1} \cD( \tilde B )_1^{\perp_{ \tilde B} } ,\,
					\cD_{\tilde A}=\cD_{\tilde A,0}+\cD_{\tilde A,1}.
					\]
					We verify that $\cD_{\tilde A}$ is $\tF,\tV$-stable. Indeed,
					since $\cD(B)$ and $\cD(\tilde B)$ are $\tV^{-1}\tF$-invariant, it suffices to verify the condition for $\tV$: we have  
					$\tV \cD_{ \tilde A }
					=\tV^2\cD(B)_1+ \cD(\tilde B)_1^{\perp_{\tilde B} }.$ 
					Then it suffices to show 
					$p\cD(B)_0 \subset p \cD(\tilde B)_1^{\perp_ B }$ and 
					$p\cD(\tilde B)_1^{\perp_B} \subset D(B)_0$
					since $\tV^2=\tF\tV=p$. Then it suffices to show 
					$\cD(\tilde B)_1 \subset D(B)_0^{\perp_B}$ and $p\cD(B)_0^{\perp_B}\subset \cD(\tilde B)_1,$ which are from \eqref{condition:isogeny}.
					By covariant Dieudonn\'e theory there exists an abelian 3-fold $A$ such that $\cD(\tilde A)=\cD_{\tilde A}$, and the inclusion $ \cD(\tilde A) \to \cD(B)$  is induced by a prime-to-$p$ isogeny $\delta: \tilde{A} \to B.$
					Define the endormorphism structure $i_{\tilde A}$ on $\tilde A$ by 
					$i_{\tilde A}(a)= \delta^{-1} \circ i_{  B}(a) \circ \delta$ for $a \in O_F.$
					Then $( A, i_A)$ is an $O_F$-abelian scheme.
					Let $\lambda_{\tilde A}$ be the unique polarization such that 
					\[
					\lambda_{\tilde A} = \delta^\vee  \circ \lambda_B \circ \delta.
					\]
					The pairings induced by $\lambda_{\tilde A}$ and $\lambda_B$ have the relation
					\[
					\langle x, y \rangle_{\lambda_{\tilde A} } =  \langle x , y \rangle_{\lambda_B} , \ x,y \in \cD(A).
					\]
					Define the level structure $\eta_{\tilde A}$ on $\tilde A$ by
					$\eta_{\tilde A}=  \delta_*^{-1} \circ \eta_B.$
					We verify
					\begin{enumerate}[resume] 
						\item $\cD( \tilde A)$ is of signature (1,2):  
						calculate the Lie algebra  
						\[
						\frac{\cD(\tilde A)}{\tV\cD(\tilde A)}=
						\frac{ \tV\cD( B)_1+
							\tV^{-1} \cD( \tilde B )_1^{\perp_{ \tilde B} } }
						{  \cD( \tilde B)_1^{\perp_{\tilde B}}  + p\cD(  B)_1  }.
						\]
						The argument is the same as that in verifying $\cD_{\tilde A}$ is $\tF,\tV$-stable.
						\item $\ker\lambda_{\tilde A}[p^\infty]$ is a $\tilde A[p]$-subgroup scheme of rank $p^2$.
						Indeed, from covariant Dieudonn\'e theory
						it is equivalent to show 
						$ \cD( \tilde A ) \stackrel{2}\subset \cD( \tilde A )^ { \perp_{ \tilde A } } $. Thus it suffices to show 
						$ \cD( \tilde A)_0    \stackrel{1} \subset \cD( \tilde A )_1^{\perp_{\tilde A} },$
						which is equivalent to
						$ p \cD( \tilde B)_0^{\perp_ B  } \stackrel{1}\subset \cD( B )_1,$ which is equivalent to 
						$p\cD( B )_0 \stackrel{1}\subset  D(\tilde B)_0,$ which comes from $\eqref{condition:isogeny}.$
					\end{enumerate}  
					
					We have constructed $\tilde A$ and $\delta,$
					while $A$ and $\delta'$ are constructed in
					Lemma \ref{lem:tTMSp}.
					The inclusion $\cD(A) \subset \cD(\tilde A)$ is then induced by a prime-to-$p$ isogeny $\alpha:A \to\tilde A.$
					
					Finally we set 
					$\tilde \theta'(t)= (\tilde B, \lambda_{\tilde B}, \eta_{ \tilde  B},
					A, \lambda_A, \eta_A, 
					\tilde A, \lambda_{\tilde A}, \eta_{\tilde A} ,
					B, \lambda_{  B} , \eta_{  B},
					\delta',\alpha,\delta
					).$ 
					It is easy to verify $\tilde \theta'$  is the inverse of   $\tilde \rho'. $
					The equivariance under prime-to-$p$ Hecke correspondence is clear.
				\end{enumerate}
			\end{proof}
		\end{lem}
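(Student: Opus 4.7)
The plan is to reduce both isomorphism claims to bijectivity on geometric points, exploiting the fact that $\tilde M$, $Y_{0,1,2}$ and $T_0(p)$ are all smooth of dimension zero over $\Fpp$. Smoothness of $\tilde M$ is granted by the preceding lemma; smoothness of $Y_{0,1,2}$ follows because each $Y_i$ is smooth of dimension $2$ by Proposition \ref{prop:Y0Y1Y2smooth} and the triple intersection is cut out set-theoretically by the three defining conditions, whose Dieudonn\'e-theoretic incarnations are transverse at a superspecial point; and $T_0(p)$ is finite over $\Fpp$ because $G'_\infty$ is compact modulo center. After reducing to an algebraically closed $\kappa\supseteq\Fpp$, it suffices to exhibit explicit inverses via covariant Dieudonn\'e theory.

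For part \eqref{tMY012}, given $y=(A,\lambda_A,\eta_A,\tilde A,\lambda_{\tilde A},\eta_{\tilde A},\alpha)\in Y_{0,1,2}(\kappa)$, I would first observe that the condition $y\in Y_{0,1}$ forces $(A,\lambda_A,\eta_A)\in S_{\spe}(\kappa)$ by Proposition \ref{prop:Y01Y02Y12}\eqref{Y01}, so Lemma \ref{lem:tTMSp} produces a unique $(\tilde B,\lambda_{\tilde B},\eta_{\tilde B})$ together with $\delta':\tilde B\to A$. Independently, the condition $y\in Y_2$ combined with the isomorphism $P\cong Y_2$ of Lemma \ref{lem:PY2} yields $(B,\lambda_B,\eta_B)$ and $\delta:\tilde A\to B$. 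The inverse map then sends $y$ to the assembled tuple; one must verify that $(\tilde B,B,\delta'\circ\alpha\circ\delta)\in T_0(p)(\kappa)$, which amounts to checking that $\ker(\delta'\circ\alpha\circ\delta)$ is a totally isotropic $O_F$-subgroup of $\tilde B[p]$ of rank $p^4$ — this is immediate from comparing the Dieudonn\'e lattices produced by the two constructions.

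For part \eqref{tMT0p}, the inverse of $\tilde\rho'$ will be built directly. Given $t=(\tilde B,\lambda_{\tilde B},\eta_{\tilde B},B,\lambda_B,\eta_B,\beta)\in T_0(p)(\kappa)$, the plan is to define
\[
\cD_{\tilde A,0}=\tV\cD(B)_1,\qquad \cD_{\tilde A,1}=\tV^{-1}\cD(\tilde B)_1^{\perp_{\tilde B}},\qquad \cD_{\tilde A}=\cD_{\tilde A,0}\oplus\cD_{\tilde A,1},
\]
verify $\tF,\tV$-stability using that $\cD(B)$ and $\cD(\tilde B)$ are both $\tV^{-1}\tF$-invariant (because of signature $(0,3)$ and \cite[Lemma 1.4]{Vol10}), and then invoke covariant Dieudonn\'e theory to produce $\tilde A$ with a prime-to-$p$ quasi-isogeny $\delta:\tilde A\to B$. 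The polarization $\lambda_{\tilde A}$ is pulled back from $\lambda_B$ via $\delta$, and $\ker\lambda_{\tilde A}$ having rank $p^2$ reduces to a length comparison of $\cD(\tilde A)^{\perp_{\tilde A}}/\cD(\tilde A)$ using condition \eqref{condition:isogeny}. Then $(A,\lambda_A,\eta_A)$ and $\delta'$ are supplied by Lemma \ref{lem:tTMSp} applied to $\tilde B$, and $\alpha:A\to\tilde A$ is the isogeny induced by the inclusion $\cD(A)\subseteq\cD(\tilde A)$ of lattices inside $\cD(\tilde A)[1/p]$; one then checks $p\lambda_A=\alpha^\vee\circ\lambda_{\tilde A}\circ\alpha$ from the normalizations.

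The main obstacle is the combinatorial bookkeeping: one must juggle four abelian threefolds $\tilde B,A,\tilde A,B$ with four polarizations, four quasi-isogenies ($\delta',\alpha,\delta,\beta$) and four level structures, and simultaneously verify all compatibility relations $p\lambda_A=\alpha^\vee\lambda_{\tilde A}\alpha$, $\lambda_{\tilde A}=\delta^\vee\lambda_B\delta$, $\lambda_{\tilde B}=\delta'^\vee\lambda_A\delta'$, $p\lambda_{\tilde B}=\beta^\vee\lambda_B\beta$, and the prescribed rank/isotropy conditions on each kernel. The zero-dimensionality at every stage means there are no tangent-space computations to perform, so the only real work is this chain-of-lattices bookkeeping inside $\cD(B)[1/p]$. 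Equivariance under prime-to-$p$ Hecke correspondence is then automatic, since the entire construction is functorial in prime-to-$p$ quasi-isogenies of the input data.
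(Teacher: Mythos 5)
Your proposal follows essentially the same route as the paper: reduce to bijectivity on $\kappa$-points by zero-dimensional smoothness, in part~(1) assemble the inverse tuple by combining the constructions of Lemma~\ref{lem:tTMSp} and Lemma~\ref{lem:PY2}, and in part~(2) build $\cD_{\tilde A}$ from $\tV\cD(B)_1$ and $\tV^{-1}\cD(\tilde B)_1^{\perp_{\tilde B}}$, check $\tF,\tV$-stability via $\tV^{-1}\tF$-invariance, then produce $A,\delta'$ again from Lemma~\ref{lem:tTMSp} and $\alpha$ from the lattice inclusion. The minor extra remarks you add (the reason $Y_{0,1,2}$ is zero-dimensional, the final polarization compatibility) are reasonable but do not change the argument's substance.
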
 
		
		\section{Level lowering}\label{sec:proof}

		\subsection{Langlands group of $G$}\label{subsection:subgroup}
	Denote by $Z$ the center of $G$
      and $G_0$ the unitary group associated with $G$.
	By \cite[p. 378]{Kni01} we have  $Z(\dA)=\dA_F^\times$ and
	\[
          G(\dA)=Z(\dA)G_0(\dA).
	\]
	Let $P$ be the parabolic of $G$ and $M\subset P$ be the Levi factor of $G$ such that $P(\dQ)$ consists of matrices
 		under the standard basis of $(\Lambda,\psi)$
		of the form
		\[
	P(\dQ)=\left\{
		\left.
		 \left(\begin{array}{ccc}
			a & *&*  \\
			0 &   b & * \\
			0 & 0 &   c
		\end{array} \right)   \right\vert a,b,c\in F^\times, ac^\fc =bb^\fc  
		\right\}
				 \] 
		and $M\subset P$ be the subgroup of diagonal matrices.
	The Langlands dual group of $G$ and $G_0$ are
 \[
	\widehat{G_0}=\GL_3(\dC)  ,\quad  \widehat{G}=\GL_3(\dC) \times \dC^\times  ,\]
	\[  ^L G_0=\widehat{G_0} \rtimes \Gal(F/\dQ), \quad ^L{G}=\widehat{G} \rtimes \Gal(F/\dQ).
 \]
Let $c$ be the nontrivial element in $  \Gal(F/\dQ)$. The action of $c$ on $\widehat{G}$ is given by 
\[
c(g,\lambda)=(\Phi (^t{g})^{-1} \Phi,\lambda \det g ).
\]
	     	The embedding   $G_0 \hookrightarrow G$ corresponds to the natural projection 
   $\widehat{G}\to   \widehat{G_0}.$
   
	Let $p$ be a rational prime  unramified in $F.$   By Satake's classification,
 each unramified principal series  $\sigma_p$ of $G_p$
 corresponds to a $\widehat G$-conjugacy class of semisimple elements in $\widehat{G}\rtimes \Frob_p$ where $\Frob_p$ is the image of an Frobenius element at $p,$
 called the Langlands/Satake parameter of $\sigma_p.$  
	     		\subsection{Classification of unramified principal series at an inert place}   \label{subsec:principal}
	Keep the notation of Section \ref{subsection:subgroup}. 
	Suppose  $p$ is inert in $F$. 
	 Let $\r{LC}(P_p \backslash G_p)$ be the space 
			of locally constant functions on $P_p \backslash G_p,$
			equipped with the natural action by $G_p$ via right multiplication.	 
	Let $\St_p$ be the quotient space of $\r{LC}(P_p \backslash G_p)$ by the constant function. Then $\St_p$ is an irreducible admissible representation of $G_p$, called the Steinberg representation of $G_p$.
	
	Moreover, let $\nu:G\to\dG_m$ be the similitude homomorphism. For any $\beta\in \dC^\times,$ let $\mu_\beta:G_p\to \dC^\times$ be the composite
\[
\mu_\beta:G_p  \xrightarrow{g\mapsto \frac{\det g  }{\nu(g)} } \dQ_p^\times \xrightarrow
{ x \mapsto \beta^{\val_p(x)} } \dC^\times \\
\]

	 Any unramified character of $M_p$
has the form
\begin{align*}
\chi_{\alpha,\beta}: &M_p \to \dC^\times \\
 &\left(\begin{array}{ccc}
			a & 0&0  \\
			0 &   b & 0 \\
			0 & 0 &   c
		\end{array}\right) 
		  \mapsto \alpha^{\val_p(a)-\val_p(b)}\beta^{\val_p(b)}
\end{align*}
where $\alpha,\beta \in \dC^*$ and $\val_p$ is the $p$-adic valuation on $F_p.$

Denote by $I_{\alpha, \beta}:=\operatorname{Ind}_{P_p}^{G_p} (\chi_{\alpha, \beta} )$ be the normalized unitary induction of $\chi_{ \alpha, \beta}$, viewed as a character on $P_p$ trivial on its unipotent radical.
Then 
$I_{\alpha, \beta}\vert_{G_{0,p}}$ coincides with $I(\alpha)$ in the notation  of \cite[3.6.5, 3.6.6]{BG06}.
We list the properties of $I_{\alpha,\beta}:$
	\begin{enumerate}
			\item If $\alpha\neq p^{\pm2} ,  -p^{\pm1}$, then $I_{\alpha,\beta} $ is irreducible.
			\item If $\alpha=p^{\pm 2}, I_{\alpha,\beta}$ has two
Jorden-Holder foctors:  $\St_p \otimes \mu_\beta$ and $\mu_\beta.$
			\item If $\alpha=-p^{\pm 1}$, then $I_{\alpha,\beta}$ has two Jordan-H\"older factors,  
			$\pi_\beta^n$ which is  unramified and non-tempered, and $\pi_\beta^2$ which is ramified and square-integrable.  
			\item 
			The central character of $I_{\alpha,\beta}$ is
\begin{align*}
Z_p \cong F_p^\times  \xrightarrow{}& \mathbb{C}^* \\
b  \mapsto& \beta^{\val_p(b)}.
\end{align*}
			\item\label{dimIab}
			For all  $\alpha,\beta \in \mathbb{C}^{*}, \operatorname{dim} I_{\alpha,\beta}^{K_{p}}=\operatorname{dim} I_{\alpha,\beta}^{\tilde{K}_{p}}=1, \operatorname{dim} I_{\alpha,\beta}^{\Iw_{p}}=2. $ 
			\item\label{dim} $\mathrm{St}_p^{K_p}=\mathrm{St}_p^{\tilde{K}_p}=0, \operatorname{dim}(\pi_\beta^{n})^{K_p}=\operatorname{dim}(\pi_\beta^{2})^{\tilde{K}_p}=1,(\pi_\beta^{n})^{\tilde{K}_p}=(\pi_\beta^{2})^{K_p}=0.$
			\item\label{Iwahoricfix}
			Let $\pi_{p}$ be an admissible irreducible representation of $G_p$. Then $\pi_{p}^{\Iw_p} \neq 0$ if and only if it is a Jordan-H\"older factor 
			of $I_{\alpha,\beta}$   for $\alpha,\beta \in \dC^* $\cite[Theorem 3.8]{Car79}.		
 \item
  The Langlands parameter of $I_{\alpha,\beta}$ is the $\widehat G$-conjugacy class  of 
 \[
  t_{\alpha, \beta}=\left(
\left(\begin{array}{ccc}
			\alpha & 0&0  \\
			0 &   \beta/\alpha & 0 \\
			0 & 0 &   1
		\end{array}\right) 
  , 1\right) \rtimes c.
 \]
 Note that   $t_{\alpha, \beta}^2=
 \left(
\left(\begin{array}{ccc}
			\alpha & 0&0  \\
			0 &   1 & 0 \\
			0 & 0 &   \alpha^{-1}
		\end{array}\right) 
  , \beta \right) \in \widehat{G}
 $.
 		\end{enumerate}

		\subsection{Automorphic   and Galois representation}\label{subsec:autoGal} 
 Let $\pi =\otimes_{v} \pi_{v}$ be a  cuspidal automorphic  representation of $G (\mathbb{A}   )$. Let $\pi_0$ be the restriction of $\pi$ to $G_0(\dA)$
  and   $\chi_\pi$ be the central character of $\pi.$
  Recall that Rogawski defined, a base change map from automorphic representations of $G_0(\dA)$(resp. $G(\dA))$ to $G_0(\dA_F)\cong \GL_3(\dA_F)$(resp. $\GL_3(\dA_F)\times \GL_1(\dA_F)).$ 
  Denote by $\pi_{0F}$(resp. $\pi_F$) the base change of $\pi_0$(resp. $\pi$).
  By \cite[Lemma 4.1.1]{Rog92},   we have
\[
\pi_F=\pi_{0 F} \otimes \ol{\chi}_{\pi}
\]
as a representation of $\mathrm{GL}_3(\dA_F) \times \mathrm{GL}_1(\dA_F),$
 where $\ol{\chi}_\pi$ is the character $z \mapsto$ $\chi_\pi(\bar{z})$. 
			We say  $\pi$ is \emph{stable} \cite[Theorem 13.3]{Rog90} if  $\pi_{0F}$ is a cuspidal representation.	
				
	Let $\square $ be a finite set of places of $\dQ$ 
   containing the archimedean  place such that $\pi$ is unramified outside $\square,$
   	   $\ell$ be a rational prime and fix an isomorphism $ \iota_\ell: \dQ_\ell^\ac \to \dC.$
			Let $p\nmid \square$ be a finite place of $\dQ$ unramified in $F,$
$t_{\pi,p}\in {^L{G}}$ be the Satake parameter of $\pi_p,$ well defined  up to $\widehat{G}$-conjugacy, and 
$t_{\pi_0,p}\in ^L{G_0}$ be the image of  $t_{\pi,p}$ via the canonical projection 
 $ ^L G \to  ^L{G_0}.$
  \begin{enumerate}
 \item\label{Satakesplit}   If $pO_F=ww^\fc$ splits, then $t_{\pi_0,p}\in \widehat{ G}=\GL_3(\dC)$
 and 
 \[
 \{ t_{\pi_{0F},w},t_{\pi_{0F},w^\fc} \} =\{t_{\pi_0,p}, ^t t_{\pi_0,p}^{-1}\}
 \]
 \item\label{Satakeinert}
  If $p$ is inert in $F$, then $t_{\pi_0,p}\in \widehat{ G}\rtimes \Frob_p $
  and $t_{\pi_{0F},p}=t_{\pi_{0},p}^2  \in  \GL_3(\dC).$  If $\pi_p=I_{\alpha,\beta}$ for $\alpha,\beta\in\dC^\times,$
then  $t_{\pi_{0F},p}=\left(\begin{array}{ccc}
			\alpha & 0&0  \\
			0 &   1 & 0 \\
			0 & 0 &   \alpha^{-1}
		\end{array}\right).$
  \end{enumerate} 
	Assume now $\pi$ is stable and cohomological with trivial coefficient, i.e.,
\[\rH^*(\fg,K_\infty;\pi_\infty)\neq 0\]
			 where
			 $K_\infty$ is defined in Section \ref{subsec:Picard}
			and  $\mathfrak{g}= \Lie G(\mathbb{R}) \otimes \mathbb{C}.$
Blasius and Rogawski \cite[1.9]{BR92} defined a semisimple 3-dimensional $\ell$-adic representation
 \[
\rho_{\pi_0,\ell}:\Gal(F^\ac/F)\to \GL_3(\dQ_\ell^\ac)
\]
attacted to $\pi_0$(or $\pi_{0,F}$) that is characterized as follows:
 \begin{enumerate}[resume]
 \item   $\rho_{\pi_0,\ell}$ is unramified outside $\square\cup \{\ell \}.$
 \item\label{Satakepi0} Let $w$ be a non-archimedean place of $F$ with $w\nmid \square\ell$ 
 and $\Frob_w \in  \Gal(F_w^\ac/F_w) \hookrightarrow \Gal(F^\ac/F)$  be a geometric Frobenius of $w$. Then the characteristic polynomial of 
$
 \rho_{\pi_0,\ell}  (\operatorname{Frob}_w )
 $
 coincides with that of $\iota_\ell(t_{\pi_{0F}, w})q_w,$ 
 where 
 $t_{\pi_{0F}, w}\in\GL_3(\dC)$ is the Satake parameter of $\pi_{0F}$ at $w$, which
 is well defined up to conjugation. 
  \end{enumerate}
 
 Since $\pi$ is cohomological with trivial coefficient,
 $\chi_{\pi,\infty}:\dC^\times \to \dC^\times$ is trivial.
 By class field theory, $\iota_\ell \circ \chi_\pi$ can be viewed as a character of $\Gal(F^\ac/F).$ We put 
 \begin{equation}\label{eq:decompcentral}
 \rho_{\pi, \ell}:=\rho_{\pi_0,\ell} \otimes (\iota_\ell  \circ  \ol\chi_\pi).
\end{equation}
 Let $L/\dQ_\ell$ be a sufficiently large finite extension such that $\r{Im}(\rho_{\pi, \ell})\subseteq \GL_3(L).$
 Let $M^\circ$ be a $\Gal(F^\ac/F)$-stable $O_L$-lattice in the representation space of $\rho_{\pi,\ell}.$ We denote by $\ol\rho_{\pi,\ell}$ the semi-simplification of $M^\circ/\varpi_L M^\circ$ as $\Gal(F^\ac/F)$-representation. By Brauer-Nesbitt theorem, 
 $\ol\rho_{\pi,\ell}$ is independent of the choice of $M^\circ.$
 
 By the local-global compatibility, 
 If $p$ is inert in $F$ and $\pi_p\cong \St_p \otimes \mu_\beta$
  for some $\beta \in \dC^\times, $
then the multiset of eigenvalues of $\ol\rho_{\pi,\ell} (\Frob_p)$ is    
 $ \{ \iota_\ell^{-1}(\beta) p^4, \iota_\ell^{-1}(\beta)p^2 , \iota_\ell^{-1}(\beta)  \} \mod \ell.$
 
 \subsection{Spherical Hecke algebra}
		\cite[3.3.1]{BG06}
		For a finite place $p$ of $\dQ$ at which $G$ is unramified, let $K_p$ denote a hyperspecial subgroup of $G_p$. Denote by $\dT(G_p,K_p):=\dZ[K_p\backslash G_p/K_p]$   
		the  Hecke algebra   of all $\dZ$-valued locally constant, compactly supported bi-$K_p$-invariant functions on $G_p$. 
		It is known that  $\dT(G_p,K_p)$ is a \emph{commutative} algebra with unit element given by the  characteristic function of $K_p$.
		  We put   $K^\square:=\prod_{p\notin \square} K_p.$
		Denote by $\dT(G^\square,K^\square)$ the 
		prime-to-$\square$ spherical Hecke algebra 
		\[
		 \dT(G^\square,K^\square):=\bigotimes_{p\notin \square} \dT(G_p,K_p).
		\]
 Suppose $(\pi^\square)^{K^\square}\neq 0.$ Then      $\dim (\pi^\square)^{K^\square}=1$ and   there exists a homomorphism $\phi_{\pi}: \dT^\square \to O_L$  such that $T\in\dT^\square$ acts on $(\pi^\square)^{K^\square}$ via $ \iota_\ell(\phi_\pi(T)).$
 Let $\lambda$ be the place in $L$ over $\dQ_\ell.$
  Define \[\ol\phi_{\pi,\ell}:\dT^\square\xrightarrow{\phi_\pi} O_L \to O_L/\lambda, 
  		\quad		\fm:=\ker \ol\phi_{\pi,\ell}.
							\]
							The residual Galois representation $\ol\rho_{\pi,\ell}$ depends only on $\fm$ thus is also denoted by $\ol\rho_\fm.$
		
		With the above preparations we can state the main theorem:
		\begin{theorem}\label{thm:main}
			Let $\pi$ be a stable cuspidal automorphic representation 
			of $G(\dA)$      cohomological with trivial coefficient.
		Let $p$ be a prime number inert in $F$. Suppose that
			\begin{enumerate}
			\item $ \pi_{p}\cong   \operatorname{St}_{p}\otimes \mu_\beta$
			for some $\beta\in \dC^\times$ as defined in 
			Section \ref{subsec:principal};
			\item\label{middeg}
			 if $i\neq 2$ then $\rH^i(S\otimes{F^\ac},\dF_\ell)_\fm = 0;$
				\item \label{absirr}
				$\ol{\rho}_{\pi,\ell }$ is absolutely irreducible;
				\item
				$\ol\rho_{\pi,\ell}$ is unramified at $p$;
				\item $ \ell\nmid(p-1)(p^3+1).$
			\end{enumerate} 
			Then there exists a cuspidal automorphic  representation $\tilde\pi$ of $G(\dA)$ such that  ${\tilde\pi}^{K^pK_p}\neq 0$  and 
			$ \ol{\rho}_{\tilde\pi,\ell} \cong  \ol{\rho}_{\pi,\ell}.$
		\end{theorem}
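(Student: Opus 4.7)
The plan is to argue by contradiction: suppose no such $\tilde\pi$ exists, and derive an impossible constraint on the Frobenius eigenvalues of $\ol\rho_{\pi,\ell}$. By Matsushima's formula together with hypothesis (2), the representation $\pi$ contributes to $\rH^2(S_0(p)\otimes F^\ac,\dF_\ell)_\fm$; the localization kills the boundary contributions in the Borel--Serre compactification because $\ol\rho_{\pi,\ell}$ is absolutely irreducible, so we may work with \'etale cohomology of the open Shimura variety directly. Under the contradiction hypothesis, the cohomology $\rH^\bullet(S\otimes F^\ac,\dF_\ell)_\fm$ vanishes in all degrees (using (2) for $i\neq 2$, and the absence of other $\tilde\pi$ for $i=2$). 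First I would therefore reduce the entire problem to analyzing  $\rH^2(S_0(p)\otimes F^\ac,\dF_\ell)_\fm$ through the weight--monodromy spectral sequence attached to the semistable model $S_0(p)$ over $O_{F_p}$.

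Next, using the stratification $S_0(p)=Y_0\cup Y_1\cup Y_2$ established in Section \ref{subsec:geoS0p}, I would write down the $E_1$ page of the weight spectral sequence with terms built from the cohomology of the $Y_i$, the pairwise intersections $Y_{i,j}$, and the triple intersection $Y_{0,1,2}$. The vanishing of the cohomology of $S$ at $\fm$, combined with Proposition \ref{prop:Y0Y1blowup} (which identifies $Y_0$ with the blowup of $S$ at superspecial points and $Y_1$ with an inseparable cover) and Proposition \ref{prop:PV} (which realizes $Y_2$ as a $\dP^1$-bundle over $N$), should force the cohomology of each individual stratum $Y_i$ to vanish at $\fm$ after localization; here one uses Proposition \ref{prop:Y01Y02Y12} to handle the $\dP^1$-bundle contribution $N$ via its uniformization $T$. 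A comparable argument will then rule out the contribution of the pairwise intersections $Y_{i,j}$, because each is a discrete Shimura variety (or a cover of one) attached to the compact inner form $G'$, whose cohomology supports only Langlands parameters that cannot match $\ol\rho_{\pi,\ell}$ (at places away from $p$ one uses Jacquet--Langlands transfer together with the absolute irreducibility hypothesis). The upshot is that the spectral sequence degenerates at $E_1$ and delivers an injection
\[
\ol\rho_{\pi,\ell}\hookrightarrow \rH^0(Y_{0,1,2}\otimes \Fpac,\dF_\ell)_\fm(-1),
\]
where the Tate twist reflects the weight filtration, and $Y_{0,1,2}$ is by Lemma \ref{lem:Y012} identified with $T_0(p)\otimes\Fpp$, a discrete Shimura variety for $G'$ at Iwahori level.

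The final step is a Frobenius eigenvalue computation on $\rH^0(T_0(p)\otimes\Fpac,\dF_\ell)_\fm$. Since $G'(\dR)$ is compact modulo center, this cohomology decomposes as a sum over automorphic representations $\pi'$ of $G'(\dA)$ with $(\pi')^{K^p\Iw_p}\neq 0$, and the Frobenius at $p$ acts on each $\pi'_p$-isotypic component by the Satake parameter of $\pi'_p$. The  Iwahori-fixed vectors force $\pi'_p$ to be a subquotient of some principal series $I_{\alpha,\beta'}$ (Section \ref{subsec:principal}\eqref{Iwahoricfix}), whose Langlands parameter $t_{\alpha,\beta'}$ gives Frobenius eigenvalues essentially of the form $\{\alpha,1,\alpha^{-1}\}$ after the base change relation in Section \ref{subsec:autoGal}\eqref{Satakeinert}; after multiplying by the central character contribution and the Tate twist, the resulting mod-$\ell$ eigenvalues lie in a set depending only on $\beta'$ and $\alpha$. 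On the other hand, since $\pi_p\cong \St_p\otimes\mu_\beta$, local--global compatibility (end of Section \ref{subsec:autoGal}) forces $\ol\rho_{\pi,\ell}(\Frob_p)$ to have eigenvalues $\{\iota_\ell^{-1}(\beta)p^4,\iota_\ell^{-1}(\beta)p^2,\iota_\ell^{-1}(\beta)\}\bmod \ell$. Matching these two multisets, one obtains a polynomial identity forcing $\ell\mid (p-1)(p^3+1)$, contradicting hypothesis (6).

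The main obstacle, I expect, is controlling the $E_1$-page of the weight spectral sequence carefully enough to ensure both (a) that it degenerates after localization at $\fm$, and (b) that the surviving graded piece really lands in $\rH^0(Y_{0,1,2})$ rather than being distributed across $\rH^0(Y_{i,j})$ or $\rH^2(Y_i)$; this requires Hecke-equivariantly identifying the connecting differentials in terms of the geometric incidence maps from Propositions \ref{prop:Y01Y02Y12} and \ref{prop:PV}, and carefully bookkeeping how the Jacquet--Langlands transfer to $G'$ interacts with the Steinberg hypothesis at $p$ so that no unintended local component evades the eigenvalue comparison.
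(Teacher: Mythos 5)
Your proposal follows the same overall strategy as the paper: argue by contradiction, kill the boundary via absolute irreducibility, run the Rapoport--Zink weight--monodromy spectral sequence on the semistable model $S_0(p)$, use the geometric description of the strata $Y_i$ to show all $E_1$-terms except those coming from $Y^{(2)}=Y_{0,1,2}$ die after localization at $\fm$, and then derive a contradiction from a Frobenius eigenvalue computation on $\rH^0(Y^{(2)})_\fm$. However, there are two genuine gaps in your argument.

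First, you do not explain how $\ol\rho_{\pi,\ell}$ actually lands in $\rH^0(Y_{0,1,2})_\fm$ with the claimed Tate twist, as opposed to being spread across the three nonzero graded pieces $\Gr_{-2},\Gr_0,\Gr_2$ of the monodromy filtration (each of which is abstractly a copy of $\rH^0(Y^{(2)})$ with a different twist). The paper uses the fact that the monodromy operator $\tilde\mu$ restricts to identity maps $\Gr_{-2}\to\Gr_0(-1)$ and $\Gr_0\to\Gr_2(-1)$, so $\ker\tilde\mu = \Fil^2 \cong \rH^0(Y^{(2)},\dF_\ell)_\fm$, and then the hypothesis that $\ol\rho_{\pi,\ell}$ is unramified at $p$ forces the $\fm$-torsion copy of $\ol\rho_{\pi,\ell}$ into this kernel. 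Without invoking the unramified condition in this way, you cannot isolate the surviving graded piece; merely saying ``the Tate twist reflects the weight filtration'' is not enough. Relatedly, your vanishing of $\rH^1(N)_\fm$ needs more than the $\dP^1$-bundle structure over $N$: the paper needs the Tate--Thompson representation computation in Lemma~\ref{lem:H1Fermat} to pin down the local component of any $\pi'$ contributing to $\rH^1(N)_\fm$ and to rule it out using $\ell\nmid(p-1)(p^3+1)$.

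Second, and more seriously, your final Frobenius comparison is not the right one. You claim that Frobenius acts on each $\pi'$-isotypic piece of $\rH^0(T_0(p)\otimes\Fpac,\dF_\ell)$ by the Satake parameter of $\pi'_p$. But $T_0(p)$ is a zero-dimensional Shimura variety, and the Galois action on $\rH^0$ is an abelian character; it is not given by the (3-dimensional) Satake parameter of $\pi'_p$. The paper instead proves (Lemma~\ref{lem:Frobp}) that $\Frob_{p^2}$ acts as the diamond operator $\langle p^{-1}\rangle$ by a \emph{geometric} argument: every abelian variety parametrized by $Y_{0,1,2}$ is superspecial, so its relative $p^2$-Frobenius is multiplication by $-p$, and then by weak approximation (Lemma~\ref{lem:diamond}) this operator acts as a \emph{scalar} on $\rH^0(Y^{(2)})[\fm]$. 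Once Frobenius is a scalar, the contradiction is immediate: the eigenvalues of $\ol\rho_{\pi,\ell}(\Frob_p)$ are $\{p^2,1,p^{-2}\}$ up to a common scalar by the Steinberg hypothesis, and forcing them to coincide gives $p^2\equiv 1\pmod\ell$, hence $\ell\mid(p-1)(p^3+1)$. Your ``matching of two multisets'' cannot produce this crisp identity without first establishing the scalar action, which is the real content of the final step.
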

		To prove the theorem, we will firstly use the Rapoport-Zink weight-monodromy spectral sequence to study the cohomology of Picard modular surface, then we argue by contradiction.		
		 We need some preliminaries on the compactification of Shimura varieties.

 		\subsection{Borel-Serre compactification of $S_0(p)$}
		Let ${S_0(p)}^\BS $ be the Borel-Serre compactification of $ S_0(p)(\dC) $
		and  $\partial {S_0(p)}^\BS $ the boundary. 
		 By \cite[Lemma 3.10]{NT16}
		we have a $G(\dA^\infty)$-equivariant isomorphism
	\begin{equation}\label{eq:boundary}
			\partial {S_0(p)}^\BS (\mathbb{C}) \cong 
			P(\mathbb{Q}) \backslash( G (\dA^\infty  )/K^p\Iw_p  \times e(P) )
			\cong \Ind_{P(\dA^\infty)}^{G(\dA^\infty)} 
				P(\mathbb{Q}) \backslash( P (\dA^\infty  )/K_P^p\Iw_p   \times e(P) )
	\end{equation}
		where $e(P)$ is the smooth manifold with corners described in \cite[\S 7.1]{BS73}
		and $K_P^p=K^p\cap P(\dA^{\infty,p}).$

		\begin{lem}\label{lem:boundaryvanish}
		Keep the notations and assumptions of Theorem \ref{thm:main}. We have
			\[
			\rH^{*}(\partial S_0(p)^\BS, \mathbb{F}_{\ell})_{\mathfrak{m}}=0.
			\]
			\begin{proof}
			Suppose on the contrary that $
			\rH^{*}(\partial S_0(p)^\BS, \mathbb{F}_{\ell})_{\mathfrak{m}}\neq 0.
			$	 
			We will show that $\ol\rho_{\pi,\ell}$ is reducible, which contradicts 
			 the condition \eqref{absirr} in Theorem \ref{thm:main}.
			  Since $\ol\rho_{\pi,\ell}\cong  \ol\rho_{\pi_0,\ell}\otimes (\iota_\ell \circ  \ol\chi_\pi)$ by \eqref{eq:decompcentral},
 it suffices to show that $\ol\rho_{\pi_0,\ell}$ is reducible.
 		Put $K_P^\square=K^\square\cap P(\dA^\square), K_M^\square=K^\square \cap M(\dA^\square),$ etc.
		We have a Satake map
		\[
		\cN: \dT(G^\square,K_{G}^\square) 		\to  \dT(M^\square,K_{M}^\square).
		\]
 	 	Following the argument of \cite[p. 36]{ACC+22} or \cite[Theorem 4.2]{NT16}, 
		 since $\fm$ is in the support of $\rH^*(\partial {S_0(p)}^\BS,\dF_\ell) ,$ 
 there exists a subgroup
  $ {K}_M^{\prime} \subset  {K}_M$ with  
  $ ( {K}_M^{\prime} )^\square={K}_M^\square$
  and a maximal ideal $\mathfrak{m}^{\prime}$ of $\mathbb{T}(M^\square, {K_M}^\square)$ in the support of 
  $\rH^0 (M(\dQ) \backslash M(\dA^\infty) /K_M' , \dF_\ell )$ such that $ {\mathfrak{m}} =\cN^{-1} (\mathfrak{m}^{\prime} ).$ In other words, there exists a homomorphism
  $\ol\theta_{\pi,\ell}: \mathbb{T}(M^\square, K_M^\square) \to \ol{L}$ for a finite extension 
  $\ol{L}$ of 
  $\dF_\ell$  
such that  $\ol \phi_{\pi,\ell}=\ol\theta_{\pi,\ell}\circ \cN.$

Put $H:=\Res_{F/\dQ}\dG_m.$ The standard Levi $M$ is a torus
			\begin{align*}
			M &\cong H \times H  \\
		\diag(a,b,c) &\mapsto (a,b ).
			\end{align*}   
			We can now assume $K_M'=K_H'\times K_H'$ which implies
	\[
	\dT(M^\square,K_{M}'^\square)\cong 
	\dT(H^\square,K_{H}'^\square)\otimes  \dT(H^\square,K_{H}'^\square).
	\]
	Since $H(\dA)=\dA_F^\times,$
  $\ol\theta_{\pi,\ell} $ 
 is equivalent to two Hecke characters
 $\ol\psi_1,\ol\psi_2:  \dA_F^\times/F^\times K_H' \to \ol{L}.$
  
 By class field theory, $\ol\psi_1$ and $\ol\psi_2$ correspond to two Galois characters
 $\ol\sigma_1,\ol\sigma_2:\Gal(F^\ac/F)\to \ol{L}^\times$ such that 
 for a place $w$ in $F$ and  a uniformizer $\varpi_w$ in $F_w \subset \dA_F^\times,$
  we have
 \[
 \ol\sigma_i(\Frob_w)=\ol\psi_i(\varpi_w).
 \]
    We claim that
    \begin{equation}\label{eq:reducible}
 \ol\rho_{\pi_0,\ell} 
 \cong
 (
 \ol\sigma_1 \oplus  \ol\sigma_2\cdot   \ol\sigma_2^{\fc,\vee} \oplus \ol\sigma_1^{\fc,\vee})
 \otimes \epsilon_\ell
\end{equation}
  where $\epsilon_\ell$ is the $\ell$-adic cyclotomic character and $\ol\sigma_i^{\fc,\vee}(g):=\ol\sigma_i((g^{\fc})^{-1})$. 
  Indeed, 
by Chebatorev density and Brauer-Nesbitt theorem,
 it suffices to verify that  for every place
					$q=ww^\fc$ split in $F$,
					the eigenvalues of $\Frob_w$ for  
					$\ol\rho_{\pi_0,\ell} $ and
 $
  (
 \ol\sigma_1 \oplus  \ol\sigma_2\cdot \ol\sigma_2^{\fc,\vee} \oplus \ol\sigma_1^{\fc,\vee})
 \otimes \epsilon_\ell
 $					coincide.
					
 	To show this,  recall that
\begin{align*}
G\left(\dQ_q\right)&=\left\{g \in \mathrm{GL}_3\left( F\otimes_\dQ \dQ_q\right) \mid {^t g^{\fc}}  \Phi g=\nu(g) \Phi 
\text { for some }
 \nu(g) \in \dQ_q^\times \right\}\\
&=\left\{ 
g=(g_w,g_{w^\fc}) \in \GL_3(F_w)\times \GL_3(F_{w^c}) \mid
 g_{w^\fc}= \nu(g) \Phi (^t g_w^{-1}) \Phi
 \right\}.
 \end{align*}
 Therefore, we have an isomorphism
		\begin{align*}
	  	   G_q&\cong \GL_3(F_w) \times  \dQ_q^\times  \\
		g &\mapsto (g_w, \nu(g))
			\end{align*} 
		under which 
		$g=\diag(a,b,c) \in M_q$ is identified with 
	$	(\diag(a_w,b_w,c_w),b_wb_{w^\fc}).$
 If $T\subset \GL_3$ denotes the diagonal torus,   we have  an isomorphism
 			\begin{align*} 
		   T_q\times \dQ_q^\times  \cong& M_q  \\
	    ( \diag(a,b,c),\nu) \mapsto& \diag((a,\nu/c) , (b,\nu/b),  (c,\nu/a)).
			\end{align*} 	
	Since $H_q\cong F_w^\times \times F_{w^\fc}^\times,$ we have 
		\begin{align*} 
		   T_q\times \dQ_q^\times  \cong& H_q \times H_q   \\
	    ( \diag(a,b,c),\nu) \mapsto& ((a,\nu/c) , (b,\nu/b)).			\end{align*}  			The local component at $q$ of $\ol\psi_1\ol\psi_2$ is given by
 		\begin{align*}
		(\ol\psi_1\ol\psi_2)_q:
		 (\diag(a,b,c),\nu)
		&\mapsto 
		\ol\psi_{1,w}(a)
		\ol\psi_{1,w^\fc}(\nu/c)
		\ol\psi_{2,w}(b)
		\ol\psi_{2,w^\fc}(\nu/b)
		\\
		&=(\ol\psi_{1,w^\fc}\ol\psi_{2,w^\fc})(\nu)
		\ol\psi_{1,w}(a)
		(\ol\psi_{2,w}\ol\psi_{2,w^\fc}^{-1})(b)
		\ol\psi_{1,w^\fc}^{-1}(c).
				\end{align*}
  Let $\widehat{M}=\widehat{T} \times \dG_m$ be the torus 
  over $\dZ_\ell$ dual to $M_{\dQ_q}.$
  By duality,
  the group of the unramified characters of $M_{\dQ_q}$ with values in $\ol L^\times$
  is isomorphic to 
  \[
  X^*(M_{\dQ_q})\otimes L^\times=X_*(\widehat{M})\otimes L^\times \cong \widehat{M}(\ol L),
  \]
  where $X^*(M_{\dQ_q})$ (resp. $X_*(\widehat{M})$)
  denotes the character group of $M_{\dQ_q}$
  (resp. the cocharacter group of $\widehat{M}$).
  With this identification $(\ol\psi_1\ol\psi_2)_q$ corresponds to the semisimple element
  \[
 \left( \left(\begin{array}{ccc}
			\ol \psi_{1,w}(q) & 0&0  \\
			0 &  (\ol \psi_{2,w}/ \ol  \psi_{2,w^\fc}) (q) & 0 \\
			0 & 0 &   \ol  \psi_{1,w^\fc}^{-1}(q)
		\end{array}\right),\nu \right) \in \widehat{M}(\ol L).
				\]
		 By Section \ref{subsec:autoGal} and Satake isomorphism
 the eigenvalues of $\ol\rho_{\pi_0,\ell}(\Frob_w)$ are given by  
\[
q\{  \ol \psi_{1,w}(q), 
   (\ol \psi_{2,w}\ol \psi_{2,w^\fc}^{-1})(q), 
    \ol\psi_{1,w^\fc}^{-1}  (q ) \}.
\]
		By Chebotarev density, the equality \eqref{eq:reducible} holds. This finishes the proof of the lemma.
			\end{proof}
		\end{lem}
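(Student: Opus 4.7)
The plan is to argue by contradiction: assume $\rH^*(\partial S_0(p)^\BS,\dF_\ell)_\fm\neq 0$ and derive that $\ol\rho_{\pi_0,\ell}$ is a direct sum of characters, which via \eqref{eq:decompcentral} contradicts the absolute irreducibility of $\ol\rho_{\pi,\ell}$ in hypothesis (\ref{absirr}) of Theorem \ref{thm:main}. First I would exploit the isomorphism \eqref{eq:boundary}: the boundary is a parabolic induction from $P$, so a Hochschild--Serre/induced-representation argument reduces its cohomology to  a direct sum of contributions coming from $P(\dQ)\backslash P(\dA^\infty)/K_P$-spaces twisted by the cohomology of arithmetic subgroups of the unipotent radical and of $e(P)$. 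Since $P$ has abelian Levi $M$, this cohomology is ultimately controlled by the cohomology of double coset spaces for $M$.

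Second, I would transfer this non-vanishing across the unramified Satake/constant-term map $\cN\colon\dT(G^\square,K_G^\square)\to\dT(M^\square,K_M^\square)$. Following the standard arithmetic-boundary argument (as in \cite[Thm.~4.2]{NT16} or \cite[p.~36]{ACC+22}), the support of $\fm$ in the boundary cohomology yields a compact open subgroup $K_M'$ with $(K_M')^\square=K_M^\square$ and a maximal ideal $\fm'\subset\dT(M^\square,K_M^\square)$ in the support of $\rH^0(M(\dQ)\backslash M(\dA^\infty)/K_M',\dF_\ell)$ with $\fm=\cN^{-1}(\fm')$. Concretely, $\fm'$ corresponds to a homomorphism $\ol\theta_{\pi,\ell}\colon\dT(M^\square,K_M^\square)\to\ol L$ satisfying $\ol\phi_{\pi,\ell}=\ol\theta_{\pi,\ell}\circ\cN$.

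Third, using the identification $M\cong H\times H$ with $H=\Res_{F/\dQ}\dG_m$ given by $\diag(a,b,c)\mapsto(a,b)$, the character $\ol\theta_{\pi,\ell}$ decomposes as a pair of mod-$\ell$ Hecke characters $\ol\psi_1,\ol\psi_2\colon \dA_F^\times/F^\times K_H'\to\ol L^\times$, which by class field theory correspond to Galois characters $\ol\sigma_1,\ol\sigma_2\colon\Gal(F^\ac/F)\to\ol L^\times$. I would then claim
\[
\ol\rho_{\pi_0,\ell}\cong(\ol\sigma_1\oplus\ol\sigma_2\cdot\ol\sigma_2^{\fc,\vee}\oplus\ol\sigma_1^{\fc,\vee})\otimes\epsilon_\ell,
\]
where $\ol\sigma^{\fc,\vee}(g):=\ol\sigma((g^\fc)^{-1})$. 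By Chebotarev and Brauer--Nesbitt, this is reduced to matching Frobenius eigenvalues at every prime $q=ww^\fc$ split in $F$. At such a $q$, the isomorphism $G_q\cong\GL_3(F_w)\times\dQ_q^\times$ (sending $g\mapsto(g_w,\nu(g))$) and the ensuing identification $M_q\cong T_q\times\dQ_q^\times$ allow an explicit computation of the unramified character $(\ol\psi_1\ol\psi_2)_q$ in terms of $\widehat M(\ol L)$-coordinates; translating through the Satake isomorphism and the prescription \eqref{Satakesplit}--\eqref{Satakepi0} identifying Satake parameters of $\pi_0$ with eigenvalues of $\rho_{\pi_0,\ell}(\Frob_w)$ (reduced mod $\ell$) gives precisely the three eigenvalues $q\cdot\{\ol\psi_{1,w}(q),(\ol\psi_{2,w}\ol\psi_{2,w^\fc}^{-1})(q),\ol\psi_{1,w^\fc}^{-1}(q)\}$.

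The main obstacle is the explicit bookkeeping in the third step: one must choose compatible identifications of $G_q$, $M_q$, their dual tori, and the Galois-side reciprocity maps, all while carefully tracking the $\fc$-twist induced by the hermitian structure and the similitude factor $\nu$. Once this is done correctly, the displayed decomposition of $\ol\rho_{\pi_0,\ell}$ as a sum of three characters of $\Gal(F^\ac/F)$ contradicts the absolute irreducibility of $\ol\rho_{\pi,\ell}$, completing the proof.
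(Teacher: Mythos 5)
Your proposal follows essentially the same approach as the paper's proof: contradiction via boundary non-vanishing, transfer across the Satake map to $\dT(M^\square,K_M^\square)$ via the NT16/ACC+22 argument, decomposition of $M\cong H\times H$ into two Hecke characters, and the reduction of \eqref{eq:reducible} to an explicit Frobenius eigenvalue match at split primes using the isomorphism $G_q\cong\GL_3(F_w)\times\dQ_q^\times$. The only cosmetic difference is that you make the Hochschild--Serre/induction structure of the boundary cohomology more explicit, which the paper leaves implicit in the citation.
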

		
\begin{corollary}\label{cor:boundaryvanish}	
Denote by $S_0(p)^\BB$ the Baily-Borel compactification of $S_0(p).$
Then we have canonical isomorphisms
	\begin{equation}\label{eq:boundaryvanish}
			\rH_c^{2}( {S_0(p)}\otimes F^\ac, \dF_\ell)_\fm \cong 
				\IH^{2}( {S_0(p)}^\BB \otimes F^\ac, \dF_\ell)_\fm
				\cong 
			\rH^{2}( {S_0(p)}\otimes F^\ac, \dF_\ell)_\fm.
		\end{equation}	
		\begin{proof}
			One has an exact sequence of Betti cohomology \cite[Remark 1.5]{CS19}
	\begin{equation}\label{eq:compactsuppexact}
		0 \rightarrow \rH^{1}(\partial S_0(p)^\BS, \mathbb{F}_{\ell})
		\rightarrow \rH^{2}_c( {S_0(p)} ,  \mathbb{F}_{\ell}) 
		\rightarrow \rH^{2}( {S_0(p)}, \mathbb{F}_{\ell}) \rightarrow \rH^{2}(\partial S_0(p)^\BS, \mathbb{F}_{\ell}) \rightarrow 0
	\end{equation}
	which is equivariant under $\dT(G^\square,K^\square)$-action.
				By \cite[1.8]{HLR86} the intersection cohomology group   $\IH^{2}( {S_0(p)}^\BB\otimes F^\ac, \dF_\ell)_\fm
		$ is the image of the map $\rH^{2}_c( {S_0(p)}\otimes{F^\ac}, \mathbb{F}_{\ell})_{\mathfrak{m}} \rightarrow \rH^{2}( {S_0(p)}\otimes{F^\ac}, \mathbb{F}_{\ell})_{\mathfrak{m}}.$  
	The corollary then follows from Lemma \ref{lem:boundaryvanish}.
	\end{proof}
		\end{corollary}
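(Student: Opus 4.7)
My plan is to deduce the corollary directly from Lemma~\ref{lem:boundaryvanish} together with the long exact sequence of Betti cohomology associated to the Borel-Serre compactification and Zucker's conjecture / the result of Harder-Langlands-Rapoport.

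First, I would invoke the fact that the pair $(S_0(p)^\BS, \partial S_0(p)^\BS)$ gives rise to a long exact sequence of Betti cohomology groups with $\dF_\ell$-coefficients
\[
\cdots \to \rH^{i-1}(\partial S_0(p)^\BS,\dF_\ell) \to \rH^i_c(S_0(p),\dF_\ell) \to \rH^i(S_0(p),\dF_\ell) \to \rH^i(\partial S_0(p)^\BS,\dF_\ell) \to \cdots
\]
(using that $S_0(p)^\BS$ is a compact manifold with corners whose interior is homotopy equivalent to $S_0(p)(\dC)$). The whole sequence is $\dT(G^\square,K^\square)$-equivariant because the Borel-Serre compactification is functorial with respect to Hecke correspondences at places outside $\square$. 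Localizing at $\fm$ is exact since it is a localization of modules over a commutative ring.

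Second, since Lemma~\ref{lem:boundaryvanish} gives $\rH^*(\partial S_0(p)^\BS,\dF_\ell)_\fm = 0$ in all degrees, the localized long exact sequence degenerates and the natural map
\[
\rH^2_c(S_0(p)\otimes F^\ac,\dF_\ell)_\fm \xra{\sim} \rH^2(S_0(p)\otimes F^\ac,\dF_\ell)_\fm
\]
is an isomorphism. This takes care of the outer isomorphism (up to passing from Betti cohomology to \'etale cohomology over $F^\ac$ via the usual comparison).

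Finally, to insert $\IH^2(S_0(p)^\BB\otimes F^\ac,\dF_\ell)_\fm$ in the middle, I would appeal to \cite[1.8]{HLR86}, which identifies $\IH^2$ of the Baily-Borel compactification with the image of the canonical map $\rH^2_c\to \rH^2$. Since this map is already an isomorphism after localizing at $\fm$, its image is the whole target, and both outer groups become canonically identified with $\IH^2(S_0(p)^\BB\otimes F^\ac,\dF_\ell)_\fm$. The only non-formal input is Lemma~\ref{lem:boundaryvanish}, which has already been established, so no serious obstacle remains; the rest is bookkeeping about the Hecke-equivariance of the exact sequence and of the Harder-Langlands-Rapoport identification.
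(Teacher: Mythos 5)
Your proof is correct and follows essentially the same route as the paper's: the long exact sequence for the pair $(S_0(p)^\BS,\partial S_0(p)^\BS)$ (the paper quotes the truncated version from \cite[Remark 1.5]{CS19}), localization at $\fm$ combined with Lemma \ref{lem:boundaryvanish} to get $\rH^2_c\xra{\sim}\rH^2$, and then \cite[1.8]{HLR86} to identify the image of $\rH^2_c\to\rH^2$ with $\IH^2$ of the Baily-Borel compactification. The additional remark about passing from Betti to \'etale cohomology over $F^\ac$ and the explicit justification of Hecke-equivariance are harmless elaborations of what the paper leaves implicit.
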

		\subsection{Generalities on the weight-monodromy spectral sequence} 
		\cite[Corollary 2.2.4]{Sai03}, \cite[2.1]{Liu19}\label{monodromy}.
		Let $K$ be a henselian discrete valuation field with residue field $\kappa$ and a separable closure $\bar{K}$. We fix a prime $p$ that is different from the characteristic of $\kappa$. Throughout this section, the coefficient ring $\Lambda$ will be $\dF_\ell.$
		We first recall the following definition.
		\begin{definition}[Strictly semistable scheme] Let $X$ be a scheme locally of finite presentation over Spec $O_{K}$. We say that $X$ is strictly semistable if it is Zariski locally \'etale over
			\[		
			\text { Spec } O_{K} [t_{1}, \ldots, t_{n} ]/(t_{1} \cdots t_{s}-\varpi)
			\]
			for some integers $0 \leq s \leq n$ (which may vary) and a uniformizer $\varpi$ of $K$.
		\end{definition}
		Let $X$ be a proper strictly semistable scheme over $O_{K}$. The special fiber $X_{\kappa}:=X \otimes_{O_{K}} \kappa$ is a normal crossing divisor of $X$. Suppose that $\left\{X_{1}, \ldots, X_{m}\right\}$ is the set of irreducible components of $X_{\kappa}$. For $r \geq 0$, put
		\[
		X_{\kappa}^{(r)}=\coprod_{I \subset\{1, \ldots, m\},|I|=r+1} \bigcap_{i \in I} X_{i} .
		\]
		Then $X_{\kappa}^{(r)}$ is a finite disjoint union of smooth proper $\kappa$-schemes of codimension $r$. From \cite[page 610]{Sai03}, we have the pullback map
		\[
		\delta_{r}^{*}: \mathrm{H}^{s}(X_{\bar{\kappa}}^{(r)}, \Lambda(j)) \rightarrow \mathrm{H}^{s}(X_{\bar{\kappa}}^{(r+1)}, \Lambda(j))
		\]
		and the pushforward (Gysin) map
		\[
		\delta_{r *}: \mathrm{H}^{s}(X_{\bar{\kappa}}^{(r)}, \Lambda(j)) \rightarrow \mathrm{H}^{s+2}(X_{\bar{\kappa}}^{(r-1)}, \Lambda(j+1))
		\]
		for every integer $j$. These maps satisfy the formula
		\[
		\delta_{r-1}^{*} \circ \delta_{r *}+\delta_{r+1 *} \circ \delta_{r}^{*}=0
		\]
		for $r \geq 1$. For reader's convenience, we recall the definition here. For subsets $J \subset I \subset$ $\{1, \ldots, m\}$ such that $|I|=|J|+1$, let $i_{J I}: \bigcap_{i \in I} X_{i} \rightarrow \bigcap_{i \in J} X_{i}$ denote the closed immersion. If $I=\left\{i_{0}<\cdots<i_{r}\right\}$ and $J=I \backslash\left\{i_{j}\right\}$, then we put $\epsilon(J, I)=(-1)^{j}$. We define $\delta_{r}^{*}$ to be the alternating sum $\sum_{I \subset J,|I|=|J|-1=r+1} \epsilon(I, J) i_{I J}^{*}$ of the pullback maps, and $\delta_{r *}$ to be the alternating sum $\sum_{I \supset J,|I|=|J|+1=r+1} \epsilon(J, I) i_{J I *}$ of the Gysin maps.
		
		\begin{remark}
			In general, the maps $\delta_{r}^{*}$ and $\delta_{r *}$ depend on the ordering of the irreducible components of $X_{\kappa}$. However, it is easy to see that the composite map $\delta_{1 *} \circ \delta_{0}^{*}$ does not depend on such ordering.
		\end{remark}
		Let us recall the weight spectral sequence attached to $X$. Denote by $K^{\text {ur }} \subset  {K^\ac}$ the maximal unramified extension, with the residue field $\bar{\kappa}$ which is a separable closure of $\kappa$. Then we have ${G}_{K} / {I}_{K} \simeq {G}_{\kappa}$. Denote by $t_{0}: {I}_{K} \rightarrow \Lambda_{0}(1)$ the $(p$-adic) tame quotient homomorphism, that is, the one sending $\sigma \in {I}_{K}$ to $(\sigma(\varpi^{1 / p^{n}}) / \varpi^{1 / p^{n}})_{n}$ for a uniformizer $\varpi$ of $K$. We fix an element $T \in I_{K}$ such that $t_{0}(T)$ is a topological generator of $\Lambda_{0}(1)$.
		
		We have the weight spectral sequence $\rE_{X}$ attached to the (proper strictly semistable) scheme $X$, where
		\begin{equation}\label{eq:RZspectral}
		({\rE}_{X})_{1}^{r, s}=\bigoplus_{i \geq \max (0,-r)} {\rH}^{s-2 i}(X_{\bar{\kappa}}^{(r+2 i)}, \Lambda(-i)) \Rightarrow {\rH}^{r+s}(X_{\bar{K}}, \Lambda)
		\end{equation}
		This is also known as the Rapoport-Zink spectral sequence, first studied in \cite{RZ82}; here we will follow the convention and discussion in \cite{Sai03}. For $t \in \mathbb{Z}$, put ${ }^{t} \mathrm{E}_{X}=\mathrm{E}_{X}(t)$ and we will suppress the subscript $X$ in the notation of the spectral sequence if it causes no confusion.
		By \cite[Corollary 2.8(2)]{Sai03}, we have a map $\mu: \mathrm{E}_{\bullet}^{\bullet-1, \bullet+1} \rightarrow \mathrm{E}_{\bullet}^{\bullet+1, \bullet-1}$ of spectral sequences (depending on $T$ ) and its version for ${ }^{r}\rE$. 
		The map $ \mu^{r, s}:=\mu_{1}^{r, s}:{ }^{t} \mathrm{E}_{1}^{p-1, q+1} \rightarrow{ }^{t} \mathrm{E}_{1}^{r+1, s-1}$ is the sum of its restrictions to each direct summand $\mathrm{H}^{s+1-2 i}(X_{\bar{\kappa}}^{(2 i+1)}, \Lambda(r-i))$, and such restriction is the tensor product by $t_{0}(T)$ (resp. the zero map) if $\mathrm{H}^{s+1-2 i}(X_{\bar{\kappa}}^{(2 i+1)}, \Lambda(t-i+1))$ does (resp. does not) appear in the target.  
		The map $\mu^{r, s}$   induces a map, known as the monodromy operator,
		\[
		\tilde{\mu}^{r, s}:{ }^{t} \mathrm{E}_{2}^{r-1, s+1} \rightarrow{ }^{t} \mathrm{E}_{2}^{r+1, s-1}(-1)
		\]
		of $\Lambda[G_{\kappa}]$-modules. 
		\subsection{Weight-monodromy spectral sequence for ${S_0(p)}$ }
		We will try to apply the weight-monodromy spectral sequence to the surface 
		$f:{S_0(p)} \to \Spec(O_{F}\otimes \dZ_{(p)}).$  In the derivation of weight-monodromy spectral sequence $f$ is required to be proper to get $  \rH^i({S_0(p)}\otimes\Fpac, R\Psi \dZ_\ell)\cong \rH^i({S_0(p)}\otimes{F^\ac},\dZ_\ell).$
		 		However, in our case $f$ is not proper. Fortunately, according to  \cite[Corollary 4.6] {LS18},
	$ \rH^i({S_0(p)}\otimes\Fpac, R\Psi \dZ_\ell)\cong \rH^i({S_0(p)}\otimes{F^\ac},\dZ_\ell)$ still holds.
				  Put
		\[
		Y^{(2)}=   Y_{0,1,2} \otimes \dF_p^\ac, ~
		Y^{(1)}=(Y_{0,1}\sqcup   {Y_{0,2}} \sqcup   {Y_{1,2}} ) \otimes \dF_p^\ac,~ 
		Y^{(0)}= (Y_{0} \sqcup   Y_{1} \sqcup   Y_{2})\otimes \dF_p^\ac .
		\]
		The spectral sequence \eqref{eq:RZspectral} with $\Lambda=\dF_\ell$ reads 
		\begin{equation}\label{eq:RZPicard}
			\xymatrix@R=0pt{
				\rH^{0}(Y^{(2)})_{\fm} (-2)
				\ar[r]&  \rH^{2}(Y^{(1)})_{\fm}(-1) \quad 
				\ar[r] &  \rH^{4}(Y^{(0)})_{\fm}  &&
				\\
				& \rH^{1}( Y^{(1)})_{\fm}(-1) \quad 
				\ar[r] &  \rH^{3}(Y^{(0)})_{\fm} &&
				\\
				& \rH^{0}(Y^{(1)})_{\fm}(-1) \quad 
				\ar[r] &  \rH^{2}(Y^{(0)})_{\fm}  \oplus 
				\rH^{0}(Y^{(2)})_{\fm}(-1)     \ar[r] & 
				\rH^{2}(Y^{(1)})_{\fm} 
				&   
				\\
				&&    \rH^{1}(Y^{(0)})_{\fm}    \ar[r] &
				\rH^{1}(Y^{(1)})_{\fm} 
				&
				\\
				&& \rH^{0}(Y^{(0)})_{\fm}   \ar[r]   &   \rH^{0}( Y^{(1)})_{\fm}  \ar[r]   \ar[r]&
				\rH^{0}(Y^{(2)})_{\fm}
			}
		\end{equation} 
		Here we omit the coefficient $\dF_\ell$ in the cohomology group. 
	\begin{lem}\label{lem:H1Fermat}
		Let $G_0$(resp. $G_0'$) be the unitary group attached to $G$(resp. $G'$) as in Section \ref{subsection:subgroup}. Recall the inner form $G'$ defined in 
		Section \ref{sec:inner}. 
			Put $G_{0,p}:=G_0(\dQ_p), K_{0,p}:=K_p\cap G_{0,p}, K_0^p:=K^p\cap G_0^p.$ Let $K_{0,p}^1$ be the kernel of the reduction map $G_0(O_p) \to G_0(\Fpp)$.
 			Then we have an isomorphism
			\begin{equation}\label{Nuniform}
				\iota_\ell \rH^{1}( N\otimes\Fpac, \dQ_\ell^\ac)\mid_{G_0(\dA)}
				\cong
				\Map_{K_{0,p}}  (G_0'(\dQ)\backslash G_0'(\dA^\infty)/K_0^p,\Omega_3     )
			\end{equation}
			of $\dC[K_0^pK_{p,0}^1\backslash G_0'(\dA^\infty)/K_0^p K_{p,0}^1]$-modules, where $(\rho_{\Omega_3},\Omega_3)$ is the Tate-Thompson representation of $K_{0,p}$ in \cite[C.2]{LTX+22}
			and the right hand side of the isomorphism denotes the
			locally constant maps 
			$f:G_0'(\dQ)\backslash G_0'(\dA^\infty)/K_0^p \to \Omega_3$
			such that $f(gk)=\rho_{\Omega_3}(k^{-1})f(g)$ for $k\in K_{0,p}$ and $g\in G_0'(\dA^\infty).$ 
 			Moreover, let $\pi_0^{\square}$ be an irreducible admissible representation of $G_0 (\mathbb{A}^{\square})$ such that $(\pi_0^{\square})^{ K_0^{\square} }$ is a constituent of $\iota_{\ell} \mathrm{H}^{1}(N\otimes\Fpac ,  {\mathbb{Q}}_{\ell}^\ac).$ 
		Then one can complete $\pi_0^{\square}$ to an automorphic representation
$\pi_0'=\pi_0^{\square}\otimes \prod_{q\in \square}\pi'_{0,q}$		 of $G_0'(\mathbb{A})$
		  such that 
			$\mathrm{BC} (  \pi'_{0,p} )$ is a constituent of an unramified principal series of $\mathrm{GL}_{3} (F_{p} )$ with Satake parameter $\left\{-p,1,-p^{-1}\right\},$ where $\BC$
			denotes the local base change from $G_{0,p}$ to $\GL_3(F_p)$.
			\begin{proof}
			Recall the fiber of the morphism
			$N\to T$ is geometrically a Fermat curve of degree $p+1$ 
			where
			 $T(\dC)\cong
			 G'(\dQ)\backslash G'(\dA^\infty)/K^pK_p  $
			by 
			 Theorem \ref{thm:TN}\eqref{Nfiber}.
	Take $t\in T(\Fpac)$, then $\rH^1(N\otimes\Fpac\cap \theta^{-1}(t),\dQ_\ell^\ac)\mid_{G_0(\dA)} $ is a representation of $G_0(\dF_p^\ac)=K_{0,p}/K_{0,p}^1,$
				 isomorphic to $\Omega_3.$ 
				For the remaining part, note that the right-hand side of \eqref{Nuniform} is a
				 $\dC[K_0^pK_{p,0}^1\backslash G_0(\dA^\infty)/K_0^p K_{p,0}^1]$-submodule of   $\Map (G_0'(\dQ)\backslash G_0'(\dA^\infty)/K_0^pK_{0,p}^1, \dC)
				$. In particular, we can complete $\pi_0^{\square}$ to an automorphic representation 
				$\pi_0'=\pi_0^{\square} \otimes \prod_{q\in \square}\pi'_{0,q}
				$ of $G_0'(\dA)$ such that  $ \pi'_{ 0,p}  |_{K_{0,p}}$ contains $\Omega_{3}$. 
				The same argument as \cite[Theorem 5.6.4(ii)]{LTX+22} then implies 
				$\pi'_{ 0,p} \cong c$-$\Ind_{K_{p,0}}^{G_0}(\Omega_3)\cong \pi^s(1)$ where $\pi^s(1)$ appears in \cite[Proposition 13.1.3(d)]{Rog90}.
				The base change $\BC(\pi^s(1))$ has the Satake parameter
				   $\{-p,1,-p^{-1}\}$ by \cite[Proposition 13.2.2(c)]{Rog90}.
 The lemma follows.
			\end{proof}
		\end{lem}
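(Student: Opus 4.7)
The approach hinges on the fibration $\theta:N\to T$ provided by Theorem \ref{thm:TN}, whose geometric fibers are isomorphic to the Fermat curve $\sC$ of degree $p+1$, combined with the uniformization of $T$ by Proposition \ref{uni1}. The essential point is that $T$ is zero-dimensional (since $G'$ is compact modulo center at infinity), so the Leray spectral sequence degenerates and gives a canonical decomposition
\[
\rH^1(N\otimes\Fpac,\dQ_\ell^\ac)=\bigoplus_{t\in T(\Fpac)}\rH^1(N_t,\dQ_\ell^\ac).
\]

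First I would analyze a single fiber. Fixing $t=(B,\lambda_B,\eta_B)\in T(\Fpac)$, the group $K_{0,p}$ acts on $\theta^{-1}(t)$ through its action on the supersingular $p$-divisible group $B[p^\infty]$, modifying the quasi-$p$-isogeny $\gamma:A\to B$ in the moduli data. This action factors through $K_{0,p}/K_{0,p}^1\cong G_0(\Fpp)$, and under the identification $N_t\cong\sC$ from Theorem \ref{thm:TN}\eqref{Nfiber}, which is intrinsically of Deligne-Lusztig type, the resulting $G_0(\Fpp)$-representation on $\rH^1(N_t,\dQ_\ell^\ac)$ matches the Tate-Thompson representation $\Omega_3$ by its definition as the $\ell$-adic cohomology of this curve. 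This uses only that the isomorphism of Theorem \ref{thm:TN}\eqref{Nfiber} is $G_0(\Fpp)$-equivariant, which follows from the functoriality of the construction.

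Next I would assemble the fibers globally. By Proposition \ref{uni1}, $T(\Fpac)\cong G'(\dQ)\backslash G'(\dA^\infty)/K^pK_p$, and fixing the isomorphism $G_0'(\dA^\infty)\cong G_0(\dA^\infty)$, the restriction to $G_0$ identifies this set with $G_0'(\dQ)\backslash G_0'(\dA^\infty)/K_0^pK_{0,p}$. Each summand in the Leray decomposition is then a copy of $\Omega_3$ indexed by such a double coset, while the prime-to-$p$ Hecke correspondences permute the double cosets via right translation. A $K_{0,p}$-equivariant function on $G_0'(\dQ)\backslash G_0'(\dA^\infty)/K_0^p$ with values in $\Omega_3$ is determined by its values on one representative of each $K_{0,p}$-orbit (with the stabilizer action intertwined through $\rho_{\Omega_3}$), recovering exactly the direct sum description. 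This yields the desired isomorphism \eqref{Nuniform}.

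For the second assertion, the right-hand side of \eqref{Nuniform} is naturally a submodule of $\Map(G_0'(\dQ)\backslash G_0'(\dA^\infty)/K_0^pK_{0,p}^1,\dC)$, which is the space of automorphic forms on $G_0'(\dA)$ of tame level $K_0^p$ and principal $K_{0,p}^1$-level at $p$. Hence any $\pi_0^\square$ whose $(\pi_0^\square)^{K_0^\square}$ appears in $\iota_\ell\rH^1(N\otimes\Fpac,\dQ_\ell^\ac)$ extends to an automorphic representation $\pi_0'$ of $G_0'(\dA)$ with $\pi_{0,p}'|_{K_{0,p}}\supset \Omega_3$. Since $\Omega_3$ is the Tate-Thompson representation of the hyperspecial subgroup, the argument of \cite[Theorem 5.6.4(ii)]{LTX+22} forces $\pi_{0,p}'\cong\mathrm{c\text{-}Ind}_{K_{0,p}}^{G_{0,p}}(\Omega_3)\cong\pi^s(1)$ in Rogawski's notation \cite[Proposition 13.1.3(d)]{Rog90}, and \cite[Proposition 13.2.2(c)]{Rog90} then identifies the base change $\BC(\pi^s(1))$ as a constituent of the unramified principal series with Satake parameter $\{-p,1,-p^{-1}\}$. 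The main subtlety throughout is the careful bookkeeping of the $K_{0,p}$-action, both on the fibers of $\theta$ (where it gives $\Omega_3$) and on the double coset set (where it encodes the passage from $K^pK_p$-level to $K^p$-level); granted the fiber identification from Theorem \ref{thm:TN}, the rest is formal.
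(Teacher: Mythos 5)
Your proposal follows the same route as the paper: identify the fibers of $\theta\colon N\to T$ with Fermat/Deligne--Lusztig curves carrying $\Omega_3$ on $\rH^1$, decompose cohomology over the finite set $T(\Fpac)$ (which you phrase via Leray; since $T$ is a finite set of points this is just cohomology of a disjoint union), reassemble as $K_{0,p}$-equivariant $\Omega_3$-valued maps on the double-coset space, and then invoke the argument of Theorem 5.6.4(ii) of LTX+ and Rogawski's Propositions 13.1.3(d)/13.2.2(c) to pin down $\pi'_{0,p}\cong\pi^s(1)$ and its base change. The extra elaboration you give (equivariance of the fiber identification, the bookkeeping of $K_{0,p}$ vs. $K_{0,p}^1$) only makes explicit what the paper leaves implicit.
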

		\begin{lem}\label{lem:cohvanish}
		Keep the notations and assumptions of Theorem \ref{thm:main}.
			Suppose there is no level-lowering, i.e., there is no automorphic representation $\pi'$ of $G(\dA)$ such that 
			${\pi'}^{K^pK_p}\neq 0$ and 
			$\ol\rho_{\pi',\ell}\cong \ol\rho_{\pi,\ell}.$ Then  
			one has	
			\begin{enumerate}
				\item\label{Scohvanish}  $\rH^2(  S\otimes\Fpac,\dF_\ell)_\fm = 0;$
				\item\label{Tcohvanish}  $\rH^2(  T\otimes\Fpac,\dF_\ell)_\fm = 0;$
			 	\item\label{tildeTcohvanish}  $\rH^0(    \tilde T\otimes\Fpac,\dF_\ell)_\fm = 0;$
				\item \label{Sblowcohvanish} 
				$\rH^*( S^\#\otimes\Fpac,\dF_\ell)_\fm = 0;$ 
				\item\label{Ncohvanish} 
				$\rH^*(  N\otimes\Fpac  , \dF_\ell)_\fm =0;$
			\end{enumerate}
			\begin{proof}
				\begin{enumerate}
					\item 
			 Suppose $\rH^2(S\otimes\Fpac, \dF_\ell)_\fm\neq 0. $	
	   By \cite[Corollary 4.6]{LS18},
 we have 
		 $ \rH^2(S\otimes{F^\ac}, \dF_\ell)_\fm \cong \rH^2(S\otimes\Fpac, \dF_\ell)_\fm \neq 0.$  
					The universal coefficient theorem gives the exact sequence  
					\[
			0	\to 	\rH^i(S\otimes{F^\ac}, \dZ_\ell)_\fm \otimes \dF_\ell \to 	\rH^i(S\otimes{F^\ac}, \dF_\ell)_\fm \to 	\rH^{i+1}(S\otimes{F^\ac}, \dZ_\ell)_\fm[\ell] \to  0,\quad i\in \dZ		\] 
		which implies that
				$\rH^2(S\otimes{F^\ac}, \dZ_\ell)_\fm$ is torsion-free
				and non-zero.  Thus		there exists a cuspidal automorphic representation $\tilde\pi$ of
					$G(\dA)$ such that
				the $\tilde\pi$-isotypic component	$\rH^2(  S\otimes{F^\ac} ,\dZ_\ell)_\fm [\tilde\pi] \otimes \dQ_\ell^\ac \neq 0$ and
					 $\tilde\pi^{K^pK_p}\neq 0 $ since $S$ is of level $K^pK_p.$
					Moreover, 
			by Section \ref{subsec:autoGal} 	the prime-to-$\square$ Hecke equivariance implies  
		    $\ol\rho_{\tilde\pi,\ell}(\Frob_q) =  \ol\rho_{\pi,\ell}(\Frob_q)$ for $q\notin\square$.
		Finally,  Chebotarev density ensures 
                				$\ol\rho_{\tilde\pi,\ell}\cong \ol\rho_{\pi,\ell}.$
					This contradicts the no-level-lowering assumption.
					\item 
						Suppose 
	$\rH^{0}( T\otimes\Fpac, \dF_\ell)_\fm\cong \rH^{0}( T\otimes{F^\ac}, \dF_\ell)_\fm \neq 0.$
 		  Since $  \rH^{0}(   T\otimes{F^\ac}, \dZ_\ell)_\fm$ is torsion-free,  
	   there exists an irreducible automorphic representation $\pi'$ of
					$G'(\dA)$ such that $\pi'^{K^p     K_p}\neq 0.$ 
				By \cite[Theorem 2.4]{Clo00} we can   transfer $\pi'$ to an automorphic representation 
				$\tilde \pi$ of $G(\dA)$ such that 
			the finite components $\tilde \pi^\infty$ and $ \pi'^\infty$ coincide.
			In particular $\tilde\pi^{K^pK_p}\neq 0.$
		The prime-to-$\square$ Hecke equivariance and Chebatrov density 
						then imply that $\ol\rho_{\tilde\pi,\ell}\cong \ol\rho_{\pi,\ell},$
		  contradicting the no-level-lowering assumption. 
		  			\item 
	 	Suppose $\rH^{0}( \tilde T\otimes\Fpac, \dF_\ell)_\fm\neq 0.$
		By the same argument as \eqref{Tcohvanish},
	   there is an irreducible automorphic representation $\pi'$ of
					$G'(\dA)$ such that $(\pi')^{ K^p  \tilde K_p}\neq 0$
	and we can again   transfer $\pi'$ to an automorphic representation 
				$\tilde \pi$ of $G(\dA)$ such that 
			the finite components $\tilde \pi^\infty$ and $ \pi'^\infty$ coincide.
						In particular $\tilde\pi^{K^p\tilde K_p}\neq 0.$
			The prime-to-$\square$ Hecke equivariance and Chebotarev density 
						then imply that $\ol\rho_{\tilde\pi,\ell}\cong \ol\rho_{\pi,\ell}.$
						
						On the other hand,
		by
		Section \ref{subsec:principal}\eqref{Iwahoricfix},
			$\tilde \pi_p$ 
					is a Jordan-H\"older factor of $I_{\alpha,\beta}$
					for some $\alpha,\beta\in\dC^\times.$
				
				If $\alpha\neq p^{\pm2},-p^{\pm1},$ then $\tilde\pi_p\cong I_{\alpha,\beta}$ thus $\tilde\pi_p^{K_p}\neq 0$ by  Section \ref{subsec:principal}\eqref{dimIab},  contradicting the no-level-lowering assumption.
					
					If $\alpha=p^{\pm2},$
					then $\tilde \pi_p\cong \St_p\otimes \mu_\beta$ or $\mu_\beta.$
				The first case is excluded since it has no non-trivial 
			$\tilde K_p$-fixed vector by  
			Section \ref{subsec:principal}\eqref{dim}.
                The second case is excluded as $\tilde\pi$ is tempered.	
                			
	If $\alpha=-p^{\pm1},$ then $\tilde \pi_p \cong \pi_\beta^n$ or $\pi_\beta^2.$
		The former is excluded since it has no non-trivial 
				$\tilde K_p$-fixed vector by Section \ref{subsec:principal}\eqref{dim}.
				For the latter
		the multiset of eigenvalues of $\ol\rho_{\tilde \pi,\ell}(\Frob_p)$
				would be $\{-p,1,-p^{-1}\}$ up to a scalar,
				leaving two possibilities:
			if $p^2\equiv -p \bmod\ell$ then $p\equiv -1\mod \ell$ thus 
			$p^2\equiv 1 \mod \ell,$   if $p^2\equiv -p^{-1} \bmod\ell$  then 
			$p^3\equiv -1 \mod \ell$,   both contradicting our assumption.	
								\item\label{blowupcoh}
					Let $E$ be the exceptional divisor of the blowup $S^\#$ of $S $ along the superspecial locus $S_{\spe}.$ 					Consider the corresponding blow up square
					\[
					\xymatrix{
						E\ar[d]_\pi  \ar[r]^j &S^\# \ar[d]^b \\
						S_{\spe} \ar [r]^i &S
					}.
					\]
			    We have a distinguished triangle \cite[\href{https://stacks.math.columbia.edu/tag/0EW5}{Tag 0EW5}]{Sta} 
						\[
					\dF_\ell \rightarrow R i_{*}(\dF_\ell\mid_{S_{\spe}}) 
					\oplus R b_{*}(\dF_\ell \mid_{ S^\#}) \rightarrow R c_{*}(\dF_\ell\mid_{E}) \rightarrow \dF_\ell[1]
					\]
					where $c=i \circ \pi=b \circ j.$
					This induces an   exact sequence of localized \'etale cohomology
					\[
					\rH^{i}(  S\otimes\Fpac, \dF_\ell)_\fm \to  
					\rH^i(  S^\#\otimes\Fpac, \dF_\ell)_\fm \oplus   \rH^i(   S_{\spe}\otimes\Fpac ,  \dF_\ell)_\fm \to
					\rH^i(  E\otimes\Fpac, \dF_\ell) _\fm  \to
					\rH^{i+1}(   S\otimes\Fpac, \dF_\ell)_\fm
					\]
					  compatible with the $\dT(G^\square,K^\square)_\fm$-action.
					Since $ \rH^*( S\otimes\Fpac, \dF_\ell)_\fm=0$
				by \eqref{Scohvanish}
					and $ \rH^0( S_{\spe}\otimes\Fpac, \dF_\ell)_\fm
					\cong \rH^0( \tilde T\otimes\Fpac, \dF_\ell)_\fm=0
					$ 	 				 by Lemma \ref{lem:tTMSp}
					and \eqref{tildeTcohvanish},
					we have an isomorphism of $\dT(G^\square,K^\square)_\fm$-modules
					\[
					\rH^i(S^\#\otimes\Fpac, \dF_\ell)_\fm \cong 
					\rH^i( E\otimes\Fpac, \dF_\ell) _\fm.
					\]
			Therefore, it suffices to show  
				  $\rH^*( E\otimes\Fpac, \dF_\ell)_\fm=0.$ 
					Since 
			 $E$ is a $\dP^1$-bundle over $S_{\spe}$
				 by the proof of Proposition \ref{prop:Y0Y1blowup}\eqref{Y0Sblow},
					we have  $ \rH^{*}( E\otimes\Fpac, \dF_\ell)_\fm= 
				\rH^{*}( S_{\spe}\otimes\Fpac, \dF_\ell)_\fm[X]/X^2 =0$
			and finish the proof.
					
					\item\label{Ncoh}
				Firstly, we have
					$\rH^{i}(  N\otimes\Fpac  , \dF_\ell)_\fm 
					\cong  \rH^{i}(  T\otimes\Fpac  , \dF_\ell)_\fm=0$
					for $i=0,2$ by \eqref{Tcohvanish}.
					If $\rH^{1}(  N\otimes\Fpac  , \dF_\ell)_\fm \neq 0$,
					then $\pi^{\square}$ appears in 
					$ \rH^1(  N\otimes\Fpac  , \dZ_\ell)_\fm\otimes \dQ_\ell^\ac$
					since $\rH^{1}(  N\otimes\Fpac  , \dZ_\ell)_\fm$ is torsion-free.
					By Lemma \ref{lem:H1Fermat} 
					we can complete $\pi^{\square}$ to 
					an automorphic representation $ \pi'=\pi^{\square}\otimes \prod_{q\in\square}\pi'_q$ of $G'(\dA)$
								 such that the Satake parameter of $\BC(\pi'_{p,0})$ is 
					$\{p,1,p^{-1}\}.$
				We can again transfer $\pi'$ to an automorphic representation 
				$\tilde \pi$ of $G(\dA)$  such that the finite components $\tilde \pi^\infty$ and $ \pi'^\infty$ coincide.
		Then		the multiset of eigenvalues of $\ol\rho_{\tilde\pi,\ell}(\Frob_p)$
				would be $\{-p,1,-p^{-1}\}$ up to a scalar.
				Comparing the eigenvalues of $\ol\rho_{\tilde\pi,\ell}$
				and $\ol\rho_{\pi,\ell}$
			as in	Lemma \ref{lem:cohvanish}\eqref{tildeTcohvanish}
				 leads to a contradiction.
				\end{enumerate}
			\end{proof}
		\end{lem}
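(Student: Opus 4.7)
The plan is to argue each vanishing by contradiction: in every case a nonzero localized cohomology class will produce an automorphic representation $\tilde\pi$ (of $G$ or of the inner form $G'$) whose associated residual Galois representation is isomorphic to $\ol\rho_{\pi,\ell}$, and this isomorphism will either contradict the no-level-lowering hypothesis or contradict the local-global compatibility of Frobenius eigenvalues.

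For assertion (1), I would first invoke \cite[Corollary 4.6]{LS18} to replace $\rH^2(S\otimes\Fpac,\dF_\ell)_\fm$ by $\rH^2(S\otimes F^\ac,\dF_\ell)_\fm$. A nonzero class lifts, via the universal coefficient theorem applied to the torsion-free $\rH^2(S\otimes F^\ac,\dZ_\ell)_\fm$ (using assumption (\ref{middeg})), to an automorphic representation $\tilde\pi$ of $G(\dA)$ with $\tilde\pi^{K^pK_p}\neq 0$. The prime-to-$\square$ Hecke equivariance identifies the Satake parameters of $\tilde\pi$ and $\pi$ away from $\square$, hence by Chebotarev and Brauer--Nesbitt $\ol\rho_{\tilde\pi,\ell}\cong\ol\rho_{\pi,\ell}$, contradicting the no-level-lowering hypothesis. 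Assertions (2) and (3) proceed along the same line, using Proposition \ref{uni1} to identify $T$ and $\tilde T$ with discrete Shimura varieties for $G'$, and Clozel's Jacquet--Langlands transfer \cite{Clo00} to produce a corresponding $\tilde\pi$ for $G$; to conclude (3) the local component $\tilde\pi_p$ lives in an Iwahori-spherical principal series $I_{\alpha,\beta}$, and I would examine each Jordan--H\"older factor listed in Section \ref{subsec:principal}, either producing a $K_p$-fixed vector (forcing a level lowering) or computing that the Frobenius eigenvalues are $\{-p,1,-p^{-1}\}$ up to scalar---an impossibility under the hypothesis $\ell\nmid(p-1)(p^3+1)$.

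For assertion (4), the plan is to exploit that $S^\#\to S$ is the blowup along the superspecial locus, with exceptional divisor $E$ that is a $\dP^1$-bundle over $S_{\spe}\cong M\cong\tilde T$ (by Lemma \ref{lem:tTMSp} and Proposition \ref{prop:Y0Y1blowup}). The blowup distinguished triangle then yields a long exact sequence of localized cohomology relating $S$, $S_{\spe}$, $S^\#$, and $E$. Using (1) and (3) the outer terms vanish, forcing $\rH^*(S^\#)_\fm\cong \rH^*(E)_\fm$; and the projective bundle formula reduces $\rH^*(E)_\fm$ to $\rH^*(S_{\spe})_\fm$, already shown to be zero. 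For (5) I would use Theorem \ref{thm:TN} to view $N\to T$ as a family of Fermat curves: proper base change (or the Leray spectral sequence) handles $\rH^0$ and $\rH^2$ from (2), while $\rH^1(N\otimes\Fpac,\dF_\ell)_\fm$ is controlled by Lemma \ref{lem:H1Fermat}, which identifies the local component at $p$ of any automorphic representation $\pi'$ of $G'(\dA)$ contributing to it with a base change having Satake parameter $\{-p,1,-p^{-1}\}$. Transferring $\pi'$ to $G$ and comparing with the Frobenius eigenvalues $\{\iota_\ell^{-1}(\beta)p^4,\iota_\ell^{-1}(\beta)p^2,\iota_\ell^{-1}(\beta)\}$ of $\ol\rho_{\pi,\ell}(\Frob_p)$, the condition $\ell\nmid(p-1)(p^3+1)$ once again rules out every matching, finishing the proof.

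The main obstacle I expect is the elimination of Jordan--H\"older factors in (3) and the eigenvalue comparison in (5); both hinge on squeezing a genuine numerical contradiction out of the congruence hypothesis $\ell\nmid(p-1)(p^3+1)$, and on ensuring that the Jacquet--Langlands transfer preserves enough information (central character, local components at $p$) for the Galois-theoretic comparison to be unambiguous. Everything else is a bookkeeping exercise with distinguished triangles and the projective bundle formula.
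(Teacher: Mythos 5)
Your proposal follows the paper's own proof essentially step for step: part (1) via \cite[Corollary 4.6]{LS18}, universal coefficients, torsion-freeness, Hecke equivariance and Chebotarev; parts (2)--(3) via the uniformization of $T,\tilde T$ and Clozel's transfer, then the Iwahori-spherical Jordan--H\"older analysis at $p$ combined with the congruence $\ell\nmid(p-1)(p^3+1)$; part (4) via the blow-up distinguished triangle and the projective bundle formula over $S_{\spe}\cong\tilde T$; and part (5) via the Fermat-curve fibration $N\to T$ and Lemma~\ref{lem:H1Fermat} to pin down the Satake parameter at $p$ before comparing Frobenius eigenvalues. The route and the supporting lemmas you name are exactly the ones the paper uses, so this is the same proof.
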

\begin{corollary}
\begin{enumerate}
				\item  $\rH^*(  Y^{(0)} ,\dF_\ell)_\fm = 0;$
				\item\label{Ybra1}
				$
				\rH^*(   Y^{(1)} , \dF_\ell)_\fm   =0.
				$
												\end{enumerate}
		\begin{proof}
		\begin{enumerate}		
					\item
					By Proposition \ref{prop:Y0Y1blowup} we have
					isomorphisms of $\dT(G^\square,K^\square)_\fm$-module 
					$\rH^i(  {Y_0}\otimes\Fpac,\dF_\ell)_\fm\cong \rH^i( {Y_1}\otimes\Fpac,\dF_\ell)_\fm\cong \rH^i(  S^\#\otimes\Fpac,\dF_\ell)_\fm=0$ for $i=0,1,2.$
					Now we show $\rH^*(  {Y_2}\otimes\Fpac,\dF_\ell)_\fm=0.$
					By Lemma \ref{lem:PY2}, Proposition \ref{prop:PV}
					and Lemma \ref{lem:cohvanish}\eqref{Ncohvanish},
					 $Y_2$ is a $\dP^1$-bundle over $N$ and thus 
					\[
					\rH^*(  {Y_2}\otimes\Fpac,\dF_\ell)_\fm\cong \rH^*( N\otimes\Fpac,\dF_\ell)_\fm[X]/X^2=0.
					\] 
					\item 
					By Proposition \ref{prop:Y01Y02Y12}\eqref{Y01},  $ { Y_{0,1}} $ is a $\dP^1$-bundle over $\tilde T$. Thus we have   an isomorphism of $\dT(G^\square,K^\square)_\fm$-modules 
					\[
					\rH^*(  {Y_{0,1}}\otimes\Fpac,\dF_\ell)_\fm\cong \rH^*( {\widetilde T}\otimes\Fpac ,\dF_\ell)_\fm[X]/X^2=0
					\]
		by \cite[Proposition 10.1]{Mil80} and Lemma \ref{lem:cohvanish}\eqref{tildeTcohvanish}. 
			By Proposition \ref{prop:Y01Y02Y12}\eqref{Y02}\eqref{Y12},
			$Y_{0,2}$ is isomorphic to $N$, $Y_{1,2}\to N$ is a purely inseparable map, thus
				we have 
					isomorphisms of $\dT(G^\square,K^\square)_\fm$-modules
					\[
					\rH^i(  {Y_{0,2}}\otimes\Fpac,\dF_\ell)_\fm\cong
					\rH^i(  {Y_{1,2}}\otimes\Fpac,\dF_\ell)_\fm\cong \rH^i(  N\otimes\Fpac ,\dF_\ell)_\fm.
					\]
					By Lemma \ref{lem:cohvanish}\eqref{Ncohvanish}
					they all vanish.

				\end{enumerate}
 			\end{proof}
\end{corollary}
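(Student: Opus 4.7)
The plan is to reduce each vanishing to Lemma \ref{lem:cohvanish} via the geometric identifications established in Sections \ref{subsec:geoS0p}--\ref{subsec:principal}. Since $Y^{(0)}$ and $Y^{(1)}$ are disjoint unions of the schemes $Y_i$ respectively $Y_{i,j}$, it suffices to show $\rH^*(Y_i\otimes\Fpac,\dF_\ell)_\fm=0$ for each $i$ in (1), and $\rH^*(Y_{i,j}\otimes\Fpac,\dF_\ell)_\fm=0$ for each pair in (2).

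For part (1), I would handle the three components separately. For $Y_0$, apply Proposition \ref{prop:Y0Y1blowup}\eqref{Y0Sblow} giving $Y_0\cong S^\#$ to conclude via Lemma \ref{lem:cohvanish}\eqref{Sblowcohvanish}. For $Y_1$, use that $\pi_1^\#\colon Y_1\to S^\#$ of Proposition \ref{prop:Y0Y1blowup} is purely inseparable, so induces an isomorphism on \'etale cohomology with $\dF_\ell$-coefficients (since $\ell\ne p$), reducing again to Lemma \ref{lem:cohvanish}\eqref{Sblowcohvanish}. For $Y_2$, chain the isomorphism $Y_2\cong P$ of Lemma \ref{lem:PY2} with $P\cong\dP(\sV)$ of Proposition \ref{prop:PV}, so that $Y_2$ is a $\dP^1$-bundle over $N$; the projective bundle formula gives $\rH^*(Y_2\otimes\Fpac,\dF_\ell)_\fm\cong \rH^*(N\otimes\Fpac,\dF_\ell)_\fm\otimes\dF_\ell[X]/(X^2)$, which vanishes by Lemma \ref{lem:cohvanish}\eqref{Ncohvanish}.

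For part (2), the same strategy works for each intersection. From Proposition \ref{prop:Y01Y02Y12}\eqref{Y01}, $Y_{0,1}$ is a $\dP^1$-bundle over $S_{\spe}$, and Lemma \ref{lem:tTMSp} identifies $S_{\spe}\cong \tilde T$; the projective bundle formula combined with Lemma \ref{lem:cohvanish}\eqref{tildeTcohvanish} gives the vanishing. From Proposition \ref{prop:Y01Y02Y12}\eqref{Y02}, $Y_{0,2}\cong N$, so Lemma \ref{lem:cohvanish}\eqref{Ncohvanish} finishes this piece. Finally, from Proposition \ref{prop:Y01Y02Y12}\eqref{Y12}, $\tilde\pi_{1,2}\colon Y_{1,2}\to N$ is purely inseparable, hence an isomorphism on \'etale cohomology, reducing once more to Lemma \ref{lem:cohvanish}\eqref{Ncohvanish}.

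The proof is essentially bookkeeping on top of the structural results of Section \ref{sec:geometry}; there is no deep obstacle to overcome at this stage. What makes the argument work is that every stratum or intersection has already been explicitly identified either with a scheme whose localized cohomology was killed in Lemma \ref{lem:cohvanish} (namely $S^\#$, $\tilde T$, or $N$), or with a $\dP^1$-bundle or purely inseparable cover of such a scheme. The only mild subtlety is ensuring that the projective bundle formula and invariance of \'etale cohomology under universal homeomorphisms both commute with localization at the spherical Hecke ideal $\fm$; this is immediate since the corresponding morphisms are equivariant under the prime-to-$p$ Hecke correspondence by the statements of Propositions \ref{prop:Y0Y1blowup}, \ref{prop:PV}, \ref{prop:Y01Y02Y12} and Lemma \ref{lem:tTMSp}.
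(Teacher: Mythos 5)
Your proof is correct and follows essentially the same route as the paper: reduce each stratum and pairwise intersection to $S^\#$, $\tilde T$, or $N$ via the isomorphisms, purely inseparable morphisms, and $\dP^1$-bundle structures already established, then invoke Lemma \ref{lem:cohvanish}. The only difference is a slightly more explicit bookkeeping on your part (separating the $Y_1\to S^\#$ step as a purely inseparable cover and spelling out $S_{\spe}\cong\tilde T$), which the paper treats implicitly.
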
	
		
		\begin{corollary} \label{lem:degenerate}
			The spectral sequence 
			\eqref{eq:RZPicard}
			localized at $\fm$ degenerates at $E_1.$
			\begin{proof}
				By Poincar\'e duality it suffices to show $E^{-1,4}_\fm= \rH^{2}(Y^{(1)}, \mathbb{F}_{\ell})_{\fm}=0$
				and 
				$E^{0,3}_\fm=\rH^{3}(Y^{(0)}, \mathbb{F}_{\ell})_{\fm} =0,$ 
				which follow from Lemma \ref{lem:cohvanish}.
			\end{proof}
		\end{corollary}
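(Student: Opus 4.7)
The strategy is to leverage the vanishing results of Lemma \ref{lem:cohvanish} and its corollary to kill essentially every entry of the $E_1$-page of \eqref{eq:RZPicard} after localization at $\fm$, so that degeneration becomes a matter of verifying a handful of arrows have zero source or zero target. First, I would catalogue the possibly non-zero $E_1^{r,s}_\fm$ by inspecting each box in \eqref{eq:RZPicard}: the equalities $\rH^*(Y^{(0)}, \dF_\ell)_\fm = \rH^*(Y^{(1)}, \dF_\ell)_\fm = 0$ immediately eliminate every summand built from $Y^{(0)}$ or $Y^{(1)}$. Since $Y^{(2)} \cong T_0(p) \otimes \Fpac$ is $0$-dimensional, only its $\rH^0$ can contribute, so the only potentially non-zero entries are $E_1^{-2,4}_\fm$, $E_1^{0,2}_\fm$ and $E_1^{2,0}_\fm$, each isomorphic up to Tate twist to $\rH^0(Y^{(2)}, \dF_\ell)_\fm$.

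Second, the $d_1$-differentials go horizontally $E_1^{r,s}_\fm \to E_1^{r+1,s}_\fm$. Reading off \eqref{eq:RZPicard}, the three surviving entries sit in the rows $s = 4, 2, 0$, and each horizontal neighbour of any of them involves $Y^{(0)}$- or $Y^{(1)}$-cohomology. Thus every $d_1$ either originates or terminates at a group that vanishes after localization, so $E_2 = E_1$ on the localized spectral sequence.

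Third, for the higher differentials $d_r\colon E_r^{a,b} \to E_r^{a+r,b+1-r}$ with $r \geq 2$, I would use Poincar\'e duality on the Rapoport--Zink spectral sequence, which exchanges positions via $(r,s) \leftrightarrow (-r, 4-s)$ up to a Tate twist, to reduce the required vanishings to those in the upper-left half of the page. This leaves exactly the two targets $E_1^{-1,4}_\fm = \rH^2(Y^{(1)}, \dF_\ell)_\fm(-1)$ (the target of $d_1$ from $E_1^{-2,4}$, handled in step two but in any case zero) and $E_1^{0,3}_\fm = \rH^3(Y^{(0)}, \dF_\ell)_\fm$ (the target of $d_2$ from $E_2^{-2,4}$); both vanish by the corollary. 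All other higher differentials emanating from $(-2,4), (0,2), (2,0)$ have targets already eliminated in step one, since they land in cohomology of $Y^{(0)}$, of $Y^{(1)}$, or of $Y^{(2)}$ in positive degree (where it vanishes by dimension).

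The main obstacle in a proof of this shape is not the degeneration combinatorics itself but the prerequisite vanishing of $\rH^*(Y^{(0)})_\fm$ and $\rH^*(Y^{(1)})_\fm$, which in turn rests on the geometric description of the strata in Section \ref{sec:geometry} (the blowup and $\dP^1$-bundle structures) and the Jacquet--Langlands-type arguments of Lemma \ref{lem:cohvanish}; once those are in hand, degeneration is a direct diagram inspection, and the Poincar\'e duality symmetry merely serves to avoid redundant checks.
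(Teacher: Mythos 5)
Your proof is correct and follows the same route as the paper: you use the vanishing of $\rH^*(Y^{(0)})_\fm$ and $\rH^*(Y^{(1)})_\fm$ (from the corollary to Lemma \ref{lem:cohvanish}) together with the $0$-dimensionality of $Y^{(2)}$ to leave only the three entries $E_1^{-2,4}$, $E_1^{0,2}$, $E_1^{2,0}$, and then dispose of all differentials by Poincar\'e duality plus the two explicit vanishings $E_1^{-1,4}_\fm = 0$ and $E_1^{0,3}_\fm = 0$. You actually spell out more of the bookkeeping than the paper does; note that once one observes all three surviving entries share total degree $r+s=2$ while any $d_r$ raises total degree by $1$, degeneration is immediate without any further differential-by-differential check.
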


		We study the $\Gal(\dF_p^\ac/\dF_{p^2})$-action on  
		$ \rH^0(Y^{(2)}, \dF_\ell)_\fm \cong \rH^0(  { T_0(p)}\otimes\Fpac, \dF_\ell)_\fm.$ 
		Consider  the   Iwahoric Hecke algebra
		$
		\dT(G_p,\Iw_p):= \dZ [\Iw_p \backslash G_p/ \Iw_p].
		$
		The
		$ \Gal( \dF_p^\ac/\dF_{p^2})$-action  and the $\dT(G_p,\Iw_p)$-action   
		on $\rH^0({T_0(p)}\otimes\Fpac, \dF_\ell)_\fm$  commute.
		Let $\phi_{\Iw_p}$ denote
		the action of $\dT(G_p,\Iw_p)$
		on $\rH^0(Y^{(2)},\dF_\ell)_\fm.$
		 For $a\in Z(\dQ_p)=F_p^\times,$ denote by $  \ang{a} \in \dT(G_p, \Iw_p)$ the characteristic function of $a\Iw_p$.
		\begin{lem}\label{lem:Frobp}
			The action of $\Frob_{p^2}$ and $\ang{p^{-1}}$ on
			$ \rH^0(Y^{(2)}, \dF_\ell)_\fm$ coincide.
			\begin{proof}
				Take 
				$s=  (A, \lambda_{A}, \eta_{A}, \tilde A, \lambda_{\tilde{A}}, \eta_{\tilde{A}}, \alpha ) \in Y^{(2)}(\dF_p^\ac).$ 
		 $\tilde A$ is superspecial by Lemma \ref{lem:PY2}\eqref{Y2tA0FequalV}
		 and Lemma \ref{lem:Y012}\eqref{BtBF=V}.
 
 				Since $A$ and $\tilde A$ are superspecial, there are  
				supersingular elliptic curves  $E$ and $\tilde E$
				defined over $\dF_{p^2}$ such that
				$A=(E^{\oplus 3})\otimes \dF_p^\ac$ and
				$\tilde A=(\tilde E^{\oplus 3})\otimes \dF_p^\ac.$
				It is well known that the relative Frobenius 
				$\Fr_E: E \to E^{(p^2)}\cong E$ coincides with the isogeny
				$-p:E\to E,$ 
				and 
				$\Fr_{\tilde E}: \tilde E \to \tilde  E^{(p^2)}\cong E$ coincides with the isogeny
				$-p:\tilde E\to \tilde E.$ 
				It turns out that the action of $\Frob_{p^2}$ and $\ang{-p^{-1}}$
				on $ \rH^0(Y^{(2)}, \dF_\ell)_\fm$
				coincide. We conclude by remarking  that $\ang{-p^{-1}}=\ang{p^{-1}}.$ 
			\end{proof}
			
		\end{lem}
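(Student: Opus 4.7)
The plan is to exhibit $\Frob_{p^2}$ explicitly as a correspondence on $Y^{(2)}$ and then identify this correspondence, under the moduli interpretation, with the Hecke operator $\ang{p^{-1}}$.

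First, I would verify that every geometric point $s=(A,\lambda_A,\eta_A,\tilde A,\lambda_{\tilde A},\eta_{\tilde A},\alpha)\in Y^{(2)}(\Fpac)$ has both $A$ and $\tilde A$ superspecial. The superspeciality of $A$ follows from $Y^{(2)}=Y_{0,1,2}\subseteq Y_{0,1}$ together with Proposition~\ref{prop:Y01Y02Y12}\eqref{Y01} (which shows $\pi_{0,1}$ factors through $S_{\spe}$). The superspeciality of $\tilde A$ is immediate from Lemma~\ref{lem:PY2}\eqref{Y2tA0FequalV} (combined with the duality $\tV \cD(\tilde A)_0=\tF\cD(\tilde A)_0$ implying the analogous statement in the other grading). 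Consequently, up to prime-to-$p$ isogeny, $A\cong E^{\oplus 3}\otimes\Fpac$ and $\tilde A\cong \tilde E^{\oplus 3}\otimes\Fpac$ for some supersingular elliptic curves $E,\tilde E$ defined over $\Fpp$ (in fact already over $\dF_p$).

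Next, I would unwind what $\Frob_{p^2}$ does on $s$. By definition $\Frob_{p^2}(s)$ is the pullback of $s$ along the absolute $p^2$-Frobenius of $\Spec\Fpac$; concretely, this replaces $(A,\alpha,\dots)$ by $(A^{(p^2)},\alpha^{(p^2)},\dots)$ and the prime-to-$p$ level structure $\eta_A$ by $\eta_A\circ\Fr_{A,*}^{-1}$ on the \'etale homology, where $\Fr_A\colon A\to A^{(p^2)}$ is the relative $p^2$-Frobenius of $A$. Since $A$ descends to $E^{\oplus 3}$ over $\Fpp$, the relative $p^2$-Frobenius coincides with the absolute one, so $A^{(p^2)}\cong A$. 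Moreover, for a supersingular elliptic curve $E$ over $\dF_p$ the Frobenius satisfies $\pi_p^2=[-p]$, hence $\pi_{p^2}=[-p]$ as an endomorphism of $E$. Applying this component-wise shows $\Fr_A=[-p]\in\End(A)$. The same holds for $\tilde A$, and these endomorphisms are visibly compatible with $\alpha$ and with the polarizations.

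Therefore the effect of $\Frob_{p^2}$ on $s$ is simply to replace $\eta_A$ by $\eta_A\circ[-p]^{-1}=\eta_A\circ[-p^{-1}]$, and similarly for $\eta_{\tilde A}$; via the moduli interpretation of prime-to-$p$ level structures, this is exactly the action of $\ang{-p^{-1}}$ viewed as an element of the center $Z_p=F_p^\times\subset G_p$. Finally, because $-1\in O_{F_p}^\times\subseteq \Iw_p$, one has $\ang{-p^{-1}}=\ang{p^{-1}}$ in $\dT(G_p,\Iw_p)$, giving the desired identification on $\rH^0(Y^{(2)},\dF_\ell)_\fm$. The only delicate step, and the one to carry out with care, is the precise comparison between the geometric Frobenius of $\Spec\Fpac$ acting on $s$ and the Hecke translation by an element of $Z_p$; this is where the sign conventions for $\pi_p^2=-p$ on a supersingular elliptic curve over $\dF_p$ enter, and is the main point that needs to be nailed down.
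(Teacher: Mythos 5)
Your proposal is correct and takes essentially the same route as the paper: both reduce to the superspeciality of $A$ and $\tilde A$, the decomposition $A\cong E^{\oplus 3}\otimes\Fpac$ (and likewise for $\tilde A$), and the classical fact that the $p^2$-Frobenius on a supersingular elliptic curve is $[-p]$, after which $\Frob_{p^2}$ is identified with the Hecke translation $\ang{-p^{-1}}=\ang{p^{-1}}$ via its effect on prime-to-$p$ level structures. Your treatment is in fact slightly more careful than the paper's in two small respects: you supply the argument for why $A$ (and not just $\tilde A$) is superspecial, via $Y^{(2)}\subseteq Y_{0,1}$ and Proposition~\ref{prop:Y01Y02Y12}\eqref{Y01}, and you make explicit that $\ang{-p^{-1}}=\ang{p^{-1}}$ because $-1\in O_{F_p}^\times\subset\Iw_p$; the only minor overreach is your parenthetical that $E,\tilde E$ are "in fact already over $\dF_p$" (not all supersingular $j$-invariants lie in $\dF_p$), but this is harmless since the sign of $\pi_{p^2}=\pm p$ is absorbed by the identity $\ang{-p^{-1}}=\ang{p^{-1}}$ anyway.
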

		
		\begin{lem}\label{lem:diamond}
			$ \phi_{\Iw_p} (\ang{p^{-1}})$ lies in the image of 
			$Z( \dA^\square)/K^\square \cap Z(\dA^\square)$
			in $\End_{\dF_\ell}( \rH^0(Y^{(2)}, \dF_\ell)_\fm).$ 
			\begin{proof} 
				Let  $\underline p \in Z(\dA^\infty)\cong (\dA_F^{\infty})^\times$ be the element whose $p$-component  is $p$ and other components are 1.
				By definition the action of $\underline p$ and $\ang{p}$ coincide.
				Since the action of $ \underline{p}^{-1}$ on $\rH^0(Y^{(2)}, \dF_\ell)_\fm$ factors through 
				$Z(\dA^\infty)/Z(\dQ)(K^p\Iw_p\cap Z(\dA^\infty)),$
				it suffices to show that
				there exist 
				$g^\square \in Z(\dA^\square)  $ and $f \in Z(\dQ),$ such that 
				$g^\square f \underline p^{-1}\in K^\square  \cap Z(\dA^\square)  ,$
				which follows from the weak approximation.
			\end{proof}
		\end{lem}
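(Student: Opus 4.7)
The plan is to exhibit $\phi_{\Iw_p}(\langle p^{-1}\rangle)$ explicitly as the action of an element of $Z(\dA^\square)$, modulo things that act trivially. The key point is that $Y^{(2)}$ is a discrete Shimura variety for $G$ of level $K^p\Iw_p$ (via the identifications of Lemma \ref{lem:Y012}), so the natural $G(\dA^\infty)$-action on $\rH^0(Y^{(2)}, \dF_\ell)_\fm$ factors through $G(\dQ)\backslash G(\dA^\infty)/(K^p\Iw_p)$, and in particular the central action factors through $Z(\dA^\infty)/Z(\dQ)\bigl(Z(\dA^\infty)\cap K^p\Iw_p\bigr)$.

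First I would identify $\langle p^{-1}\rangle$ with the action of the idele $\underline{p}^{-1}\in Z(\dA^\infty)$ whose $p$-component is $p^{-1}$ and whose components away from $p$ are $1$. This is immediate from the definition of the Hecke operator $\langle a\rangle$ for $a\in Z(\dQ_p)$ under the uniformization identifying $Y^{(2)}(\Fpac)$ with a double coset space for $G'(\dA^\infty)$ at level $K^p\Iw_p$ (Lemma \ref{lem:Y012} combined with Proposition \ref{uni1}, and using that $Z$ is the same for $G$ and $G'$ since $G'$ is an inner form).

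Next I would reduce the problem to one of weak approximation. It suffices to produce $g^\square\in Z(\dA^\square)$ and $f\in Z(\dQ)$ such that
\[
g^\square\cdot f\cdot\underline{p}^{-1}\ \in\ K^\square\cap Z(\dA^\square),
\]
since then $\underline{p}^{-1}$ and $(g^\square)^{-1}$ have the same image in the quotient $Z(\dA^\infty)/Z(\dQ)\bigl(Z(\dA^\infty)\cap K^p\Iw_p\bigr)$ (noting that $K^\square\cap Z(\dA^\square)$ maps into $K^p\Iw_p\cap Z(\dA^\infty)$ at the places away from $p$, and at $p$ the element $\underline{p}^{-1}$ lies in $Z(\dQ_p)\subset Z(\dQ_p)\cdot\Iw_p$). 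Equivalently, we want an element of $Z(\dQ)$ that approximates $\underline{p}$ at every place in $\square\setminus\{p\}$ up to $K^\square$ and is a $p$-adic unit at $p$ multiplied by $\underline{p}$; but since we allow the compensating $g^\square$ in $Z(\dA^\square)$, the actual requirement is just to find $f\in F^\times=Z(\dQ)$ whose image in $Z(\dQ_q)/\bigl(Z(\dQ_q)\cap K_q\bigr)$ matches a prescribed target for every $q\in\square$, $q\ne p$, and which has trivial constraint at $p$.

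The hard part, which is really not hard here, is that this amounts to showing $F^\times$ surjects onto $\prod_{q\in\square,\,q\ne p}Z(\dQ_q)/(Z(\dQ_q)\cap K_q)$, which is a standard consequence of weak approximation for the torus $Z=\Res_{F/\dQ}\dG_m$ (or of the density of $F^\times$ in any finite product of completions modulo open subgroups). Once such an $f$ is chosen, we set $g^\square:=(f\underline{p}^{-1})^{-1}$ restricted to the components away from $\square$, adjusted by $K^\square$-factors at the remaining places outside $\square$; this lies in $Z(\dA^\square)$, and by construction $g^\square f\underline{p}^{-1}\in K^\square\cap Z(\dA^\square)$. Hence $\phi_{\Iw_p}(\langle p^{-1}\rangle)$ coincides with the action of $(g^\square)^{-1}\in Z(\dA^\square)$, modulo the kernel $K^\square\cap Z(\dA^\square)$, proving the lemma.
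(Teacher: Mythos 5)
Your overall strategy matches the paper's: identify $\ang{p^{-1}}$ with the idele $\underline{p}^{-1}$ whose only nontrivial component is at $p$, observe that the action factors through the quotient $Z(\dA^\infty)/Z(\dQ)\bigl(K^p\Iw_p \cap Z(\dA^\infty)\bigr)$, and reduce to a weak approximation statement for the torus $Z=\Res_{F/\dQ}\dG_m$.

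There is, however, a genuine gap in the step where you drop the constraint at $p$. You write that ``since we allow the compensating $g^\square$ in $Z(\dA^\square)$, the actual requirement is just to find $f\in F^\times$ whose image in $Z(\dQ_q)/(Z(\dQ_q)\cap K_q)$ matches a prescribed target for every $q\in\square$, $q\ne p$, and which has trivial constraint at $p$.'' This is false. Since $\pi_p$ is Steinberg, $p$ lies in $\square$, so any $g^\square\in Z(\dA^\square)$ has trivial $p$-component and cannot compensate anything at the place $p$: the $p$-component of $g^\square f\underline{p}^{-1}$ is just $f_p\,p^{-1}$. Writing out the intended congruence $\underline{p}^{-1}\equiv g^\square \pmod{Z(\dQ)(K^p\Iw_p\cap Z(\dA^\infty))}$ at the place $p$ forces $f_p\,p^{-1}\in \Iw_p\cap Z_p=O_{F_p}^\times$, i.e., $\val_p(f)=1$. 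This is a nontrivial constraint, and if $f$ were taken to be a $p$-adic unit (as your ``trivial constraint at $p$'' would permit), then $f\underline{p}^{-1}$ has $p$-adic valuation $-1$ and does not lie in the level subgroup at $p$; the argument then fails to identify the image of $\underline{p}^{-1}$ in the quotient with the image of any element of $Z(\dA^\square)$. Your parenthetical justification (``at $p$ the element $\underline{p}^{-1}$ lies in $Z(\dQ_p)\subset Z(\dQ_p)\cdot\Iw_p$'') does not help, because the quotient is by the \emph{global} $Z(\dQ)$ embedded diagonally, not by the local group $Z(\dQ_p)$, so membership of $p^{-1}$ in $Z(\dQ_p)$ has no bearing on its class.

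Note that your own first formulation of what is needed (``...and is a $p$-adic unit at $p$ multiplied by $\underline{p}$'') was correct; the error is only in the subsequent attempt to discard this condition. The fix is immediate: apply weak approximation for $F^\times$ in $\prod_{q\in\square\setminus\{\infty\}}F_q^\times$, including the place $p$, to produce $f\in F^\times$ with $\val_p(f)=1$ and with $f_q$ lying in the relevant compact open subgroup for every other finite $q\in\square$. This is what the paper's appeal to weak approximation actually uses.
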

		
		\subsection{Proof of the main theorem}
		\begin{proof}[Proof of Theorem \ref{thm:main}]
			Suppose there is no level-lowering,
			i.e., there is no automorphic representation $\tilde \pi$ of $G(\dA)$ such that 
			${\tilde\pi}^{K^pK_p}\neq 0$ and
			$\ol\rho_{\tilde\pi,\ell}\cong \ol\rho_{\pi,\ell}.$
			 	By Zucker's conjecture and the Matsushima formula we have the decomposition
			\cite[1.9]{BR92}
			\begin{equation}\label{eq:IH2decom}
				\IH^{2}( {S_0(p)}\otimes{F^\ac},  \dZ_\ell )_\fm  \otimes  \dQ_\ell^\ac
				=\bigoplus_{\tilde \pi } 
			 \iota_\ell^{-1}  	\tilde \pi^{K^p\Iw_p} \otimes  \rho_{\tilde \pi,\ell}
			\end{equation}
			where
			$\tilde \pi$ runs over irreducible automorphic representations of $G(\dA)$ such that $\tilde\pi_\infty$ is cohomological with trivial coefficient
			and
		 $\ol\rho_{\tilde\pi,\ell}\cong \ol\rho_{\pi,\ell}.$
			By Corollary \ref{cor:boundaryvanish} and the absolute irreducibility of $\ol\rho_{\pi,\ell}$,
			every irreducible    Jordan-H\"older factor  			of $\rH^{2}( {S_0(p)}\otimes{F^\ac},  \dF_\ell )_\fm $
			 is isomorphic to $\ol\rho_{\pi,\ell}.$ 
			 The weight-monodromy spectral sequence,
			 which
			 degenerates at $E_1$ by 
			 Lemma \ref{lem:degenerate},
			 gives a filtration $\Fil^*
			\mathrm{H}^{2}( {S_0(p)}\otimes F^\ac,   \dF_\ell)_\fm$ on
			$ \mathrm{H}^{2}( {S_0(p)}\otimes F^\ac  ,   \dF_\ell)_\fm$
			of $\dT(G^\square,K^\square)_\fm$-modules. 
			Put
			$\Gr_p:=\Fil^{p}/\Fil^{p+1}.$
			Then by Lemma \ref{lem:cohvanish} the non-zero terms are
			\begin{align*}
				\Gr_{-2} &=\rH^{0}(Y^{(2)} , \mathbb{F}_{\ell}(-2))_{\fm}   ,\\
 				\Gr_{0} &= 
				\rH^{0}(Y^{(2)}  , \mathbb{F}_{\ell}(-1))_{\fm},\\
					\Gr_2&=\rH^{0}(Y^{(2)}   , \mathbb{F}_{\ell})_{\fm}.
			\end{align*}
		  The monodromy operator $\tilde \mu$ in Section \ref{monodromy} boils down to   identity maps $\Gr_{-2}\to \Gr_0(-1)$ and $\Gr_{0}\to \Gr_{2}(-1)$.
		  In particular, $\ker\tilde\mu=\Fil^2\cong\rH^{0}(Y^{(2)} , \mathbb{F}_{\ell})_{\fm}. $
			The unramifiedness of $\ol\rho_{\pi,\ell}$ at $p$ 
			 implies that
			$  \rH^{0}(Y^{(2)} , \mathbb{F}_{\ell}) [\fm]
			\subset  \rH^{0}(Y^{(2)} , \mathbb{F}_{\ell})_{\fm}$
			contains a copy of $\ol\rho_{\pi,\ell}\mid_{\Gal(\dF_p^\ac/\dF_{p^2})}.$ 
			 However, by Lemma \ref{lem:Frobp} and Lemma \ref{lem:diamond},
		 $\Frob_{p^2}$ acts  as the scalar 
		 $\chi_{\pi,\ell}(\underline p)^{-1}$ on
			$\rH^0(Y^{(2)}  ,\dF_\ell)_\fm[\fm]	$
			where $\chi_{\pi,\ell}:=\iota_\ell^{-1} \circ \chi_\pi$
			and $\chi_{\pi}$ is the central character.
			On the other hand, 
			the multiset of eigenvalues of $\ol\rho_{\pi,\ell}(\Frob_p)$
			is $\{p^2,1,p^{-2}\}\mod \ell$ up to multiplication by a common scalar.
			We then deduce that $p^2 \equiv 1 \mod l,$
			contradicting the assumption.
		\end{proof} 
\begin{bibdiv}
\begin{biblist}

\bib{ACC+22}{article}{
      author={Allen, Patrick~B.},
      author={Calegari, Frank},
      author={Caraiani, Ana},
      author={Gee, Toby},
      author={Helm, David},
      author={Le~Hung, Bao~V.},
      author={Newton, James},
      author={Scholze, Peter},
      author={Taylor, Richard},
      author={Thorne, Jack~A.},
       title={Potential automorphy over {CM} fields},
        date={2022},
     journal={Annals of Mathematics},
}

\bib{Bel02}{thesis}{
      author={Bella\"iche, Jo\"el},
       title={Congruences endoscopiques et repr\'esentations galoisiennes},
        type={Ph.D. Thesis},
        date={2002},
}

\bib{BG06}{article}{
      author={Bella\"{\i}che, Jo\"{e}l},
      author={Graftieaux, Phillipe},
       title={Augmentation du niveau pour {${\rm U}(3)$}},
        date={2006},
        ISSN={0002-9327},
     journal={Amer. J. Math.},
      volume={128},
      number={2},
       pages={271\ndash 309},
         url={https://mathscinet.ams.org/mathscinet-getitem?mr=2214894},
      review={\MR{2214894}},
}

\bib{Boy19}{article}{
      author={Boyer, Pascal},
       title={Principe de {M}azur en dimension sup\'{e}rieure},
        date={2019},
        ISSN={2429-7100},
     journal={J. \'{E}c. polytech. Math.},
      volume={6},
       pages={203\ndash 230},
         url={https://mathscinet.ams.org/mathscinet-getitem?mr=3959073},
      review={\MR{3959073}},
}

\bib{BR92}{incollection}{
      author={Blasius, Don},
      author={Rogawski, Jonathan~D.},
       title={Tate classes and arithmetic quotients of the two-ball},
        date={1992},
   booktitle={The zeta functions of {P}icard modular surfaces},
   publisher={Univ. Montr\'{e}al, Montreal, QC},
       pages={421\ndash 444},
         url={https://mathscinet.ams.org/mathscinet-getitem?mr=1155236},
      review={\MR{1155236}},
}

\bib{BS73}{article}{
      author={Borel, A.},
      author={Serre, J.-P.},
       title={Corners and arithmetic groups},
        date={1973},
        ISSN={0010-2571},
     journal={Comment. Math. Helv.},
      volume={48},
       pages={436\ndash 491},
         url={https://mathscinet.ams.org/mathscinet-getitem?mr=387495},
      review={\MR{387495}},
}

\bib{BT72}{article}{
      author={Bruhat, F.},
      author={Tits, J.},
       title={Groupes r\'{e}ductifs sur un corps local},
        date={1972},
        ISSN={0073-8301},
     journal={Inst. Hautes \'{E}tudes Sci. Publ. Math.},
      number={41},
       pages={5\ndash 251},
         url={https://mathscinet.ams.org/mathscinet-getitem?mr=327923},
      review={\MR{327923}},
}

\bib{BW06}{article}{
      author={B\"{u}ltel, Oliver},
      author={Wedhorn, Torsten},
       title={Congruence relations for {S}himura varieties associated to some
  unitary groups},
        date={2006},
        ISSN={1474-7480},
     journal={J. Inst. Math. Jussieu},
      volume={5},
      number={2},
       pages={229\ndash 261},
         url={https://mathscinet.ams.org/mathscinet-getitem?mr=2225042},
      review={\MR{2225042}},
}

\bib{Car79}{inproceedings}{
      author={Cartier, P.},
       title={Representations of {$p$}-adic groups: a survey},
        date={1979},
   booktitle={Automorphic forms, representations and {$L$}-functions ({P}roc.
  {S}ympos. {P}ure {M}ath., {O}regon {S}tate {U}niv., {C}orvallis, {O}re.,
  1977), {P}art 1},
      series={Proc. Sympos. Pure Math., XXXIII},
   publisher={Amer. Math. Soc., Providence, R.I.},
       pages={111\ndash 155},
         url={https://mathscinet.ams.org/mathscinet-getitem?mr=546593},
      review={\MR{546593}},
}

\bib{Car86}{article}{
      author={Carayol, Henri},
       title={Sur les repr\'{e}sentations {$l$}-adiques associ\'{e}es aux
  formes modulaires de {H}ilbert},
        date={1986},
        ISSN={0012-9593},
     journal={Ann. Sci. \'{E}cole Norm. Sup. (4)},
      volume={19},
      number={3},
       pages={409\ndash 468},
         url={https://mathscinet.ams.org/mathscinet-getitem?mr=870690},
      review={\MR{870690}},
}

\bib{Cho94}{article}{
      author={Choucroun, Francis~M.},
       title={Analyse harmonique des groupes d'automorphismes d'arbres de
  {B}ruhat-{T}its},
        date={1994},
        ISSN={0037-9484},
     journal={M\'{e}m. Soc. Math. France (N.S.)},
      number={58},
       pages={170},
         url={https://mathscinet.ams.org/mathscinet-getitem?mr=1294542},
      review={\MR{1294542}},
}

\bib{Clo00}{article}{
      author={Clozel, L.},
       title={On {R}ibet's level-raising theorem for {$\rm U(3)$}},
        date={2000},
        ISSN={0002-9327},
     journal={Amer. J. Math.},
      volume={122},
      number={6},
       pages={1265\ndash 1287},
         url={https://mathscinet.ams.org/mathscinet-getitem?mr=1797662},
      review={\MR{1797662}},
}

\bib{CS19}{unpublished}{
      author={Caraiani, Ana},
      author={Scholze, Peter},
       title={On the generic part of the cohomology of non-compact unitary
  {S}himura varieties},
        date={2019},
        note={arXiv:1909.01898},
}

\bib{dSG18}{incollection}{
      author={de~Shalit, Ehud},
      author={Goren, Eyal~Z.},
       title={On the bad reduction of certain {$U(2,1)$} {S}himura varieties},
        date={2018},
   booktitle={Geometry, algebra, number theory, and their information
  technology applications},
      series={Springer Proc. Math. Stat.},
      volume={251},
   publisher={Springer, Cham},
       pages={81\ndash 152},
         url={https://mathscinet.ams.org/mathscinet-getitem?mr=3880385},
      review={\MR{3880385}},
}

\bib{Hel06}{unpublished}{
      author={Helm, David},
       title={Mazur's principle for {U}(2,1) {S}himura varieties},
        date={2006},
        note={arXiv:1709.03731},
}

\bib{HLR86}{article}{
      author={Harder, G.},
      author={Langlands, R.~P.},
      author={Rapoport, M.},
       title={Algebraische {Z}yklen auf {H}ilbert-{B}lumenthal-{F}l\"{a}chen},
        date={1986},
        ISSN={0075-4102},
     journal={J. Reine Angew. Math.},
      volume={366},
       pages={53\ndash 120},
         url={https://mathscinet.ams.org/mathscinet-getitem?mr=833013},
      review={\MR{833013}},
}

\bib{Jar99}{article}{
      author={Jarvis, Frazer},
       title={Mazur's principle for totally real fields of odd degree},
        date={1999},
        ISSN={0010-437X},
     journal={Compositio Math.},
      volume={116},
      number={1},
       pages={39\ndash 79},
         url={https://mathscinet.ams.org/mathscinet-getitem?mr=1669444},
      review={\MR{1669444}},
}

\bib{Kat81}{incollection}{
      author={Katz, N.},
       title={Serre-{T}ate local moduli},
        date={1981},
   booktitle={Algebraic surfaces ({O}rsay, 1976--78)},
      series={Lecture Notes in Math.},
      volume={868},
   publisher={Springer, Berlin-New York},
       pages={138\ndash 202},
         url={https://mathscinet.ams.org/mathscinet-getitem?mr=638600},
      review={\MR{638600}},
}

\bib{Kni01}{article}{
      author={Knightly, Andrew~H.},
       title={Galois representations attached to representations of {${\rm
  GU}(3)$}},
        date={2001},
        ISSN={0025-5831},
     journal={Math. Ann.},
      volume={321},
      number={2},
       pages={375\ndash 398},
         url={https://mathscinet.ams.org/mathscinet-getitem?mr=1866493},
      review={\MR{1866493}},
}

\bib{Liu19}{article}{
      author={Liu, Yifeng},
       title={Bounding cubic-triple product {S}elmer groups of elliptic
  curves},
        date={2019},
        ISSN={1435-9855},
     journal={J. Eur. Math. Soc. (JEMS)},
      volume={21},
      number={5},
       pages={1411\ndash 1508},
         url={https://mathscinet.ams.org/mathscinet-getitem?mr=3941496},
      review={\MR{3941496}},
}

\bib{LS18}{incollection}{
      author={Lan, Kai-Wen},
      author={Stroh, Beno\^{\i}t},
       title={Nearby cycles of automorphic \'{e}tale sheaves, {II}},
        date={2018},
   booktitle={Cohomology of arithmetic groups},
      series={Springer Proc. Math. Stat.},
      volume={245},
   publisher={Springer, Cham},
       pages={83\ndash 106},
         url={https://mathscinet.ams.org/mathscinet-getitem?mr=3848816},
      review={\MR{3848816}},
}

\bib{LTX+22}{article}{
      author={Liu, Yifeng},
      author={Tian, Yichao},
      author={Xiao, Liang},
      author={Zhang, Wei},
      author={Zhu, Xinwen},
       title={On the {B}eilinson-{B}loch-{K}ato conjecture for
  {R}ankin-{S}elberg motives},
        date={2022},
        ISSN={0020-9910},
     journal={Invent. Math.},
      volume={228},
      number={1},
       pages={107\ndash 375},
         url={https://mathscinet.ams.org/mathscinet-getitem?mr=4392458},
      review={\MR{4392458}},
}

\bib{Mes72}{book}{
      author={Messing, William},
       title={The crystals associated to {B}arsotti-{T}ate groups: with
  applications to abelian schemes},
      series={Lecture Notes in Mathematics, Vol. 264},
   publisher={Springer-Verlag, Berlin-New York},
        date={1972},
         url={https://mathscinet.ams.org/mathscinet-getitem?mr=0347836},
      review={\MR{0347836}},
}

\bib{Mil80}{book}{
      author={Milne, James~S.},
       title={\'{E}tale cohomology},
      series={Princeton Mathematical Series, No. 33},
   publisher={Princeton University Press, Princeton, N.J.},
        date={1980},
        ISBN={0-691-08238-3},
         url={https://mathscinet.ams.org/mathscinet-getitem?mr=559531},
      review={\MR{559531}},
}

\bib{NT16}{article}{
      author={Newton, James},
      author={Thorne, Jack~A.},
       title={Torsion {G}alois representations over {CM} fields and {H}ecke
  algebras in the derived category},
        date={2016},
     journal={Forum Math. Sigma},
      volume={4},
       pages={Paper No. e21, 88},
         url={https://mathscinet.ams.org/mathscinet-getitem?mr=3528275},
      review={\MR{3528275}},
}

\bib{Raj01}{article}{
      author={Rajaei, Ali},
       title={On the levels of mod {$l$} {H}ilbert modular forms},
        date={2001},
        ISSN={0075-4102},
     journal={J. Reine Angew. Math.},
      volume={537},
       pages={33\ndash 65},
         url={https://mathscinet.ams.org/mathscinet-getitem?mr=1856257},
      review={\MR{1856257}},
}

\bib{Rib90}{article}{
      author={Ribet, K.~A.},
       title={On modular representations of {${\rm Gal}(\overline{\bf Q}/{\bf
  Q})$} arising from modular forms},
        date={1990},
        ISSN={0020-9910},
     journal={Invent. Math.},
      volume={100},
      number={2},
       pages={431\ndash 476},
         url={https://mathscinet.ams.org/mathscinet-getitem?mr=1047143},
      review={\MR{1047143}},
}

\bib{Rib91}{article}{
      author={Ribet, Kenneth~A.},
       title={Lowering the levels of modular representations without
  multiplicity one},
        date={1991},
        ISSN={1073-7928},
     journal={Internat. Math. Res. Notices},
      number={2},
       pages={15\ndash 19},
         url={https://mathscinet.ams.org/mathscinet-getitem?mr=1104839},
      review={\MR{1104839}},
}

\bib{Rog90}{book}{
      author={Rogawski, Jonathan~D.},
       title={Automorphic representations of unitary groups in three
  variables},
      series={Annals of Mathematics Studies},
   publisher={Princeton University Press, Princeton, NJ},
        date={1990},
      volume={123},
        ISBN={0-691-08586-2; 0-691-08587-0},
         url={https://mathscinet.ams.org/mathscinet-getitem?mr=1081540},
      review={\MR{1081540}},
}

\bib{Rog92}{incollection}{
      author={Rogawski, Jonathan~D.},
       title={Analytic expression for the number of points mod {$p$}},
        date={1992},
   booktitle={The zeta functions of {P}icard modular surfaces},
   publisher={Univ. Montr\'{e}al, Montreal, QC},
       pages={65\ndash 109},
         url={https://mathscinet.ams.org/mathscinet-getitem?mr=1155227},
      review={\MR{1155227}},
}

\bib{RZ82}{article}{
      author={Rapoport, M.},
      author={Zink, Th.},
       title={\"{U}ber die lokale {Z}etafunktion von {S}himuravariet\"{a}ten.
  {M}onodromiefiltration und verschwindende {Z}yklen in ungleicher
  {C}harakteristik},
        date={1982},
        ISSN={0020-9910},
     journal={Invent. Math.},
      volume={68},
      number={1},
       pages={21\ndash 101},
         url={https://mathscinet.ams.org/mathscinet-getitem?mr=666636},
      review={\MR{666636}},
}

\bib{Sai03}{article}{
      author={Saito, Takeshi},
       title={Weight spectral sequences and independence of {$l$}},
        date={2003},
        ISSN={1474-7480},
     journal={J. Inst. Math. Jussieu},
      volume={2},
      number={4},
       pages={583\ndash 634},
         url={https://mathscinet.ams.org/mathscinet-getitem?mr=2006800},
      review={\MR{2006800}},
}

\bib{Ser87b}{incollection}{
      author={Serre, J.-P.},
       title={Lettre {\`a} {J}.-{F}. {M}estre},
        date={1987},
   booktitle={Current trends in arithmetical algebraic geometry ({A}rcata,
  {C}alif., 1985)},
      series={Contemp. Math.},
      volume={67},
   publisher={Amer. Math. Soc., Providence, RI},
       pages={263\ndash 268},
         url={https://mathscinet.ams.org/mathscinet-getitem?mr=902597},
      review={\MR{902597}},
}

\bib{Ser87a}{article}{
      author={Serre, Jean-Pierre},
       title={Sur les repr\'{e}sentations modulaires de degr\'{e} {$2$} de
  {${\rm Gal}(\overline{\bf Q}/{\bf Q})$}},
        date={1987},
        ISSN={0012-7094},
     journal={Duke Math. J.},
      volume={54},
      number={1},
       pages={179\ndash 230},
         url={https://mathscinet.ams.org/mathscinet-getitem?mr=885783},
      review={\MR{885783}},
}

\bib{Sta}{misc}{
      author={{The Stacks Project Authors}},
       title={\textit{Stacks Project}},
        date={2018},
}

\bib{Tit79}{inproceedings}{
      author={Tits, J.},
       title={Reductive groups over local fields},
        date={1979},
   booktitle={Automorphic forms, representations and {$L$}-functions ({P}roc.
  {S}ympos. {P}ure {M}ath., {O}regon {S}tate {U}niv., {C}orvallis, {O}re.,
  1977), {P}art 1},
      series={Proc. Sympos. Pure Math., XXXIII},
   publisher={Amer. Math. Soc., Providence, R.I.},
       pages={29\ndash 69},
         url={https://mathscinet.ams.org/mathscinet-getitem?mr=546588},
      review={\MR{546588}},
}

\bib{vH21}{article}{
      author={van Hoften, Pol},
       title={A geometric {J}acquet-{L}anglands correspondence for paramodular
  {S}iegel threefolds},
        date={2021},
        ISSN={0025-5874},
     journal={Math. Z.},
      volume={299},
      number={3-4},
       pages={2029\ndash 2061},
         url={https://mathscinet.ams.org/mathscinet-getitem?mr=4329279},
      review={\MR{4329279}},
}

\bib{Vol10}{article}{
      author={Vollaard, Inken},
       title={The supersingular locus of the {S}himura variety for {${\rm
  GU}(1,s)$}},
        date={2010},
        ISSN={0008-414X},
     journal={Canad. J. Math.},
      volume={62},
      number={3},
       pages={668\ndash 720},
         url={https://mathscinet.ams.org/mathscinet-getitem?mr=2666394},
      review={\MR{2666394}},
}

\bib{VW11}{article}{
      author={Vollaard, Inken},
      author={Wedhorn, Torsten},
       title={The supersingular locus of the {S}himura variety of {${\rm
  GU}(1,n-1)$} {II}},
        date={2011},
        ISSN={0020-9910},
     journal={Invent. Math.},
      volume={184},
      number={3},
       pages={591\ndash 627},
         url={https://mathscinet.ams.org/mathscinet-getitem?mr=2800696},
      review={\MR{2800696}},
}

\bib{Wan22}{article}{
      author={Wang, Haining},
       title={Level lowering on siegel modular threefold of paramodular level},
        date={2022},
     journal={arXiv:1910.07569},
}

\bib{Wed01}{incollection}{
      author={Wedhorn, Torsten},
       title={The dimension of {O}ort strata of {S}himura varieties of
  {PEL}-type},
        date={2001},
   booktitle={Moduli of abelian varieties ({T}exel {I}sland, 1999)},
      series={Progr. Math.},
      volume={195},
   publisher={Birkh\"{a}user, Basel},
       pages={441\ndash 471},
         url={https://mathscinet.ams.org/mathscinet-getitem?mr=1827029},
      review={\MR{1827029}},
}

\end{biblist}
\end{bibdiv}

	\end{document}